\nonstopmode
\pdfoutput=1
%

\documentclass[final, twocolumn, envcountsect, envcountsame]{svjour3}


\usepackage{amsmath, amssymb}
\usepackage{graphicx}        
\usepackage{epsfig} 
\usepackage[all]{xy}         
\usepackage{cite}            
\usepackage{stmaryrd}	      

\smartqed              
\RequirePackage[displaymath, mathlines]{lineno} 
\def\al{\alpha} 
\def\be{\beta} 
\def\ga{\gamma} 
\def\de{\delta} 
\def\ep{\varepsilon}

\def\th{\theta} 
 
\def\ka{\kappa} 
\def\la{\lambda} 
 
\def\si{\sigma} 
 
\def\ph{\varphi} 
 
\def\ps{\psi} 
 
\def\Ga{\Gamma} 
\def\De{\Delta} 
 
\def\La{\Lambda} 
 
\def\Ph{\Phi} 
\def\Ps{\Psi} 
\def\Om{\Omega}
\def\Hor{\on{Hor}}
\def\hor{\on{hor}}
\def\ver{\on{\ver}}
\def\Emb{\on{Emb}}
\def\Imm{\on{Imm}}
\def\Id{\on{Id}}
\def\Diff{\on{Diff}}
\def\Vol{\operatorname{Vol}}
\def\x{\times}
\def\p{\partial} 
\def\X{{\mathfrak X}}
  
\def\L{\mathcal{L}}

\def\R{{\mathbb R}}
\def\ad{\operatorname{ad}}

\let\on=\operatorname
\let\wt=\widetilde
\let\wh=\widehat
\let\ol=\overline

\let\mc=\mathcal
\let\mf=\mathfrak
\newcommand{\ud}{\,\mathrm{d}}
\def\Tr{\on{Tr}}
\def\vol{\on{vol}}
\def\dist{\on{dist}}
\def\Met{\on{Met}}

\hyphenation{pa-ra-me-tri-zed re-pa-ra-me-tri-zations re-pa-ra-me-tri-zation equi-va-lent geo-de-sic se-pa-ra-ting met-ric geo-de-sics suf-fi-cient-ly pro-ven So-bo-lev un-pa-ra-met-ri-zed in-ver-ti-bi-li-ty}

\spnewtheorem*{openquestion}{Open question.}{\bf}{\rm}
\spnewtheorem*{rmk}{Remark.}{\bf}{\rm}


\begin{document}


\title{Overview of the Geometries of Shape Spaces and Diffeomorphism Groups}
\titlerunning{Overview of Shape Spaces}

\author{Martin Bauer 
\thanks{M. Bauer was supported by FWF Project P24625}
\and Martins Bruveris \and Peter W. Michor}
\institute{
Martin Bauer \and Peter W. Michor
\at Fakult\"at f\"ur Mathematik, Universit\"at Wien, \\
Oskar-Morgenstern-Platz 1,
A-1090 Wien, Austria \\
\email{bauer.martin@univie.ac.at} \\
\email{peter.michor@univie.ac.at}
\and
Martins Bruveris
\at Institut de math\'ematiques, EPFL, CH-1015, \\ Lausanne, Switzerland \\
\email{martins.bruveris@epfl.ch}
}

\date{\today}
\maketitle

\begin{abstract}
This article provides an overview of various notions of shape spaces, including the space of 
parametrized and unparametrized curves, the space of immersions, the diffeomorphism group 
and the space of Riemannian metrics. We discuss the Riemannian metrics that can be defined thereon, 
and what is known about the properties of these metrics. We put particular emphasis on the induced 
geodesic distance, the geodesic equation and its well-posedness, geodesic and metric completeness 
and properties of the curvature.  
\end{abstract}
\keywords{Shape Space \and Diffeomorphism Group \and Manifolds of mappings \and Landmark space 
\and Surface matching \and Riemannian geometry} 
\subclass{\\ 58B20 \and 58D15\and 35Q31}


\tableofcontents
\section{Introduction}
The variability of a certain class of shapes is of interest in various fields of applied mathematics and 
it is of particular importance in the field of computational anatomy. In mathematics and computer vision, shapes have been represented in many different ways: 
point clouds, surfaces or images are only some  examples. These shape spaces are inherently non-linear. As an example, consider the shape space of all surfaces of a certain dimension and genus. 
The nonlinearity makes it difficult to do statistics. One way to overcome this difficulty is to introduce 
a Riemannian structure on the space of shapes. This enables us
to locally linearize the space and develop statistics based on geodesic methods.
Another advantage of the Riemannian setting for shape analysis is its intuitive notion of similarity. Namely, two shapes that differ only by a small deformation are regarded as similar to each other.

In this article we will concentrate on shape spaces of surfaces and we will give an overview of the different Riemannian structures, that have been considered on these spaces. 

\subsection{Spaces of interest}\label{intro_spaces}

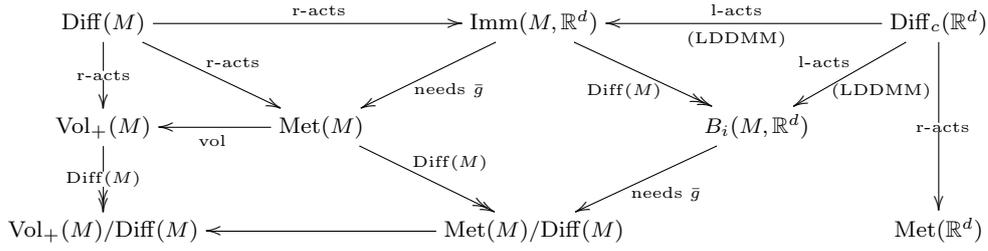
\begin{figure*}
$$
\xymatrix{
{\Diff(M)}  \ar[rr]^{\text{ r-acts }}  \ar[rd]^{\text{ r-acts }} 
\ar[d]|{\text{r-acts}}
& & {\on{Imm}(M,\mathbb R^d)} \ar[dl]^{\text{needs  }\bar g} \ar@{->>}[dr]_{\Diff(M)}
& & {\Diff_{c}(\mathbb R^d)} \ar[dd]|{\text{ r-acts} } \ar[ll]_{\text{ l-acts }}^{\text{(LDDMM)}} 
\ar[ld]_{\text{ l-acts }}^{\text{(LDDMM)}} 
\\
\Vol_{+}(M) \ar@{->>}[d]|{\Diff(M)} & \Met(M) \ar[l]^{\vol}
\ar@{->>}[rd]^{\Diff(M)} & & {B_i(M,\R^d)} \ar[ld]^{\text{needs  }\bar g} &
\\
\Vol_{+}(M)/{\Diff(M)}  && 
\ar[ll] \Met(M)/{\Diff(M)} && \Met(\mathbb R^d) & 
}
$$
\caption{An overview of the relations between the spaces discussed in this article. Here $\Vol_+(M)$ is the space of all positive definite volume densities, $\Diff(M)$ the diffeomorphism group, $\Met(M)$ the space of all Riemannian metrics, $\Imm(M,\R^d)$
the space of immersed surfaces and $B_i(M,\R^d)$ the shape space of unparametrized surfaces; $\ol g$ denotes the Euclidean metric on $\R^d$.}
\end{figure*}
We fix a compact manifold $M$ without boundary of dimension $d-1$. In this paper a shape is a submanifold of $\R^d$ that is diffeomorphic to $M$ and we denote by $B_i(M,\R^d)$ and $B_e(M,\R^d)$ the spaces of all immersed and embedded submanifolds.

One way to represent $B_i(M,\R^d)$ is to start with the space $\on{Imm}(M,\R^d)$ of immersions upon which the diffeomorphism group $\on{Diff}(M)$ acts from the right via
\[
\on{Imm}(M,\R^d)\x \on{Diff}(M) \ni (q, \ph) \mapsto q \on{\circ} \ph \in \on{Imm}(M,\R^d).
\]
The space $\on{Imm}(M,\R^d)$ is the space of parametrized type $M$ submanifolds of $\R^d$ and the action of $\on{Diff}(M)$ represents {\it reparametrizations}. The immersions $q$ and $q \on{\circ} \ph$ have the same image in $\R^d$ and thus one can identify $B_i(M,\R^d)$ with the quotient
\[
B_i(M,\R^d) \cong \on{Imm}(M,\R^d) / \on{Diff}(M)\,.
\]
The space $B_i(M,\R^d)$ is not a manifold, but an orbifold with isolated singular points; see Sect.\ \ref{orbifolds}. To remove these we will work with the slightly smaller space $\on{Imm}_f(M,\R^d)$ of free immersions (see Sect. \ref{imm_emb}) and denote the quotient by
\[
B_{i,f}(M,\R^d) \cong \on{Imm}_f(M,\R^d) / \on{Diff}(M)\,.
\]
Similarly one obtains the manifold $B_e(M,\R^d)$ as the quotient of the space $\on{Emb}(M,\R^d)$ of embedded submanifolds with the group $\on{Diff}(M)$.

The spaces $\on{Imm}(M,\R^d)$ and $\on{Emb}(M,\R^d)$ are sometimes called {\it pre-shape spaces}, 
and $\on{Diff}(M)$ is the {\it reparametrization group}. 
Their quotients $B_i(M,\R^d)$ and $B_e(M,\R^d)$ are called {\it shape spaces}.

We would like to note that our usage of the terms shape and pre-shape space differs from that employed in \cite{Kendall1999,Small1996,Dryden1998}. In the terminology of \cite{Kendall1999} a pre-shape space is the space of labelled landmarks modulo translations and scalings and the shape space additionally quotients out rotations as well. For the purposes of this paper, the pre-shape space contains parametrized curves or surfaces and we pass to the shape space by quotienting out the parametrizations.

A Riemannian metric on $\on{Imm}(M,\R^d)$ that is invariant under the action of $\on{Diff}(M)$ induces a metric on $B_{i, f}(M,\R^d)$, such that the projection
\[
\pi : \on{Imm}_f(M,\R^d) \to B_{i, f}(M,\R^d),\quad q \mapsto q(M)\,,
\]
is a Riemannian submersion. This will be our method of choice to study almost local and 
Sobolev-type metrics on $\on{Imm}(M,\R^d)$ and $B_{i, f}(M,\R^d)$ in Sect.\ \ref{Almost_local} and 
\ref{sobolev_inner}. These are classes of metrics, that are defined via quantities measured 
directly on the submanifold. We might call them {\it inner metrics} to distinguish them from 
{\it outer metrics}, which we will describe next. This is however more a conceptual distinction 
rather than a rigorously mathematical one.     

Most Riemannian metrics, that we consider in this article, will be {\it weak}, i.e., considered as a mapping from the tangent bundle to the cotangent bundle the metric is injective, but not surjective. Weak Riemannian metrics are a purely infinite-dimensional phenomenon and they are the source of most of the analytical complications, that we will encounter.

A way to define Riemannian metrics on the space of parametrized submanifolds is via the left action of $\on{Diff}_c(\R^d)$ on $\on{Emb}(M,\R^d)$,
\[
\on{Diff}_c(\R^d) \x \on{Emb}(M,\R^d)\!\ni\!(\ph, q) \!\mapsto\! \ph\on{\circ} q \in \on{Emb}(M,\R^d).
\]
Here $\on{Diff}_c(\R^d)$ denotes the Lie group of all compactly supported diffeomorphisms of $\R^d$ with  Lie algebra the space $\mf X_c(\R^d)$ of all compactly supported vector fields, see Sect.~\ref{diff_groups}.
Given a right-invariant metric on $\on{Diff}_c(\R^d)$, the left action induces a metric on $\on{Emb}(M,\R^d)$, such that for each embedding $q_0 \in \on{Emb}(M,\R^d)$ the map
\[
\pi_{q_0} : \on{Diff}_c(\R^d) \to \on{Emb}(M,\R^d),\quad \ph \mapsto \ph \on{\circ} q_0\,,
\]
is a Riemannian submersion onto the image. This construction formalizes the idea of measuring the 
cost of deforming a shape as the minimal cost of deforming the ambient space, i.e.,
\begin{equation}
\label{eq:diff_metric_descend}
G^{\on{Emb}}_q(h,h) = \inf_{X \on{\circ} q = h} G^{\on{Diff}}_{\on{Id}}(X, X)\,.
\end{equation}
Here $h \in T_q \on{Emb}(M,\R^d)$ is an infinitesimal deformation of $q$ and the length squared $G^{\on{Emb}}_q(h,h)$, which measures its cost, is given as the infimum of $G^{\on{Diff}}_{\on{Id}}(X, X)$, that is the cost of deforming the ambient space. The infimum is taken over all $X \in \mf X_c(\R^d)$ infinitesimal deformations of $\R^d$, that equal $h$ when restricted to $q$. This motivates the name {\it outer metrics}, since they are defined in terms of deformations of the ambient space.

The natural space to define these metrics is the space of embeddings instead of immersions, because not all orbits of the $\on{Diff}_c(\R^d)$ action on $\on{Imm}(M,\R^d)$ are open. Defining a Riemannian metric on $B_e(M,\R^d)$ is now a two step process
\[
\xymatrix@1{
\on{Diff}_c(\R^d) \ar[r]^-{\pi_{q_0}} & \on{Emb}(M,\R^d) \ar[r]^-\pi & B_e(M,\R^d)
}\,.
\]
One starts with a right-invariant Riemannian metric on $\on{Diff}_c(\R^d)$, which descends via \eqref{eq:diff_metric_descend} to a metric on $\on{Emb}(M,\R^d)$. This metric is invariant under the reparametrization group and thus descends to a metric on $B_e(M,\R^d)$. These metrics are studied in Sect.\ \ref{sec_outer}.

Riemannian metrics on the diffeomorphism groups $\on{Diff}_c(\R^d)$ and $\on{Diff}(M)$ are of interest, not only because these groups act as the deformation group of the ambient space and the reparametrization group respectively. They are related to the configuration spaces for hydrodynamics and various PDEs arising in physics can be interpreted as geodesic equations on the diffeomorphism group. While a geodesic on $\on{Diff}_c(\R^d)$ is a curve $\ph(t)$ of diffeomorphisms, its right-logarithmic derivative $u(t) = \p_t \ph(t) \on{\circ} \ph(t)^{-1}$ is a curve of vector fields. If the metric on $\on{Diff}_c(\R^d)$ is given as $G_{\on{Id}}(u, v) = \int_{\R^d} \langle Lu, v \rangle \ud x$ with a differential operator $L$, then the geodesic equation can be written in terms of $u(t)$ as
\[
\p_t m + (u \cdot \nabla)m + m \on{div} u + Du^T.m = 0,\quad m = Lu\,.
\]
PDEs that are special cases of this equation include the Camassa-Holm equation, the Hunter-Saxton equation and others. See Sect.\ \ref{diff_groups} for details.

So far we encoded shape through the way it lies in the ambient space; i.e., either as a map $q: M\to \R^d$ or as its image $q(M)$. One can also look at how the map $q$ deforms the model space $M$. Denote by $\ol g$ the Euclidean metric on $\R^d$ and consider the pull-back map
\begin{equation}
\label{intro_met_pb}
\on{Imm}(M,\R^d) \to \Met(M),\quad q \mapsto q^\ast \ol g\,,
\end{equation}
where $\Met(M)$ is the space of all Riemannian metrics on $M$ 
and $q^\ast \ol g$ denotes the pull-back of $\ol g$ to a metric on $M$. Depending on the dimension of $M$ one
can expect to capture more or less information about shape with this map. Elements of $\Met(M)$ with $\on{dim}(M) = d-1$ 
are symmetric, positive definite tensor fields of type $0 \choose 2$ and thus have $\tfrac{d(d-1)}2$ components. 
Immersions on the other hand are maps from $M$ into $\R^d$ and thus have $d$ components. For $d=3$, 
the case of surfaces in $\R^3$, the number of components coincide, while for $d>3$ we have $\tfrac{d(d-1)}2 > d$. 
Thus we would expect the pull-back map to capture most aspects of shape. The pull-back is equivariant with respect 
to $\on{Diff}(M)$ and thus we have the commutative diagram
\[
\xymatrix{
\on{Imm}_f(M,\R^d) \ar[r]^-{q\mapsto q^\ast \ol g}\ar[d] & \Met(M) \ar[d] \\
B_{i,f}(M,\R^d) \ar[r] & \Met(M) / \on{Diff}(M)
}
\]
The space in the lower right corner is not far away from $\Met(M) / \on{Diff}_0(M)$, where $\on{Diff}_0(M)$ denotes the connected component of the identity. This space, known as {\it super space}, is used in general relativity. Little is known about the properties of the pull-back map \eqref{intro_met_pb}, but as a first step it is of interest to consider Riemannian metrics on the space $\Met(M)$. This is  done in Sect.\ \ref{sec_mets}, with special emphasis on the $L^2$- or Ebin-metric.

\subsection{Questions discussed}

After having explained the spaces, that will play the main roles in the paper and the relationships 
between them, what are the questions that we will be asking? The questions are motivated by 
applications to comparing shapes.  

After equipping the space with a Riemannian metric, the simplest way to compare shapes is by 
looking at the matrix of pairwise distances, measured with the induced geodesic distance function. 
Thus an important question is, whether the geodesic distance function is point-separating, that is 
whether for two distinct shapes $C_0 \neq C_1$ we have $d(C_0, C_1) > 0$. In finite dimensions the 
answer to this question is always ``yes''. Even more, a standard result of Riemannian geometry 
states that the topology induced by the geodesic distance coincides with the manifold topology. In 
infinite dimensions, when the manifold is equipped with a weak Riemannian metric, this is in 
general not true any more. The topology induced by the geodesic distance will also be weaker than 
the manifold topology. We will therefore survey what is known about the geodesic distance and the 
topology it induces.         

The path realizing the distance between two shapes is, if it exists, a geodesic. So it is natural 
to look at the geodesic equation on the manifold. In finite dimensions the geodesic equation is an 
ODE, the initial value problem for geodesics can be solved, at least for short times, and the 
solution depends smoothly on the initial data. The manifolds of interest in this paper are 
naturally modeled mostly as Fr\'echet manifolds and in coordinates the geodesic equation is usually a 
partial differential equation or even involves pseudo differential operators. Only the regulat Lie 
group $\Diff_c(N)$ of diffeomorphisms with compact support on a noncompact manifold are modeled on 
$(LF)$-spaces. 
Thus even the short-time solvability of the initial-value problem is a non-trivial 
question. For some of the metrics, in particular for the class of almost local metrics, it is still 
open. For the diffeomorphism group the geodesic equations for various metrics are of interest in 
their own right. To reflect this we will discuss in Sect.\ \ref{diff_groups} first the geodesic 
equations before proceeding with the properties of the geodesic distance.         

It is desirable for applications that the Riemannian metric possesses some completeness properties. 
It can be either in form of geodesic completeness, i.e., that geodesics are extendable for all 
time, or metric completeness with respect to the induced geodesic distance. Since we are 
considering only weak Riemannian metrics on spaces of smooth shapes, we cannot expect the space to 
be metrically complete, but in some examples it is possible to at least describe the metric 
completion of shape space.     

In order to perform statistics on shape space one can choose a reference shape, for example by computing the mean of a given set of shapes, and linearize the space around this shape via the Riemannian exponential map and normal 
coordinates. The curvature tensor contains information about the accuracy of this approximation. In 
general computing the curvature leads to long formulas that are hard to interpret, but in some 
cases curvature admits a simple expression. We collect the examples, where more is known about the 
curvature, usually the sectional curvature, than just the formula for it.     

To summarize, we will deal with the following four properties of Riemannian metrics on shape spaces:
\begin{itemize}
\item
Geodesic distance
\item
Geodesic equation and existence of geodesics
\item
Geodesic and metric completeness
\item
Properties of the curvature
\end{itemize}

Riemannian geometry on shape spaces is currently an active area of research. Therefore this paper is less an encyclopedic treatment of the subject but rather resembles an interim report highlighting what is known and more importantly, what is not.

\subsection{Topics not discussed}

There are many topics that lie outside the scope of this paper, among which are the following.

{\it Changes in topology.} In certain applications it may be of interest to consider deformations of a shape that allow for the development of holes or allow the shape to split into several components. In this paper we fix the model manifold $M$ and only consider submanifolds of $\R^d$ diffeomorphic to $M$. Thus by definition all deformations are topology-preserving. See \cite{Delfour2011, Wirth2011, Bronstein2010} for topologically robust approaches to shape matching.

{\it Non-geodesic distances.} Many interesting distances can be defined on shape spaces, that are not induced by an underlying Riemannian metric; see for example \cite{Manay2006, Mumford1991, Memoli2005}. As we are looking at shape spaces through the lens of Riemannian geometry, these metrics will necessarily be left out of focus.

{\it Subgroups of the diffeomorphism groups.} The Riemannian geometry of the diffeomorphism group and its subgroups, especially the group  of volume-preserving diffeomorphisms, has been studied extensively; see for example \cite{Smolentsev2006}. It plays an important role in hydrodynamics, being the configuration space for incompressible fluid flow \cite{Ebin1970}. While the full diffeomorphism group itself is indispensable for shape analysis, its subgroups have not been used much in this context.

{\it Utmost generality.} We did not strive to state the results in the most general setting. It is possible to consider shapes of higher codimension inside $\R^d$ or curved ambient spaces; see \cite{Bauer2011b}. This would include examples like space curves or curves that lie on a sphere. It would also make the presentation more difficult to read.

{\it Numerical methods.} Since shape space is infinite-dimensional, computing the exponential map, the geodesic between two shapes or the geodesic distance are numerically non-trivial tasks. While we present some examples, we do not attempt to provide a comprehensive survey of the numerical methods that have been employed in the context of shape spaces. Finding stable, robust and fast numerical methods and proving their convergence is an area of active research for most of the metrics and spaces discussed in this paper. See \cite{Rumpf2012_preprint, Samir2012, Cotter2008, Cotter2012, Bauer2011a, Gunther2011} for various approaches to discretizing shape space.

\section{Preliminaries}

\subsection{Notation}\label{notation}
In this section we will introduce the basic notation that we will use throughout this article. On 
$\R^d$ we consider the Euclidean metric, which we will denote by $\bar g$ or 
$\langle \cdot,\cdot\rangle$. Unless stated otherwise we will assume that the parameter space  $M$ is a compact, oriented manifold without boundary of dimension $\on{dim}(M)=d-1$. Riemannian metrics on $M$ are usually denoted by $g$. Tensor 
fields like $g$ and its variations $h$ are identified with their associated mappings 
$TM\to T^*M$. For a metric $g$ this yields the musical isomorphisms
\[ \flat: TM\to T^*M \quad\text{ and }\quad \sharp: T^*M\to TM.\]
Immersions from $M$ to $\R^d$ -- i.e., smooth mappings with everywhere injective derivatives 
-- are denoted by $q$ and the corresponding  unit normal field of
an (orientable) immersion $q$ is denoted by $n_q$.  
For every immersion $q: M\to \R^d$ we consider the induced pull-back metric $g=q^*\bar g$ on $M$ given by
\[q^*\bar g(X,Y)=\bar g(Tq.X,Tq.Y),\]
for  vector fields $X,Y\in\X(M)$. Here $Tq$ denotes the tangent mapping of the map $q:M\to\R^d$.
We will denote the induced volume form of the metric $g=q^*\bar g$ as $\vol(g)$. In positively oriented coordinates $(u,U)$ it is given by
\[\on{vol}(g)=\sqrt{\on{det}(\bar g\left(\p_iq,\p_j q
\right))}\ du^1\wedge\dots\wedge du^{d-1}\,.\]
Using the volume form we can calculate the total volume $\on{Vol}_q=\int_M \on{vol}(q^*\bar g)$ of the immersion $q$.

The Levi-Civita covariant derivative determined by a metric $g$ will be denoted by $\nabla^g$ and we will consider the induced Bochner--Laplacian $\Delta^g$, which is defined for all 
vector fields $X\in\X(M)$ via 
\[\De^gX =-\on{Tr}(g^{-1} \nabla^2 X)\,.\]
Note that in $\R^d$ the usual Laplacian $\Delta$ is the negative of the Bochner--Laplacian of the Euclidean metric, i.e., $\Delta^{\bar g}=-\Delta$.

Furthermore, we will need the  second fundamental form $s_q(X,Y)=\bar g\left( \nabla^{q^*\bar g}_X Tq.Y,n_q\right).$
Using it we can define the 
Gau{\ss} curvature $K_q=\on{det}(g^{-1} s_q)$ and the mean curvature $H_q=\on{Tr}(g^{-1} s_q)$. 

In the special case of plane curves ($M=S^1$ and $d=2$) we use the letter $c$ for the immersed curve. 
The curve parameter $\th \in S^1$ will be the positively oriented parameter on $S^1$, 
and differentiation $\p_\th$ will be denoted by the subscript $\th$, i.e., $c_\th=\p_\th c$.
We will use a similar notation for the time derivative of a time dependent family of curves, i.e., $\partial_tc= c_t$.

We denote the corresponding unit length tangent vector by 
\[
v=v_c=\frac{c_\theta}{|c_\theta|}=-Jn_c\quad\text{where }
J=\sqrt{-1}\text{  on }\mathbb C=\mathbb R^2\,,
\]
and $n_c$ is the unit length tangent vector. The covariant derivative of the pull-back metric reduces to arclength derivative, and the induced volume form to arclength integration: \[D_s=\frac{\p_\th}{|c_{\th}|}, \qquad \ud s= |c_{\th}|\ud\th.\]
Using this notation the length of a curve can be written as
\[\ell_c=\int_{S^1}\ud s\,.\]
In this case Gau{\ss} and mean curvature are the same and are denoted by
$\ka = \langle D_s v, n\rangle$. 

\subsection{Riemannian submersions}\label{Submersion}

In this article we will repeatedly induce a Riemannian metric on a quotient space using a given metric on the top space.
The concept of a Riemannian submersion will allow us to achieve this goal in an elegant manner.
We will now explain in general terms what a Riemannian submersion is
and how geodesics in the quotient space correspond to horizontal geodesics in the top space. 

Let $(E,G_E)$ be a possibly infinite dimensional weak Riemannian manifold; weak means that 
$G_E:TE\to T^*E$ is injective, but need not be surjective. A consequence is that the Levi-Civita 
connection (equivalently, the geodesic equation) need not exist; however, if the Levi-Civita connection does exist, it is unique.
Let $\mathcal G$ be a smooth possibly infinite dimensional regular Lie group;
see \cite{KM97r} or \cite[Section 38]{Michor1997} for the notion used here, or \cite{Neeb2006} for 
a more general notion of Lie group. Let $\mathcal G\x E\to E$ be a
smooth group action on $E$ and assume that 
$B:=E/\mathcal G$ is a manifold.  
Denote by $\pi:E\rightarrow B$ the projection, which is then a submersion of smooth manifolds by 
which we means that it admits local smooth sections everywhere;
in particular, $T\pi:TE \rightarrow TB$ is surjective. Then
$$\on{Ver}=\on{Ver}(\pi):=\on{ker}(T\pi) \subset TE$$
is called the {\it vertical subbundle}. 
Assume that $G_E$ is in addition invariant under the action of $\mathcal G$.
Then the expression
\[
\|Y\|^2_{G_B} := \inf\{\|X\|^2_{G_E}: X \in T_x E,\, T\pi.X = Y\}
\]
defines a semi-norm on $B$. If it is a norm, it can be shown (by polarization pushed through the 
completion)
that this norm comes from a weak Riemannian metric $G_B$ on $B$; 
then the projection $\pi: E \to B$ is a Riemannian submersion.

Sometimes the the $G_E$-orthogonal space
$\on{Ver}(\pi)^\bot \subset TE$
is a fiber-linear complement in $TE$. In general, the orthogonal 
space is a 
complement (for the $G_E$-closure of $\on{Ver}(\pi)$) only if taken in
the fiberwise $G_E$-completion $\overline {TE}$ 
of $TE$. This leads to the notion of a {\it robust Riemannian manifold}: a Riemannian manifold $(E,G_E)$ is called robust, if $\overline{TE}$ is a smooth vector-bundle over $E$ and the Levi-Civita connection of $G_E$ exists and is smooth. See \cite{Micheli2013} for details. We will encounter examples, where the use of $\overline{TE}$ is necessary in Sect. \ref{sec_outer}.

The {\it horizontal subbundle} $\Hor=\Hor(\pi,G)$ is the $G_E$-orthogonal complement of $\on{Ver}$ in $TE$ or in 
$\overline{TE}$, respectively. Any vector $X \in TE$ can be decomposed uniquely in vertical and horizontal components as
\[
X=X^{\on{ver}}+X^{\hor}\,.
\]
Note that if we took the complement in $\overline{TE}$, i.e., $\on{Hor} \subset \overline{TE}$, then in general $X^{\on{ver}} \in \overline{\on{Ver}}$. The mapping
\[
T_x \pi|_{\Hor_x}:\Hor_x\rightarrow T_{\pi(x)}B\quad\text{  or }\quad\overline{T_{\pi(x)}B}
\]
is a linear isometry of (pre-)Hilbert spaces for all $x\in E$. Here $\overline{T_{\pi(x)}B}$ is the fiber-wise $G_B$-completion of $T_{\pi(x)}B$. We are not claiming that $\overline{TB}$ forms a smooth vector-bundle over $B$ although this will be true in the examples considered in Sect. \ref{sec_outer}.
  
\begin{theorem}\label{thm:submersion}
Consider a Riemannian submersion $\pi:E\rightarrow B$ between robust weak Riemannian manifolds, 
and let $\ga:[0,1]\rightarrow E$ be a geodesic in $E$.
\begin{enumerate}
\item If $\ga'(t)$ is horizontal at one $t$, then it is horizontal at all $t$. 
\item If $\ga'(t)$ is horizontal then $\pi \circ \ga$ is a geodesic in $B$.
\item If every curve in $B$ can be lifted to a horizontal curve in $E$, 
then, up to the choice of an initial point, there is a one-to-one correspondence between curves in $B$ and horizontal curves in $E$. 
This implies that instead of solving the geodesic equation in $B$ one can equivalently solve
the equation for horizontal geodesics in $E$.
\end{enumerate}
\end{theorem}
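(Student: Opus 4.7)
The plan is to exploit the $\mc G$-invariance of $G_E$ to produce a conservation law along every geodesic in $E$, and then to use the fact that $T\pi|_{\on{Hor}}$ is a fiberwise isometry; the robustness of $(E,G_E)$ guarantees that the Levi-Civita connection $\nabla^{G_E}$ exists, so that standard Killing-field manipulations are available. For part (1), let $X$ lie in the Lie algebra $\mf g$ of $\mc G$, and let $\zeta_X$ denote the associated fundamental vector field on $E$. The $\mc G$-invariance of $G_E$ makes each $\zeta_X$ a Killing field, i.e., $G_E(\nabla_Y\zeta_X,Y)=0$ for every $Y\in TE$. Combined with the geodesic equation $\nabla_{\ga'}\ga'=0$, this yields
\[
\frac{d}{dt} G_E\bigl(\ga'(t),\zeta_X(\ga(t))\bigr) = G_E\bigl(\ga'(t),\nabla_{\ga'(t)}\zeta_X\bigr) = 0,
\]
so the pairing is constant along $\ga$. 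Since $\on{Ver}$ is pointwise spanned (or densely spanned, after passage to $\overline{TE}$) by the values of the fundamental vector fields, horizontality of $\ga'(t_0)$ at a single $t_0$ forces all these constants to vanish, giving horizontality of $\ga'(t)$ for every $t$.

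For part (2), set $c:=\pi\circ\ga$. The isometry property of $T_x\pi|_{\on{Hor}_x}$ gives $G_B(c'(t),c'(t))=G_E(\ga'(t),\ga'(t))$, so $c$ and $\ga$ have the same energy. To see that $c$ is a geodesic in $B$, I would take a smooth variation $c_s$ of $c$ with fixed endpoints, lift locally using smooth sections of $\pi$ and horizontal projection to a variation $\ga_s$ of $\ga$ with $\ga_s(0)=\ga(0)$ fixed and $\ga_s(1)\in\pi^{-1}(c(1))$, and observe that the energies again agree. The first variation of $E(\ga_s)$ at $s=0$ reduces, via the geodesic equation for $\ga$, to the boundary term $G_E\bigl(\ga'(1),\partial_s\ga_s(1)|_{s=0}\bigr)$, which vanishes because $\ga'(1)\in\on{Hor}$ while $\partial_s\ga_s(1)|_{s=0}\in\on{Ver}$ (being tangent to the fiber over $c(1)$). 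Hence the first variation of $E(c_s)$ vanishes for all such variations, so $c$ is a critical point of energy and therefore a $G_B$-geodesic whenever the equation makes sense in the robust Riemannian submersion framework.

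For part (3), horizontal lifts are unique once an initial point is fixed, since any such lift $\tilde\ga$ satisfies the first-order relation $\tilde\ga'(t)=(T_{\tilde\ga(t)}\pi|_{\on{Hor}})^{-1}c'(t)$ with prescribed initial value. Together with the assumed existence of horizontal lifts this yields a bijection between curves in $B$ and horizontal curves in $E$, parametrized by the choice of initial point. Parts (1) and (2) then translate this correspondence into the asserted one between geodesics in $B$ and horizontal geodesics in $E$, reducing the geodesic initial-value problem on $B$ to a horizontal geodesic initial-value problem upstairs.

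The main obstacle is making all of this rigorous in the weak, infinite-dimensional setting: a priori $\nabla^{G_E}$ need not exist, the horizontal complement $\on{Hor}$ may only complement $\on{Ver}$ inside the fiberwise completion $\overline{TE}$ rather than $TE$, and the Killing identity has to be read in that weak sense. The robustness hypothesis is tailored exactly to circumvent these issues, after which the Killing-field computation and the energy argument run as in the finite-dimensional case; the one additional subtlety, namely that $G_B$-geodesics might not satisfy an ODE in the strict sense, is absorbed by interpreting ``geodesic'' as ``critical point of energy,'' which is what the computation actually produces.
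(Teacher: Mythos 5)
The paper itself gives no proof of this theorem; it points to \cite{Michor2008b} and \cite{Micheli2013}, where the argument runs through the horizontal/vertical decomposition of the covariant derivative (O'Neill-type identities) adapted to the robust weak setting. Your route --- Noether's conservation law for part (1) and a first-variation-of-energy argument for part (2) --- is genuinely different, and it is the natural one in the paper's actual setting, where every submersion considered arises as a quotient $E\to E/\mathcal G$ by an isometric group action; indeed it is the mechanism the paper invokes elsewhere when it says that reparametrization momenta are constant along geodesics. What the connection-level argument of the cited references buys is generality: your proof of (1) uses that $\on{Ver}$ is (densely) spanned by the fundamental vector fields $\zeta_X$, which holds for the quotients in this paper but is not among the stated hypotheses, so as a proof of the theorem as literally formulated (an arbitrary Riemannian submersion of robust weak Riemannian manifolds) it covers only the group-action case.

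There is one step that, as written, is circular and should be repaired. In part (2) you lift the variation $c_s$ horizontally and assert that the energies of $c_s$ and $\ga_s$ agree; but the existence of horizontal lifts of arbitrary curves is exactly the hypothesis you only grant yourself in part (3), and in the weak infinite-dimensional setting it is a nontrivial assumption (it requires solving an ODE in the fiber). The standard fix avoids horizontal lifts entirely: lift $c_s$ by arbitrary local smooth sections of $\pi$ (which exist by the paper's definition of submersion) to a variation $\tilde c_s$ with $\tilde c_0=\ga$ and endpoints confined to the fibers over the endpoints of $c$. The infimum definition of $G_B$ gives $E(c_s)\le E(\tilde c_s)$ pointwise in $s$, with equality at $s=0$ because $\ga$ is horizontal; since both functions are differentiable and touch at an interior minimum of their difference, $\tfrac{d}{ds}\big|_{0}E(c_s)=\tfrac{d}{ds}\big|_{0}E(\tilde c_s)$, and the latter vanishes by exactly the boundary-term computation you give (the $t=1$ term pairs a horizontal vector with a vertical one). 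With that repair, and with the caveat about the group-action hypothesis in (1), your argument is sound; the remaining infinite-dimensional subtleties (existence of $\nabla$, reading ``geodesic'' as ``critical point of energy'', uniqueness of horizontal lifts as a first-order relation with prescribed initial value) are absorbed by the robustness hypothesis in the way you indicate.
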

See \cite[Sect.~26]{Michor2008b} for a proof, and \cite{Micheli2013} for the case of 
robust Riemannian manifolds.

\section{The spaces of interest}

\subsection{Immersions and embeddings}\label{imm_emb}
Parametrized surfaces will be modeled as immersions or embeddings 
of the configuration manifold $M$ into $\R^d$.
We call immersions and embeddings parametrized since a change in their parametrization 
(i.e., applying a diffeomorphism on the domain of the function) 
results in a different object. 
We will deal with the following sets of functions:

\begin{equation}\label{sh:im:eq1}
\begin{aligned}
 \Emb(M,\R^d) \subset &\Imm_f(M,\R^d) \\&\quad\subset \Imm(M,\R^d) \subset C^\infty(M,\R^d)\,.
\end{aligned}
\end{equation}
Here $C^\infty(M,\R^d)$ is the set of smooth functions from $M$ to $\R^d$, 
$\Imm(M,\R^d)$ is the set of all \emph{immersions} of $M$ into $\R^d$, i.e.,
all functions $q \in C^\infty(M,\R^d)$ such that 
$T_x q$ is injective for all $x \in M$.
The set $\Imm_f(M,\R^d)$  consists of all \emph{free immersions} $q$;
i.e., the diffeomorphism 
group of $M$ acts freely on $q$, i.e., $q \on{\circ} \ph = q$ implies
$\ph=\Id_M$ for all $\ph \in \Diff(M)$. 

By \cite[Lem.~3.1]{Michor1991}, the isotropy group $\Diff(M)_q:=\{\ph\in\Diff(M): q\on{\circ} 
\ph=q\}$ of any 
immersion $q$ is always a finite group which acts properly discontinuously on $M$ so that 
$M\to M/\Diff(M)_q$ is a covering map. 
$\Emb(M,N)$ is the set of all \emph{embeddings} of $M$ into $\R^d$, i.e., 
all immersions $q$ that are homeomorphisms onto their image. 
\begin{theorem}
The spaces $\Imm(M,\R^d), \Imm_f(M,\R^d)$ and $\Emb(M,\R^d)$ are Fr\'echet manifolds.  
\end{theorem}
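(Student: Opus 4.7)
The plan is to realize all three spaces as open subsets of $C^\infty(M,\R^d)$, which is a Fr\'echet space, and hence trivially a Fr\'echet manifold, with the standard $C^\infty$-topology since $M$ is compact. Open subsets of Fr\'echet manifolds inherit a Fr\'echet manifold structure (a single global chart suffices), so it suffices to verify openness at each level of the filtration \eqref{sh:im:eq1}.

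First I would show that $\Imm(M,\R^d)$ is open in $C^\infty(M,\R^d)$. In local charts, injectivity of $T_xq$ is equivalent to the strict positivity of the Gram determinant $\det(\bar g(\p_iq,\p_jq)) > 0$, which is an open condition in the $C^1$-topology on any compact coordinate patch. Covering $M$ by finitely many charts (possible by compactness) shows that the immersion condition is open in the $C^1$-, and hence in the finer $C^\infty$-topology. Next, openness of $\Emb(M,\R^d)$ in $\Imm(M,\R^d)$ is the classical fact that a $C^1$-small perturbation of an embedding of a compact manifold is again an embedding: local injectivity is guaranteed by the immersion property (via the inverse function theorem in charts and a tubular neighborhood of $q(M)$), while global injectivity is preserved because any two distinct points of $M$ are mapped by $q$ to points at positive distance bounded below by compactness.

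The main obstacle is openness of $\Imm_f(M,\R^d)$ in $\Imm(M,\R^d)$. Here one uses Lemma 3.1 of \cite{Michor1991}, which states that $\Diff(M)_q$ is always a finite group acting properly discontinuously on $M$. If $q$ is free, I would argue by contradiction: suppose $q_n\to q$ in $\Imm(M,\R^d)$ and $\ph_n\in\Diff(M)_{q_n}$ with $\ph_n\neq\Id_M$. Using that the $q_n$ are uniformly $C^k$-bounded immersions converging to a genuine immersion $q$, together with the relation $q_n\circ\ph_n = q_n$, one derives uniform $C^k$-bounds on the $\ph_n$ and their inverses (the immersion estimate $|T\ph_n|\lesssim |Tq_n\cdot T\ph_n|/\sigma(Tq_n) = |Tq_n|/\sigma(Tq_n)$ being the starting point, where $\sigma$ denotes the smallest singular value). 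Applying Arzel\`a--Ascoli, a subsequence converges to some $\ph\in\Diff(M)$ with $q\circ\ph = q$, i.e.\ $\ph\in\Diff(M)_q=\{\Id_M\}$. Once $\ph_n$ is $C^1$-close to $\Id_M$, a standard argument (for instance via a Riemannian exponential on $M$ and uniqueness of fixed points of contractive maps) forces $\ph_n=\Id_M$ for large $n$, contradicting the assumption.

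The delicate point throughout is extracting the right uniform regularity for the isotropy elements $\ph_n$ from the convergence $q_n\to q$, since $\Diff(M)$ carries its own $C^\infty$-topology and one must translate between the $C^\infty$-topology on $C^\infty(M,\R^d)$ and uniform control of the $\ph_n$. This is where the free immersion hypothesis, combined with the finiteness statement of \cite{Michor1991}, is essential. The remainder of the verification that the resulting charts give a Fr\'echet manifold structure is automatic once openness is established.
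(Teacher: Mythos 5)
Your proposal is correct and follows essentially the same route as the paper: all three spaces are exhibited as open subsets of the Fr\'echet space $C^\infty(M,\R^d)$, with openness checked at each stage of the filtration \eqref{sh:im:eq1}. The only difference is that where the paper simply cites \cite[Sect.~5.1]{Michor1980} for openness of the immersion condition, \cite[Thm.~44.1]{Michor1997} for openness of embeddings, and \cite[Thm.~1.5]{Michor1991} for openness of the free immersions, you supply direct arguments; your Arzel\`a--Ascoli argument for the last (and only delicate) step is sound, since the relation $q_n\on{\circ}\ph_n=q_n$ together with the uniform lower bound on the singular values of $Tq_n$ does yield uniform $C^k$-bounds on the isotropy elements and their inverses, and uniform local injectivity of immersions near $q$ then upgrades $C^1$-closeness of $\ph_n$ to $\Id_M$ into equality.
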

\begin{proof}
 Since $M$ is compact by assumption it follows that $C^\infty(M,\R^d)$ is a 
Fr\'echet manifold by \cite[Sect.~42.3]{Michor1997}; see also \cite{Hamilton1982}, \cite{Michor1980}. 
All inclusions in \eqref{sh:im:eq1} are inclusions of open subsets: 
first $\Imm(M,\R^d)$ is open in $C^\infty(M,\R^d)$ since
the condition that the differential is injective at every point 
is an open condition on the one-jet of $q$ \cite[Sect.~5.1]{Michor1980}. 
$\Imm_f(M,\R^d)$ is open in $\Imm(M,\R^d)$ by \cite[Thm.~1.5]{Michor1991}.
$\Emb(M,\R^d)$ is open in $\Imm_f(M,\R^d)$ by \cite[Thm.~44.1]{Michor1997}. 
Thus all the spaces are Fr\'echet manifolds as well.
\end{proof}

\subsection{Shape space}

Unparametrized surfaces are equivalence classes of parametrized surfaces under the action of the reparametrization group.

\begin{theorem}[Thm. 1.5, \cite{Michor1991}]
\label{bi_bundle}
The quotient space 
\[
B_{i,f}(M,\R^d) := \on{Imm}_f(M,\R^d) / \on{Diff}(M)
\]
 is a smooth Hausdorff manifold and the projection
\[
\pi :\on{Imm}_f(M,\R^d) \to B_{i,f}(M,\R^d)
\]
is a smooth principal fibration with $\on{Diff}(M)$ as structure group.

For $q \in \on{Imm}_f(M,\R^d)$ we can define a chart around $\pi(q) \in B_{i,f}(M,\R^d)$ by
\[
\pi \on{\circ} \ps_q : C^\infty(M, (-\ep, \ep)) \to B_{i,f}(M,\R^d)
\]
with $\ep$ sufficiently small, where 
\[
\ps_q : C^\infty(M, (-\ep, \ep)) \to \on{Imm}_{f}(M,\R^d)
\]
 is defined by $\ps_q(a) = q + an_q$ and $n_q$ is the unit-length normal vector to $q$.
\end{theorem}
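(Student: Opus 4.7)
The plan is to realize $\pi : \on{Imm}_f(M,\R^d) \to B_{i,f}(M,\R^d)$ as a principal $\Diff(M)$-bundle by explicitly producing smooth local sections (slices) through the normal exponential, and then deducing the manifold and Hausdorff properties of the quotient from standard principal bundle theory in the convenient calculus setting \cite{Michor1997}.

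First I would note that by the definition of $\on{Imm}_f(M,\R^d)$ the right action of $\Diff(M)$ on it is free, and the smoothness of the action is a consequence of $\Diff(M)$ being a regular Lie group acting smoothly on $C^\infty(M,\R^d)$. The heart of the argument is then the construction of a slice at a given $q \in \on{Imm}_f(M,\R^d)$. For this I would use the map
\[
\ps_q : C^\infty(M,(-\ep,\ep)) \to \on{Imm}_f(M,\R^d), \qquad \ps_q(a) = q + a\,n_q,
\]
where $n_q$ is the unit normal field. Since $q$ is an immersion, the fiber derivative
$d\ps_q(0).b = b\, n_q$ is transverse, for small enough $\ep$, to the $\Diff(M)$-orbit through $q$ (whose tangent space at $q$ consists of vector fields along $q$ tangent to the image of $Tq$); taking $\ep$ small one checks that $\ps_q(a)$ remains a free immersion and that $\ps_q$ is a smooth embedding.

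The main technical step, and the one I expect to cost the most work, is showing that $\ps_q$ is actually a \emph{slice}: for every immersion $\tilde q$ sufficiently close to $q$ in the $C^\infty$-topology, there exist unique $a \in C^\infty(M,(-\ep,\ep))$ and $\ph \in \Diff(M)$ close to $\Id_M$ with $\tilde q = \ps_q(a) \circ \ph$. Concretely, one applies the inverse function theorem (in the tame Fr\'echet or convenient setting) to the map
\[
(a,\ph) \mapsto (q + a\, n_q) \circ \ph,
\]
whose derivative at $(0,\Id_M)$ sends $(b,X)$ to $b\, n_q + Tq.X$; this is a linear isomorphism onto $C^\infty(M,\R^d) = \Gamma(q^*T\R^d)$ precisely because every vector field along $q$ decomposes uniquely into its normal component (captured by $b$) and its tangential component (captured by $X \in \X(M)$, using injectivity of $Tq$). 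The freeness hypothesis is what ensures that this $\ph$ is actually unique globally, not just up to the isotropy, and here one uses \cite[Lem.~3.1]{Michor1991} together with the fact, proven in \cite[Thm.~1.5]{Michor1991}, that $\on{Imm}_f(M,\R^d)$ is open in $\on{Imm}(M,\R^d)$.

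Once the slice property is established, the standard machinery takes over: the charts $\pi \circ \ps_q$ on $B_{i,f}(M,\R^d)$ have smooth transition maps (the change-of-slice diffeomorphism is smooth by another application of the implicit function theorem to the reparametrization $\ph$), so $B_{i,f}(M,\R^d)$ becomes a smooth Fr\'echet manifold and $\pi$ is a smooth submersion admitting local sections, hence a smooth principal $\Diff(M)$-bundle. For Hausdorffness, I would argue that if $\pi(q_1) \neq \pi(q_2)$ then the orbits $q_1 \cdot \Diff(M)$ and $q_2 \cdot \Diff(M)$ are disjoint closed subsets of $\on{Imm}_f(M,\R^d)$; combining tubular neighborhoods of the form $\ps_{q_i}(C^\infty(M,(-\ep,\ep))) \cdot \Diff(M)$ with a compactness/properness argument for the action on the space of free immersions yields disjoint saturated neighborhoods, whose images separate $\pi(q_1)$ and $\pi(q_2)$ in $B_{i,f}(M,\R^d)$.
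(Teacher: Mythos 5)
Your overall strategy --- the normal-field slice $\ps_q(a)=q+a\,n_q$, the decomposition of $C^\infty(M,\R^d)=\Ga(q^*T\R^d)$ into normal and tangential parts, freeness for global uniqueness of $\ph$, and then the standard passage to a principal $\Diff(M)$-bundle --- is exactly the route taken in the cited reference \cite{Michor1991} (the paper itself gives no proof, only the citation). So the architecture of your argument is right.

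The genuine gap is in what you yourself call the main technical step: you propose to invert $(a,\ph)\mapsto(q+a\,n_q)\on{\circ}\ph$ by ``the inverse function theorem (in the tame Fr\'echet or convenient setting).'' No such theorem is available. In the convenient calculus of \cite{Michor1997} there is no inverse function theorem beyond Banach spaces (the exponential map of $\Diff(S^1)$ is the standard counterexample to local surjectivity despite an invertible derivative), and invoking Nash--Moser would require verifying tame estimates for a whole neighborhood of inverses, which is not done and is essentially as hard as the problem itself. The proof in \cite{Michor1991} avoids this entirely by reducing to \emph{finite-dimensional} differential topology: one considers the map $(x,t)\mapsto q(x)+t\,n_q(x)$ on $M\x(-\ep,\ep)$, which by the finite-dimensional inverse function theorem and compactness of $M$ is a diffeomorphism onto an immersed tubular neighborhood when restricted to suitable pieces; for $\tilde q$ a $C^1$-small perturbation of $q$ one then \emph{defines} $\ph(x)$ as the unique nearby $y$ with $\tilde q(x)-q(y)$ normal to $q$ at $y$, and $a=\ps_q^{-1}(\tilde q\on{\circ}\ph^{-1})$. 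Smoothness of $\tilde q\mapsto(a,\ph)$ then comes from smooth parameter dependence in the finite-dimensional implicit function theorem together with the exponential law for manifolds of mappings --- not from an infinite-dimensional inversion. Freeness (together with \cite[Lem.~3.1]{Michor1991}, which makes the isotropy a finite group acting properly discontinuously) is then what upgrades the local uniqueness of $\ph$ near $\Id_M$ to injectivity of $(a,\ph)\mapsto\ps_q(a)\on{\circ}\ph$ over all of $\Diff(M)$; without it one only obtains the orbifold structure on $B_i(M,\R^d)$. Your Hausdorffness sketch is also thinner than it should be --- the action is not proper in any straightforward sense --- but that is a secondary issue compared with the missing finite-dimensional reduction.
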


\begin{corollary}
\label{be_bundle}
The statement of Thm. \ref{bi_bundle} does not change, if we replace $B_{i,f}(M,\R^d)$ by $B_{e}(M,\R^d)$ and $\on{Imm}_f(M,\R^d)$ by $\on{Emb}(M,\R^d)$.
\end{corollary}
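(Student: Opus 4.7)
The plan is to obtain the corollary by restricting the principal bundle structure of Theorem~\ref{bi_bundle} to the open $\Diff(M)$-invariant subset $\Emb(M,\R^d) \subset \Imm_f(M,\R^d)$. Concretely, I would argue in three short steps.

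First, I would observe that $\Emb(M,\R^d)$ is $\Diff(M)$-invariant: if $q \in \Emb(M,\R^d)$ and $\ph \in \Diff(M)$, then $q \on{\circ} \ph$ is again a homeomorphism onto its image (with the image unchanged), and $T(q \on{\circ} \ph) = Tq \on{\circ} T\ph$ is injective pointwise since both factors are. Thus $\ph$ permutes embeddings, and in particular the isotropy group of any $q\in\Emb(M,\R^d)$ inside $\Diff(M)$ is the restriction of the isotropy group of $q$ viewed as a free immersion, namely trivial. So the $\Diff(M)$-action on $\Emb(M,\R^d)$ is free, proper, and smooth (inherited from the action on $\Imm_f$).

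Second, since $\Emb(M,\R^d)$ is open in $\Imm_f(M,\R^d)$ by \cite[Thm.~44.1]{Michor1997} (as already noted in the previous proof), its image $B_e(M,\R^d) = \pi(\Emb(M,\R^d))$ is open in $B_{i,f}(M,\R^d)$ — openness of the projection follows because $\pi^{-1}(\pi(\Emb(M,\R^d))) = \Emb(M,\R^d)$ by $\Diff(M)$-invariance. As an open subset of a smooth Hausdorff manifold, $B_e(M,\R^d)$ is itself a smooth Hausdorff manifold, and the restriction
\[
\pi : \Emb(M,\R^d) \to B_e(M,\R^d)
\]
is a smooth principal $\Diff(M)$-fibration, being the pullback of the bundle of Theorem~\ref{bi_bundle} over an open base.

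Third, for the chart statement I would take $q \in \Emb(M,\R^d)$ and use the same map $\ps_q(a) = q + a n_q$. Since $\Emb(M,\R^d)$ is open in $\Imm_f(M,\R^d)$ in the Fr\'echet topology, and $\ps_q$ is continuous with $\ps_q(0) = q$, there exists $\ep > 0$ such that $\ps_q(C^\infty(M,(-\ep,\ep))) \subset \Emb(M,\R^d)$. Shrinking $\ep$ if necessary so that $\ps_q$ is also a diffeomorphism onto its image as in Theorem~\ref{bi_bundle}, the composition $\pi \on{\circ} \ps_q$ provides the required chart around $\pi(q) \in B_e(M,\R^d)$.

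The only step requiring any genuine care is the choice of $\ep$ in the chart construction — one must invoke openness of $\Emb$ in $\Imm_f$ in the $C^\infty$ topology to guarantee $q + a n_q$ remains an embedding, not merely a free immersion, for small $a$. Everything else is formal transfer from the ambient principal bundle via restriction to an invariant open subset.
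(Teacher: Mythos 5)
Your proposal is correct and follows essentially the same route as the paper: the paper's proof also observes that $\Emb(M,\R^d)$ is an open, $\Diff(M)$-invariant subset of $\Imm_f(M,\R^d)$, so that $B_e(M,\R^d)=\Emb(M,\R^d)/\Diff(M)$ is an open subset of $B_{i,f}(M,\R^d)$ and thus inherits the principal bundle structure (the paper additionally cites direct proofs for embeddings in the literature). Your extra care in shrinking $\ep$ so that $\ps_q(a)=q+an_q$ stays an embedding is a correct and worthwhile elaboration of a point the paper leaves implicit.
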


The result for embeddings is proven in \cite{Michor1980b,Michor1980,BinzFischer1981,Hamilton1982}. 
As $\on{Emb}(M,\R^d)$ is an open subset of $\on{Imm}_f(M,\R^d)$ and is $\on{Diff}(M)$-invariant, the quotient
\[
 B_e(M,\R^d):=\on{Emb}(M,\R^d)/\Diff(M)
\]
is an open subset of $B_{i,f}(M,\R^d)$ and as such itself a smooth principal bundle with structure group $\on{Diff}(M)$.

\subsection{Some words on orbifolds}
\label{orbifolds}
The projection 
$$
\Imm(M,\R^d) \to B_i(M,\R^d):=\Imm(M,\R^d)/\Diff(M)
$$
is the prototype of a Riemannian submersion onto an infinite dimensional Riemannian orbifold.
In the article \cite[Prop.~2.1]{Stanhope2011} it is stated that the finite dimensional Riemannian orbifolds are 
exactly of the form $M/G$ for a Riemannian manifold $M$ and a compact group $G$ of isometries with 
finite isotropy groups. Curvature on Riemannian orbifolds is well defined, and it 
suffices to treat it on the dense regular subset. In our case $B_{i,f}(M,\R^d)$ is the 
regular stratum of the orbifold $B_i(M,\R^d)$.
For the behavior of geodesics on Riemannian orbit spaces $M/G$ see 
for example \cite{Michor2003}; the easiest way to carry these results over to infinite 
dimensions is by using  
Gauss' lemma, which only holds if the Riemannian exponential mapping is a diffeomorphism on an 
$G_{\Imm}$-open neighborhood of 0 in each tangent space. This is rarely true. 

Given a $\Diff(M)$-invariant Riemannian metric on $\Imm(M,\R^d)$,
one can define a metric distance $\on{dist}^{B_i}$ on $B_i(M,\R^d)$ by taking as distance between two shapes the 
infimum of the lengths of all (equivalently, horizontal) smooth curves connecting the corresponding 
$\Diff(M)$-orbits. 
There are the following questions:
\begin{itemize} 
  \item Does $\on{dist}^{B_i}$ separate points? In many cases this has been decided.
	\item Is $(B_i(M,\R^d),\on{dist}^{B_i})$ a geodesic metric space? In other words,  does there exists a 
        rectifiable curve connecting two shapes in the same connected component whose length is 
        exactly the distance? This is widely open, but it is settled as soon as local minimality of 
        geodesics in $\Imm(M,\R^d)$ is established.
\end{itemize}

In this article we are discussing Riemannian metrics on $B_{i,f}(M,\R^d)$, that are induced by Riemannian metrics on $\on{Imm}_f(M,\R^d)$ via Riemannian submersions. However all metrics on $\on{Imm}_f(M,\R^d)$, that we consider, arise as restrictions of metrics on $\on{Imm}(M,\R^2)$. Thus, when dealing with parametrized shapes, we will use the space $\on{Imm}(M,\R^2)$ and restrict ourselves to the open and dense subset $\on{Imm}_f(M,\R^2)$, whenever we consider the space  $B_{i,f}(M,\R^2)$ of unparametrized shapes.

\subsection{Diffeomorphism group}

Concerning the Lie group structure of the diffeomorphism group we have the following theorem.

\begin{theorem}[Thm. 43.1, \cite{Michor1997}]
Let $M$ be a smooth manifold, not necessarily compact. The group
\begin{multline*}
\on{Diff}_c(M) = \big\{ \ph : \ph, \ph^{-1} \in C^\infty(M,M),\\
\{x:\ph(x) \ne x \} \text{ has compact closure} \big\}
\end{multline*}
of all compactly supported diffeomorphisms is an open submanifold of $C^\infty(M,M)$ (equipped wit 
a refinement of the Whitney $C^\infty$-topology) and 
composition and inversion are smooth maps. It is a regular Lie group and the Lie algebra is the 
space $\mf X_c(M)$ of all compactly supported vector fields, whose bracket is the negative of the usual Lie 
bracket.  
\end{theorem}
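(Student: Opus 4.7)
The plan is to build up the statement in four stages: the manifold structure on $C^\infty(M,M)$, the openness of $\Diff_c(M)$, the smoothness of the group operations, and finally regularity together with the identification of the bracket on the Lie algebra.

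First I would fix an auxiliary complete Riemannian metric on $M$ together with its exponential map $\exp^M$, and use it to chart $C^\infty(M,M)$ near a fixed map $f$ by sections of $f^*TM$ via $u \mapsto \exp^M_f(u)$. Equipping $C^\infty(M,M)$ with the refinement of the Whitney $C^\infty$-topology means that only sections which vanish off a common compact set are allowed as nearby charts, so the model space near $f=\Id$ is the $(LF)$-space $\mf X_c(M)$. Chart changes are smooth by the $\omega$-lemma applied to the smooth map $\exp^M$, which is the standard step verifying that $C^\infty(M,M)$ is a smooth manifold in the convenient calculus sense of \cite{KM97r}. The openness of $\Diff_c(M)$ then reduces to two local facts: being a local diffeomorphism is an open condition on the $1$-jet, and being globally injective is preserved under $C^1$-small, compactly supported perturbations provided one controls behavior at infinity — which the compact-support refinement of the topology supplies for free, since outside some compact set every nearby map coincides with $\ph$ itself.

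For smoothness of composition I would again invoke the $\omega$-lemma: composition $(\ph,\ps)\mapsto \ph\circ\ps$ is the pushforward along the smooth map $\ph$, so smoothness is inherited from smooth dependence on parameters. Smoothness of inversion is more delicate and is best handled by verifying the chain rule against smooth curves (the hallmark of convenient calculus): if $t\mapsto \ph_t$ is a smooth curve in $\Diff_c(M)$, then differentiating the identity $\ph_t\circ \ph_t^{-1}=\Id$ gives $\partial_t(\ph_t^{-1}) = -T\ph_t^{-1}\cdot(\partial_t\ph_t)\circ \ph_t^{-1}$, and the right-hand side is visibly smooth in $t$, so $\ph_t^{-1}$ is smooth.

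Regularity is the step I expect to be the real obstacle, because it requires turning a smooth curve $t\mapsto X_t$ in $\mf X_c(M)$ (a curve in an $(LF)$-space) into a smooth curve $t\mapsto \ph_t$ in $\Diff_c(M)$ with $\partial_t\ph_t = X_t\circ \ph_t$, $\ph_0=\Id$. Existence for each fixed initial point follows from classical ODE theory, and global-in-time existence is guaranteed because each $X_t$ has compact support, so the flow is the identity outside a compact set depending only on the supports of $X_s$ for $s$ in a bounded interval. Smooth dependence on $t$ and on the initial point, together with the fact that the time-dependent vector field is smooth jointly in all variables, then upgrades the flow to a smooth curve in $\Diff_c(M)$; one must also verify that the evolution operator $\on{Evol}:C^\infty(\R,\mf X_c(M))\to C^\infty(\R,\Diff_c(M))$ is itself smooth, which is the formal definition of regularity used in \cite{KM97r}.

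Finally, to identify the Lie algebra I would compute the tangent space at the identity directly from the charts: $T_{\Id}\Diff_c(M)\cong \mf X_c(M)$. To get the bracket, I would compute $\partial_s\partial_t|_{(0,0)}(\Fl^X_s\circ \Fl^Y_t\circ \Fl^X_{-s})$ in a chart and observe that right-invariant differentiation produces $-[X,Y]_{\mf X(M)}$; equivalently, one notes that for a Lie group modeled on its Lie algebra by right translation, the adjoint action of $\Diff_c(M)$ on $\mf X_c(M)$ is $\Ad(\ph)X = \ph_* X$, whose derivative at $\Id$ in the direction $Y$ gives $-[Y,X]_{\mf X(M)}$. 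Hence the Lie bracket on $T_{\Id}\Diff_c(M)$ is the negative of the usual Lie bracket on $\mf X_c(M)$, as claimed.
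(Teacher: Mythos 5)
The paper itself gives no proof of this statement---it is quoted verbatim from \cite[Thm.~43.1]{Michor1997}---and your outline follows essentially the same route as that reference: charts via a Riemannian exponential with the compact-support refinement giving $\mf X_c(M)$ as model space, openness, the $\omega$-lemma and curve-wise checking for composition and inversion, regularity via flows of time-dependent compactly supported fields, and the sign reversal of the bracket from the adjoint action. Two small points you gloss over but that the standard proof does address: openness of $\Diff_c(M)$ also requires surjectivity of a perturbed map (which follows since it agrees with a diffeomorphism off a compact set and is a proper local diffeomorphism), and the global-in-time existence of the flow uses that a smooth curve in the $(LF)$-space $\mf X_c(M)$ locally has supports contained in a single compact set.
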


An infinite dimensional smooth Lie group $G$ with Lie algebra $\mf g$ is called regular, if the following two conditions hold:
\begin{itemize}
\item 
For each smooth curve 
$X\in C^{\infty}(\mathbb R,\mathfrak g)$ there exists a unique smooth curve 
$g\in C^{\infty}(\mathbb R,G)$ whose right logarithmic derivative is $X$, i.e.,
\begin{equation}
\label{eq:regular}
\begin{split} g(0) &= e \\
\p_t g(t) &= T_e(\mu^{g(t)})X(t) = X(t).g(t)\;.
\end{split} 
\end{equation}
Here $\mu^g :G\to G$ denotes the right multiplication: $$\mu^g x=x.g\,.$$
\item
The map $\on{evol}^r_G: C^{\infty}(\mathbb R,\mathfrak g)\to G$ is smooth, where 
$\on{evol}^r_G(X)=g(1)$ and $g$ is the 
unique solution of \eqref{eq:regular}.
\end{itemize}

If $M$ is compact, then all diffeomorphisms have compact support and $\on{Diff}_c(M) = \on{Diff}(M)$. For $\R^n$ the group $\on{Diff}(\R^d)$ of all orientation preserving diffeomorphisms is not an open subset of $C^\infty(\R^d,\R^d)$ endowed with the compact $C^\infty$-topology and thus it is not a smooth manifold with charts in the usual sense.  Therefore, it is necessary to work with the smaller space $\on{Diff}_c(\R^d)$ of compactly supported diffeomorphisms. In Sect.\ \ref{diff_groups} we will also introduce the groups  $\Diff_{H^{\infty}}(\R^d)$ and $\Diff_{\mc S}(\R^d)$ with weaker decay conditions towards infinity. Like $\on{Diff}_c(\R^d)$ they are smooth regular Lie groups.

\subsection{The space of Riemannian metrics}

We denote by $\Met(M)$ the space of all smooth Riemannian metrics. Each $g \in \Met(M)$ is a symmetric, positive definite $0 \choose 2$ tensor field on $M$, or equivalently a pointwise positive definite section of the bundle $S^2 T^\ast M$.

\begin{theorem}[Sect. 1.1, \cite{Gil-Medrano1991}]
Let $M$ be a compact manifold without boundary. The space $\Met(M)$ of all Riemannian metrics on $M$ is an open subset of the space $\Ga(S^2 T^\ast M)$ of all symmetric $0 \choose 2$ tensor fields and thus itself a smooth Fr\'echet-manifold.
\end{theorem}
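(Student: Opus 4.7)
The plan is to first identify $\Gamma(S^2 T^\ast M)$ as a Fr\'echet space, then to verify that pointwise positive-definiteness is an open condition on sections by exploiting the compactness of $M$, and finally to invoke the fact that open subsets of Fr\'echet spaces are automatically Fr\'echet manifolds.

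First I would recall that since $M$ is compact and $S^2 T^\ast M \to M$ is a finite-rank smooth vector bundle, the space $\Gamma(S^2 T^\ast M)$ of smooth sections, equipped with the $C^\infty$-topology, is a Fr\'echet space. Concretely, one fixes a finite atlas $\{(U_\al,u_\al)\}$ trivializing the bundle and a background Riemannian metric $g_0$ on $M$; the countable family of $C^k$-seminorms of the coefficient functions in these charts generates the Fr\'echet topology, and completeness is standard.

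Next I would show that $\Met(M) \subset \Gamma(S^2 T^\ast M)$ is open. Fix $g \in \Met(M)$. Using the background metric $g_0$, consider the continuous function
\[
\la : M \to \R, \qquad \la(x) := \inf_{0 \neq v \in T_x M} \frac{g_x(v,v)}{g_0(v,v)}.
\]
This is the smallest eigenvalue of $g$ with respect to $g_0$ at $x$, so $\la(x) > 0$ for every $x \in M$. Since $M$ is compact and $\la$ is continuous, there exists $c > 0$ with $\la(x) \geq c$ for all $x$. Now if $h \in \Gamma(S^2 T^\ast M)$ lies in the $C^0$-neighborhood
\[
\mathcal{U} := \{h : \sup_{x \in M,\, g_0(v,v)=1} |h_x(v,v) - g_x(v,v)| < c/2\},
\]
then for every $x \in M$ and $v \in T_x M$ with $v \neq 0$ we have $h_x(v,v) \geq g_x(v,v) - (c/2) g_0(v,v) \geq (c/2) g_0(v,v) > 0$. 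Symmetry of $h$ is automatic. Hence $\mathcal{U} \subset \Met(M)$, and since $\mathcal{U}$ is a neighborhood of $g$ in the coarser $C^0$-topology, it is a fortiori a neighborhood in the $C^\infty$-Fr\'echet topology.

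Finally, an open subset of a Fr\'echet space is canonically a smooth Fr\'echet manifold, modeled on that space itself, with the tangent space at any $g \in \Met(M)$ naturally identified with $\Gamma(S^2 T^\ast M)$. I do not anticipate a real obstacle here; the only substantive point is the uniform positivity of $\la$, which is a direct compactness argument and is the reason the assumption that $M$ is compact enters the statement.
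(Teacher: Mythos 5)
Your proof is correct and is essentially the standard argument (the paper itself gives no proof, only a citation to Gil-Medrano--Michor, where the same reasoning appears): openness of the positive-definiteness condition via a uniform lower eigenvalue bound from compactness, detected already in the $C^0$-topology and hence in the finer $C^\infty$-topology. The homogeneity step implicit in passing from $g_0$-unit vectors to arbitrary $v$ is fine since all three tensors are quadratic in $v$, so there is no gap.
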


For each $g \in \Met(M)$ and $x \in M$ we can regard $g(x)$ as either a map
\[
g(x): T_xM \x T_x M \to \R
\]
or as an invertible map
\[
g_x: T_xM \to T_x^\ast M\,.
\]
The latter interpretation allows us to compose $g, h \in \Met(M)$ to obtain a fiber-linear map $g^{-1}.h:TM \to TM$.

\section{The $L^2$-metric on plane curves}

\subsection{Properties of the $L^2$-metric}

We first look at the simplest shape space, the space of plane curves. 
In order to induce a metric on the manifold of un-parametrized curves $B_{i,f}(S^1,\R^2)$ 
we need to define a metric on parametrized curves 
$\Imm(S^1,\R^2)$, that 
is invariant under reparametrizations, c.f.~Sect.~\ref{Submersion}.
The simplest such metric on the space of immersed plane curves  is the $L^2$-type metric
\begin{align*}
G_c^0(h, k) = \int_{S^1} \langle h(\th), k(\th) \rangle \ud s\,.
\end{align*}
The horizontal bundle of this metric, when restricted to $\on{Imm}_f(S^1,\R^2)$, consists of all tangent vectors, $h$ that are pointwise orthogonal
to $c_\theta$, i.e., $h(\th)=a(\th)n_c(\th)$ for some scalar function $a\in C^{\infty}(S^1)$.
An expression for the metric on the quotient space $B_{i,f}(S^1,\R^2)$, using  the charts from Thm. \ref{bi_bundle}, is given by 
\[
G_C^0(T_c\pi(a.n_c), T_c\pi(b.n_c)) = \int_{S^1} a(\th) b(\th) \ud s\,.
\]

This metric was first studied in the context of shape analysis in \cite{Michor2006c}. 
The geodesic equation for the $G^0$-metric  on $\on{Imm}_f(S^1,\R^2)$ is given by
\begin{equation}
\label{ge_l2_imm}
\left(|c_{\th}| c_t\right)_t = -\frac 12 \left( \frac{|c_t|^2 c_\th}{|c_\th|}\right)_\th\,.
\end{equation}
Geodesics on $B_{i,f}(S^1,\R^2)$ correspond to horizontal geodesics on 
$\on{Imm}_f(S^1,\R^2)$ by Thm.~\ref{thm:submersion}; these satisfy $c_t = a.n_c$, with a scalar function $a(t,\th)$. 
Thus the geodesic equation \eqref{ge_l2_imm} reduces to an equation for $a(t,\th)$,
\[
a_t = \frac 12 \ka a^2\,.
\]
Note that this is not an ODE for $a$, because $\ka_c$, being the curvature of $c$, depends implicitly on $a$. 
It is however possible to eliminate $\ka$ and arrive at (see \cite[Sect.~4.3]{Michor2006c})
\begin{multline*}
a_{tt}-4 \frac {a_t^2}{a} - \frac{a^6 a_{\th\th}}{2w^4} + \frac{a^6a_{\th}w_\th}{w^5} - \frac{a^5a_\th^2}{w^4} = 0 \\
  w(\th)= a(0,\th)\sqrt{|c_\th(0,\th)|}\,,
\end{multline*}
a nonlinear hyperbolic PDE of second order.

\begin{openquestion}
Are the geodesic equations on either of the spaces $\on{Imm}(S^1,\R^2)$ or $B_{i,f}(S^1,\R^2)$ for the $L^2$-metric (locally) well-posed?
\end{openquestion}

The $L^2$-metric is among the few for which the sectional curvature on $B_{i,f}(S^1,\R^2)$ has a simple expression. 
Let $C = \pi(c) \in B_{i,f}(S^1,\R^2)$ and choose $c \in \on{Imm}_f(S^1,\R^2)$ such that it is parametrized by constant speed. 
Take $a.n_c, b.n_c \in \on{Hor}_{G^0}(c)$ two orthonormal horizontal tangent vectors at $c$. 
Then the sectional curvature of the plane spanned by them 
is given by the Wronskian
\begin{equation}
\label{l2_sec_curv}
k_C(P(T_c\pi(a.n_c),T_c\pi(b.n_c)) = \frac 12 \int_{S^1} \left(ab_\th - a_\th b\right)^2 \ud s\,.
\end{equation}
In particular the sectional curvature is non-negative and unbounded. 

\begin{remark}
This metric has a natural generalization to the space $\on{Imm}(M,\R^d)$ 
of immersions of an arbitrary compact manifold $M$. 
This can be done by replacing the integration over arc-length with integration 
over the volume form of the induced pull-back metric. For $q\in\Imm(M,\R^d)$ the metric is defined by
\begin{align*}
G_q^0(h, k) = \int_{M} \langle h(x), k(x) \rangle \on{vol}(q^*\bar g)\,.
\end{align*}
The geodesic spray of this metric was computed in \cite{Binz1980} and the curvature in \cite{Kainz1984}.
\end{remark}

For all its simplicity the main drawback of the $L^2$-metric is that the induced geodesic distance vanishes on $\on{Imm}(S^1,\R^2)$. 
If $c :[0,1] \to \on{Imm}(S^1,\R^2)$ is a path, denote by
\[
\on{Len}^{G}_{\on{Imm}}(c) = \int_0^1 \sqrt{ G_{c(t)}(c_t(t), c_t(t))} \ud t
\]
its length.
The geodesic distance between two points is defined as the infimum of the pathlength over all paths connecting the two points,
\[
\on{dist}^G_{\on{Imm}}(c_0, c_1) = \inf_{\substack{c(0)=c_0, \\c(1)=c_1}} \on{Len}^G_{\on{Imm}}(c)\,.
\]
For a finite dimensional Riemannian manifold $(M,G)$ this distance is always positive,  due to  the local invertibility of the exponential map. This does not need to be true for a weak Riemannian metric in infinite dimensions and the $L^2$-metric on $B_{i,f}(S^1,\R^2)$ was the first known example, where this was indeed false. We have the following result.

\begin{theorem}
The geodesic distance function induced by the metric $G^0$ vanishes identically on $\on{Imm}(S^1,\R^2)$ and $B_{i,f}(S^1,\R^2)$.

For any two curves $c_0, c_1 \in \on{Imm}(S^1,\R^2)$ and $\ep >0$ there exists a smooth path $c :[0,1] \to \on{Imm}(S^1,\R^2)$ with $c(0)=c_0$, $c(1)=c_1$ and length $\on{Len}^{G^0}_{\on{Imm}}(c) < \ep$.
\end{theorem}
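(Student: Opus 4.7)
The plan is to prove the stronger length statement: for any $c_0, c_1 \in \Imm(S^1,\R^2)$ and $\ep > 0$, there exists a smooth path $c : [0,1] \to \Imm(S^1,\R^2)$ joining them with $\on{Len}^{G^0}_{\Imm}(c) < \ep$. This immediately yields $\on{dist}^{G^0}_{\Imm}(c_0, c_1) = 0$. The vanishing on $B_{i,f}(S^1,\R^2)$ then follows from Thm.~\ref{thm:submersion} and the fact that $\pi : \Imm_f(S^1,\R^2) \to B_{i,f}(S^1,\R^2)$ is a Riemannian submersion: any shapes $C_0, C_1$ admit lifts in $\Imm_f$ to which the length statement applies, and projecting the constructed path to $B_{i,f}$ cannot increase its length.

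The central building block, following Michor and Mumford, is a high-frequency, low-amplitude zigzag perturbation. For a fixed curve $c_0$ and parameters $A, n > 0$, consider
\[
c(t, \theta) = c_0(\theta) + t\, A \sin(n\theta)\, n_{c_0}(\theta), \qquad t \in [0,1].
\]
Its velocity $c_t = A \sin(n\theta)\, n_{c_0}$ is bounded pointwise by $A$, so
\[
\on{Len}^{G^0}_{\Imm}(c) \le \int_0^1 \sqrt{\int_{S^1} A^2\, \ud s_0}\, \ud t = A\sqrt{\ell_{c_0}},
\]
a bound that is independent of $n$. Choosing $A$ small and $n$ large we deform $c_0$ into a curve $\tilde c_0$ whose tangent direction oscillates arbitrarily rapidly, at arbitrarily small $G^0$-cost. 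The same procedure produces a zigzagged version $\tilde c_1$ of $c_1$.

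The proof then connects $c_0$ to $c_1$ in three stages: zigzag $c_0 \rightsquigarrow \tilde c_0$, interpolate $\tilde c_0 \rightsquigarrow \tilde c_1$, and straighten $\tilde c_1 \rightsquigarrow c_1$. Stages one and three are controlled by the estimate above, each arranged to cost less than $\ep/3$. The second stage is the technical heart of the argument: one must connect two highly oscillatory curves whose large-scale geometry may be completely different, by a path of $L^2$-length at most $\ep/3$. The key geometric fact is that on a sufficiently fine zigzag, any prescribed $\R^2$-valued displacement field has, in arc-length average, arbitrarily small normal component, so the desired motion can be orchestrated as a superposition of microscopic deformations whose pointwise speeds stay small. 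The main obstacle is to turn this heuristic into an explicit smooth homotopy through $\Imm(S^1,\R^2)$ realizing the bound; unlike Stages one and three, linear interpolation is useless here (its cost is essentially independent of the zigzag), and the construction must carefully exploit the high frequency of $\tilde c_0$ and $\tilde c_1$ to compensate for the magnitude of the shape change.
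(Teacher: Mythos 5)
There is a genuine gap, and it sits exactly where you placed the ``technical heart''. Your plan is to prove the short-path statement on $\Imm(S^1,\R^2)$ first and push it down to $B_{i,f}(S^1,\R^2)$, but the mechanism you invoke for Stage 2 --- that along a sufficiently fine zigzag any displacement field has small normal component in arc-length average --- is the mechanism for the \emph{quotient} metric, where $G^0$ only charges $\langle c_t, n_c\rangle$. On $\Imm(S^1,\R^2)$ the metric is $G^0_c(c_t,c_t)=\int_{S^1}|c_t|^2\,\ud s$ and charges the full velocity, so making the normal component small gains nothing; worse, zigzagging increases $\ud s$ and therefore increases the cost of any fixed displacement field. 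Stage 2 thus has no working mechanism as described, and you acknowledge that you cannot complete it. The paper's route is the reverse of yours: the zigzag construction of \cite{Michor2006c} proves vanishing on $B_{i,f}(S^1,\R^2)$, where normal-component smallness is exactly what is needed, and the statement on $\Imm(S^1,\R^2)$ is then deduced in \cite{Bauer2012c} by combining this with the vanishing of the geodesic distance of the induced right-invariant $L^2$-type metric on $\Diff(S^1)$, which makes motion within the fibers $c\on{\circ}\Diff(S^1)$ asymptotically free; one follows a nearly horizontal lift of a short path in $B_{i,f}$, lands at some $c_1\on{\circ}\ph$, and then moves inside the fiber to $c_1$ at negligible cost. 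Some substitute for this $\Diff(S^1)$ ingredient is indispensable in your setup: without it there is no way to move points of the curve a macroscopic distance cheaply in the $\Imm$-metric.

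A secondary but real error: the one estimate you do carry out is wrong. In $\on{Len}^{G^0}_{\on{Imm}}(c)=\int_0^1\bigl(\int_{S^1}|c_t|^2\,\ud s\bigr)^{1/2}\ud t$ the element $\ud s=|c_\th(t,\th)|\,\ud\th$ belongs to the moving curve $c(t)$, not to $c_0$. For $c(t,\th)=c_0(\th)+tA\sin(n\th)\,n_{c_0}(\th)$ one has $|c_\th(t,\th)|\sim 1+tAn$, so the bound is of order $A\sqrt{\ell_{c_0}(1+An)}$ rather than $A\sqrt{\ell_{c_0}}$, and is certainly not independent of $n$. This particular point can be repaired by coupling $A$ to $n$ (e.g.\ $A=n^{-1/2}$ keeps the cost small while the teeth still steepen), but as written the computation conflates the arc length of $c(t)$ with that of $c_0$, and the same conflation would undermine any Stage 2 estimate built on top of it.
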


For the space $B_{i,f}(S^1,\R^2)$ an explicit construction of the path with arbitrarily short length was given in \cite{Michor2006c}. Heuristically, if the curve is made to zig-zag wildly, then the normal component of the motion will be inversely proportional to the length of the curve. Since the normal component is squared the length of the path can be made arbitrary small.  This construction is visualized in Fig.\ref{fig:vanishing}.

For $\on{Imm}(S^1,\R^2)$ vanishing of the geodesic distance is proven in \cite{Bauer2012c}; the proof makes use of the vanishing of the distance on $B_{i,f}(S^1,\R^2)$ and on $\Diff(S^1)$.

\begin{figure}[ht]
\includegraphics[width=.48\textwidth]{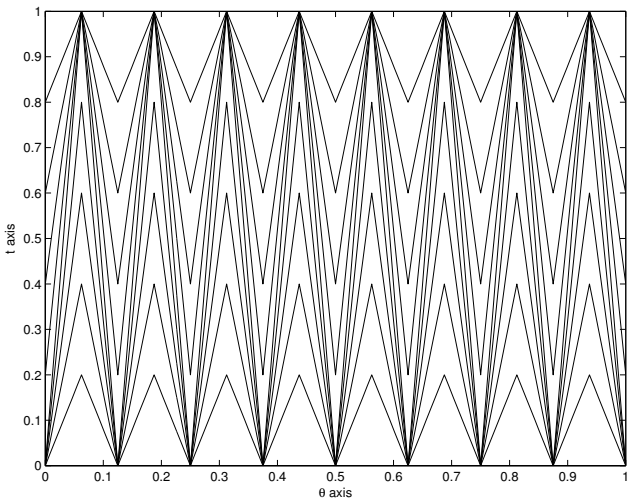}
\caption{A short path in the space of un-parametrized curves that connects the bottom to the top line. Original image published in \cite{Michor2006c}.}
\label{fig:vanishing}
\end{figure}
\begin{remark}
 In fact, this result holds more generally for the space $\on{Imm}(M, \R^d)$. One can also replace $\R^d$ by an arbitrary Riemannian manifold $N$; see~\cite{Michor2005}.
\end{remark}

The vanishing of the geodesic distance leads us to consider stronger metrics that prevent this behavior. 
In this article we will present three different classes of metrics:
\begin{itemize}
 \item Almost local metrics: $$G_q^\Ps(h,k) = \int_M \Ps(q) \langle h, k \rangle \on{vol}(q^*\bar g),$$ where  
 $\Psi:\on{Imm}(M,\R^d)\rightarrow C^\infty(M, \R_{>0})$ is a suitable smooth function.
 \item Sobolev type metrics:
  $$G_q^L(h,k) = \int_M  \langle L_{q} h, k \rangle \on{vol}(q^*\bar g),$$
  where $L_{q}: T_{q}\Imm(M,\R^d)\rightarrow T_{q}\Imm(M,\R^d)$ is a suitable differential operator.
  \item Metrics that are induced by right invariant metrics on the diffeomorphism group of the ambient space.
\end{itemize}

\subsection{Gradient flows on curves}

The $L^2$-metric is used in geometric active contour models to define gradient flows for various energies. For example the curve shortening flow
\[
c_t = \ka_c n_c
\]
is the gradient flow of the energy $E(c) = \int_{S^1} \ud s = \ell_c$ with respect to the $L^2$-metric.

The following example is taken from \cite{Mennucci2009}. The centroid based energy $E(c) = \tfrac 12 |\mu(c) - w|^2$, with $w \in \R^2$ fixed and $\mu(c) = \tfrac 1{\ell_c}\int_{S^1} c \ud s$ denoting the centroid, attains its minimum when $\mu(c) = w$. The $L^2$-gradient of the energy is
\[
\nabla^0 E(c) = \langle \mu(c) - w, n_c \rangle n_c + \ka_c \langle \mu(c) - c, \mu(c) - w \rangle n_c\,.
\]
We see from the second term that the gradient flow
\[
c_t = - \nabla^0 E(c)
\]
tries to decrease the length of the curve for points with $\langle \mu(c) - c, \mu(c) - w \rangle \leq 0$, but increase for $\langle \mu(c) - c, \mu(c) - w \rangle > 0$. This latter part is ill-posed. However the ill-posedness of the gradient flow is not an intrinsic property of the energy, it is a consequence of the metric we chose to define the gradient. For example the gradient flow with respect to the $H^1$-metric
\[
G_c^1(h,k) = \int_{S^1} \tfrac 1{\ell_c} \langle h, k \rangle + \ell_c \langle D_s h, D_s k \rangle \ud s
\]
is locally well-posed. See \cite{Mennucci2007,Sundaramoorthi2011,Sundaramoorthi2008} for more details on Sobolev active contours and applications to segmentation and tracking. The same idea has been employed for gradient flows of surfaces in \cite{Zhang2008}.

\section{Almost local metrics on shape space}\label{Almost_local}

Almost local metrics are metrics of the form
\[G_q^\Psi(h,k)=\int_M \Psi(q) \langle h, k \rangle \on{vol}(q^*\bar g)\,,\]
where $\Ps:\Imm(M,\R^d)\to C^{\infty}(M,\R_{>0})$ is a smooth function
that is equivariant with respect to the action of $\on{Diff}(M)$, i.e.,
\[\Psi(q\on{\circ}\ph)=\Psi(q)\on{\circ}\ph\,,\quad q\in\on{Imm}(M,\R^d)\,,\; \ph\in\on{Diff}(M)\,.\]
Equivariance of $\Ps$ then implies the invariance of $G^\Ps$ and thus $G^\Ps$ induces a Riemannian metric on the quotient $B_{i,f}(M,\R^d)$.

Examples of almost local metrics that have been considered are of the form
\begin{equation}\label{almost-local-metric}
G_q^\Ph(h,k)=\int_M \Ph(\on{Vol}_q,H_q,K_q) \langle h, k \rangle \on{vol}(q^*\bar g)\,,
\end{equation}
where $\Ph \in C^\infty(\R^3,\R_{>0})$ is a function of the total volume $\on{Vol}_q$, the mean 
curvature $H_q$ and the Gau\ss{} curvature $K_q$. The name ``almost local'' is derived from the 
fact that while $H_q$ and $K_q$ are local quantities, the total volume $\on{Vol}_q$ induces a mild 
non-locality in the metric. If $\Ph=\Ph(\on{Vol})$ depends only on the total volume, the resulting 
metric is conformally equivalent to the $L^2$-metric, the latter corresponding to $\Ph\equiv 1$.

For an almost local metric $G^\Ps$ the horizontal bundle at $q \in \on{Imm}_f(M,\R^d)$ consists of those tangent vectors $h$ that are pointwise orthogonal to $q$,
\begin{multline*}
\on{Hor}^{\Ps}(q) = \{ h \in T_q \on{Imm}_f(M,\R^d) : \\
h=a.n_q\,, a \in C^\infty(M,\R)\}\,.
\end{multline*}
Using the charts from Thm. \ref{bi_bundle}, 
the metric $G^\Psi$ on $B_{i,f}(M,\R^d)$ is given by
\[G_{\pi(q)}^\Psi\left(T_q\pi(a.n_q),T_q\pi(b.n_q)\right)
=\int_M \Psi(q).a.b\, \on{vol}(q^*\bar g)\,,\]
with $a, b \in C^\infty(M,\mathbb R)$.

Almost local metrics, that were studied in more detail include the curvature weighted $G^A$-metrics
\begin{equation}
\label{ga_metric}
G^A_c(h,k)=\int_{S^1} (1+A\ka^2_c)\langle h,k\rangle\ud s\,,
\end{equation}
with $A>0$ in \cite{Michor2006c} and the conformal rescalings of the $L^2$-metric
\[
G^\Ph_c(h,k)= \Ph(\ell_c) \int_{S^1} \langle h,k\rangle\ud s\,,
\]
with $\Ph\in C^{\infty}(\R_{>0},\R_{>0})$ in \cite{YezziMennucci2004a, Shah2008}, both on the space of plane curves. More general almost local metrics on the space of plane curves were considered in \cite{Michor2007} and they have been generalized to hypersurfaces in higher dimensions in \cite{Bauer2010, Bauer2012a, Bauer2012b}.

\subsection{Geodesic distance}
\label{al_dist}

Under certain conditions on the function $\Ps$ almost local metrics are strong enough to induce a point-separating geodesic distance function on the shape space.

\begin{theorem}
\label{thm:al_non_vanish}
If $\Psi$ satisfies one of the following conditions
\begin{enumerate}
\item
$\Psi(q)\geq 1+A H^2_q$\label{conditionGA}
\item
$\Psi(q)\geq A \Vol_q$\label{conditionConformal}
\end{enumerate}
for some $A>0$, then the metric $G^{\Psi}$ induces a point-separating geodesic distance function on $B_{i,f}(M,\R^d)$, i.e., for $C_0 \neq C_1$ we have $\on{dist}^\Ps_{B_{i,f}}(C_0, C_1) > 0$.
\end{theorem}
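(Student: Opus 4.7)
The plan is a swept-volume argument in the spirit of Michor-Mumford. By the Riemannian submersion from Theorem \ref{bi_bundle}, it is enough to bound $\on{Len}^{G^\Psi}(q)$ from below by a positive constant $\delta(C_0,C_1)$ for every horizontal path $q:[0,1]\to\Imm_f(M,\R^d)$ joining a lift of $C_0$ to a lift of $C_1$. A horizontal velocity has the form $q_t=a\,n_q$ with $a\in C^\infty(M,\R)$, and for the map $F:[0,1]\times M\to\R^d$, $F(t,p):=q(t,p)$, the pointwise absolute Jacobian with respect to the volume element $dt\wedge\vol(q_t^*\bar g)$ on the domain equals $|a(t,p)|$, because $q_t$ is orthogonal to $Tq(T_pM)$. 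The area formula therefore yields
\[
\mathcal V(q)\;:=\;\int_0^1\!\int_M |a(t,p)|\,\vol(q_t^*\bar g)\,dt
\;=\;\int_{\R^d} \#F^{-1}(x)\,dx,
\]
the total swept volume, and the proof reduces to sandwiching $\mathcal V(q)$ between $\on{Len}^{G^\Psi}(q)$ from above and a positive constant $\delta(C_0,C_1)$ from below.

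The upper bound comes from the Cauchy--Schwarz estimate $(\int_M|a|\vol)^2\le\Vol_q\int_M a^2\vol$. Under hypothesis (\ref{conditionConformal}) one has $\Psi(q)\ge A\Vol_q$, so $\int_M a^2\vol\le G^\Psi_q(q_t,q_t)/(A\Vol_q)$ and hence $\int_M|a|\vol\le A^{-1/2}\sqrt{G^\Psi_q(q_t,q_t)}$, giving $\mathcal V(q)\le A^{-1/2}\on{Len}^{G^\Psi}(q)$ at once. Under hypothesis (\ref{conditionGA}) the bound $\Psi\ge 1$ only yields $\int_M|a|\vol\le\sqrt{\Vol_q}\sqrt{G^\Psi_q(q_t,q_t)}$, so an a priori control of $\Vol_q$ is needed. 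Combining $|\partial_t\Vol_q|=|\int_M aH_q\vol|\le(\Vol_q\int_M a^2H_q^2\vol)^{1/2}$ with $\Psi(q)\ge 1+AH_q^2$ gives
\[
\bigl|\partial_t\sqrt{\Vol_q}\bigr|\;\le\;\frac{1}{2\sqrt A}\sqrt{G^\Psi_q(q_t,q_t)},
\]
so $\sqrt{\Vol_{q(t)}}$ varies by at most $\on{Len}^{G^\Psi}(q)/(2\sqrt A)$ along the path, producing an upper bound on $\mathcal V(q)$ that is at most quadratic in $\on{Len}^{G^\Psi}(q)$.

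The main obstacle is the geometric lower bound $\mathcal V(q)\ge\delta(C_0,C_1)>0$. Because $C_0\neq C_1$ as subsets of $\R^d$ one may pick, after possibly swapping, $x_0\in C_0\setminus C_1$ and a ball $B_r(x_0)$ with $B_r(x_0)\cap C_1=\emptyset$. For any lift $q_0$ of $C_0$ there is an open $U\subset M$ of positive $\vol(q_0^*\bar g)$-measure with $q_0(U)\subset B_{r/2}(x_0)$, while for every lift $q_1$ of $C_1$ the image $q_1(M)$ avoids $B_r(x_0)$. The continuous deformation $F(t,\cdot)$ must therefore pull the piece $q_0(U)$ of $C_0$ out of $B_r(x_0)$; a winding-number or $\Z/2$-degree argument on $B_r(x_0)$ then produces a subset of $B_r(x_0)$ of $d$-dimensional Lebesgue measure at least $\delta(C_0,C_1)>0$ that is forced to lie in $F([0,1]\times M)$, with $\delta$ depending only on $C_0,C_1$. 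Via the area-formula identity this gives $\mathcal V(q)\ge\delta(C_0,C_1)$.

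Combining the two bounds under either hypothesis produces $\on{Len}^{G^\Psi}(q)\ge c(C_0,C_1)>0$ uniformly in $q$, and taking the infimum over all connecting paths yields $\on{dist}^\Psi_{B_{i,f}}(C_0,C_1)\ge c(C_0,C_1)>0$. The delicate step is the geometric lower bound: one has to rule out that a family of immersions joins two inequivalent shapes while sweeping arbitrarily little ambient volume, despite having no a priori control on injectivity radii or curvature of the intermediate immersions.
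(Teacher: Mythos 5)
Your strategy coincides with the one the paper attributes to \cite{Michor2006c}, \cite{Shah2008} and \cite{Bauer2012a}: bound the swept volume $\mathcal V(q)$ above by the $G^\Psi$-length via Cauchy--Schwarz, use the Lipschitz continuity of $\sqrt{\Vol_q}$ to control $\Vol_{q(t)}$ along the path under hypothesis (\ref{conditionGA}), and bound $\mathcal V(q)$ below by a constant depending only on the two shapes. Your two Cauchy--Schwarz estimates and the derivation of $\bigl|\partial_t\sqrt{\Vol_q}\bigr|\le\tfrac{1}{2\sqrt A}\sqrt{G^\Psi_q(q_t,q_t)}$ are correct and are precisely the paper's two stated ingredients.

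The gap is in the step you yourself flag as the main obstacle, and it is genuine. The localized claim --- that for an \emph{arbitrary} $x_0\in C_0\setminus C_1$ a subset of $B_r(x_0)$ of measure at least $\delta(C_0,C_1)>0$ is forced into $F([0,1]\times M)$ --- fails on $B_{i,f}(M,\R^d)$, whose points are images of \emph{immersions}, not embeddings. If $x_0$ lies on a multiple-point locus of $q_0$ where two sheets pass through with cancelling co-orientations (a free immersion of $S^1$ can retrace an arc of itself in the opposite direction before turning around via a small loop elsewhere), then neither the integer nor the mod-$2$ degree of $q_0$ jumps across $C_0$ near $x_0$, so both can agree with the constant value of $\on{deg}(q_1,\cdot)$ on all of $B_r(x_0)$; a path that first shrinks the distant turning loop and only then retracts the doubled arc sweeps arbitrarily little volume inside $B_r(x_0)$. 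The usable statement is global: $\mathcal V(q)\ge\int_{\R^d}\left|\on{deg}(q_1,x)-\on{deg}(q_0,x)\right|dx$, obtained from the degree of $F$ on the manifold with boundary $[0,1]\times M$ relative to its boundary. One must then prove that this integral is positive whenever $\pi(q_0)\ne\pi(q_1)$ --- i.e.\ locate a point of the symmetric difference of $C_0$ and $C_1$ across which the degree genuinely jumps --- and check that the resulting constant does not degenerate when the lifts $q_0,q_1$ are changed by possibly orientation-reversing reparametrizations, which flip the sign of the degree. For embedded hypersurfaces your local argument does work, since the degree jumps by $\pm1$ across a single sheet; for the immersed case, which is what the theorem asserts, the positivity of the degree-difference integral is the actual content of the proof and is not supplied.
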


For planar curves the result under assumption \ref{conditionGA} is proven in \cite[Sect. 3.4]{Michor2006c} and under assumption \ref{conditionConformal} in \cite[Thm. 3.1]{Shah2008}. The proof was generalized to the space of hypersurfaces in higher dimensions in \cite[Thm.~8.7]{Bauer2012a}.

The proof is based on the observation that under the above assumptions the $G^\Ps$-length of a path of immersions can be bounded from below by the area swept out by the path. A second ingredient in the proof is the Lipschitz-continuity of the function $\sqrt{\on{Vol}_q}$.

\begin{theorem}
If $\Ps$ satisfies
\begin{align*}
\Psi(q)&\geq 1+A H^2_q\,,
\end{align*}
then the geodesic distance satisfies
\[
\left|\sqrt{\Vol_{Q_1}}-\sqrt{\Vol_{Q_2}}\right| \leq  \frac{1}{2 \sqrt{A}} \on{dist}^{G^\Psi}_{B_{i,f}}(Q_1,Q_2)\,.
\]
In particular the map
\[\sqrt{\Vol}:(B_{i,f}(M,\R^d),\on{dist}^{G^\Psi}_{B_{i,f}}) \to \mathbb R_{\geq 0}\]
is Lipschitz continuous.
\end{theorem}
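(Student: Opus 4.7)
The plan is to show that $\sqrt{\Vol}$, viewed as a function on $\on{Imm}_f(M,\R^d)$, is already Lipschitz with constant $\tfrac{1}{2\sqrt{A}}$ with respect to the path-length pseudo-distance of $G^\Psi$, and then pass to the quotient by noting that $\Vol_q$ depends only on $\pi(q)=Q$. Concretely, I would prove the pointwise infinitesimal inequality
\[
\bigl|\partial_t \sqrt{\Vol_{q(t)}}\bigr| \leq \tfrac{1}{2\sqrt{A}}\sqrt{G^\Psi_{q(t)}(q_t,q_t)}
\]
for every smooth path $q(t)$ in $\on{Imm}(M,\R^d)$, and then integrate in $t$ and take the infimum over paths joining representatives of $Q_1$ and $Q_2$.

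The first step is the standard first variation of the volume form: for $h=q_t$ decomposed as $h = h^\top + a\,n_q$ with $a = \langle h, n_q\rangle$, one has
\[
\partial_t \on{vol}(q^*\bar g) = \bigl(\on{div}^g(h^\top) - a H_q\bigr)\on{vol}(q^*\bar g),
\]
so integrating over the closed manifold $M$ kills the divergence term and gives
\[
\partial_t \Vol_q \;=\; -\int_M \langle h, H_q n_q\rangle\,\on{vol}(q^*\bar g).
\]
Now I would apply the weighted Cauchy--Schwarz inequality with weight $\Psi(q)$:
\[
\bigl|\partial_t \Vol_q\bigr|^2 \;\leq\; \left(\int_M \Psi(q)|h|^2\,\on{vol}\right)\!\left(\int_M \frac{H_q^2}{\Psi(q)}\,\on{vol}\right).
\]

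The second factor is controlled by the hypothesis $\Psi(q)\geq 1+A H_q^2\geq A H_q^2$, which gives $H_q^2/\Psi(q)\leq 1/A$ pointwise and hence
\[
\int_M \frac{H_q^2}{\Psi(q)}\on{vol}(q^*\bar g) \;\leq\; \frac{\Vol_q}{A}.
\]
Combining, $|\partial_t \Vol_q|\leq \tfrac{1}{\sqrt{A}}\sqrt{\Vol_q}\,\sqrt{G^\Psi_q(h,h)}$, and dividing by $2\sqrt{\Vol_q}$ yields the desired infinitesimal Lipschitz estimate for $\sqrt{\Vol_q}$.

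The remaining step is routine: for any smooth path $q:[0,1]\to\on{Imm}(M,\R^d)$ connecting immersions $q_0,q_1$, integrating the infinitesimal bound gives
\[
\bigl|\sqrt{\Vol_{q_1}}-\sqrt{\Vol_{q_0}}\bigr| \;\leq\; \tfrac{1}{2\sqrt{A}}\,\on{Len}^{G^\Psi}_{\on{Imm}}(q).
\]
Since $\Vol_{q\circ\ph}=\Vol_q$ for $\ph\in\Diff(M)$, the function $\sqrt{\Vol}$ descends to $B_{i,f}(M,\R^d)$, and by Theorem \ref{bi_bundle} together with Theorem \ref{thm:submersion} every path in $B_{i,f}$ lifts (locally, and globally up to reparametrization) to $\on{Imm}_f$ without changing length. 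Taking the infimum over all such paths joining representatives of $Q_1,Q_2$ gives the stated inequality with $\on{dist}^{G^\Psi}_{B_{i,f}}$ on the right. I do not anticipate a serious obstacle; the only subtle point is checking that the first variation formula is applied with the correct sign convention for $H_q$ (which only affects an absolute value), and that one is free to work on $\on{Imm}$ rather than on horizontal lifts, since the Cauchy--Schwarz estimate makes no use of horizontality of $h$.
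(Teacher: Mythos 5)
Your proof is correct and is essentially the argument of the sources the paper cites for this statement (\cite[Sect.~3.3]{Michor2006c} for curves, \cite[Lem.~8.4]{Bauer2012a} for hypersurfaces): first variation of the volume, a weighted Cauchy--Schwarz with weight $\Psi(q)$, and the pointwise bound $H_q^2/\Psi(q)\le 1/A$, followed by integration along paths. The passage to the quotient is also handled correctly; indeed, since only the normal component $a=\langle q_t,n_q\rangle$ enters $\partial_t\Vol_q$ and $\int_M\Psi a^2\on{vol}$ is exactly the quotient norm of $T\pi.q_t$, the infinitesimal estimate descends directly to $B_{i,f}(M,\R^d)$ without any lifting subtleties.
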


This result is proven in \cite[Sect. 3.3]{Michor2006c} for plane curves and in \cite[Lem. 8.4]{Bauer2012a} for hypersurfaces in higher dimensions.

In the case of planar curves \cite{Shah2008} showed that for the almost local metric with $\Ps(c)=\ell_c$ the geodesic distance on $B_{i,f}(S^1,\R^2)$ is not only bounded by but equal to the infimum over the area swept out,
\[
\on{dist}^{\ell}_{B_{i,f}}(C_0, C_1) = \inf_{\substack{\pi(c(0)) = C_0 \\ \pi(c(1)) = C_1}}
\int_{S^1\x [0,1]} \!\left| \on{det} dc(t, \th) \right| \ud \th \ud t.
\]

\begin{remark}
No almost local metric can induce a point separating geodesic distance function on $\on{Imm}_f(M,\R^d)$ and thus neither on $\on{Imm}(M,\R^d)$. When we restrict the metric $G^\Ps$ to an orbit $q \on{\circ} \on{Diff}(M)$ of the $\on{Diff}(M)$-action, the induced metric on the space $q \on{\circ} \on{Diff}(M) \cong \on{Diff}(M)$ is a right-invariant weighted $L^2$-type metric, for which the geodesic distance vanishes. Thus
\[
\on{dist}^\Ps_{\on{Imm}}(q, q\on{\circ} \ph) = 0\,,
\]
for all $q \in \on{Imm}_f(M,\R^d)$ and $\ph \in \on{Diff}(M)$. See Sect. \ref{diff_dist} for further details.

This is not a contradiction to Thm. \ref{thm:al_non_vanish}, since a point-separating distance on the quotient $B_{i,f}(M,\R^d)$ only implies that the distance on $\on{Imm}_f(M,\R^d)$ separates the fibers of the projection $\pi : B_{i,f}(M,\R^d) \to \on{Imm}_f(M,\R^d)$. On each fiber $\pi^{-1}(C) \subset \on{Imm}_f(M,\R^d)$ the distance can still be vanishing, as it is the case for the almost local metrics.
\end{remark}

It is possible to compare the geodesic distance on shape space with the Fr\'echet distance. The Fr\'echet distance is defined as
\begin{equation}
\label{frechet_dist}
\operatorname{dist}^{L^\infty}_{B_{i,f}}(Q_0,Q_1) = \inf_{q_0,q_1} \|q_0 - q_1\|_{L^\infty},
\end{equation}
where the infimum is taken over all immersions $q_0, q_1$ with $\pi(q_i)=Q_i$. Depending on the behavior of the metric under scaling, it may or may not be possible to bound the Fr\'echet distance by the geodesic distance.

\begin{theorem}[Thm. 8.9, \cite{Bauer2012a}]
\label{thm_lip_noncont}
If $\Ps$ satisfies one of the conditions,
\begin{enumerate}
\item
$\Psi(q) \leq C_1 + C_2 H_q^{2k}$
\item
$\Psi(q) \leq C_1 \Vol_q^k$,
\end{enumerate}
with some constants $C_1, C_2 > 0$ and $k < \tfrac{d+1}2$, then there exists no constant $C>0$, such that
\[
\on{dist}^{L^\infty}_{B_{i,f}}(Q_0,Q_1) \leq C \on{dist}^{\Ps}_{B_{i,f}}(Q_0,Q_1)\,,
\]
holds for all $Q_0,Q_1 \in B_{i,f}(M,\R^d)$.
\end{theorem}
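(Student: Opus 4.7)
The plan is to exhibit an explicit sequence of shape pairs in $B_{i,f}(M,\R^d)$ along which the ratio $\on{dist}^{L^\infty}_{B_{i,f}}/\on{dist}^{\Psi}_{B_{i,f}}$ is unbounded; this immediately rules out the hypothetical constant $C$. Fix a reference immersion $q_0$ and a sequence $v_n \in \R^d$ with $|v_n| \to \infty$, and set $Q_n := \pi(q_0 + v_n)$ with $Q_0 := \pi(q_0)$. The Fréchet side is elementary: the Hausdorff distance between $q_0(M)$ and its translate $q_0(M) + v_n$ is at least $|v_n| - 2\on{diam}(q_0(M))$, and $\on{dist}^{L^\infty}_{B_{i,f}}$ dominates Hausdorff distance, giving $\on{dist}^{L^\infty}_{B_{i,f}}(Q_0, Q_n) \to \infty$.

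For the $\Psi$-side, I would build paths of uniformly bounded $\Psi$-length by a shrink--translate--expand recipe: pick a scale $\lambda_n \in (0,1]$ and concatenate (i) the scaling path from $q_0$ to $\lambda_n q_0$, (ii) the translation from $\lambda_n q_0$ to $\lambda_n q_0 + v_n$ at constant scale, and (iii) the reverse scaling from $\lambda_n q_0 + v_n$ up to $q_0 + v_n$. The scaling laws $H_{sq_0} = s^{-1} H_{q_0}$ and $\vol((sq_0)^*\bar g) = s^{d-1} \vol(q_0^*\bar g)$, the hypothesized upper bound on $\Psi$, and the subadditivity $\sqrt{a+b} \le \sqrt{a} + \sqrt{b}$ reduce the $\Psi$-length of each scaling leg to a combination of $\int_{\lambda_n}^1 s^{(d-1)/2}\,\ud s$ and $\int_{\lambda_n}^1 s^{(d-1-2k)/2}\,\ud s$ in case (1), and to $\int_{\lambda_n}^1 s^{(k+1)(d-1)/2}\,\ud s$ in case (2). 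The hypothesis $k<(d+1)/2$ is exactly what makes the exponent $(d-1-2k)/2$ strictly larger than $-1$, so these integrals converge uniformly as $\lambda_n \to 0$ and the total scaling cost stays bounded.

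The translation leg, by the same scaling computation, contributes $|v_n|\sqrt{C_1'\lambda_n^{d-1} + C_2'\lambda_n^{d-1-2k}}$ in case (1) and $|v_n|\sqrt{C_1'}\,\lambda_n^{(k+1)(d-1)/2}$ in case (2). In case (2) any choice $\lambda_n \to 0$ with $|v_n|\lambda_n^{(k+1)(d-1)/2}\to 0$ (easy since $(k+1)(d-1)/2>0$) annihilates the translation cost, so $\on{dist}^\Psi_{B_{i,f}}(Q_0, Q_n)$ stays uniformly bounded, contradicting the proposed Lipschitz inequality once combined with the first paragraph. In case (1), one couples $\lambda_n = \lambda_n(|v_n|) \to 0$ to $|v_n|$ to control both pieces of the translation cost.

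The main obstacle is the balance in case (1) when $k \in [(d-1)/2, (d+1)/2)$: the factor $\lambda_n^{d-1-2k}$ in the translation cost itself diverges as $\lambda_n \to 0$, so the naive shrink--translate--expand construction yields only linear growth in $|v_n|$, which merely matches the Fréchet growth rather than beating it. Pushing the argument through in this regime requires modifying the intermediate shape, for instance morphing through elongated auxiliary immersions selected so that $\int_M H_q^{2k}\vol(q^*\bar g)$ is small and the translation leg admits a cheaper path. The threshold $k<(d+1)/2$ is then optimal precisely because it is the borderline where the scaling integral ceases to converge at $s=0$.
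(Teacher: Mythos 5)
Your handling of condition (2), and of condition (1) in the range $k<\tfrac{d-1}2$, is essentially sound: there $\int_M\Psi(q)\on{vol}(q^*\bar g)$ tends to zero along the rescaled shapes, the translation leg becomes asymptotically free, and the shrink legs stay bounded. The genuine gap is the regime $\tfrac{d-1}2\le k<\tfrac{d+1}2$ of condition (1), which you flag but do not close, and which contains precisely the case the paper emphasizes, namely the $G^A$-metric on plane curves ($d=2$, $k=1$). The repair you sketch --- passing through elongated auxiliary immersions with $\int_M H_q^{2k}\on{vol}(q^*\bar g)$ small --- cannot work: for $\Psi(q)=C_1+C_2H_q^{2k}$, which satisfies hypothesis (1), the translation cost per unit distance $\bigl(\int_M\Psi(q)\on{vol}(q^*\bar g)\bigr)^{1/2}$ is bounded below by a positive constant \emph{uniformly over all immersions} once $k\ge\tfrac{d-1}2$. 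For plane curves with $k=1$ this follows from $C_1\ell_c+C_2\int_{S^1}\ka_c^2\,\ud s\ge 2\sqrt{C_1C_2}\int_{S^1}|\ka_c|\,\ud s\ge 4\pi\sqrt{C_1C_2}$ (Cauchy--Schwarz together with the total-curvature bound $\int_{S^1}|\ka_c|\,\ud s\ge 2\pi$); for surfaces in $\R^3$ with $k=1$ the Willmore inequality plays the same role. Hence \emph{every} shrink--translate--expand path, whatever intermediate shape is used, has $G^\Psi$-length at least $m|v_n|$ for a fixed $m>0$, while $\on{dist}^{L^\infty}_{B_{i,f}}(Q_0,Q_0+v_n)\le|v_n|$, so the ratio along translate pairs stays bounded and no contradiction is obtained.

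The underlying issue is that in this regime the failure of the Lipschitz bound cannot be produced by spatial separation at all; one must use pairs of shapes at bounded distance in $\R^d$ whose Fr\'echet distance is controlled from below while the $G^\Psi$-distance degenerates. One way to do this is by localized normal perturbations: adding to a fixed immersion a normal bump of height $\de$ and width $\sqrt{\de}$ keeps the curvature, and hence $\Psi$, uniformly bounded along the deformation, so the path costs $O\bigl(\de\cdot\de^{(d-1)/4}\bigr)=o(\de)$ in $G^\Psi$-length, yet the tip of the bump lies at distance $\de$ from the original image, whence $\on{dist}^{L^\infty}_{B_{i,f}}\ge\de$; letting $\de\to0$ defeats any Lipschitz constant. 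Note finally that your closing claim that $k<\tfrac{d+1}2$ is ``optimal'' is not supported by your own argument: in your construction that hypothesis only guarantees finiteness of the shrink legs, whereas the step that actually breaks down is governed by the exponent $d-1-2k$ of the translation leg, whose sign changes already at $k=\tfrac{d-1}2$.
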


Note that this theorem also applies to the $G^A$-metric for planar curves defined in \eqref{ga_metric}. Even though Thm. \ref{thm_lip_noncont} states that the identity map
\begin{equation*}
\iota:\Big(B_{i,f}(S^1,\R^2),\dist^{G^A}\Big)\rightarrow 
\Big(B_{i,f}(S^1,\R^2),\dist^{L^\infty}\Big)
\end{equation*}
is not Lipschitz continuous, it can be shown that is continuous and thus the topology induces by $\on{dist}^{G^A}$ is stronger than that induced by $\on{dist}^{L^\infty}$.

\begin{theorem}[Cor. 3.6, \cite{Michor2006c}]
The identity map on $B_{i,f}(S^1,\R^2)$ is continuous from $(B_{i,f}(S^1,\R^2),\dist^{G^A})$ to $(B_{i,f}(S^1,\R^2),\dist^{L^\infty})$ and uniformly continuous on every subset, where the length $\ell_C$ is bounded.
\end{theorem}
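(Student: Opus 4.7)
I will bound the Fréchet distance between two shapes by a function of their $G^A$-distance that tends to zero as $\dist^{G^A}\to 0$, with an explicit dependence on a length bound $L$. Since Thm.~\ref{thm_lip_noncont} rules out a Lipschitz bound, this function must be strictly sub-linear.

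First, given $C_0, C_1 \in B_{i,f}(S^1,\R^2)$ with $\dist^{G^A}(C_0,C_1)<\delta$, I would pick representatives $c_0, c_1$ and a horizontal path $c:[0,1]\to\Imm_f(S^1,\R^2)$ joining them with $\on{Len}^{G^A}(c)<\delta+\ep$. Along such a path $c_t = a(t,\cdot)\,n_{c(t)}$, so
\[
\dist^{L^\infty}(C_0,C_1) \leq \sup_\theta |c(1,\theta)-c(0,\theta)| \leq \int_0^1 \|a(t,\cdot)\|_\infty\,dt.
\]
Next, I would invoke the Lipschitz bound of the previous theorem (valid because $1+A\ka^2 \geq 1+A H_c^2$ in dimension two): $|\sqrt{\ell_{c(t)}}-\sqrt{\ell_{c(0)}}| \leq \on{Len}^{G^A}(c)/(2\sqrt{A})$. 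This keeps $\ell_{c(t)}$ inside a fixed interval $[0,L']$ with $L'$ depending only on $L, A, \delta$, giving a uniform length bound along the whole path.

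The heart of the argument is to convert the weighted $L^2$-information carried by $G^A$ into a pointwise bound
\[
\|a(t,\cdot)\|_\infty \leq \Theta\!\left(G^A_{c(t)}(a\,n_{c(t)},a\,n_{c(t)}),\,L'\right)
\]
with $\Theta(x,L')\to 0$ as $x\to 0$. The idea is to split $a$ into its arclength mean $\bar a$ and the oscillation $a-\bar a$: the mean is controlled in absolute value by $\ell^{-1/2}\sqrt{G^A_{c}(a\,n_c,a\,n_c)}$ directly, while the oscillation must be handled using the curvature weight. A localized peak of $|a|$ at a point of small $\ka$ spreads its mass over an arc whose length is bounded below in terms of $L'$, whereas a peak where $\ka$ is large is penalized directly by the $A\ka^2$ factor. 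Combining these two regimes yields the modulus $\Theta$. Inserting this into the first inequality and applying Cauchy--Schwarz in $t$ produces a bound
\[
\dist^{L^\infty}(C_0,C_1) \leq \wt\Theta\!\left(\dist^{G^A}(C_0,C_1),\,L\right)
\]
with $\wt\Theta(\cdot,L)$ continuous at zero, which simultaneously yields continuity at every point and uniform continuity on length-bounded subsets.

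The principal obstacle is obtaining the pointwise bound on $a$. The natural Sobolev embedding $H^1(S^1)\hookrightarrow C^0(S^1)$ is unavailable because the $G^A$-metric does not control $D_s a$; one must instead exploit the geometric interplay between $a$ and $\ka$ encoded by the curvature weight, and the bound obtained must be strictly sub-linear in $G^A$ --- consistent with the non-Lipschitz behavior of Thm.~\ref{thm_lip_noncont}.
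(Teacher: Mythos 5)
Your overall skeleton --- reducing to a horizontal path, using the Lipschitz continuity of $\sqrt{\ell}$ to keep the length bounded along the whole path, and aiming for a strictly sub-linear modulus of continuity --- matches the setup of the argument in \cite{Michor2006c}. However, the step you yourself single out as the heart of the proof, the instantaneous pointwise bound $\|a(t,\cdot)\|_\infty \le \Theta\bigl(G^A_{c(t)}(a\,n,a\,n),L'\bigr)$ with $\Theta(x,L')\to 0$ as $x\to 0$, is false, and no choice of $\Theta$ can repair it. At a fixed time the velocity field $a$ is an arbitrary smooth function on $S^1$, completely independent of the geometry of the curve $c(t,\cdot)$: take a curve of fixed length containing a straight segment (so $\ka\equiv 0$ there) and let $a$ be a bump of height $M$ supported on a sub-arc of that segment of length $\ep$. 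Then $G^A_c(a\,n,a\,n)=\int_{S^1}(1+A\ka^2)a^2\,ds = \int a^2\,ds\le M^2\ep$ can be made arbitrarily small while $\|a\|_\infty=M$ is arbitrarily large. Your heuristic that a peak of $|a|$ at a point of small curvature ``spreads its mass over an arc of definite length'' has no mechanism behind it: nothing forces $a$ to spread out, and the weight $1+A\ka^2$ sees only the curve, not the deformation. This is precisely the degeneracy responsible for the vanishing of the unweighted $L^2$-distance, and the curvature weight does not remove it at the level of a single snapshot.

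The estimate therefore cannot be proved time-slice by time-slice; the time integration must be used in an essential way. The argument in \cite{Michor2006c} bounds the displacement $\max_\th|c(1,\th)-c(0,\th)|$ of a horizontal path directly in terms of $\on{Len}^{G^A}(c)$, $A$ and $\max_t\ell_{c(t)}$: the point is that a deformation which persistently pushes one point of the curve a definite distance $r$ must, over the course of the path, either sweep a definite amount of area near that point or bend the curve there, and both effects are penalized --- the first by the $L^2$ part together with the (localized) area-swept-out bound, the second by the $A\ka^2$ term. Decomposing the trajectory of the marked point into pieces of displacement $r$ and optimizing over $r$ produces a bound by a fractional power of $\on{Len}^{G^A}(c)$ with constants depending only on $A$ and the length bound, which yields both continuity and the asserted uniform continuity on length-bounded subsets. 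As written, your proposal is missing this accumulated-in-time mechanism entirely, so the proof does not go through.
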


As a corollary to this result we obtain another proof that the geodesic distance for the $G^A$-metric is point-separating on $B_{i,f}(S^1,\R^2)$.

\subsection{Geodesic equation}

Since geodesics on $B_{i,f}$ correspond to horizontal geodesics on $\on{Imm}_f(M,\R^d)$, see Thm.~\ref{thm:submersion}, to compute the geodesic equation on $B_{i,f}(M,\R^d)$ it is enough to restrict the geodesic equation on $\on{Imm}_f(M,\R^d)$ to horizontal curves.

As an example for the resulting equations we will present the geodesic equations on $B_{i,f}(M,\R^d)$ for the almost local metric with $\Ps(q) = 1 + AH_q^2$, which is a generalization of the metric \eqref{ga_metric}, and the family of metrics with
$\Ps(q) = \Ph(\on{Vol}_q)$, which are conformal rescalings of the $L^2$-metric.

\begin{theorem}
Geodesics of the almost local $G^\Ps$-metric with $\Ps(q) = 1 + AH_q^2$ on $B_{i,f}(M,\R^d)$ are given by solutions of
\begin{align*}
q_t &= an_q\,,\qquad g=q^\ast\ol g\,,\\
a_t&=\tfrac12 a^2 H_q+\tfrac{2A}{1+AH_q^2} g(H_q \nabla^g a + 2a \nabla^g H_q,\nabla^g a)\\
&\qquad\qquad\;-\tfrac{Aa^2}{1 + AH_q^2} \left(\De^g H_q + \on{Tr}\left(\left(g^{-1} s_q\right)^2\right) \right)\,.
\end{align*}
For the family of metrics with $\Ps(q) = \Ph(\on{Vol}_q)$ geodesics are given by
\begin{align*}
q_t&=\frac{b(t)}{\Ph(\Vol_q)} n_q\,,\qquad g=q^{*}\ol g\,, \\
a_t&=\frac{H_q}{2
\Ph(\Vol_q)}\left(a^2-\frac{\Ph'(\Vol_q)}{\Ph(\Vol_q)}
\int_M a^2 \on{vol}({g})\right)\,.
\end{align*}
\end{theorem}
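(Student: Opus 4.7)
The plan is to derive both formulas by computing the Euler-Lagrange equations of the energy functional $E(q) = \tfrac12 \int_0^1 \int_M \Ps(q) |q_t|^2 \on{vol}(g)\,\ud t$ on $\Imm_f(M,\R^d)$ with $g = q^*\ol g$, and then restricting the resulting equation to horizontal paths. Since the almost local metric has horizontal bundle characterized by $q_t = a\cdot n_q$ for a scalar function $a$, and since by Thm.~\ref{thm:submersion} horizontal geodesics on $\Imm_f(M,\R^d)$ project to geodesics on $B_{i,f}(M,\R^d)$, this yields the correct equations on the quotient.

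The first step is to collect the pointwise variation formulas under a normal variation $\delta q = a\, n_q$: namely $\delta g = -2 a\, s_q$, $\delta \on{vol}(g) = -a H_q \on{vol}(g)$, $\delta \Vol_q = -\int_M a H_q \on{vol}(g)$, the classical identity $\delta H_q = -\De^g a - a\,\on{Tr}\bigl((g^{-1}s_q)^2\bigr)$, and the fact that $\delta n_q$ is tangential. These allow me to differentiate $\Ps$ in both cases: for $\Ps = 1 + A H_q^2$ I get $\delta\Ps = -2 A H_q\bigl(\De^g a + a\,\on{Tr}((g^{-1}s_q)^2)\bigr)$, while for $\Ps = \Ph(\Vol_q)$ the variation $\delta\Ps = -\Ph'(\Vol_q)\int_M a H_q \on{vol}(g)$ is purely a function of $t$, which is the analytic source of the conformal structure of the second metric.

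Substituting $q_t = a\, n_q$ into $E$, taking the first variation with respect to a normal test field $\delta q = \eta\, n_q$, integrating by parts in both space and time, and projecting to the normal direction then yields a scalar PDE for $a$. For $\Ps = 1 + A H_q^2$, the $L^2$-part of the metric produces the $\tfrac12 a^2 H_q$ term exactly as for the bare $L^2$ metric; the $A H_q^2$ correction contributes the gradient term $\tfrac{2A}{1+AH_q^2}\, g(H_q \nabla^g a + 2a \nabla^g H_q,\nabla^g a)$ after the Laplacian coming from $\delta H_q$ is transferred onto $a$ via one integration by parts, and the remaining undifferentiated pieces assemble into $-\tfrac{Aa^2}{1+AH_q^2}\bigl(\De^g H_q + \on{Tr}((g^{-1}s_q)^2)\bigr)$. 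For $\Ps = \Ph(\Vol_q)$, the fact that $\delta\Ps$ depends only on $t$ gives a conservation law that forces the spatial dependence of $\Ph(\Vol_q)a$ to be time-independent; this justifies the ansatz $a = b(t)/\Ph(\Vol_q)$, and substituting back and using the formula for $\delta\Vol_q$ yields the compact form stated.

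The main obstacle is the curvature bookkeeping in the first case: because $\delta H_q$ itself contains $\De^g a$, the integration by parts redistributes derivatives between $a$ and $H_q$, and one must carefully separate the terms that combine with pre-existing $H_q$-factors to form the $(1+AH_q^2)^{-1}$-weighted gradient contribution from those that produce the $\De^g H_q$ and $\on{Tr}((g^{-1}s_q)^2)$ terms with their correct signs and coefficients. Verifying that all tangential contributions cancel, so that the resulting scalar equation is genuinely the normal projection of the geodesic spray, is the consistency check that pins down the final numerical coefficients and confirms horizontality is preserved along the flow.
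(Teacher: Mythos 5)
Your overall strategy---take the first variation of the energy on $\on{Imm}_f(M,\R^d)$ and restrict to horizontal paths $q_t=a\,n_q$---is exactly the route the paper points to: it gives no computation of its own but refers to \cite{Michor2006c}, \cite{Shah2008} and \cite[Sect.~10.2--10.3]{Bauer2012a}, all of which proceed by this variational calculation. However, one of the ingredients you ``collect'' is wrong. With the conventions of this paper ($s_q(X,Y)=\ol g(\nabla_X Tq.Y,n_q)$, $H_q=\on{Tr}(g^{-1}s_q)$, and $\De^g$ the \emph{positive} Bochner Laplacian), the variation of the mean curvature under $\delta q=a\,n_q$ is
\[
\delta H_q=-\De^g a+a\,\on{Tr}\big((g^{-1}s_q)^2\big)\,,
\]
not $-\De^g a-a\,\on{Tr}((g^{-1}s_q)^2)$ as you state. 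Check it on a circle of radius $r$ with the paper's (inward) normal: $\ka=1/r$, a constant normal speed $a$ gives $r_t=-a$ and hence $\ka_t=+a\ka^2$, whereas your formula predicts $-a\ka^2$. Since $\delta\Ps=2AH_q\,\delta H_q$ is precisely the quantity you integrate by parts, this sign propagates into the coefficients of the curvature terms of the final equation, so the computation as you have set it up cannot ``assemble into'' the stated right-hand side. (Incidentally, a dimensional check of the kind that would catch this also shows that the last term of the stated equation should read $\De^g H_q+H_q\on{Tr}((g^{-1}s_q)^2)$; tracking units is a useful way to pin down these coefficients.)

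The second gap is in the conformal case. There is no conservation law forcing ``the spatial dependence of $\Ph(\Vol_q)a$ to be time-independent'': the reparametrization momentum vanishes identically on horizontal paths, and nothing else of that form is conserved. In the theorem the line $q_t=\tfrac{b(t)}{\Ph(\Vol_q)}n_q$ is not an ansatz to be justified; it introduces the momentum-like variable $b=\Ph(\Vol_q)\,\ol g(q_t,n_q)$, and the displayed evolution equation is the equation for \emph{that} variable. One can confirm on the totally geodesic family of concentric spheres that the stated ODE holds for $b$ but not for the normal speed $a$: the direct Euler--Lagrange equation for the radius is $r_{tt}=-\tfrac{(d-1)r_t^2}{2r}\bigl(1+\tfrac{\Ph'\Vol}{\Ph}\bigr)$, which matches the theorem only after the substitution $b=-\Ph r_t$. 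Your plan would instead produce the equation for $a$ itself, which still has to be converted. So the architecture of your argument is right and matches the sources, but the key variational input and the reduction in the conformal case both need repair before the computation reproduces the theorem.
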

For the $G^A$-metric and planar curves the geodesic equation was calculated in \cite[Sect.~4.1]{Michor2006c}, whereas for conformal metrics on planar curves it is presented in \cite[Sect.\ 4]{Shah2008}.
For hypersurfaces in higher dimensions the equations are calculated in  \cite[Sect.\ 10.2 and 10.3]{Bauer2012a}.

Note that both for $A=0$ and $\Phi(q)\equiv 1$ one recovers the geodesic equation for the $L^2$-metric, 
\[a_t=\frac12 H_q a^2.\]

Similarly to the case of the $L^2$-metric it is unknown, whether the geodesic equations are well-posed.

\begin{openquestion}
Are the geodesic equations on either of the spaces $\on{Imm}(M,\R^2)$ or $B_{i,f}(M,\R^2)$ for the almost local metrics (locally) well-posed?
\end{openquestion}

\subsection{Conserved quantities}\label{almost_conserved}
If the map $\Ps$ is equivariant with respect to the $\on{Diff}(M)$-action, then the $G^\Ps$-metric 
is invariant, and we obtain by Noether's theorem that the reparametrization momentum is constant 
along each geodesic. The reparametrization momentum for the $G^\Ps$-metric is given by
\[
\Ps(q) g(q_t^\top,\cdot) \on{vol}(q^\ast \ol g) \in \Ga(T^\ast M \otimes \La^{d-1}T^*M)\,,
\]
with $g = q^\ast \ol g$ and the pointwise decomposition of the tangent vector $q_t = q_t^\top + q_t^\bot$ of $q_t$ into $q_t^\bot = \ol g(q_t, n_q)n_q$ and $q_t^\top(x) \in T_{q(x)} q(M)$. This means that for each $X \in \mf X(M)$ we have
\[
\int_M \Ps(q) g(q_t^\top, X) \on{vol}(q^\ast \ol g) = \textrm{const.}
\]

If $\Ps$ is additionally invariant under the action of the Euclidean motion group $\R^d \rtimes \on{SO}(d)$, i.e., $\Ps(O.q + v) = \Ps(q)$, then so is the $G^\Ps$-metric and by Noether's theorem the linear and angular momenta are constant along geodesics. These are given by
\begin{align*}
\int_M \Ps(q) q_t \on{vol}(q^\ast \ol g) &\in \R^d \\
\int_M \Ps(q)\,  q \wedge q_t \on{vol}(q^\ast \ol g) &\in \bigwedge\nolimits^2 \R^d \cong \mf{so}(d)^\ast\,.
\end{align*}
The latter means that for each $\Om \in \mf{so}(d)$ the quantity
\[
\int_M \Ps(q) \ol g(\Om.q, q_t) \on{vol}(q^\ast \ol g) 
\]
is constant along geodesics.

If the function $\Psi$ satisfies the scaling property
\[\Psi(\la q)=\la^{-\on{dim}(M) - 2}\Ps(q),\, q\in\Imm(M,\R^d),\, \la\in\R_{>0},\]
then the induced metric $G^\Ps$ is scale invariant. In this case the scaling momenta are conserved along geodesics as well:
\[
\int_M\Ps(q)
 \langle q,q_t\rangle\on{vol}(q^*\ol g)\qquad\qquad \textrm{scaling momentum}
\]
For plane curves the momenta are
\begin{align*}
\int_{S^1} \Ps(c) \langle c_\theta, c_t \rangle \mu \ud s & & \textrm{reparametrization momentum}\\
\int_{S^1} \Ps(c) c_t \ud s & & \textrm{linear momentum}\\
\int_{S^1} \langle Jc, c_t \rangle \ud s & & \textrm{angular momentum}\\
\int_{S^1} \Ps(c) \langle c,c_t\rangle \ud s & & \textrm{scaling momentum}
\end{align*}
with $\mu \in C^\infty(S^1)$ and $J$ denoting rotation by $\tfrac{\pi}2$.

\subsection{Completeness}
\label{al_compl}

Regarding geodesic completeness, one can look at the set of spheres with a common center. This 
one-dimen\-sional subset  of $B_{i,f}(S^{d-1},\R^d)$
is a totally geodesic submanifold, i.e., a geodesic up to 
parameterization. One can explicitly calculate the length of this geodesic as spheres shrink towards a 
point and when they expand towards infinity. When it is possible to shrink to a point with a 
geodesic of finite length, the space can obviously not be geodesically complete. This is the case 
under the following conditions.   

\begin{theorem}[Thm. 9.1, \cite{Bauer2012a}]
\label{al_incomplete}
If $\Ps$ satisfies one of the conditions,
\begin{enumerate}
\item
$\Psi(q) \leq C_1 + C_2 H_q^{2k}$
\item
$\Psi(q) \leq C_1 \Vol_q^k$,
\end{enumerate}
with some constants $C_1, C_2 > 0$ and $k < \tfrac{d+1}2$, then the spaces $\Imm(S^{d-1},\R^d)$ and $B_{i,f}(S^{d-1},\R^d)$ are not geodesically complete with respect to the $G^\Ps$-metric.
\end{theorem}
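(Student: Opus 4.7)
The plan is to exhibit an incomplete geodesic by exploiting the $\on{SO}(d)$-symmetry and reducing to the one-dimensional totally geodesic submanifold of concentric round spheres. Let $q_r(x) = rx$ for $x \in S^{d-1}$, and consider a curve $t \mapsto q_{r(t)}$ in $\on{Imm}(S^{d-1},\R^d)$. Since $\p_t q_{r(t)} = \dot r \cdot n_{q_{r(t)}}$, this curve is automatically horizontal and descends to a curve in $B_{i,f}(S^{d-1},\R^d)$. In the natural setting of metrics \eqref{almost-local-metric}, $\Psi$ is invariant under the ambient Euclidean group, so $G^\Psi$ is $\on{SO}(d)$-invariant; the family $\{q_r : r > 0\}$ is then the fixed-point set of the $\on{SO}(d)$-isotropy at the origin, hence a totally geodesic submanifold. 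Every geodesic of the induced one-dimensional Riemannian manifold $\bigl((0,\infty), g_1(r)\,dr^2\bigr)$ is thus a geodesic of $B_{i,f}(S^{d-1},\R^d)$, and lifts to a horizontal geodesic of $\on{Imm}(S^{d-1},\R^d)$ by Thm.~\ref{thm:submersion}.

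Next I would compute the metric coefficient $g_1(r)$. Since $q_r^\ast\ol g = r^2 g_0$ with $g_0$ the round metric on $S^{d-1}$, we have $\on{vol}(q_r^\ast\ol g) = r^{d-1}\on{vol}_{g_0}$, $\Vol_{q_r} = \omega_{d-1}\, r^{d-1}$ (with $\omega_{d-1}$ the surface area of the unit sphere), and the mean curvature $H_{q_r} = (d-1)/r$ is spatially constant. Hence
\[
G^\Psi_{q_r}(\dot r\, n_{q_r}, \dot r\, n_{q_r}) = \dot r^2\, r^{d-1}\int_{S^{d-1}} \Psi(q_r)\,\on{vol}_{g_0}.
\]
Under hypothesis (1) one has $\Psi(q_r) \le C_1 + C_2(d-1)^{2k}\, r^{-2k}$ pointwise, so $g_1(r) \le \omega_{d-1}\bigl(C_1\, r^{d-1} + C_2(d-1)^{2k}\, r^{d-1-2k}\bigr)$; hypothesis (2) gives analogously $g_1(r) \le C_1\,\omega_{d-1}^{k+1}\, r^{(k+1)(d-1)}$.

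To conclude geodesic incompleteness, I compute the arc length from radius $r_0 > 0$ to the origin:
\[
\int_0^{r_0}\sqrt{g_1(r)}\,dr \le \sqrt{\omega_{d-1}}\int_0^{r_0}\sqrt{C_1\, r^{d-1} + C_2(d-1)^{2k}\, r^{d-1-2k}}\,dr,
\]
which is finite iff $d-1-2k > -2$, that is iff $k < (d+1)/2$; the analogous integral $\int_0^{r_0} r^{(k+1)(d-1)/2}\,dr$ under hypothesis (2) is likewise finite. Consequently the half-line $(0,\infty)$ equipped with $g_1(r)\,dr^2$ is incomplete at $r=0$, and the radial geodesic with inward-pointing initial velocity escapes to $r = 0$ in a finite parameter interval. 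Because the limiting configuration — a collapsed sphere — is not an immersion, this geodesic cannot be extended, and both $B_{i,f}(S^{d-1},\R^d)$ and $\on{Imm}(S^{d-1},\R^d)$ fail to be geodesically complete under the $G^\Psi$-metric.

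The main obstacle is justifying the totally geodesic reduction, since the theorem only assumes an upper bound on $\Psi$ rather than full ambient isometry invariance. For $\Psi$ of the form \eqref{almost-local-metric} the required $\on{SO}(d)$-invariance is automatic and the argument runs as above; if only the upper bound is assumed, one must instead use the length computation to produce a Cauchy sequence $\{q_{r_n}\}$ with $r_n \to 0$ (giving metric incompleteness) and then deduce the existence of an incomplete geodesic by analyzing the scalar ODE governing a radially symmetric $a(t)$ in the horizontal geodesic equation, using energy conservation of the form $\Psi(q_r)\, a^2\, \Vol_{q_r} = \mathrm{const}$ along the geodesic to reduce to a first-order quadrature and exhibit finite-time blow-up of the radius.
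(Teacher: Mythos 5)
Your argument is exactly the one the paper uses: the family of concentric spheres is identified as a totally geodesic (one-dimensional) submanifold via the rotational symmetry of the metric, and the length of the path shrinking a sphere to a point is computed and shown to be finite precisely when $k < \tfrac{d+1}{2}$, so the corresponding constant-speed geodesic exits $\Imm(S^{d-1},\R^d)$ in finite time. Your exponent bookkeeping is correct, and your closing remark about needing rotation-invariance of $\Ps$ (automatic for metrics of the form \eqref{almost-local-metric}, which is the setting of the cited Thm.~9.1 of \cite{Bauer2012a}) matches the paper's implicit hypotheses.
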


Note that these are the same conditions as in Thm. \ref{thm_lip_noncont}. For other choices of $M$ scalings will in general not be geodesic, but under the same condition an immersion can be scaled down to a point with finite energy. What conditions are sufficient to prevent geodesics from developing singularities and thus make the spaces geodesically complete is unknown.

Concerning metric completeness, it cannot be expected that a weighted $L^2$-type metric will be able to prevent immersions from losing smoothness in the completion. We have only a partial result available for the $G^A$-metric \eqref{ga_metric} on plane curves.

Similarly to the definition of $B_{i,f}(S^1,\R^2)$, we can define the larger space
\[
B^{lip}_i(S^1,\R^2) = \on{Lip}(S^1,\R^2) / \sim
\]
of equivalence classes of Lipschitz curves. We identify two Lipschitz curves, if they differ by a {\it monotone correspondence}. This can be thought of as a generalization of reparametrizations, which allow for jumps and intervals of zero speed; see \cite[Sect.\ 2.11]{Michor2006c}. Equipped with the Fr\'echet-distance \eqref{frechet_dist}, the space $B^{lip}_i(S^1,\R^2)$ is metrically complete.

A curve $C \in B^{lip}_i(S^1,\R^2)$ is called a {\it 1-BV rectifiable curve}, if the turning angle function $\al$ of an arc-length parametrized lift $c \in \on{Lip}(S^1,\R^2)$ of $C$ is a function of bounded variation.

\begin{theorem}[Thm. 3.11, \cite{Michor2006c}]
The completion of the metric space $(B_{i,f}(S^1,\R^2), \on{dist}^{G^A})$ is contained in the shape space $B^{lip}_i(S^1,\R^2)$ of Lipschitz curves and it contains all 1-BV rectifiable curves.
\end{theorem}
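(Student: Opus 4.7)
The statement asserts two inclusions
\[
\{\text{1-BV rectifiable}\}\;\subset\;\overline{B_{i,f}(S^1,\R^2)}^{\on{dist}^{G^A}}\;\subset\; B^{lip}_i(S^1,\R^2),
\]
which I would prove independently. For the right-hand inclusion, take a Cauchy sequence $(C_n)$ with respect to $\on{dist}^{G^A}$. The Lipschitz estimate of Sect.~\ref{al_dist}, specialized to curves so that $\on{Vol}_c=\ell_c$ and $H_c=\ka_c$, gives
\[
\bigl|\sqrt{\ell_{C_n}}-\sqrt{\ell_{C_m}}\bigr|\le \tfrac{1}{2\sqrt A}\on{dist}^{G^A}(C_n,C_m),
\]
so the lengths remain in a bounded interval. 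The corollary recalled above (uniform continuity of the identity map on sets of bounded length) then asserts that $(C_n)$ is also Cauchy in the Fr\'echet distance $\on{dist}^{L^\infty}$. Since $(B^{lip}_i(S^1,\R^2),\on{dist}^{L^\infty})$ is metrically complete, a unique Fr\'echet limit $C_\infty\in B^{lip}_i$ exists; a routine check that $\on{dist}^{G^A}$-equivalent Cauchy sequences produce the same Fr\'echet limit yields the desired embedding.

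For the left-hand inclusion, take $C$ with arclength representative $c$ of length $\ell$ and turning angle $\al\in BV([0,\ell])$. Mollify by $\al_\ep:=\al*\rho_\ep$, so $\|\al_\ep\|_{BV}\le\|\al\|_{BV}$ and $\al_\ep\to\al$ in $L^1$, and define smooth arclength curves $c_\ep$ by $c_\ep'(s)=(\cos\al_\ep(s),\sin\al_\ep(s))$, followed by a vanishing affine correction enforcing closure. Then $c_\ep$ is a free immersion for small $\ep$, $\ka_{c_\ep}=\al_\ep'$, and $c_\ep\to c$ uniformly. To show that $(c_{\ep_n})$ is $\on{dist}^{G^A}$-Cauchy along a suitable $\ep_n\to 0$, I would interpolate by $\al_t=(1-t)\al_{\ep_n}+t\al_{\ep_{n+1}}$, build the corresponding arclength curves $c_t$ with $\ka_{c_t}=\al_t'$, and bound
\[
\on{Len}^{G^A}(c_\cdot)^2\le \int_0^1\!\!\int_{S^1}\bigl(1+A(\al_t')^2\bigr)|\p_t c_t|^2\,ds\,dt,
\]
using the pointwise inequality $|\p_t c_t(s)|\le \|\al_{\ep_n}-\al_{\ep_{n+1}}\|_{L^1}$. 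The Cauchy property together with the first inclusion identifies the completion point with the Fr\'echet limit $C$.

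The main obstacle is the summability of this length estimate when $\al\in BV\setminus H^1$: the factor $(\al_t')^2$ can blow up as $\ep_n\to 0$, so the estimate cannot be closed by a global bound. The remedy is a localization: the mass of $\al_{\ep_n}-\al_{\ep_{n+1}}$ is concentrated near the singular set of $\al$, where each quasi-jump of size $\De\al$ smoothed at scale $\ep_n$ contributes roughly $\ep_n + A(\De\al)^2/\ep_n$ to the weighted length and only $O(\ep_n)$ to the displacement. Choosing the scales $\ep_n$ in geometric progression in tandem with the jump sizes recorded by the $BV$ measure of $\al$ yields a summable total, controlled by $\|\al\|_{BV}$. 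This tradeoff between displacement and weighted curvature at each corner is the precise mechanism by which the $BV$ hypothesis on the turning angle translates into membership in the completion.
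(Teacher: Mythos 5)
Your first inclusion assembles exactly the ingredients this survey supplies for it --- the Lipschitz bound $\bigl|\sqrt{\ell_{C_n}}-\sqrt{\ell_{C_m}}\bigr|\le\tfrac1{2\sqrt A}\on{dist}^{G^A}(C_n,C_m)$ to get bounded length, the uniform continuity of the identity on bounded-length sets to convert $G^A$-Cauchy into Fr\'echet--Cauchy, and the completeness of $(B^{lip}_i(S^1,\R^2),\on{dist}^{L^\infty})$ --- and is correct as far as it goes. Be aware, though, that this only produces a well-defined continuous map from the completion into $B^{lip}_i(S^1,\R^2)$; the injectivity promised by the word ``embedding'' is a separate matter that your ``routine check'' does not touch, since the identity map in the opposite direction is certainly not continuous, and you should state in which sense ``contained in'' is meant.

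The genuine gap is in the second inclusion, and it is not where you located it. The global estimate you declare impossible does close: with $V=\on{Var}(\al)$ the standard mollification bounds give $\|\al_\ep-\al\|_{L^1}\le \ep V$, $\|\al_\ep'\|_{L^1}\le V$ and $\|\al_\ep'\|_{L^\infty}\le CV/\ep$, so along your interpolation from scale $\ep_n$ to $\ep_{n+1}=\ep_n/2$ one has $\|\p_t c_t\|_{L^\infty}\le\|\al_{\ep_n}-\al_{\ep_{n+1}}\|_{L^1}\le 2\ep_n V$ and $\int_{S^1}\bigl(1+A(\al_t')^2\bigr)\ud s\le \ell+2ACV^2/\ep_n$, whence
\[
\on{Len}^{G^A}(c_\cdot)\;\le\; 2\ep_n V\sqrt{\ell+2ACV^2/\ep_n}\;\le\; 2\ep_n V\sqrt{\ell}+2\sqrt{2AC}\,V^2\sqrt{\ep_n}\,,
\]
which is summable over a geometric sequence $\ep_n$: the displacement decays like $\ep_n$ while the curvature weight grows only like $\ep_n^{-1}$, so their product vanishes. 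No localization is needed, and the localization you sketch in its place cannot serve as a substitute: the derivative of a general $BV$ turning angle has, besides jumps, an absolutely continuous and a Cantor part, and the latter is not exhausted by ``quasi-jumps'', so an argument organized corner by corner does not cover all 1-BV rectifiable curves. To finish, carry out the displayed global estimate and supply the omitted routine steps: the closure correction (of size $O(\ep V)$) must be imposed on every curve $c_t$ of the homotopy, not only on the endpoints, and its contributions to $\p_t c_t$ and to $\ka_{c_t}$ checked to be of lower order; the rotation index of $\al_\ep$ must be preserved so that $c_\ep$ is a closed immersion; and freeness of the approximants must be arranged by a small perturbation.
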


\subsection{Curvature}

The main challenge in computing the curvature for almost local metrics on $\on{Imm}(M,\R^d)$ is 
finding enough paper to finish the calculations. It is probably due to this that apart from the 
$L^2$-metric we are not aware of any curvature calculations on the space $\on{Imm}(M,\R^d)$. For 
the quotient space $B_{i,f}(M,\R^d)$ the situation is a bit better and the formulas a bit shorter. This is because $B_{i,f}(M,\R^d)$ is modeled on $C^\infty(M,\R)$, while the space $\on{Imm}(M,\R^d)$ is modeled on $C^\infty(M,\R^d)$. In coordinates elements of $B_{i,f}(M,\R^d)$ are represented by scalar functions, while immersions need functions with $d$ components.
For plane curves and conformal metrics the curvature has been calculated in \cite{Shah2008} and for $\Psi(c)=\Phi(\ell_c,\ka_c)$ in \cite{Michor2007}. Similarly for higher dimensional surfaces the curvature has been calculated for $\Psi(q)=\Phi(\Vol_q,H_q)$ in \cite{Bauer2012a}.

The sectional curvature for the $L^2$-metric on plane curves \eqref{l2_sec_curv} is non-negative. In general the expression for the sectional curvature for almost local metrics with $\Ph \not\equiv 1$ will contain both positive definite, negative definite and indefinite terms. For example the sectional curvature of the metric \eqref{ga_metric} with $\Ps(c) = 1+A\ka_c^2$ on plane curves has the following form.

\begin{theorem}[Sect. 4.6, \cite{Michor2006c}]
Let $C \in B_{i,f}(S^1,\R^2)$ and choose $c \in \on{Imm}(S^1,\R^2)$ such that $C = \pi(c)$ and $c$ is parametrized by constant speed. Let $a.n, b.n \in \on{Hor}(c)$ two orthonormal horizontal tangent vectors at $c$. Then the sectional curvature of the plane spanned by $a, b \in T_C B_{i,f}(S^1,\R^2)$ for the $G^A$ metric is 
\begin{multline*}
k_C(P(a,b))=-\int_{S^1}A(a.D_s^2b-b.D_s^2a)^2ds\\ +\int\frac{(1-A\ka^2)^2-4A^2\ka.D_s^2\ka+8A^2(D_s\ka)^{2}}{2(1+A\ka^2)}\cdot \\
\cdot(a.D_sb-b.D_sa)^2ds
\end{multline*}
\end{theorem}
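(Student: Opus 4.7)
The plan is to exploit the Riemannian submersion $\pi : \on{Imm}_f(S^1,\R^2) \to B_{i,f}(S^1,\R^2)$ from Thm.~\ref{bi_bundle} together with the version of O'Neill's formula that comes out of Thm.~\ref{thm:submersion}. For horizontal orthonormal vectors $X = a.n$, $Y = b.n$ at a representative $c$ with $\pi(c) = C$,
\begin{equation*}
k_C(P(a,b)) = R^{\on{Imm}}_c(X,Y,Y,X) + \tfrac{3}{4}\bigl\|[\wt X,\wt Y]^{\ver}\bigr\|_{G^A}^2,
\end{equation*}
where $\wt X,\wt Y$ are horizontal extensions of $X,Y$ to a neighborhood of $c$. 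Thus the computation reduces to (i) the curvature of $G^A$ on $\on{Imm}_f(S^1,\R^2)$ evaluated on a horizontal $2$-plane, and (ii) the vertical defect of the bracket of the horizontal lifts.

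First I would compute the Levi-Civita connection of $G^A$ on $\on{Imm}_f(S^1,\R^2)$ via the first variation of the energy functional $E(c) = \int_0^1 \int_{S^1}(1+A\ka_c^2)|c_t|^2 \ud s\,\ud t$, then polarize to extract $\nabla$. The analytical core is the linearization of the geometric functions: for a deformation $h = a.n_c$ of a constant-speed curve $c$ one has $D_{c,h}\ka_c = -D_s^2 a - \ka_c^2 a$, $D_{c,h}\ud s = -\ka_c a \,\ud s$, and $D_{c,h} n_c = -(D_s a).v_c$; these three formulas, together with the musical identifications on $S^1$, determine $\nabla$ through a Koszul-type computation. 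Then $R(X,Y)Z = \nabla_X\nabla_Y Z - \nabla_Y\nabla_X Z - \nabla_{[X,Y]}Z$ can be evaluated at $c$ after choosing any convenient extensions of $X,Y$ (for instance the constant-in-direction extensions in the affine chart $c + C^\infty(S^1,\R^2)$, so that all derivatives of $a$ and $b$ with respect to the base point drop out).

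For the O'Neill term, I would use that at the constant-speed curve $c$ the vertical bundle is $\ver_c = C^\infty(S^1)\cdot c_\th$ and horizontal vectors are exactly those with vanishing tangential component; a short calculation gives $[\wt X,\wt Y]^{\ver}_c$ proportional to $(a D_s b - b D_s a).c_\th$, whose $G^A$-norm-squared produces, after one integration by parts against the weight $(1+A\ka^2)$, precisely the Wronskian coefficient $(a D_s b - b D_s a)^2$ appearing in the statement. The orthonormality and constant-speed hypotheses are used here to discard cross terms and to pull the $|c_\th|$-factors outside of $D_s$.

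The main obstacle is the bookkeeping in step two: the Bochner-type term $R^{\on{Imm}}(X,Y,Y,X)$ produces contributions involving $D_s^2 a$, $D_s^2 b$, $D_s\ka$ and $D_s^2 \ka$ weighted by rational functions of $(1+A\ka^2)$, and one has to integrate by parts repeatedly, using the antisymmetry in $(a,b)$ to organize everything into the two Wronskian structures $(a D_s b - b D_s a)$ and $(a D_s^2 b - b D_s^2 a)$. The delicate point is keeping track of the sign and weight in front of each term so that the $R^{\on{Imm}}$-contribution combines with the $\tfrac{3}{4}\|\cdot\|^2$ O'Neill correction to produce the final coefficient $\bigl((1-A\ka^2)^2 - 4A^2\ka\, D_s^2\ka + 8A^2(D_s\ka)^2\bigr)/\bigl(2(1+A\ka^2)\bigr)$, and the manifestly nonpositive term $-A(a D_s^2 b - b D_s^2 a)^2$. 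Beyond this combinatorial collection, no new ideas are needed.
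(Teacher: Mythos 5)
Your strategy is valid in principle but it is not the route of the cited source, and it front-loads the hardest part. In \cite{Michor2006c}, Sect.~4.6, the curvature is computed entirely \emph{downstairs}: the induced metric on $B_{i,f}(S^1,\R^2)$ is written out in the normal-bundle chart $a\mapsto \pi(c+a\,n_c)$ of Thm.~\ref{bi_bundle}, where tangent vectors are scalar functions, and the first and second derivatives of $a\mapsto G^A$ in that chart are fed into the general formula for the curvature of a weak Riemannian metric given in a chart. This is precisely the point made in the paragraph preceding the theorem: $B_{i,f}$ is modeled on $C^\infty(S^1,\R)$ rather than $C^\infty(S^1,\R^2)$, which is what keeps the computation finite. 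Your O'Neill decomposition instead takes as input the full curvature tensor of $G^A$ on $\on{Imm}(S^1,\R^2)$ restricted to horizontal planes, and the survey states explicitly that this has never been carried out for any almost local metric other than $L^2$, because the computation is prohibitively long. So the approach is genuinely different, and what it buys (a conceptually clean split into an intrinsic term and a $\tfrac34\|[\cdot,\cdot]^{\mathrm{vert}}\|^2$ term) is paid for by an unexecuted calculation that is strictly harder than the one it replaces; the entire content of the theorem sits inside the step you label as ``bookkeeping.''

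Two concrete defects in the sketch itself. First, with the conventions of this paper ($v=-Jn_c$, $\ka=\langle D_sv,n\rangle$) the variation of curvature under $c_t=a.n_c$ is $D_{c,a.n_c}\ka=D_s^2a+\ka^2a$; since this formula is form-invariant under reversing the orientation of $n_c$, your $-D_s^2a-\ka^2a$ is wrong in any consistent convention, and the error would propagate into exactly the terms $A^2\ka\,D_s^2\ka$ and $A^2(D_s\ka)^2$ of the target coefficient. Second, the extensions used in the two halves of your argument cannot be the same: constant extensions in the affine chart are fine for the tensorial quantity $R^{\on{Imm}}_c(X,Y,Y,X)$, but they are not horizontal away from $c$, while the O'Neill correction requires genuinely horizontal extensions (only for those is the vertical part of the bracket the Wronskian multiple of $v_c$ you quote). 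One also has to justify O'Neill's identity for this weak metric --- here this is plausible, since the horizontal bundle of an almost local metric is an honest fiberwise complement $\{a.n_c\}$ and the Levi-Civita connection exists, but it must be said. None of these is fatal, but as it stands the proposal establishes nothing beyond the $A=0$ case already recorded in \eqref{l2_sec_curv}.
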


It is assumed, although not proven at the moment, that for a generic immersion, similar to Thm. \ref{thm_curv_pm}, the sectional curvature will assume both signs.

\subsection{Examples}

To conclude the section we want to present some examples of numerical solutions to 
the geodesic boundary value problem for given shapes $Q_0,Q_1 \in B_{i,f}(M,\R^d)$ with metrics of 
the form \eqref{almost-local-metric}. 
One method  to tackle this problem is to directly minimize the  horizontal path energy
$$E^{\operatorname{hor}}(q) = \int_0^1 \int_M \Phi(\operatorname{Vol}_q,H_q) \langle q_t,n_q 
\rangle^2 \operatorname{vol}(q^*\bar g)\,\ud t$$
over the set of paths $q$ of immersions with fixed endpoints $q_0, q_1$ that project onto the target surfaces $Q_0$ and $Q_1$, i.e.,  $\pi(q_i)=Q_i$. 
The main advantage of this approach  for the class of almost local metrics lies in the simple form of the horizontal bundle. 
Although we will only show one specific example in this article it is worth to note that several numerical experiments are available; see:
\begin{itemize}
 \item \cite{Michor2006c,Michor2007} for the $G^A$--metric and planar curves.
 \item \cite{YezziMennucci2004a,YezziMennucci2005} for conformal metrics and  planar curves.
 \item \cite{Bauer2010,Bauer2012a,Bauer2012b} for surfaces in $\R^3$.
\end{itemize}
\begin{figure}[ht]
\includegraphics[width=.48\textwidth]{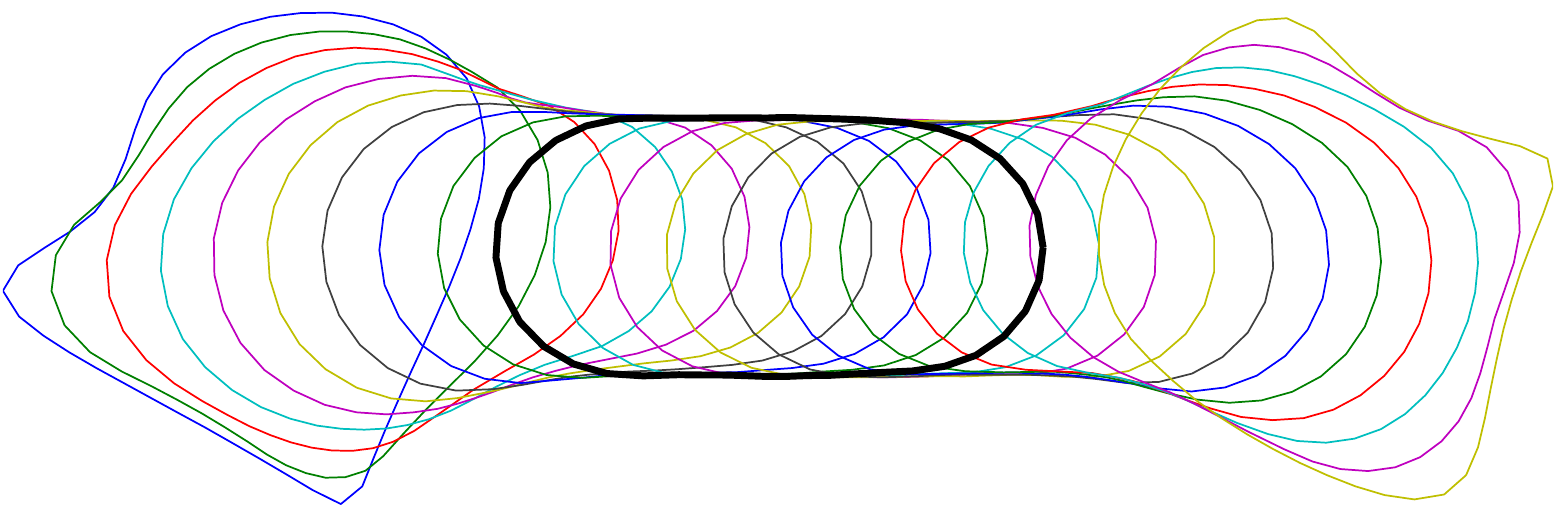}
\caption{A geodesic in the $G^A$--metric joining two 
shapes of size about $1$ at distance $5$ apart with
$A=.25$, using $20$ time samples and a 48-gon approximation for all curves. Original image published in \cite{Michor2006c}.}
\label{fig:almost1}
\end{figure}
The example we want to present here, is concerned with the behavior of the $G^A$-metric matching curves that are far apart in space.
In the article \cite{Michor2006c} the authors showed that pure translation of a cigar-like shape with a  cross--section of $2\sqrt{A}$
is (locally) a geodesic for the $G^A$--metric. Thus one might expect that a geodesic between distant curves
will asymptotically utilize this cigar shaped curve, translate this optimal curve and then deform it to the target shape. In fact the 
numerical examples resemble this behavior as can be seen in Fig.\ref{fig:almost1}. Note that the cross--section of the middle figure -- which is highlighted -- 
is slightly bigger than $2\sqrt{A}$. A reason for this might be that the distance between the two boundary shapes is not sufficiently large. In the article \cite{Bauer2012a}
it has been shown that this behavior carries over to the case of higher-dimensional surfaces, c.f. Fig.~\ref{fig:almost2}. Note that the behavior of the geodesics changes dramatically if one increases the distance 
further, namely for shapes that are sufficiently far apart the geodesics will go through a shrink and grow behavior. This phenomenon  is based on the fact that it is possible to shrink a sphere to zero in finite time for the $G^A$--metric. 
Then geodesics of very long translations will go via a strong shrinking part and growing part, and almost all of the translation will be done with the shrunken version of the shape. 
This behavior, which also occurs for the class of conformal metrics, is described in \cite{Bauer2012a}. 
\begin{figure}[ht]
\centering
\includegraphics[width=.48\textwidth]{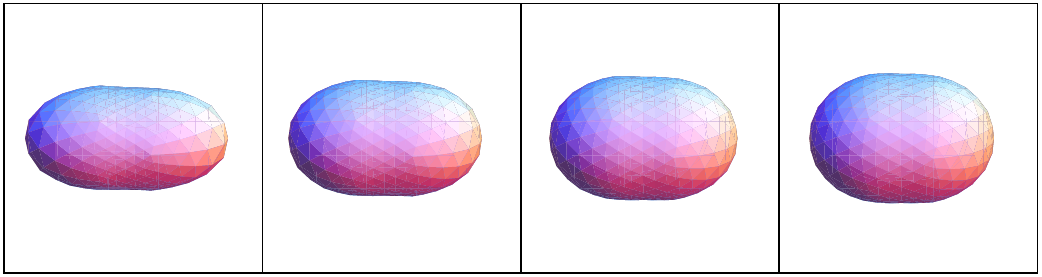}%
\caption{Middle figure of a geodesic between two unit spheres at distance $3$ apart for $A=0.2$, $A=0.4$, $A=0.6$, $A=0.8$. In each of the simulations 20 time steps and a triangulation with 720 triangles were used. Original image published in \cite{Bauer2012a}.} 
\label{fig:almost2}
\end{figure}

\section{Sobolev type metrics on shape space}\label{sobolev_inner}

Sobolev-type inner metrics on the space $\on{Imm}(M,\R^d)$ of immersions are metrics of the form
\[
G_q^L(h,k)=\int_M  \langle L_q h, k \rangle \on{vol}(q^*\ol g)\,,
\]
where for each $q \in \on{Imm}(M,\mathbb R^d)$, $L_q$ is a pseudo-differen\-tial operator on $T_q\on{Imm}(M,\R^d)$. To be precise we assume that the operator field
\[
L : T\on{Imm}(M,\R^d) \to T\on{Imm}(M,\R^d)
\]
is a smooth base-point preserving bundle isomorphism, such that for every $q \in \on{Imm}(M,\R^d)$ the map
\[
L_q : T_q\on{Imm}(M,\R^d) \to T_q\on{Imm}(M,\R^d)
\]
is a pseudo-differential operator, that is symmetric and positive with respect to the $L^2$-metric. 
Ordinarily, $L_q$ will be elliptic and of order $\ge 1$, with the order being constant in $q$. However, the operator 
fields in \cite{Michor2012d_preprint} are not elliptic.
An example for such an operator field $L$ is
\begin{equation}\label{sobolevmetric}
L_qh = h + (\De^g)^l h,\quad l\geq 0\,,
\end{equation}
where $\De^g$ is the Laplacian of the induced metric $g = q^\ast \ol g$ on $M$.

We will also assume that the operator field $L$ is invariant under the action of the reparametrization group $\on{Diff}(M)$, i.e.,
\begin{equation}
\label{L_rep_inv}
(L_q h) \on{\circ} \ph = L_{q\on{\circ}\ph}(h\on{\circ}\ph)\,,
\end{equation}
for all $\ph, q$ and $h$. Then the metric $G^L$ is invariant under $\on{Diff}(M)$ and it induces a Riemannian metric on the quotient space $B_{i,f}(M,\R^d)$.

In contrast to the class of almost local metrics, for whom the horizontal bundle of the submersion 
\[
\on{Imm}_f(M,\R^d)\to B_{i,f}(M,\R^d)
\]
consisted of tangent vectors, that are pointwise orthogonal to the surface, here the horizontal bundle cannot be described explicitly. Instead we have
\[
\on{Hor}^L(q) = \{ h \in T_q\on{Imm}_f(M,\R^d)\,:\, L_q h = a.n_q \}\,,
\]
where $a \in C^\infty(M,\R)$ is a smooth function. Thus to parametrize the horizontal bundle we need to invert the operator $L_q$.

General Sobolev-type inner metrics on the space of immersed plane curves have been studied in \cite{Michor2007} and on surfaces in higher dimensions in \cite{Bauer2011b}. 
Numerical experiments for special cases of order one Sobolev type metrics are presented in the articles \cite{Jermyn2011,Jermyn2012,Bauer2011a}.
\begin{figure}[ht]
\includegraphics[width=.48\textwidth]{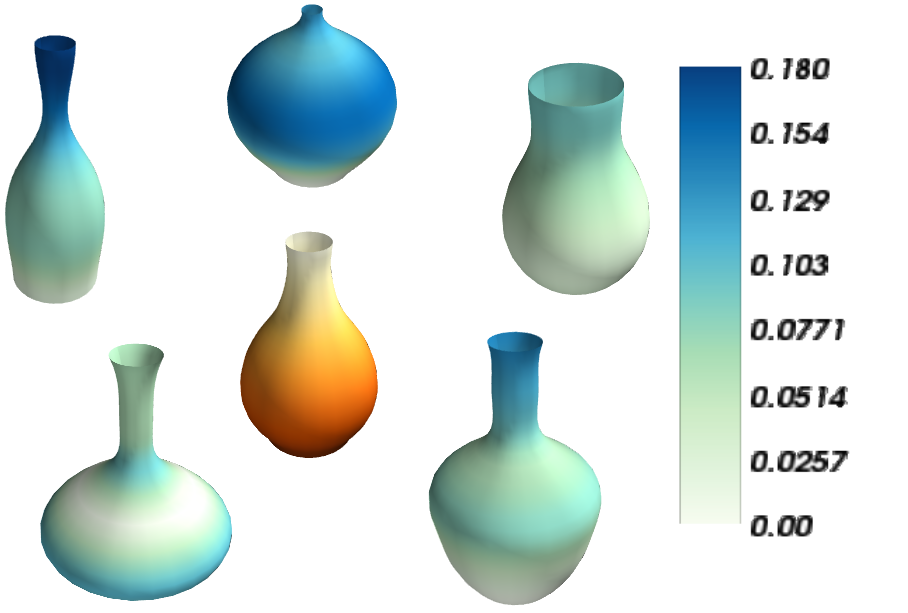}
\caption{In this figure we show the Karcher mean of five vase-shaped objects with respect to the Sobolev metric of order one -- as defined in \eqref{sobolevmetric} -- on the space of parametrized surfaces $\Imm(S^1\times[0,1],\R^3)$. The mean shape, which is displayed in the center of the figure is computed using an iterated shooting method. The colored regions  on the averaged shapes encode the Euclidean length of the initial velocity of the geodesic, which connects each shape to the mean. The color of the mean was chosen for artistic purposes only. Original image published in \cite{Bauer2011a}.}
\label{fig:Karchermean}
\end{figure}

In \cite{Bauer2012d} the authors consider metrics of the form
\[
G_q^L(h,k) = \Ph(\on{Vol}_q) \int_M \langle L_q h, k \rangle \on{vol}(q^*\ol g)\,.
\]
These are a combination of Sobolev-type metrics with a non-local weight function, that can be chosen such that the resulting metric is scale-invariant.
Sobolev type metrics  are far better investigated and understood on the the manifold of plane curves than in higher dimension. 
Therefore, we will discuss this case separately in Sect.\ \ref{Sob:curves}.

\subsection{Sobolev metrics on plane curves}\label{Sob:curves}

A reparametrization invariant Sobolev-type metric on the space of plane curves has been first introduced by Younes in  \cite{Younes1998}. 
There he studied the homogeneous $\dot H^1$ metric
\[G^{\dot H^1}_c(h,k)=\int_{S^1}\langle D_sh, D_sk\rangle \ud s\,.\]
However, this is not a metric on $\Imm(S^1,\R^2)$ but only on the quotient space
$\Imm(S^1,\R^2)/\on{transl}$. In order to penalize bending and stretching of the curve differently it has been generalized  in \cite{Mio2004, Mio2007}
to
\begin{equation}\label{elasticmetric1}
\begin{aligned}
 G_c^{a,b}(h,k)&=\int_{S^1}a^2 \langle D_sh, n_c \rangle \langle D_sk, n_c \rangle \\&\qquad\qquad+b^2 \langle D_sh, v_c\rangle\langle D_sk, v_c\rangle \ud s\;.
\end{aligned}
\end{equation}
In this metric the parameters $a,b$ can be interpreted as the tension and rigidity coefficients of the curves. 
For $a=1, b=\tfrac 12$ a computationally efficient representation of this metric -- called the {\it Square Root Velocity Transform (SRVT)} -- has been found in \cite{Jermyn2011} and it has been generalized 
for arbitrary parameters $a,b$ in \cite{Bauer2012b_preprint}. Following \cite{Jermyn2011} we will 
describe this transformation for the case  $a=1, b=\frac12$: 
\[R: \left\{
\begin{array}{ccc}
\on{Imm}(S^1,\R^2)/\on{transl.}&\to& C^{\infty}(S^1,\R^2) \\
c&\mapsto &\sqrt{|c_\theta|}v.
\end{array} \right.
\]
The inverse of this map is given by
\[R^{-1}: \left\{
\begin{array}{ccc}
C^{\infty}(S^1,\R^2)&\to& \on{Imm}([0,2\pi],\R^2)/\on{transl.} \\
e&\mapsto &\int_0^\theta |e(\si)|e(\si) \ud\si\,.
\end{array} \right.
\]
Here $\on{Imm}([0,2\pi],\R^2)/\on{transl.}$ is viewed as the subspace of curves $c$ with $c(0)=0$.

Note that  $R^{-1}(e)$ is a closed curve if and only if 
\[ \int_0^{2\pi} |e(\theta)|e(\th)\ud\th=0\,.\]

\begin{theorem}
Consider the flat $L^2$-metric 
\[
G_q^{L^2,\on{flat}}(e, f) = \int_{S^1} \langle e,f \rangle \ud\th
\]
on $C^{\infty}(S^1,\R^2)$. The pullback of the metric $G^{L^2,\on{flat}}$ by the $R$-transform is the elastic metric with coefficients $a=1$, $b=\tfrac 12$.

The image of the space $\on{Imm}(S^1,\R^2)/\on{transl.}$ under the $R$-transform is a co-dimension $2$ submanifold of the flat space $C^{\infty}(S^1, \R^2)$.
\end{theorem}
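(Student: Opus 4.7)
The statement splits into two independent parts: an isometric-pullback identity, and a submanifold claim. My plan is to treat them separately, with the variational computation for the first part doing essentially all of the work for both.

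\textbf{Part 1: pullback of the flat metric.} The approach is a direct computation of $dR_c$ followed by evaluation in the orthonormal frame $(v_c,n_c)$. Writing $R(c)=\sqrt{|c_\theta|}\,v_c$ and varying $c\mapsto c+\varepsilon h$, I would first compute the three basic derivatives
\[
\partial_\varepsilon|c_\theta|=|c_\theta|\langle v_c,D_sh\rangle,\quad \partial_\varepsilon\sqrt{|c_\theta|}=\tfrac{1}{2}\sqrt{|c_\theta|}\langle v_c,D_sh\rangle,\quad \partial_\varepsilon v_c=\langle D_sh,n_c\rangle n_c,
\]
the last one using that $\partial_\varepsilon v_c$ is perpendicular to $v_c$. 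Combining them via the product rule yields
\[
dR_c(h)=\tfrac{1}{2}\sqrt{|c_\theta|}\,\langle D_sh,v_c\rangle v_c+\sqrt{|c_\theta|}\,\langle D_sh,n_c\rangle n_c.
\]
Taking the flat inner product of $dR_c(h)$ with $dR_c(k)$, using orthonormality of $v_c,n_c$, and integrating in $\theta$ converts the factor $|c_\theta|\,\ud\theta$ into $\ud s$. The result is exactly the elastic integrand in \eqref{elasticmetric1} with $a=1$, $b=\tfrac12$. The only thing to double-check is the distribution of the factor $\tfrac12$ between the tangential and normal terms.

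\textbf{Part 2: codimension 2 submanifold.} Here the strategy is to realize the image of $R$ as the preimage of a regular value. Since $R(c)(\theta)\neq 0$ everywhere (as $|c_\theta|>0$), and conversely $R^{-1}$ is defined on all nonvanishing $e$, the image is
\[
R\bigl(\on{Imm}(S^1,\R^2)/\on{transl.}\bigr)=\{e\in C^\infty(S^1,\R^2)\,:\,e(\theta)\neq 0\text{ for all }\theta,\ F(e)=0\},
\]
where $F\colon C^\infty(S^1,\R^2)\to\R^2$ is the closure map $F(e)=\int_0^{2\pi}|e(\theta)|\,e(\theta)\,\ud\theta$. The nonvanishing condition is open, so the content of the claim is that $F$ is a submersion at every point of $F^{-1}(0)$ with $e\neq 0$, giving a codimension $2$ submanifold.

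For the submersion property, I would compute
\[
dF_e(\delta e)=\int_0^{2\pi}\!\!\Bigl(|e|\,\delta e+\frac{\langle e,\delta e\rangle}{|e|}\,e\Bigr)\,\ud\theta
=\int_0^{2\pi}Q(e(\theta))\delta e(\theta)\,\ud\theta,
\]
where the pointwise symmetric operator $Q(e)\colon\R^2\to\R^2$ has eigenvalue $2|e|$ in the direction of $e$ and eigenvalue $|e|$ in the perpendicular direction, hence is pointwise invertible wherever $e\neq 0$. Surjectivity of $dF_e$ onto $\R^2$ then follows by a standard bump-function argument: taking $\delta e(\theta)=\chi_\varepsilon(\theta-\theta_0)\,w$ with $\int\chi_\varepsilon=1$ and $\chi_\varepsilon$ concentrated near $\theta_0$ sends $dF_e(\delta e)$ to $Q(e(\theta_0))w$ as $\varepsilon\to 0$, and since $Q(e(\theta_0))$ is invertible, any target in $\R^2$ is attained. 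The main obstacle I anticipate is purely notational: keeping the $\sqrt{|c_\theta|}$ factors straight and the $|c_\theta|\,\ud\theta\leftrightarrow\ud s$ conversions consistent in Part 1, and verifying that the codimension~$2$ submanifold structure one obtains from $F$ is compatible with the smooth Fréchet-manifold structure on $C^\infty(S^1,\R^2)$, which follows from $F$ being a smooth map between Fréchet spaces with split and complemented kernel at every regular point.
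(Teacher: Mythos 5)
Your proof is correct, and it follows the same route as the sources the paper defers to (the survey itself states this theorem without proof, citing \cite{Jermyn2011,Bauer2012b_preprint}): a direct computation of $dR_c(h)=\tfrac12\sqrt{|c_\theta|}\langle D_sh,v_c\rangle v_c+\sqrt{|c_\theta|}\langle D_sh,n_c\rangle n_c$ giving the elastic metric with $a^2=1$, $b^2=\tfrac14$, and the identification of the image as the preimage of the regular value $0$ of the closure map $F(e)=\int_0^{2\pi}|e|e\,\ud\th$ on the open set of nowhere-vanishing $e$. Your submersion argument via the pointwise operator $Q(e)$ with eigenvalues $2|e|$ and $|e|$ is sound, and since $F$ has finite-dimensional target the complemented-kernel implicit function theorem applies in the Fr\'echet setting as you indicate.
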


This representation not only allows to efficiently discretize the geodesic equation, but also to compute the curvature of $\on{Imm}(S^1,\R^2)$; See \cite{Jermyn2011,Bauer2012b_preprint} for details.

\begin{figure}[ht]
\includegraphics[width=.48\textwidth]{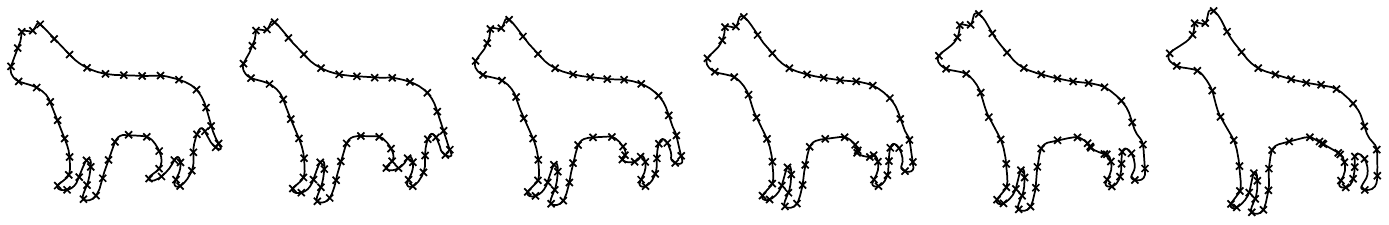}
\caption{A geodesic in the shape space $B_{i,f}(S^1,\R^2)$ equipped with the elastic metric that connects the cat--shaped figure to the dog--shaped figure. Original image published in \cite{Bauer2012b_preprint}.}
\label{fig:Rmap}
\end{figure}

A scale invariant version of the $\dot{H^1}$-metric
\[G_c(h,k)=\frac{1}{\ell_c}\int_{S^1}\langle D_sh, D_sk\rangle \ud s\]
has been studied in \cite{Michor2008a}. There the authors derive an explicit solution formula for the geodesic equation and calculate the sectional curvature. 
More general and higher order Sobolev metrics on plane curves have been studied in \cite{Mennucci2008,Michor2007}, and they have been applied to the field of active contours in 
\cite{Mennucci2007,Charpiat2005}. Other Sobolev type metrics on curves that have been studied include a metric for which  translations, scale changes and deformations of the curve are orthogonal \cite{Sundaramoorthi2011} and 
an $H^2$-type (semi)-metric whose kernel is generated by translations, scalings and rotations. \cite{Shah2010}.

For curves we can use arclength to identify each element $C \in B_{i,f}(S^1,\R^2)$ of shape space with a (up to rotation) unique parametrized curve $c \in \on{Imm}(S^1,\R^2)$. This observation has been used by Preston to induce a Riemannian metric on the shape space of unparametrized curves, via metrics on the space of arclength parametrized curves; see \cite{Preston2012,Preston2011}. A similar approach has been chosen in  \cite{Klassen2004}. 

\subsection{Geodesic distance}

Sobolev-type metrics induce a point-separating geodesic distance function on $B_{i,f}(M,\R^d)$, if the order of the operator field $L$ is high enough. For the $H^1$-metric
\begin{equation}
\label{h1_metric}
G^{H^1}_q(h,k) = \int_M \left\langle \left(\on{Id} + \De^g\right) h, k \right\rangle \on{vol}(q^\ast \ol g)\,,
\end{equation}
one can bound the length of a path by the area (volume) swept-out, similarly to the case of almost local metrics.

\begin{theorem}
If the metric $G^L$ induced by the operator field $L$ is at least as strong as the $H^1$-metric \eqref{h1_metric}, i.e.,
\[
G_q^L(h,h) \geq C G^{H^1}_q(h,h)
\]
for some constant $C>0$, then $G^L$ induces a point-separating geodesic distance function on the shape space $B_{i,f}(M,\R^d)$.
\end{theorem}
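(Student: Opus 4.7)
The strategy is two-step: reduce the claim to the pure $H^1$-metric, and then adapt the volume-swept-out argument of Sect.~\ref{al_dist} to this setting.

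Since $G^L_q(h,h) \geq C\,G^{H^1}_q(h,h)$ pointwise in $(q,h)$, smooth paths in $\on{Imm}_f(M,\R^d)$ satisfy $\on{Len}^{G^L}(q) \geq \sqrt{C}\,\on{Len}^{G^{H^1}}(q)$, and taking infima over paths joining two $\on{Diff}(M)$-orbits yields $\on{dist}^{G^L}_{B_{i,f}} \geq \sqrt{C}\,\on{dist}^{G^{H^1}}_{B_{i,f}}$. It therefore suffices to show that $G^{H^1}$ itself induces a point-separating geodesic distance on $B_{i,f}(M,\R^d)$.

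For this I would bound the volume swept out along a smooth path $q:[0,1]\to\on{Imm}_f(M,\R^d)$,
\[
A(q) := \int_0^1 \int_M |\ol g(q_t, n_q)|\on{vol}(q^\ast\ol g)\,\ud t,
\]
from above in terms of $\on{Len}^{G^{H^1}}(q)$. Cauchy--Schwarz on $(M, g)$ with $g=q^\ast\ol g$ together with $|\ol g(q_t,n_q)| \leq |q_t|$ gives $\int_M|\ol g(q_t,n_q)|\on{vol}(g) \leq \sqrt{\on{Vol}_q}\,\sqrt{G^{H^1}_q(q_t,q_t)}$, hence $A(q) \leq (\sup_t \sqrt{\on{Vol}_{q(t)}})\cdot \on{Len}^{G^{H^1}}(q)$. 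To make this useful I need a Lipschitz estimate for $\sqrt{\on{Vol}_{q(t)}}$: the first-variation formula $\partial_t\on{vol}(g) = (\on{div}^g q_t^\top - H_q\ol g(q_t,n_q))\on{vol}(g)$ and Stokes' theorem reduce this to $\partial_t \on{Vol}_q = -\int_M H_q\ol g(q_t,n_q)\on{vol}(g)$. Writing $a=\ol g(q_t,n_q)$, the pointwise identity $|\nabla(an_q)|^2 = |\nabla a|^2 + a^2|s_q|^2$ (which uses $n_q\perp\nabla n_q$) together with $|s_q|^2 \geq H_q^2/(d-1)$ shows that the gradient part of $G^{H^1}_q$ already dominates $\int a^2 H_q^2\on{vol}(g)$; one further application of Cauchy--Schwarz then converts this into the desired estimate $|\partial_t\sqrt{\on{Vol}_{q(t)}}| \leq K\sqrt{G^{H^1}_q(q_t,q_t)}$, which is precisely the step that fails for the pure $L^2$-metric.

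Combining these estimates yields $A(q) \leq \Phi(\on{Vol}_{q_0},\on{Len}^{G^{H^1}}(q))$ for a continuous $\Phi$ vanishing as the length does. Since $C_0\neq C_1$, the infimum of $A$ over all paths joining lifts of $C_0$ and $C_1$ is bounded below by a positive quantity $\ep(C_0,C_1)$ controlled by the Fr\'echet distance between the two shapes, via the isoperimetric-type argument of \cite[Sect.~3.4]{Michor2006c}. Together these force $\on{Len}^{G^{H^1}}(q)$ to stay above a strictly positive threshold, giving $\on{dist}^{G^{H^1}}_{B_{i,f}}(C_0,C_1) > 0$ and hence, by the reduction above, $\on{dist}^{G^L}_{B_{i,f}}(C_0,C_1) > 0$. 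The main obstacle is the Lipschitz estimate for $\on{Vol}$: unlike the almost local case in Thm.~\ref{thm:al_non_vanish}, where $\Psi \geq 1+AH^2$ builds the curvature directly into the metric, here the control of $\int a^2 H_q^2\on{vol}(g)$ must be routed through the identity for $|\nabla(an_q)|^2$, and one must check that the tangential component $q_t^\top$ (which contributes to $G^{H^1}_q$ but not to $\partial_t\on{Vol}_q$) only helps, and that the constant $K$ can be chosen independent of $q$ along paths of bounded $H^1$-length.
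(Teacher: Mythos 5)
Your proposal follows essentially the same route as the paper: reduce to the $H^1$-metric, bound the $G^{H^1}$-length of a path from below by the volume swept out, and control the volume factor via the Lipschitz continuity of $\sqrt{\on{Vol}}$ — which is exactly the ingredient the paper isolates (and states as the subsequent theorem, proved in \cite[Lem. 7.5]{Bauer2011b} via the same first-variation and $|\nabla(a n_q)|^2 = |\nabla a|^2 + a^2|s_q|^2 \geq |\nabla a|^2 + a^2 H_q^2/(d-1)$ computation you sketch). The only point to nail down, which you correctly flag yourself, is that the estimate $\int_M a^2 H_q^2 \on{vol}(g) \lesssim G^{H^1}_q(q_t,q_t)$ must be justified for arbitrary (not just pointwise-normal) tangent vectors, since the cross terms between $q_t^\top$ and $a\,n_q$ in $\nabla q_t$ do not vanish pointwise.
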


A proof can be found in \cite[Thm 7.6]{Bauer2011b}. An ingredient in the proof is the Lipschitz continuity of $\sqrt{\on{Vol}_q}$.

\begin{theorem}
The $H^1$-metric satisfies
\[
\left|\sqrt{\Vol_{Q_1}}-\sqrt{\Vol_{Q_2}}\right| \leq  \frac{1}{2} \on{dist}^{H^1}_{B_{i,f}}(Q_1,Q_2)\,.
\]
In particular the map
\[
\sqrt{\Vol}:(B_{i,f}(M,\R^d),\on{dist}^{H^1}_{B_{i,f}}) \to \mathbb R_{> 0}
\]
is Lipschitz continuous.
\end{theorem}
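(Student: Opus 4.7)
The plan is to establish a pointwise differential inequality of the form
\[
\left|\partial_t \sqrt{\Vol_{q(t)}}\right| \le \tfrac{1}{2}\sqrt{G^{H^1}_{q(t)}(q_t,q_t)}
\]
along every smooth path $q:[0,1]\to\Imm_f(M,\R^d)$, then integrate in $t$ and take the infimum over paths projecting to a curve from $Q_1$ to $Q_2$ in $B_{i,f}$. Since $\Vol_q$ is $\Diff(M)$-invariant, $\sqrt{\Vol}$ descends to a well-defined function on $B_{i,f}(M,\R^d)$, so only the pointwise inequality requires genuine work.

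Setting $g(t) = q(t)^*\bar g$ and $h = q_t$, the variation formula $\partial_t g_{ij} = 2\langle\partial_iq,\partial_jh\rangle$ combined with $\partial_t\on{vol}(g) = \tfrac12 g^{ij}\partial_tg_{ij}\on{vol}(g)$ yields
\[
\partial_t\Vol_q = \int_M g^{ij}\langle\partial_iq,\partial_jh\rangle\,\on{vol}(g) = \int_M \langle\nabla q,\nabla h\rangle_g\,\on{vol}(g).
\]
Equivalently, the normal decomposition $h = h^\top + a\,n_q$ together with the divergence theorem on the closed manifold $M$ and the identity $\De^g q = -H_q n_q$ recovers the classical first variation of area formula $\partial_t\Vol_q = -\int_M aH_q\,\on{vol}(g)$. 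The key pointwise observation is that $|\nabla q|_g^2 = g^{ij}g_{ij} = \dim M$, so the Hilbert--Schmidt norm of $dq:TM\to\R^d$ is constantly $\sqrt{\dim M}$ along $M$; this equals $1$ in the case $M = S^1$ of plane curves. Applying pointwise Cauchy--Schwarz followed by Cauchy--Schwarz in $L^2(M,\on{vol}(g))$ yields
\[
|\partial_t\Vol_q| \le \sqrt{\dim M}\,\sqrt{\Vol_q}\left(\int_M|\nabla h|_g^2\,\on{vol}(g)\right)^{1/2} \le \sqrt{\dim M}\,\sqrt{\Vol_q}\,\sqrt{G^{H^1}_q(h,h)},
\]
where the final step uses $G^{H^1}_q(h,h)\ge\int_M|\nabla h|_g^2\on{vol}(g)$ from the definition of the $H^1$-metric. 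Dividing by $2\sqrt{\Vol_q}$ produces the differential inequality with constant $\sqrt{\dim M}/2$, giving the stated $\tfrac12$ exactly when $\dim M = 1$.

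Integrating in $t$ and using $|\sqrt{\Vol_{Q_1}} - \sqrt{\Vol_{Q_2}}| \le \int_0^1|\partial_t\sqrt{\Vol_{q(t)}}|\,\ud t$ gives a bound by $\tfrac12 \on{Len}^{H^1}(q)$ for any such path, and taking the infimum over paths produces the Lipschitz bound on $\dist^{H^1}_{B_{i,f}}$. The main subtlety, and the source of the stated sharp constant $\tfrac12$ being tied to plane curves, is the dimensional factor $\sqrt{\dim M}$ arising from the pointwise norm of $\nabla q$; this factor is saturated by umbilical motions (for instance, a radially dilating round sphere), so no improvement of the constant is possible via this Cauchy--Schwarz argument in higher dimensions.
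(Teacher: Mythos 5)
Your argument is correct and is essentially the route behind the proofs the paper cites (\cite[Sect.~4.7]{Michor2007} for curves, \cite[Lem.~7.5]{Bauer2011b} for hypersurfaces): compute the first variation of volume, write it as $\int_M \langle \nabla q,\nabla h\rangle_g\,\on{vol}(g)$ (equivalently $-\int_M a H_q\,\on{vol}(g)$ via $\De^g q=-H_q n_q$ and integration by parts), and apply Cauchy--Schwarz pointwise and then in $L^2$, using $\int_M|\nabla h|_g^2\,\on{vol}(g)\le G^{H^1}_q(h,h)$. The one substantive point is the one you already flag: your estimate yields the constant $\tfrac{\sqrt{\dim M}}{2}$, and this is not slack in your argument but a needed correction to the statement. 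For $\dim M\ge 2$ the inequality with constant $\tfrac12$ is in fact false: take $q$ the unit sphere in $\R^3$ centred at the origin and $h=q$. Then $\p_t\Vol = 2\Vol = 8\pi$, so $\p_t\sqrt{\Vol}=2\sqrt{\pi}$, while $G^{H^1}_q(h,h)=\int_{S^2}\big(|q|^2+|\nabla q|_g^2\big)\on{vol}(g)=(1+2)\cdot 4\pi=12\pi$ and $\tfrac12\sqrt{12\pi}=\sqrt{3\pi}<2\sqrt{\pi}$. Integrating along the radial path shows that for two concentric spheres of radii $1$ and $1+\ep$ with $\ep$ small the left-hand side exceeds $\tfrac12$ times the length of that path, hence exceeds $\tfrac12\on{dist}^{H^1}_{B_{i,f}}$. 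So the theorem as printed is correct verbatim only for plane curves ($\dim M=1$); in general the constant must be $\tfrac{\sqrt{\dim M}}{2}=\tfrac{\sqrt{d-1}}{2}$, and your remark that dilations saturate the pointwise Cauchy--Schwarz step (with the $L^2$-term becoming negligible for small spheres) shows this constant is sharp. With that emendation your proof is complete; the passage from the differential inequality to the statement on $B_{i,f}(M,\R^d)$ --- integrate in $t$, take the infimum over paths, and use the $\Diff(M)$-invariance of $\Vol$ --- is exactly as you describe.
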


A proof for plane curves can be found in \cite[Sect. 4.7]{Michor2007} and for higher dimensional surfaces in \cite[Lem. 7.5]{Bauer2011b}.

The behavior of the geodesic distance on the space $\on{Imm}_f(M,\R^d)$ is unknown. Similar to Sect. \ref{al_dist} we can restrict the $G^L$-metric to an orbit $q \on{\circ} \on{Diff}(M)$ and the induced metric on $\on{Diff}(M)$ will be a right-invariant Sobolev metric. Since Sobolev-type metrics of a sufficiently high order on the diffeomorphism group have point-separating geodesic distance functions, there is no a-priori obstacle for the distance $\on{dist}^L_{\on{Imm}}$ not to be point-separating.

\begin{openquestion}
Under what conditions on the operator field $L$ does the metric $G^L$ induce a point-separating geodesic distance function on $\on{Imm}_f(M,\R^d)$?
\end{openquestion}

\subsection{The geodesic equation}

The most concise way to write the geodesic equation on $\on{Imm}(M,\R^d)$ for a general operator 
field $L$ involves its covariant derivative $\nabla L$ and adjoint $\on{Adj}(\nabla L)$. See 
\cite[Sect. 4.2]{Bauer2011b} for the definition of $\nabla L$; note that $\nabla$ here is not related to 
the metric $G^L$. For a general operator field $L$ on 
$\on{Imm}(M,\R^d)$ we define the adjoint $\on{Adj}(\nabla L)$ to be the adjoint of $(\nabla_k L) h$ 
in the $k$ variable with respect to the $L^2$-metric, i.e., for all $h, k, m \in T_q 
\on{Imm}(M,\R^d)$ we have     
\begin{multline}
\label{L_adj}
\int_M \langle (\nabla_k L) h, m\rangle \on{vol}(q^\ast \ol g) =  \\
= \int_M \langle k, \on{Adj}(\nabla L)(h, m)\rangle \on{vol}(q^\ast \ol g)\,.
\end{multline}
The existence and smoothness of the adjoint has to be checked for each metric by hand. This usually 
involves partial integration and even for simple operator fields like $L=\on{Id}+(\De^g)^l$ the 
expressions for the adjoint quickly become unwieldy.  

Assuming the adjoint in the above sense exists, we can write the geodesic equation in the following form in terms of the momentum.
\begin{theorem}[Thm. 6.5, \cite{Bauer2011b}]
Let $L$ be a smooth pseudo-differential operator field, that is invariant under reparametrizations, such that the adjoint $\on{Adj}(\nabla L)$ exists in the sense of \eqref{L_adj}. Then the geodesic equation for the $G^L$-metric on $\Imm(M,\R^d)$ is given by:
\begin{equation}
\label{sob_ge}
\begin{split}
p &= L_q q_t \otimes \on{vol}(q^\ast \ol g) \\
p_t &=  \frac12\Big(\on{Adj}(\nabla L)(q_t,q_t)^\bot- 2Tq.\langle L_q q_t,\nabla q_t\rangle^\sharp\\
&\qquad\qquad\qquad
-H_q \langle L_q q_t,q_t\rangle n_q \Big)\otimes\on{vol}(q^*\ol g)
\end{split}
\end{equation}
\end{theorem}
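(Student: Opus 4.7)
The plan is to derive the geodesic equation as the Euler--Lagrange equation of the energy functional
\[
E(q) = \tfrac{1}{2}\int_0^1 \int_M \langle L_q q_t, q_t\rangle\, \on{vol}(q^*\ol g)\, \ud t
\]
under compactly supported variations $h$ of the path $q$ with $h(0) = h(1) = 0$. Computing $\partial_s|_{s=0} E(q+sh)$ splits naturally into three contributions: the variation of the operator $L_q$ (captured by the covariant derivative $\nabla L$), the variation of the velocity $q_t$ (which becomes $h_t$), and the variation of the induced volume form $\on{vol}(q^*\ol g)$. Each of these has to be rewritten so that $h$ appears algebraically; then, since $h$ is arbitrary, one reads off the equation.

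First I would treat the $h_t$-term: using self-adjointness of $L_q$ with respect to the $L^2$-pairing and integrating by parts in $t$, the contribution $\int_0^1 \int_M \langle L_q h_t, q_t\rangle\, \on{vol}(g)\, \ud t$ becomes $-\int_0^1 \int_M h\cdot p_t\, \ud t$, where $p = L_q q_t \otimes \on{vol}(q^*\ol g)$ is the momentum. Next, the variation of $L_q$ gives a term $\tfrac{1}{2}\int\!\!\int \langle (\nabla_h L) q_t, q_t\rangle\, \on{vol}(g)$, which by the defining relation \eqref{L_adj} equals $\tfrac{1}{2}\int\!\!\int \langle h, \on{Adj}(\nabla L)(q_t, q_t)\rangle\, \on{vol}(g)$. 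Finally, for the volume-form variation, I would split $h = h^\top + h^\bot$ with $h^\bot = \langle h, n_q\rangle n_q$ and use the standard identity
\[
D_{q,h}\on{vol}(q^*\ol g) = \bigl(\on{div}^g(h^\top) - H_q\langle h, n_q\rangle\bigr)\on{vol}(q^*\ol g).
\]
The divergence term can be integrated by parts on $M$, turning $\on{div}^g(h^\top)$ into a pairing between $h^\top$ and $\nabla^g\langle L_q q_t, q_t\rangle$; after symmetrizing with the half-factor this produces exactly the tangential term $-Tq.\langle L_q q_t, \nabla q_t\rangle^\sharp$. The mean-curvature piece becomes the normal term $-H_q\langle L_q q_t, q_t\rangle n_q$.

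Collecting: setting the first variation to zero for all $h$ and using that only the normal component of $\on{Adj}(\nabla L)(q_t,q_t)$ can couple to $h^\bot$ (while the tangential component is absorbed into the $\nabla q_t$ term by the volume-form calculation), the density-valued equation
\[
p_t = \tfrac{1}{2}\Bigl(\on{Adj}(\nabla L)(q_t,q_t)^\bot - 2Tq.\langle L_q q_t,\nabla q_t\rangle^\sharp - H_q\langle L_q q_t,q_t\rangle n_q\Bigr)\otimes\on{vol}(q^*\ol g)
\]
emerges, which is \eqref{sob_ge}.

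The main obstacle I expect is bookkeeping the tangential/normal decomposition cleanly. The adjoint of $\nabla L$ is defined as a vector in $T_q\on{Imm}$ with no built-in splitting, and both the $h^\top$ from the volume-form variation and the corresponding piece of $\on{Adj}(\nabla L)(q_t,q_t)^\top$ contribute to the tangential coefficient; one must verify that the two tangential contributions combine precisely into $-Tq.\langle L_q q_t,\nabla q_t\rangle^\sharp$ via a spatial integration by parts, which requires using the compatibility between $\nabla L$ and the $\on{Diff}(M)$-invariance \eqref{L_rep_inv} (differentiating this identity in the reparametrization direction produces the commutator that yields the $\nabla q_t$ factor). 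Verifying the existence and smoothness of $\on{Adj}(\nabla L)$ is, as the excerpt warns, to be checked case-by-case; here we simply assume it holds.
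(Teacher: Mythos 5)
Your proposal is correct and follows essentially the same route as the proof in the cited reference: the first variation of the energy, with the three contributions (variation of $L_q$ via $\on{Adj}(\nabla L)$, time integration by parts giving $p_t$, and the variation of $\on{vol}(q^\ast\ol g)$ via $\on{div}^g(h^\top)-H_q\langle h,n_q\rangle$), and the differentiated reparametrization invariance to identify the tangential part of $\on{Adj}(\nabla L)(q_t,q_t)$. Your middle paragraph slightly overstates what the volume-form term alone yields (it gives $-\tfrac12 Tq.(\nabla\langle L_qq_t,q_t\rangle)^\sharp$, not yet $-Tq.\langle L_qq_t,\nabla q_t\rangle^\sharp$), but your final paragraph correctly identifies that the missing piece comes from combining this with $\on{Adj}(\nabla L)(q_t,q_t)^\top$ as computed from \eqref{L_rep_inv}, so the argument is sound.
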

Note, that only the normal part of the adjoint
\[\on{Adj}(\nabla L)(q_t,q_t)^\bot=\langle \on{Adj}(\nabla L)(q_t,q_t),n_q\rangle n_q \]
appears in the geodesic equation. The 
tangential part is determined by the reparametrization invariance of the operator field $L$; see \cite[Lem. 6.2]{Bauer2011b}.

\begin{example}
Consider the simple operator field $L=D_s$ on the space $\on{Imm}(S^1,\R^2)$ of plane curves. To emphasize the nonlinear dependence of $L$ on the footpoint $c$ we write it as $L_c h = \tfrac{1}{|c_\theta|} h_\th$. The covariant derivative $\nabla L$ is simply the derivative of $L$ with respect to the footpoint,
\[
(\nabla_k L)  h = -\frac{1}{|c_\theta|^3} \langle k_\th, c_\theta \rangle h_\th = -\langle D_s k, v_c \rangle D_s h\,.
\]
for the operator field $L=D_s$. To compute its adjoint, we use the following identity, obtained by integration by parts,
\begin{multline*}
\int_{S^1} \langle D_s k, v_c \rangle \langle D_s h, m \rangle \ud s = \\
=- \int_{S^1} \langle k, \ka_c n_c \rangle \langle D_s h, m \rangle + \langle k, v_c \rangle D_s \langle D_s h, m \rangle \ud s\,,
\end{multline*}
which leads to
\[
\on{Adj}(\nabla L)(h, m) = \langle D_s h, m\rangle \ka_c n_c + D_s \left(\langle D_s h, m\rangle \right) v_c\,.
\]
The normal part $\on{Adj}(\nabla L)^\bot$, which is necessary for the geodesic equation is
\[
\on{Adj}(\nabla L)^\bot(h, m) = \langle D_s h, m\rangle \ka_c n_c\,.
\]
Note that while the full adjoint is a second order differential operator field, the normal part has only order one. This reduction in order will be important for the well-posedness of the geodesic equation.
\end{example}

To prove that geodesics on 
$B_{i,f}(M,\R^d)$ can be represented by horizontal geodesics on $\Imm_f(M,\R^d)$ we need the following lifting property.

\begin{lemma}[Lem. 6.8 and 6.9, \cite{Bauer2011b}]
Let $L$ be a smooth pseudo-differential operator field with order constant in $q$, that is 
invariant under reparametrizations, and such that for each $q$, the operator $L_q$ is elliptic, 
symmetric, and 
positive-definite. 
Then the decomposition 
\[
T\on{Imm}_f(M,\R^d) = \on{Hor}^L \oplus \on{Ver}
\]
of tangent vectors into horizontal and vertical parts is a smooth operation.

For any smooth path $q(t)$ in $\Imm_f(M,\R^d)$ there exists a
smooth path $\ph(t)$ in $\on{Diff}(M)$ depending smoothly on $q(t)$ such that the path $\wt q(t) = q(t) \on{\circ} \ph(t)$ is horizontal, i.e.,
\[
G^L_{\wt q(t)}(\p_t \wt q(t), T \wt q. X) = 0\,,\qquad \forall X\in \mf X(M)\,.\]

Thus any path in shape space can be lifted to a horizontal path of immersions.
\end{lemma}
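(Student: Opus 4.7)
The plan is to prove the two assertions separately, since the path lifting reduces to the splitting statement plus regularity of $\Diff(M)$. For the splitting, I would observe that for a free immersion $q$ the vertical space is exactly $\on{Ver}_q = \{Tq.X : X \in \mf X(M)\}$ and that $Tq : \mf X(M) \to T_q \on{Imm}_f(M,\R^d)$ is a fiberwise injection. Given $h \in T_q \on{Imm}_f(M,\R^d)$, the vertical part $h^{\on{ver}} = Tq.X$ is characterized by orthogonality of the remainder to all vertical vectors, i.e., $X$ solves
\[
\int_M \langle L_q(Tq.X), Tq.Y\rangle \on{vol}(q^*\ol g) = \int_M \langle L_q h, Tq.Y\rangle \on{vol}(q^*\ol g) \quad \forall Y \in \mf X(M).
\]
Writing this as $A_q X = B_q h$, where $A_q = (Tq)^* \circ L_q \circ Tq$ and the adjoint is taken with respect to the $L^2(q^*\ol g)$-pairing, the operator $A_q : \mf X(M) \to \mf X(M)$ is symmetric and strictly positive by injectivity of $Tq$ and positivity of $L_q$. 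Its principal symbol $(Tq)^\top \circ \sigma(L_q) \circ Tq$ is fiberwise positive definite, so $A_q$ inherits ellipticity and the order from $L_q$. Elliptic theory then provides a pseudo-differential inverse of the opposite order, and smooth dependence of the data $(L_q, Tq, \on{vol}(q^*\ol g))$ on $q$, combined with smooth dependence of inverses of elliptic families on their coefficients, yields smoothness of $q \mapsto A_q^{-1}$ and hence of the splitting $h \mapsto (h^{\on{hor}}, h^{\on{ver}})$.

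For the path lifting, given a smooth path $q(t) \in \on{Imm}_f(M,\R^d)$ I would apply the splitting to the velocity to obtain a smooth curve $X(t) \in \mf X(M)$ with $\p_t q(t) - Tq(t).X(t) = (\p_t q(t))^{\on{hor}}$. By the regularity property of $\Diff(M)$ stated earlier in the paper, the curve $-X(t) \in \mf X(M)$ admits a unique integral curve $\ph(t) \in \Diff(M)$ with $\ph(0) = \Id_M$ and right-logarithmic derivative $-X(t)$, i.e.\ $\p_t \ph(t) = -X(t) \circ \ph(t)$. Setting $\wt q(t) = q(t) \circ \ph(t)$, direct differentiation gives
\[
\p_t \wt q(t) = \bigl(\p_t q(t) - Tq(t).X(t)\bigr) \circ \ph(t) = (\p_t q(t))^{\on{hor}} \circ \ph(t).
\]
The reparametrization invariance \eqref{L_rep_inv} of $L$, together with invariance of the volume form, implies that $G^L$ is $\Diff(M)$-invariant and hence that the horizontal bundle is $\Diff(M)$-equivariant: $(\p_t q)^{\on{hor}} \circ \ph \in \on{Hor}^L(\wt q)$, which is precisely the required horizontality of $\p_t \wt q$.

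The main obstacle is the smoothness claim for the splitting. The operator $A_q$ is a pseudo-differential operator field on $M$ whose coefficients depend nonlinearly on $q$ through both $L_q$ and $Tq$; the principal-symbol ellipticity calculation is routine, but propagating Fr\'echet-smoothness of the inverse through the parametrix construction as $q$ varies in the infinite-dimensional manifold $\on{Imm}_f(M,\R^d)$ is the technical heart of the argument. Once this smoothness is in hand, the second part follows by the essentially formal calculation above combined with the regularity of $\Diff(M)$.
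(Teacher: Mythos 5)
Your argument is correct and follows essentially the same route as the proof in the cited reference \cite{Bauer2011b} (the paper itself only quotes Lemmas 6.8 and 6.9 there without proof): one reduces the splitting to inverting the elliptic, symmetric, positive operator $(Tq)^{*}\circ L_q\circ Tq$ on $\mf X(M)$, whose inverse depends smoothly on $q$ by the theory of smooth families of elliptic operators, and then obtains the horizontal lift by integrating the resulting time-dependent vector field $-X(t)$ via the regularity of $\Diff(M)$ and the $\Diff(M)$-equivariance of $\on{Hor}^L$. The step you flag as the technical heart --- Fr\'echet-smoothness of $q\mapsto\bigl((Tq)^{*}L_qTq\bigr)^{-1}$ --- is exactly the point carried by the elliptic-family machinery in the reference, so nothing essential is missing.
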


\subsection{Well-posedness of the geodesic equation}
The well-posedness of the geodesic equation can be proven under rather general assumptions on the operator field.

{\bf Assumptions.} For each $q \in \on{Imm}(M,\R^d)$ the operator $L_q$ is an elliptic, pseudo-differential operator of order $2l$ and it is positive and symmetric with respect to the $L^2$-metric.

The operator field $L$, the covariant derivative $\nabla L$, and the normal part of the  adjoint 
$\on{Adj}(\nabla L)^\bot$ are all smooth sections of the corresponding bundles. For fixed $q$ the 
expressions 
\[
L_q h,\quad (\nabla_h L_q)k,\quad \on{Adj}(\nabla L)_q(h,k)^\bot
\]
 are pseudo-differential operators of order $2l$ in $h,k$ separately. As mappings in the footpoint 
 $q$ they can be a composition of non-linear differential operators and linear pseudo-differential 
 operators as long as the total order is less than $2l$.  

The operator field $L$ is reparametrization invariant in the sense of \eqref{L_rep_inv}.

With these assumptions we have the following theorem from \cite[Thm. 6.6]{Bauer2011b}. A similar theorem has been proven for plane curves in \cite[Thm 4.3]{Michor2007}.

\begin{theorem}
\label{thm:sob_wp}
Let the operator field $L$ satisfy the above assumptions with $l\geq 1$ and let $k > \tfrac {\on{dim}(M)}2 + 2l + 1$. Then the geodesic spray of the $G^L$-metric is smooth on the Sobolev manifold $\on{Imm}^k(M,\R^d)$ of $H^k$-immersions.

In particular the initial value problem for the geodesic equation \eqref{sob_ge} has unique solutions
\[
(t \mapsto q(t,\cdot)) \in C^\infty((-\ep,\ep), \on{Imm}^k(M,\R^d))\,,
\]
for small times and the solution depends smoothly on the initial conditions $q(0,\cdot)$, $q_t(0,\cdot)$ in $T\on{Imm}^k(M,\R^d)$.
\end{theorem}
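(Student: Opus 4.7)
The approach is the classical Ebin--Marsden strategy, adapted to the immersion setting as in \cite[Thm 4.3]{Michor2007}: extend the metric to a Sobolev completion on which the geodesic spray becomes an ordinary smooth vector field, then apply Picard--Lindel\"of on a Banach manifold. Concretely, one completes $\on{Imm}(M,\R^d)$ to the Hilbert manifold $\on{Imm}^k(M,\R^d)$ of $H^k$-immersions (an open subset of $H^k(M,\R^d)$, since $k > \on{dim}(M)/2 + 1$ gives $H^k \hookrightarrow C^1$ and the immersion condition is an open $C^1$-condition), rewrites the geodesic equation \eqref{sob_ge} as a second-order ODE $q_t = u$, $u_t = F(q,u)$, and proves that $F : T\on{Imm}^k \to T\on{Imm}^k$ is smooth.

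To extract $F$, differentiate $p = L_q u \otimes \on{vol}(q^\ast \ol g)$ in $t$: this produces a leading term $L_q u_t \otimes \on{vol}(q^\ast \ol g)$ together with contributions from $\p_t L_q = (\nabla_u L)\cdot$ and from $\p_t \on{vol}(q^\ast \ol g)$. Equating with the right-hand side of \eqref{sob_ge} yields
\[
L_q u_t = \Phi(q,u)\,,
\]
where $\Phi(q,u)$ is assembled from $\on{Adj}(\nabla L)(u,u)^\bot$, $Tq.\langle L_q u, \nabla u\rangle^\sharp$, $H_q \langle L_q u, u\rangle n_q$, and analogous contributions from $(\nabla_u L)u$. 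By the standing assumptions, each of these is a pseudo-differential operator of order at most $2l$ in $u$, with coefficients that are nonlinear differential expressions of total order at most $2l$ in $q$. Positivity, symmetry, and ellipticity of $L_q$ give, via elliptic regularity, that $L_q : H^k \to H^{k-2l}$ is a topological isomorphism depending smoothly on $q\in \on{Imm}^k$; consequently $F(q,u) := L_q^{-1}\Phi(q,u)$ is of order $0$ in $u$ and sends $T\on{Imm}^k$ into itself.

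The heart of the proof is verifying that $F$ is smooth as a map between the Banach spaces underlying $T\on{Imm}^k$. The threshold $k > \on{dim}(M)/2 + 2l + 1$ is chosen precisely so that $H^{k-2l-1}$ is a Banach algebra under pointwise multiplication and closed under composition with sufficiently smooth maps, the standard Sobolev module lemma. This makes the nonlinear coefficients appearing in $L_q$, $\nabla L$, and $\on{Adj}(\nabla L)^\bot$ into smooth functions of $q\in \on{Imm}^k$; combining this with smooth dependence of parametrices of elliptic families on their symbols yields smoothness of $L_q^{-1}$, and hence of $F$, in the Banach category. Once the spray $S(q,u) = (u,F(q,u))$ is known to be a smooth vector field on $T\on{Imm}^k$, Picard--Lindel\"of delivers the stated local existence, uniqueness and smooth dependence on initial data. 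The main obstacle, and where most of the work in \cite{Bauer2011b} resides, is the careful bookkeeping of orders: one must confirm that in each formally order-$2l$ term the leading symbol in $u$ is genuinely compensated by $L_q^{-1}$, and that the nonlinear $q$-dependence of $\nabla L$ and $\on{Adj}(\nabla L)^\bot$ never pushes the total order above $2l$, so that no derivative is lost when restricting to the fixed Sobolev scale.
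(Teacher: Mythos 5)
Your proposal is correct and follows essentially the same route as the proof the paper relies on: the survey does not reproduce the argument but cites \cite[Thm.~6.6]{Bauer2011b} (and \cite[Thm~4.3]{Michor2007} for curves), where well-posedness is established exactly by passing to the Sobolev completion $\on{Imm}^k(M,\R^d)$, inverting the elliptic operator field $L_q$ to exhibit a smooth spray with no loss of derivatives, and applying Picard--Lindel\"of, in the spirit of Ebin--Marsden. Your bookkeeping of orders, the role of the threshold $k>\tfrac{\dim(M)}2+2l+1$, and the remark that the normal part $\on{Adj}(\nabla L)^\bot$ must not exceed order $2l$ all match the assumptions and the strategy of that reference.
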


\begin{remark}
For smooth initial conditions $q(0,\cdot)$, $q_t(0,\cdot)$ in $T\on{Imm}^k(M,\R^d)$ we can apply 
the above theorem for different $k$ and obtain solutions in each Sobolev completion 
$\on{Imm}^k(M,\R^d)$. It can be shown that the maximal interval of existence is independent of the 
Sobolev order $k$ and thus the solution of the geodesic equation itself is in fact smooth. 
Therefore the above theorem continues to hold, if $\on{Imm}^k(M,\R^d)$ is replaced by 
$\on{Imm}(M,\R^d)$.     
\end{remark}
\begin{remark}
Due to the correspondence of horizontal geodesics on $\Imm_f(M,\R^d)$ to geodesics on  shape 
space $B_{i,f}(M,\R^d)$ the above well-posedness theorem implies in particular the well-posedness 
of the geodesic problem on  $B_{i,f}(M,\R^d)$.   
\end{remark}

\begin{example}
The assumptions of this theorem might look very abstract at first. The simplest operator fulfilling them is
\[
L_q = \on{Id} + \De^g
\]
or any power of the Laplacian, $L_q = \on{Id} + (\De^g)^l$. We can also introduce non-constant coefficients, for example
\[
L_q = f_1(H_q, K_q) + f_2(H_q, K_q) (\De^g)^l \,,
\]
as long as the operator remains elliptic, symmetric and positive. To check symmetry and positivity 
it is sometimes easier to start with the metric. For example the expression 
\begin{multline*}
G_q(h,k) = \int_M g_1(\on{Vol}_q) \langle h, k \rangle + \\
+g_2(\on{Vol}_q)\sum_{i=1}^d g(\nabla^g h^i, \nabla^g k^i)\on{vol}(q^\ast \ol g)\,,
\end{multline*}
defines a metric and the corresponding operator $L_q$ will be symmetric and positive, provided 
$g_1$ and $g_2$ are positive functions. We can compute the operator via integration by parts, 
\[
(L_qh)^i = g_1(\on{Vol}_q) h^i - \on{div}^g \left( g_2(\on{Vol}_q) \nabla^g h^i \right)\,.
\]
For this operator field to satisfy the assumptions of Thm. \ref{thm:sob_wp}, if $g_2$ is the constant function, because $L_q h$ has order 2 in $h$, so it can depend at most on first derivatives of $q$.
\end{example}

\subsection{Conserved quantities}
If the operator field $L$ is invariant with respect to reparametrizations, the $G^L$-metric will be invariant under the action of $\Diff(M)$.
By Noether's theorem  the reparametrization momentum is constant along each geodesic, c.f. Sect.\ \ref{almost_conserved}. 
This means that for each $X \in \mf X(M)$ we have
\[
\int_M  \langle L_q q_t, Tq. X\rangle  \on{vol}(q^\ast \ol g) =  \textrm{const.}
\]
If $L$ is additionally invariant under the action of the Euclidean motion group $\R^d \rtimes \on{SO}(d)$ then so is the $G^L$-metric and  the linear and angular momenta are constant along geodesics. These are given by
\begin{align*}
\int_M( L_q q_t) \on{vol}(q^\ast \ol g) &\in \R^d \\
\int_M   q \wedge (L_q q_t) \on{vol}(q^\ast \ol g) &\in \bigwedge\nolimits^2 \R^d \cong \mf{so}(d)^\ast\,.
\end{align*}
If the operator field $L$ satisfies  the scaling property
\[L_{\la .q}=\la^{-\on{dim}(M) - 2}L_q,\quad q\in\Imm(M,\R^d), \la\in\R\,,\]
then the induced metric $G^L$ is scale invariant. In this case the scaling momentum is conserved along geodesics as well. It is given by:
\[
\int_M
 \langle q,L_qq_t\rangle\on{vol}(g) \in \R\,.
\]
See Sect.~\ref{almost_conserved} for a more detailed explanation of the meaning of these quantities.

\subsection{Completeness}

Concerning geodesic completeness it is possible to derive a result similar to Thm. \ref{al_incomplete}. The set of concentric spheres with a common center is again a totally geodesic submanifold and we can look for conditions, when it is possible to shrink spheres to a point with a geodesic of finite length.

\begin{theorem}[Lem. 9.5, \cite{Bauer2011b}]
If $L=\on{Id} + (\De^g)^l$ and $l < \tfrac{\on{dim}(M)}2 + 1$, then the spaces $\Imm(S^{d-1},\R^d)$ and $B_{i,f}(S^{d-1},\R^d)$ are not geodesically complete with respect to the $G^L$-metric.
\end{theorem}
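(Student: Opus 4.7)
The plan is to exhibit a geodesic of finite $G^L$-length that leaves the manifold, using the observation that the one-parameter family of concentric round spheres is a totally geodesic submanifold of $B_{i,f}(S^{d-1},\R^d)$.

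First I would establish the ``totally geodesic" claim. Consider the curve $r \mapsto [q_r] \in B_{i,f}(S^{d-1},\R^d)$, where $q_r(x) = rx$ for $x \in S^{d-1}$ and $r > 0$. The operator field $L_q = \on{Id} + (\De^g)^l$ depends on $q$ only through the pull-back metric $g = q^*\ol g$, hence is equivariant under the natural action of the Euclidean motion group on $\on{Imm}(S^{d-1},\R^d)$; so the $G^L$-metric descends to an $O(d)$-invariant metric on $B_{i,f}$. Since each shape $[q_r]$ is pointwise fixed by this $O(d)$-action, the curve $\{[q_r]: r > 0\}$ is the fixed-point set of an isometric action and is therefore a totally geodesic submanifold.

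Next I would compute the arc-length element along the curve. For the sphere of radius $r$ the pull-back metric is $g_r = r^2 g_0$ with $g_0$ the round metric on $S^{d-1}$, so $\on{vol}(g_r) = r^{d-1}\on{vol}(g_0)$ and $\De^{g_r} = r^{-2}\De^{g_0}$. The tangent vector to the curve is $h(x) = \dot r\, x = \dot r\, n_{q_r}$. The coordinate functions $x^i$ are first spherical harmonics with $\De^{g_0} x^i = (d-1)x^i$, so $L_{q_r} h = \bigl(1 + (d-1)^l r^{-2l}\bigr) h$, and therefore
\begin{equation*}
G^L_{q_r}(h,h) = \om_{d-1}\bigl(1 + (d-1)^l r^{-2l}\bigr)\, r^{d-1}\, \dot r^2,
\end{equation*}
where $\om_{d-1} = \on{vol}(S^{d-1},g_0)$. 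The $G^L$-length from radius $r_1$ down to $r_0$ is then
\begin{equation*}
\sqrt{\om_{d-1}}\int_{r_0}^{r_1}\sqrt{r^{d-1} + (d-1)^l\, r^{d-1-2l}}\,\ud r.
\end{equation*}
Near $r=0$ the integrand is comparable to $r^{(d-1-2l)/2}$, which is integrable at $0$ precisely when $(d-1-2l)/2 > -1$, i.e., when $l < (d+1)/2 = \tfrac{\on{dim}(M)}{2}+1$. Under this hypothesis, parametrizing the sphere family by arc length gives a geodesic in $B_{i,f}(S^{d-1},\R^d)$ of finite length that runs off the manifold as $r \to 0$. The same curve $q_r(x)=rx$ in $\on{Imm}(S^{d-1},\R^d)$ has velocity everywhere normal to its image, hence is horizontal for the $G^L$-metric (since $L_{q_r}$ preserves the purely-normal subspace at a sphere); so by Thm.~\ref{thm:submersion} it is itself a geodesic of finite length in $\on{Imm}(S^{d-1},\R^d)$.

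The main obstacle I anticipate is justifying rigorously the ``fixed-point set of an isometric action is totally geodesic" step in this infinite-dimensional weak-Riemannian setting, where general Riemannian folklore does not automatically apply. I would handle this by invoking Thm.~\ref{thm:sob_wp} to get local uniqueness of geodesics; then a geodesic starting tangent to the sphere family must, by $O(d)$-symmetry and uniqueness, remain invariant under $O(d)$ and so stay inside the one-dimensional $O(d)$-fixed set. Alternatively, one can insert the ansatz $q(t,x) = r(t)x$ directly into the geodesic equation \eqref{sob_ge}, observe that both sides lie in the radial direction, and reduce everything to a scalar ODE for $r(t)$ whose finite-time blow-down to $r = 0$ is exactly the length estimate above.
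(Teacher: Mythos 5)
Your proposal is correct and follows essentially the same route as the paper's source (Lem.~9.5 of \cite{Bauer2011b}): the concentric spheres form a totally geodesic one-parameter family, the coordinate functions are degree-one spherical harmonics so that $L_{q_r}h=\bigl(1+(d-1)^l r^{-2l}\bigr)h$, and the resulting length integrand behaves like $r^{(d-1-2l)/2}$ near $r=0$, which is integrable exactly when $l<\tfrac{d+1}2=\tfrac{\on{dim}(M)}2+1$. The only soft spot is the fixed-point-set justification of "totally geodesic" in the case $l=0$, where Thm.~\ref{thm:sob_wp} gives no uniqueness, but your alternative of substituting the ansatz $q(t,x)=r(t)x$ directly into the geodesic equation covers that case as well.
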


For other choices of $M$ scalings will in general not be geodesic, but under the same condition an immersion can be scaled down to a point with finite energy. Under what conditions these spaces become geodesically complete is unknown. We do however suspect that similarly as Thm. \ref{diff_complete} for the diffeomorphism group, a differential operator field of high enough order will induce a geodesically complete metric.

The metric completion of  $B_{i,f}(S^1,\R^2)$ is known for the Sobolev metrics
\[
G_c^{H^j}(h, k) = \int_{S^1} \frac {1}{\ell_c}\langle h, k \rangle + \ell^{2j}_c\langle D_s^j h, D_s^j k \rangle \ud s\,,
\]
with $j=1,2$. For the metric of order 1 we have the following theorem.

\begin{theorem}[Thms. 26 and 27, \cite{Mennucci2008}]
The metric completion of $(B_{i,f}, \on{dist}^{H^1})$ is $B^{lip}_i(S^1,\R^2)$, the space 
of all rectifiable curves with the Fr\'echet topology.
\end{theorem}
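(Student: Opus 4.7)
I would prove this by showing two inclusions: first, that every Cauchy sequence in $(B_{i,f}(S^1,\R^2), \on{dist}^{H^1})$ converges in $B_i^{lip}(S^1,\R^2)$, and second, that every rectifiable curve is the limit of a Cauchy sequence of smooth immersions.

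The first inclusion rests on two Lipschitz-type estimates. One is already available: the map $\sqrt{\Vol}=\sqrt{\ell}$ is Lipschitz with respect to $\on{dist}^{H^1}$, so along any Cauchy sequence $C_n$ the lengths stay bounded. The other estimate I would establish is a bound of the form $\on{dist}^{L^\infty}_{B_{i,f}}(C_0,C_1)\le F(\ell_{\max})\,\on{dist}^{H^1}_{B_{i,f}}(C_0,C_1)$ on subsets where the length is bounded by $\ell_{\max}$. For this, pick a smooth path $c(t,\cdot)$ realizing the distance up to $\varepsilon$ and estimate, for each $\theta$, $|c(1,\theta)-c(0,\theta)|\le\int_0^1 \|c_t(t,\cdot)\|_{L^\infty(\ud\th)}\ud t$; then use the Sobolev embedding $H^1(S^1)\hookrightarrow L^\infty(S^1)$ applied in the arc-length variable, noting that the $H^1$ metric scales correctly so the embedding constant depends only on $\ell_{c(t)}$. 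Combining these, a Cauchy sequence for $\on{dist}^{H^1}$ has uniformly bounded length and is Cauchy in the Fréchet distance, so it converges to some $C_\infty\in B_i^{lip}$.

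For the second inclusion, given $C\in B_i^{lip}$ represented by an arc-length Lipschitz map $c\colon S^1\to\R^2$, I would approximate $c$ by a sequence of smooth immersions $c_n$ obtained by convolving an arc-length lift with a mollifier (so that $c_n\to c$ uniformly and $\ell_{c_n}\to\ell_c$). The task is then to show that $\{[c_n]\}$ is Cauchy in $\on{dist}^{H^1}$. For $n,m$ large, I would construct an explicit path in $\on{Imm}(S^1,\R^2)$ from $c_n$ to $c_m$ by first smoothly rescaling so both have the same length, then by the affine homotopy $c^{(s)}=(1-s)c_n+s c_m$, which is an immersion for $n,m$ sufficiently large (since the unit tangents of $c_n,c_m$ are close in $L^2$ for close mollifier scales). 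The $H^1$ length of this path would then be estimated directly, with the scale-invariant factors $1/\ell$ and $\ell^2$ ensuring the estimate is controlled purely by the $L^2$-norm of $c_m-c_n$ and of $\p_\th c_m-\p_\th c_n$, both of which tend to $0$ by standard mollifier estimates for Lipschitz functions.

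The main obstacle is this last energy estimate: in the $H^1$ metric $\ell^2\langle D_s h,D_s h\rangle$ term, the derivative $D_s c_t$ involves differentiating the velocity of the homotopy along the evolving arc-length, and one must avoid picking up uncontrolled terms from $\partial_\th c^{(s)}$. The delicate point is that while the mollified approximations of a Lipschitz curve only converge weakly-$\ast$ at the derivative level (their tangents need not converge strongly in any $H^k$), the geometric combination appearing in the homogeneous $H^1$-energy of the \emph{path} $c^{(s)}$ involves $\p_s c^{(s)}=c_m-c_n$ and its $\th$-derivative, both of which do converge to zero in $L^2$. Making this rigorous — essentially showing that the natural affine homotopy between two close mollifications of a rectifiable curve has vanishing $H^1$-energy — is the heart of the argument, and is where one would need to carefully exploit the scale-invariant form of the metric so that the prefactors $1/\ell$ and $\ell^2$ give a clean estimate independent of the (bounded) lengths involved.
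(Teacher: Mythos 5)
The paper does not actually prove this statement; it quotes Thms.~26 and 27 of \cite{Mennucci2008} verbatim, so there is no in-paper argument to compare against. Your two-inclusion strategy is the natural one, and the first inclusion (every $\on{dist}^{H^1}$-Cauchy sequence converges in the Fr\'echet topology to a rectifiable limit) is essentially sound: the Lipschitz estimate for $\sqrt{\ell}$ together with a scale-corrected Sobolev embedding of the form $\|h\|_{L^\infty}^2\le C\max(1,\ell_c^{-1})\,G^{H^1}_c(h,h)$ gives the comparison with $\on{dist}^{L^\infty}$, and lower semicontinuity of length under uniform convergence places the limit in $B^{lip}_i(S^1,\R^2)$. (Note that the embedding constant degenerates as $\ell\to0$, not as $\ell\to\infty$, so the relevant control is a lower bound on the length along the path; sequences whose lengths tend to $0$ converge to point curves, which are still elements of $B^{lip}_i$ as defined.)

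The genuine gap is in the density direction. A general rectifiable curve can have cusps, where the arc-length derivative $c_\th$ jumps from $v$ to $-v$. At such a point the mollification $c_n=\rho_n* c$ satisfies $c_{n,\th}\approx 0$ on an interval of size $1/n$, so $c_n$ need not be an immersion at all; and even when $c_n,c_m$ are immersions, the affine homotopy $c^{(s)}$ can have $\p_\th c^{(s)}=0$ wherever $c_{n,\th}$ and $c_{m,\th}$ point in nearly opposite directions. This is fatal for the energy estimate as you state it, because the $H^1$-energy of the path is not a function of $\|c_m-c_n\|_{H^1(d\th)}$ alone: writing it out, $\ell^2\int |D_s h|^2\,\ud s=\ell^2\int |h_\th|^2/|\p_\th c^{(s)}|\,\ud\th$, and the factor $1/|\p_\th c^{(s)}|$ is a pointwise quantity that no length-normalizing prefactor controls. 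Near a cusp $|h_\th|$ is comparable to $|c_{n,\th}|+|c_{m,\th}|$ while $|\p_\th c^{(s)}|$ can vanish, so the integrand can fail to be integrable. To close the argument you must either first approximate $C$ by immersions with speed bounded below along the connecting paths (for instance by smoothing the turning-angle or tangent indicatrix rather than the position, or by otherwise resolving the cusps), or use a representation of the $H^1$-energy that is manifestly controlled by parametrization-independent data. As it stands, the step you identify as the heart of the argument is not merely delicate: the specific homotopy you propose need not stay in $\on{Imm}(S^1,\R^2)$ and its energy is not finite in general.
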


See Sect. \ref{al_compl} for details about $B^{lip}_i(S^1,\R^2)$. There is a similar result for the metric of second order.

\begin{theorem}[Thm. 29, \cite{Mennucci2008}]
The completion of the metric space $(B_{i,f}, \on{dist}^{H^2})$ is the set of all those rectifiable curves that admit curvature $\ka_c$ as a measurable function and $\int_{S^1} \ka_c^2 \ud s < \infty$.
\end{theorem}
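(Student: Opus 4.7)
The plan is to prove two inclusions: every Cauchy sequence in $(B_{i,f}(S^1,\R^2),\dist^{H^2})$ has a limit that is a rectifiable curve with measurable, $L^2$-integrable curvature, and conversely every such curve arises as an $\dist^{H^2}$-limit of smooth immersed curves. The first inclusion builds on the preceding theorem for $\dist^{H^1}$ and adds a curvature estimate; the second is a regularization argument.

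\textbf{Forward inclusion.} On any subset where $\ell_c$ is bounded above and below, the $H^2$-metric trivially dominates the $H^1$-metric, so a $\dist^{H^2}$-Cauchy sequence $(C_n)$ is also $\dist^{H^1}$-Cauchy. The lower bound on $\ell_c$ is stable under Cauchy sequences by the same Lipschitz-continuity argument for $\sqrt{\Vol}$ used in the $H^1$-case, and an upper bound follows from the fact that $\ell_c^{1/2}$ is $\dist^{H^2}$-Lipschitz for analogous reasons. Hence the preceding theorem gives a limit $C \in B_i^{\mathrm{lip}}(S^1,\R^2)$. To control the curvature I would derive, along any smooth path $c(t,\cdot)$ of immersions, the a priori estimate
\[
\Bigl|\tfrac{d}{dt}\,\|\ka_{c(t)}\|_{L^2(ds)}\Bigr| \;\leq\; F\bigl(\ell_{c(t)}, \|\ka_{c(t)}\|_{L^2}\bigr)\sqrt{G^{H^2}_{c(t)}(c_t,c_t)},
\]
obtained by computing $\partial_t \ka$ in terms of $D_s^2 c_t$, $D_s c_t$ and $\ka_c$, and bounding remainder terms via the Sobolev embedding $\dot H^2 \hookrightarrow L^\infty$ on $S^1$. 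A Gr\"onwall argument along a near-minimizing path joining $C_n$ to $C_m$ gives a uniform bound $\sup_n \|\ka_{c_n}\|_{L^2} < \infty$. Weak $L^2$-compactness extracts a limit $\kappa_\infty$, which I would identify with the distributional curvature of $C$ by passing to the limit in the defining formula $\int \ka \, v \ud s = -\int D_s v \cdot n \ud s$ tested against smooth vector fields.

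\textbf{Reverse inclusion.} Given $C$ rectifiable with $\ka_C \in L^2(ds)$, lift to the constant-speed parametrization $c : S^1 \to \R^2$; then $c_\theta \in H^1(S^1,\R^2)$ because $c_{\theta\theta}$ is, up to a multiplicative constant depending on $\ell_C$, equal to $\ka_C\, n_c$, which lies in $L^2$. Mollify the unit tangent $v_c$ at scale $\ep$ (on $S^1$, treating it as an element of $H^1(S^1,S^1)$), integrate, and correct by an affine translation to restore closure, producing smooth immersions $c_\ep$ with $c_\ep \to c$ in $H^2(S^1,\R^2)$. For the distance estimate I would use the straight-line homotopy $c_s = (1-s)c_\ep + s c_{\ep'}$, which stays in $\Imm(S^1,\R^2)$ for $\ep,\ep'$ small because $H^2 \hookrightarrow C^1$, and bound $\on{Len}^{H^2}(c_s)$ directly by $\|c_{\ep'}-c_\ep\|_{H^2}$ up to a factor depending on the length; this gives $\dist^{H^2}(\pi(c_\ep),\pi(c_{\ep'})) \to 0$, hence $\pi(c_\ep)$ converges to $C$ in the completion.

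\textbf{Main obstacle.} The hard part is the first inclusion, specifically the chain-rule estimate for $\partial_t\|\ka_{c(t)}\|_{L^2}$. The $H^2$-metric measures variations of the immersion rather than of its curvature, so extracting a bound on $\ka$ requires a careful integration-by-parts that absorbs the cross terms produced by the footpoint-dependence of $D_s$ into factors controlled by $\ell_c$ and $\|\ka_c\|_{L^2}$ itself, which is precisely what Gr\"onwall will then close. A secondary subtlety is verifying that the weak-$L^2$ limit of $\ka_{c_n}$ is the distributional curvature of the limit rectifiable curve, since the parametrizations may be only equivalent up to monotone correspondences in $B_i^{\mathrm{lip}}$; handling this requires working with an arc-length representative throughout, as in Preston's framework cited earlier.
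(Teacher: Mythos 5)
The paper itself does not prove this statement; it quotes it from \cite{Mennucci2008} (Thm.~29), where the argument is essentially the one you outline: an a priori bound on $\int_{S^1}\ka_c^2\ud s$ along paths of finite $G^{H^2}$-length, obtained from the first variation $\p_t\ka=\langle D_s^2c_t,n_c\rangle-2\ka\langle D_sc_t,v_c\rangle$ together with a Gr\"onwall argument, combined with an approximation argument for the reverse inclusion. So your strategy matches the source, and the Gr\"onwall step does close (the estimate is of the form $\tfrac{d}{dt}\|\ka\|_{L^2}\leq C(\ell_c)(1+\|\ka\|_{L^2})\sqrt{G^{H^2}_c(c_t,c_t)}$, so $\log(1+\|\ka\|_{L^2})$ is controlled by path length). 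Two steps, however, are genuinely flawed as written. First, in the reverse inclusion you mollify the unit tangent, integrate, and then ``correct by an affine translation to restore closure.'' A translation shifts $c(0)$ and $c(2\pi)$ by the same vector and therefore cannot repair a nonzero closure defect $\int_0^{2\pi}\wt v(\th)\,|c_\th|\ud\th\neq 0$; the correction must act on the derivative itself. The standard fix is simpler: mollify $c$ directly as a map $S^1\to\R^2$, which preserves closedness automatically, remains an immersion for small $\ep$ since $c\in H^2\hookrightarrow C^1$ has nonvanishing derivative, and converges to $c$ in $H^2$.

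Second, in the forward inclusion you claim that ``the lower bound on $\ell_c$ is stable under Cauchy sequences'' by the Lipschitz continuity of a power of $\ell_c$. That Lipschitz property only gives convergence of $\ell_{c_n}$; it does not exclude $\ell_{c_n}\to 0$. Since every constant in your argument --- the comparison $G^{H^2}\gtrsim G^{H^1}$ via the interpolation $\|D_sh\|_{L^2}^2\leq\|h\|_{L^2}\|D_s^2h\|_{L^2}$, the bound on $\|D_sc_t\|_{\infty}$, and hence the Gr\"onwall constant --- blows up as $\ell_c\to 0$, the argument is circular on this point: you need either to show that collapsing to a point costs infinite $\on{dist}^{H^2}$-length, or to treat degenerate limits separately before invoking the $H^1$-completion theorem. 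The remaining subtlety you flag yourself --- identifying the weak $L^2$-limit of $\ka_{c_n}$ with the distributional curvature of the limit in $B^{lip}_i(S^1,\R^2)$ --- does go through in the arc-length representative and is handled the same way in \cite{Mennucci2008}.
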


\subsection{Curvature}

Apart from some results on first and second order metrics on the space of plane curves, very little is known about the curvature of Sobolev-type metrics on either $\on{Imm}_f(M,\R^d)$ or $B_{i,f}(M,\R^d)$.

The family \eqref{elasticmetric1} of $H^1$-type metrics on the space $\on{Imm}([0,2\pi],\R^2)/{\on{trans}}$ of open curves modulo translations is isometric to an open subset of a vector space and therefore flat; see \cite{Bauer2012b_preprint}. It then follows from O'Neil's formula that the quotient space $B_{i,f}(M,\R^d)$ has non-negative sectional curvature.

The scale-invariant $H^1$-type semi-metric
\[
G_c(h,k) = \frac{1}{\ell_c} \int_{S^1}\langle D_s h, D_s k \rangle \ud s\,,
\]
descends to a weak metric on $B_{i,f}(S^1,\R^2)/{(\on{sim})}$, 
which is the quotient of $B_{i,f}(S^1,\R^2)$ by similarity transformations --- translations, 
rotations and scalings.
The sectional curvature has been computed explicitly in \cite{Michor2008a}; it is again 
non-negative and upper bounds of the following form can be derived. 

\begin{theorem}[Sect. 5.8, \cite{Michor2008a}]
Take a curve $c \in \on{Imm}(S^1,\R^2)$ and let $h_1,h_2 \in T_c\on{Imm}(S^1,\R^2)$ be two orthonormal tangent vectors. Then the sectional curvature at $C = \pi(c)$ of the plane spanned by the projections of $h_1, h_2$ in the space $B_{i,f}(S^1,\R^2)/{(\on{sim})}$ is bounded by
\begin{multline*}
0 \leq k_C(P(h_1,h_2)) \leq 2 + A_1(\ka_c) + \\
+A_2(\ka_c) \| D_s h_2 \cdot n \|_{\infty}
+ A_3(\ka_c) \| D_s (D_s h_2 \cdot n) \|_{\infty}\,.
\end{multline*}
where $A_i : C^\infty(S^1,\R) \to \R$ are functions of $\ka$ that are invariant under reparametrizations and similarity transformations.
\end{theorem}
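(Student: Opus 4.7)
The approach is to exploit the Riemannian submersion structure of $\pi : \on{Imm}(S^1,\R^2) \to B_{i,f}(S^1,\R^2)/(\on{sim})$ and to combine an explicit curvature computation on an intermediate quotient with O'Neill's formula. The scale-invariant $H^1$-semi-metric $G_c(h,k) = \ell_c^{-1}\int_{S^1}\langle D_s h, D_s k\rangle\,\ud s$ has as its kernel exactly the infinitesimal similarities $\on{sim} = \R^2\rtimes(\R_{>0}\times\on{SO}(2))$ together with the infinitesimal reparametrizations $T\Diff(S^1)$, so it descends to a genuine weak Riemannian metric on the quotient. The horizontal complement at $c$ for the $\Diff(S^1)$-part of the action is obtained by inverting an elliptic pseudo-differential operator on $S^1$ whose symbol depends smoothly on $\ka_c$, and the remaining $\on{sim}$-part is finite-dimensional.

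First I would compute the Levi-Civita connection and the Riemann tensor of the scale-invariant metric on the intermediate quotient $\on{Imm}(S^1,\R^2)/\on{transl}$, where the only inversion involved is that of $-D_s^2$ on zero-mean vector fields, a standard elliptic operation on $S^1$. By repeated integration by parts using the periodicity in $\th$, the quadratic form $G_c(R(h_1,h_2)h_2, h_1)$ can be rewritten as a sum of non-negative squared $L^2$-integrals, which already exhibits the non-negativity of the curvature on the intermediate space. A similar rewriting was used in \eqref{l2_sec_curv} for the plain $L^2$-metric, and I expect the scale-invariant modification to introduce only bounded corrections depending polynomially on $\ka_c$ and its arclength derivatives.

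Next I would apply O'Neill's formula
\[
k_C(P(h_1,h_2)) = k_{\on{int}}(P(h_1^h, h_2^h)) + \tfrac{3}{4}\,\|[h_1^h,h_2^h]^{\on{ver}}\|_G^2
\]
to the further submersion by the remaining similarities and reparametrizations. Both terms on the right are non-negative — the first by the previous step, the second tautologically — which yields the lower bound $0\le k_C$. For the upper bound I would expand each term as a quadratic expression in the arclength derivatives of $h_1, h_2$ with coefficients built from $\ka_c$ and its derivatives. Because $h_1, h_2$ are $G$-orthonormal, horizontality implies in particular $\int_{S^1}(D_s h_i\cdot n)^2\,\ud s = \ell_c$, providing uniform $L^2$-control on the normal components of their arclength derivatives. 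Distributing each quadratic contribution by Hölder's inequality — placing one factor in $L^\infty$ and the other in $L^2$ — converts the bilinear-in-$h_2$ pieces into expressions linear in $\|D_s h_2\cdot n\|_\infty$ and $\|D_s(D_s h_2\cdot n)\|_\infty$, while the $\ka_c$-pure contributions collect into $2 + A_1(\ka_c)$ after the $L^2$ factors are absorbed by $\|h_i\|_G = 1$.

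The main obstacle will be the careful bookkeeping of derivative orders: one must verify that the O'Neill correction, which a priori involves the inverse of a second-order horizontal-projection operator, contributes at most the two sup-norms listed and not higher-order ones. The asymmetric placement of all derivatives on $h_2$, rather than a symmetric split between $h_1$ and $h_2$, is exactly what allows an $L^\infty$-type estimate rather than merely an $H^s$-type one. Reparametrization- and similarity-invariance of the coefficients $A_i$ then follow automatically because the expressions depend on $c$ only through $\ka_c$, which is itself equivariant under $\Diff(S^1)$ and invariant under $\on{sim}$.
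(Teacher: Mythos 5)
Your overall architecture (curvature of an intermediate quotient plus O'Neill for the remaining group actions, then H\"older to isolate the sup-norms of $D_s h_2\cdot n$ and $D_s(D_s h_2\cdot n)$) is the right shape, but the load-bearing step is missing. The proof in the cited source \cite{Michor2008a} does not obtain non-negativity of the intermediate curvature by ``repeated integration by parts'': it rests on the explicit square-root representation $c_\theta=\tfrac12(e+if)^2$, under which the scale-invariant $\dot H^1$-metric on curves modulo translations and scalings becomes the round metric on (an open subset of) the sphere in $L^2$, with the closedness conditions turning $(e,f)$ into an orthonormal pair; quotienting by rotations then lands in the Grassmannian of $2$-planes in $L^2(S^1)$, whose sectional curvature is explicitly non-negative and bounded by $2$. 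That isometry is exactly where the constant $2$ in the statement and the lower bound $0\le k_C$ come from; only the final quotient by $\Diff(S^1)$ is handled by O'Neill, and its vertical-bracket term is what produces the $A_i(\ka_c)$ and the two sup-norms. In your version the sentence ``I expect the scale-invariant modification to introduce only bounded corrections depending polynomially on $\ka_c$'' is precisely the assertion that needs proof, and there is no reason a direct curvature computation on $\on{Imm}(S^1,\R^2)/\on{transl}$ with the scale-invariant metric organizes itself into a sum of non-negative squares: that space is isometric to a piece of a sphere (constant positive curvature), not to anything where a Wronskian-type identity like \eqref{l2_sec_curv} is available, and the analogous \emph{non}-scale-invariant elastic metric gives a flat intermediate space, so the two situations are structurally different and neither is detected by your argument.

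Two smaller but genuine errors. First, the kernel of the semi-metric $G_c(h,k)=\ell_c^{-1}\int_{S^1}\langle D_sh,D_sk\rangle\,\ud s$ consists only of the constant (translation) vector fields; infinitesimal scalings, rotations and reparametrizations are \emph{not} in the kernel (e.g.\ $D_s c=v\neq0$), so the passage to $B_{i,f}(S^1,\R^2)/(\on{sim})$ is a Riemannian submersion quotient for those groups, not a kernel quotient, and your description of the horizontal space has to be set up accordingly. Second, $G$-orthonormality gives $\int_{S^1}|D_sh_i|^2\,\ud s=\ell_c$ for the full vector $D_sh_i$, hence only $\int_{S^1}(D_sh_i\cdot n)^2\,\ud s\le\ell_c$, not equality; this does not break the H\"older step but should be stated correctly, since the tangential components of $D_sh_i$ do enter the O'Neill term through the horizontality constraint.
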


Explicit formulas of $A_i(\ka)$ can be found in \cite{Michor2008a}. This is a bound on the 
sectional curvature, that depends on the first two derivatives of $h_2$ and is independent of 
$h_1$. Moreover, the explicit formulas for geodesics given in \cite{Michor2008a} show that conjugate points are not dense on geodesics.

A similar bound has been derived in \cite{Shah2010} for a second order metric on the space of plane curves.

\section{Diffeomorphism groups}
\label{diff_groups}

In the context of shape spaces  diffeomorphism groups arise two-fold: 
\begin{itemize}
 \item The shape space $B_{i,f}(M,\R^d)$ of immersed submanifolds is the quotient
\[
B_{i,f}(M,\R^d) = \on{Imm}_f(M,\R^d) / \on{Diff}(M)
\]
of the space of immersions by the reparametrization group $\on{Diff}(M)$.
\item By fixing an embedding $q_0 \in \on{Emb}(M,\R^d)$ we have the map
\[
\on{Diff}_c(\R^d) \to \on{Emb}(M,\R^d),\quad \ph \mapsto \ph \on{\circ} q_0
\]
A right-invariant Riemannian metric on $\on{Diff}_c(\R^d)$ induces a Riemannian metric on $\on{Emb}(M,\R^d)$ such that the above map is a Riemannian submersion. See Sect.\ \ref{sec_outer} for this construction.
\end{itemize}
These are the two main applications of the diffeomorphism group discussed in this paper. Thus we mainly will treat the group
\begin{align*}
&\on{Diff}(M) = \left\{ \ph\! \in \! C^\infty(M,M): \ph \text{ bij., }\!  \ph^{-1}\! \!  \in\!  C^\infty(M,M) \right\}
\end{align*}
of smooth diffeomorphisms of a closed manifold $M$ and groups of diffeomorphisms of $\R^d$ with the 
following decay conditions towards infinity 
\begin{align*}
\on{Diff}_c(\R^d) &= \left\{ \ph\,:\, \on{supp}(\ph - \on{Id}) \text{ compact} \right\} \\
\Diff_{\mc S}(\R^d)&=\left\{ \ph  \,:\, (\ph - \on{Id})\in \mc S(\R^d) \right\} \\
\Diff_{H^{\infty}}(\R^d)&=\left\{ \ph \,:\, (\ph - \on{Id})\in H^{\infty}(\R^d) \right\}\,.
\end{align*}
Here $H^{\infty}(\R^d)$ denotes the intersection of all Sobolev spaces $H^{k}(\R^d), \, 
k\in \mathbb N_{\geq 0}$, $\mc S(\R^d)$ denotes the Schwartz space of all rapidly decreasing 
functions. All these are smooth regular Lie groups. Their Lie algebras are the spaces $\mf X_c(\R^d), 
\mf X_{\mc S}(\R^d)$ and $\mf X_{H^\infty}(\R^d)$ of compactly supported, rapidly decreasing and 
Sobolev vector fields, respectively. See \cite{Michor2006a} and \cite{Michor2012b_preprint} for 
details.   

None of the diffeomorphism groups on $\R^d$ introduced above contain translations, rotations, or, 
more generally, affine maps, since they require the diffeomorphisms to decay towards the identity. 
It is possible to extend the groups by considering a semidirect product, for example 
$\on{Diff}_c(\R^d) \rtimes SO(d)$. But for our purposes this is not necessary: given two embedding 
$q_1, q_2 \in \on{Emb}(M,\R^d)$ differing by an affine map $q_2 = A \circ q_1$, since $M$ is 
compact, there exists a diffeomorphism $\ph$, decaying to the identity such that $q_2 = 
\ph \circ q_1$. Thus when considering the action of $\on{Diff}_c(\R^d)$ or the other diffeomorphism 
groups on $\on{Emb}(M,\R^d)$ in Sect. \ref{sec_outer}, we are not really losing affine maps, 
although they are not literally elements of the considered groups.       

On general non-compact manifolds $N$ one can also consider the group of compactly supported 
diffeomorphisms like on $\R^d$; see \cite[Sect.\ 43]{Michor1997}. 

\subsection{Right-invariant Riemannian metrics}\label{metrics_diff}

A right-invariant metric on $\on{Diff}_c(\R^d)$ is determined via
\begin{multline}
\label{eq_rinv_rm}
G_\ph(X_\ph,Y_\ph) 
= \langle X_\ph \on{\circ} \ph^{-1}, Y_\ph\on{\circ}\ph^{-1} \rangle_L\,,\\ X_\ph,Y_\ph \in T_\ph \on{Diff}_c(\R^d)\,,
\end{multline}
by an inner product $\langle\cdot,\cdot\rangle_L$ on the space $\mf X_c(\R^d)$ of vector fields. We assume that the inner product is defined via a symmetric, positive definite, pseudo-differential operator field $L: \mf X(\R^d) \to \mf X(\R^d)$ by
\[
\langle X, Y \rangle_L = \int_{\R^d} LX \cdot Y \ud x\,.
\]
Examples of such inner products include
\begin{itemize}
\item
The $L^2$-metric with $L=\on{Id}$,
\[
\langle X, Y \rangle_{L^2} = \int_{\R^d} \langle X, Y \rangle \ud x\,.
\]
\item
The Sobolev-type metrics of order $s$ with $s>0$,
\[
\langle X, Y \rangle_{H^s} = \int_{\R^d} \left( 1 + |\xi|^2\right)^s \langle \wh X(\xi), \wh Y(\xi) \rangle \ud \xi\,,
\]
with $\wh X(\xi) = (2\pi)^{-d/2} \int_{\R^d} e^{-i \langle x, \xi\rangle} X(x) \ud x$ being the Fourier transform.
Note that for $s \in \mathbb N$ these metrics can be written as
\[
\langle X, Y \rangle_{H^s} = \int_{\R^d} \langle \left(\on{Id} - \De\right)^s X, Y \rangle \ud x\,,
\]
i.e., $L=(\on{Id} - \De)^s$.
\item
The family of $a$-$b$-$c$-metrics, introduced in \cite{Khesin2011_preprint},
\begin{multline}
\label{abc_metric}
\langle X, Y \rangle_{a,b,c} = \int_{\mathbb \R^d} a \langle X, Y \rangle + b \on{div} X \on{div} Y + \\
+c \langle d X^\flat, dY^\flat \rangle \ud x\,.
\end{multline}
\end{itemize}
Recall that $\Delta=-\Delta^{\ol{g}}$ denotes the usual Laplacian on $\R^d$, which is the negative of the geometric Laplacian; see Sect.~\ref{notation}.
In dimension $d=1$ the second and the third term coincide and the metric simplifies to the family of $a$-$b$ metrics
\[
\langle X, Y \rangle_{a,b} = \int_{\mathbb \R} a X Y + b  X'  Y'\ud x\,.
\]

On manifolds other than $\R^d$, one can use the intrinsic differential operator fields to define inner 
products on $\mf X(M)$, which are then extended to right-invariant Riemannian metrics on 
$\on{Diff}(M)$ via \eqref{eq_rinv_rm}. For example, when $(M,g)$ is a Riemannian manifold  
Sobolev-metrics of integral order can be defined using the Laplacian $\De^g$ via   
\[
\langle X, Y \rangle_{H^k} = \int_M g\left( (\on{Id} + \De^g)^k X, Y \right) \on{vol}^g\,.
\]
Similarly the family of $a$-$b$-$c$ metrics have an intrinsic representation given by
(where $\de=-*d*$ is the codifferential)
\begin{multline*}
\langle X, Y \rangle_{a,b,c} = \int_M a g(X, Y) + b g(\de X^\flat, \de Y^\flat) + \\
+c g(dX^\flat, dY^\flat) \on{vol}^g\,.
\end{multline*}
More general Sobolev spaces $H^s(M)$ with $s \notin \mathbb N$ and the corresponding norms can be 
introduced using partitions of unity and Riemannian exponential coordinates. See the books 
\cite{Triebel1992} and \cite{Eichhorn2007} for the theory of function spaces, including Sobolev 
spaces of fractional order, on manifolds.   
\begin{remark}
An alternative approach to induce a metric on the diffeomorphism group is to use a reproducing kernel Hilbert space $\mathcal H$ of vector fields, with  $\X(\R^d)\subset \mathcal H$ and consider the restriction of the inner product on $\mathcal H$ to $\X(\R^d)$. This approach is described in Sect.\ \ref{metric_outer}.
\end{remark}

\subsection{Geodesic Equation}\label{EPDiff}

The geodesic equation on any Lie group $\mc G$ with a right-invariant metric is given as follows. A curve $g(t) \in \mc G$ is a geodesic if the right logarithmic derivative $u(t) = \p_t g(t) g(t)^{-1}$ satisfies
\[
\p_t u = -\on{ad}_u^T u\,,
\]
where $\on{ad}^T$ is the transpose of $\on{ad}$ with respect to the given inner product $\ga(\cdot,\cdot)$ on the Lie algebra, i.e., 
\[
\ga(\on{ad}_u^T v, w) = \ga(v, \on{ad}_u w)
\]

On $\on{Diff}(\R^d)$ with a metric given via an operator field $L$ we can write the equation as a PDE in terms of the momentum $m=Lu$,
\begin{equation}
\label{epdiff}
\p_t m + (u \cdot \nabla)m + m \on{div} u + Du^T.m = 0,\quad m = Lu\,,
\end{equation}
and $\p_t \ph = u \on{\circ} \ph$. For different choices of $L$ one can obtain the following PDEs as geodesic equations.

The $L^2$-metric with $Lu=u$ in one dimension has as geodesic equation {\it Burgers' equation},
\[
u_t + 3uu_x = 0\,.
\]
This equation was used as a model equation for turbulence in \cite{Burgers1948}.

The $H^1$-metric with $Lu = u - u_{xx}$ in one dimension has as geodesic equation the {\it Camassa-Holm equation} \cite{Holm1993},
\[
u_t - u_{xxt} + 3uu_x - 2u_x u_{xx} - uu_{xxx} = 0\,.
\]
It describes the propagation of shallow water waves on the free surface under gravity. It is a completely integrable equation and possesses a bihamiltonian structure, that gives rise to an infinite number of conservation laws. 

The homogeneous $\dot H^1$ semi-metric on $\Diff(S^1)$ with the operator $Lu = -u_{xx}$ descends to a metric on the right coset space $\on{Rot}(S^1)\backslash \on{Diff}(S^1)$ of diffeomorphisms modulo rigid rotations. The geodesic equation is the {\it periodic Hunter-Saxton equation}
\[
u_{xxt} + 2u_x u_{xx} + uu_{xxx} = 0\,.
\]
The Hunter-Saxton equation was proposed as a model for the propagation of orientation waves in nematic 
liquid crystals in \cite{Hunter1991}. Its geodesic nature was discovered in \cite{Misiolek2003}. It 
is also a completely integrable, bihamiltonian equation with an infinite number of conservation 
laws \cite{Hunter1994}. As a Riemannian manifold $(\on{Diff}(S^1), \dot H^1)$ is isometric to an 
open subset of a sphere and as such has positive constant curvature \cite{Lenells2007}. It was 
shown recently in \cite{Bauer2012d_preprint}, that a related result also
holds for the non-periodic Hunter-Saxton equation, which is the geodesic equation for the  
$\dot H^1$-metric on a certain extension of $\Diff_c(\R)$.

Between the Hunter-Saxton and the Camassa-Holm equation lies the {\it $\mu$-Hunter-Saxton equation},
\[
u_{xxt} - 2\mu(u) u_x + 2u_x u_{xx} + uu_{xxx} = 0\,,
\]
which is the geodesic equation on the circle with respect to the $\mu \dot H^1$-metric defined by the operator $Lu = \mu(u) - u_{xx}$, with $\mu(u) = \tfrac 1{2\pi} \int_{S^1} u \ud x$ being the mean. It was introduced in \cite{Lenells2008a} as a non degenerate metric on $\on{Diff}(S^1)$, such that the projection
\[
(\on{Diff}(S^1), \mu \dot H^1) \to (\on{Rot}(S^1)\backslash \on{Diff}(S^1), \dot H^1)
\]
is a Riemannian submersion. It is also a completely integrable, bihamiltonian equation.

The geodesic equation for the homogeneous $\dot H^{1/2}$-metric is the {\it modified Constantin-Lax-Majda (mCLM) equation},
\[
m_t + um_x + 2u_x m = 0,\quad m = \mc H u_x\,.
\]
The mCLM equation is part of a family of one dimensional models for the vorticity equation \cite{Constantin1985, DeGregorio1990, Okamoto2008}. Its geodesic nature was recognized in \cite{Wunsch2010a}. As for the Hunter-Saxton equation we have to regard the mCLM equation on the coset space $\on{Rot}(S^1)\backslash \on{Diff}(S^1)$.

In the context of hydrodynamics a closely related space is the Virasoro-Bott group 
\[\on{Vir}(S^1)= \Diff(S^1)\x_c \mathbb R\,,\] 
with the group operations
\begin{displaymath}
\binom{\ph}{\al}\binom{\ps}{\be}  
     =\binom{\ph\on{\circ}\ps}{\al+\be+c(\ph,\ps)},\quad 
\binom{\ph}{\al}^{-1}=\binom{\ph^{-1}}{-\al}
\end{displaymath}
for $\ph, \ps \in \on{Diff}(S^1)$, and $\al,\be \in \mathbb R$. 
The  Virasoro-Bott group is a central extension of
$\Diff(S^1)$ with respect to the Bott-cocycle:
\begin{align*} 
c:\on{Diff}(S^1)\x \on{Diff}(S^1)&\to \mathbb R
\\
c(\ph,\ps)& = \frac12\int\log(\ph'\on{\circ}\ps)d\log\ps'\,,
\end{align*}
and it is the unique non-trivial central extension of $\on{Diff}(S^1)$. For a detailed exposition of the Virasoro-Bott group see the book of Guieu and Roger \cite{Guieu2007}.
It was found in \cite{Segal1991, Ovsienko1987} that  the geodesic equation of
the right invariant $L^2$-metric on the Virasoro-Bott group  
is the \emph{Korteweg-de Vries equation}
\[u_t+3u_xu+au_{xxx}=0,\quad a\in \R \;.\] 
Similarly the \emph{Camassa Holm equation with dispersion} 
\[
u_t - u_{xxt} + 3uu_x - 2u_x u_{xx} - uu_{xxx} +2\ka u_x= 0
\]
was recognized to be the geodesic equation on the Vira\-soro-Bott group with respect to the $H^1$-metric in \cite{Misiolek1998}.
\begin{figure}
\begin{center}
\renewcommand{\arraystretch}{1.2}
\begin{tabular}{ |c | c |c|}
  \hline                        
Space & Metric & Geod. equation  \\
\hline
  $\Diff(S^1)$&$L^2$&Burgers \\
  $\Diff(S^1)$&$H^1$&Camassa-Holm \\
  $\Diff(S^1)$&$\mu \dot H^1$&$\mu$-Hunter-Saxton\\
  $\on{Rot}(S^1)\backslash\Diff(S^1)$&$\dot H^1$&Hunter-Saxton\\
  $\on{Rot}(S^1)\backslash\Diff(S^1)$&$\dot H^{1/2}$&mCLM\\
  $\on{Vir}(S^1)$&$L^2$&KdV\\
  $\on{Vir}(S^1)$&$H^1$&Camassa-Holm w. disp.\\
  \hline  
\end{tabular}
\caption{Some geodesic equations on diffeomorphism groups, that are relevant in mathematical physics.}
\end{center}
\end{figure}

The geodesic equation \eqref{epdiff} can be rewritten as an integral in Lagrangian coordinates. For 
a metric given by a differential operator, let $K(x,y)$ be its Green's function. We assume that the 
initial momentum $m_0$ is a vector-valued distribution, whose components are finite measures. The 
initial velocity can be obtained from $m_0$ via $u_0(x) = \int_{\R^d} K(x,.) m_0(.)$ and conversely 
$m_0 = Lu_0 \otimes dx$. Then \eqref{epdiff} can be written as
\begin{equation}
\begin{aligned}
\label{epdiff_lagrange}
\p_t \ph(t, x) = \int_{\R^d} K(\ph(t, x), .)\, \ph(t)_\ast m_0(.) \,.
\end{aligned}
\end{equation}

\subsection{Well-posedness of the geodesic equation}

One possible method to prove the well-posedness of the geodesic equations is to extend the group and the metric to the Sobolev-completion
\[
\on{Diff}^q(M) = \{ \ph \in H^q(M, M) : \ph \on{bij.},\, \ph^{-1}\! \in\! H^q(M,M) \}
\]
which is a Hilbert manifold and a topological group for $q>\on{dim}(M)/2+1$. It is however not a Lie group any more, since the right-multiplication is only continuous but not smooth. Nevertheless it is possible to show that the geodesic spray of various metrics on the Sobolev-completion is smooth for $q$ large enough and then an application of the theorem of Picard-Lindel\"of for ODEs shows the existence and smoothness of the exponential map. This method was first applied in \cite{Ebin1970} for the $L^2$-metric on the group of volume-preserving diffeomorphisms to show the existence of solutions for Euler's equations, which model inviscid, incompressible fluid flows. On the full diffeomorphism group the following well-posedness results can be obtained via the same method.
\begin{theorem}[Thm. 3.3, \cite{GayBalmaz2009}]\label{GayBalmaz}
Let $(M,g)$ be a compact Riemannian manifold without boundary. The geodesic spray of the $H^1$-metric 
\[\langle u, v \rangle = \int_M g(u,v) + g(\nabla u, \nabla v) \on{vol}^g\] is smooth as a map $T\on{Diff}^q(M) \to T^2 \on{Diff}^q(M)$ for $q > \tfrac{\on{dim}(M)}2 + 1$.

The (higher-dimensional) Camassa-Holm equation with initial condition $u_0 \in \mf X^q(M,M)$ admits 
a unique solution $u(t)$ for small times, the map $t \mapsto u(t)$ is in 
$
C^0((-\ep,\ep), \mf X^q(M)) \cap C^1((-\ep,\ep), \mf X^{q-1}(M))
$,
and the map $u_0 \in \mf X^q(M) \mapsto u(t) \in \mf X^q(M)$ is continuous.
\end{theorem}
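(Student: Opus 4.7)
The plan is to adapt the Ebin--Marsden strategy, first used for the $L^2$-metric on volume-preserving diffeomorphisms, to the right-invariant $H^1$-metric on the full diffeomorphism group. For $q>\dim(M)/2+1$ the set $\Diff^q(M)$ of $H^q$-diffeomorphisms is a smooth Hilbert manifold and a topological group (though not a Lie group, since right-multiplication is only continuous). Right-translation by elements of $\Diff^q(M)$ is smooth, so the $H^1$-inner product on $\mf X^q(M)$ extends by right-invariance to a smooth weak Riemannian metric on $\Diff^q(M)$. The geodesic equation can then be written as a second-order ODE $\ddot\ph=S(\ph,\dot\ph)$ on the Hilbert manifold $T\Diff^q(M)$, where $S$ is the geodesic spray, and via the substitution $u(t)=\dot\ph(t)\circ\ph(t)^{-1}$ it becomes the (higher-dimensional) Camassa--Holm equation $\p_t m+\nabla_u m+m\,\on{div}\,u+(Du)^{\!T}m=0$ with $m=Lu$ and $L=\on{Id}+\nabla^*\nabla$.

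First I would compute the spray explicitly. In Eulerian form, $\p_t u=-\ad^T_u u$, where $\ad^T_u v=L^{-1}\!\left((u\!\cdot\!\nabla)(Lv)+(Lv)\on{div}\,u+(Du)^{\!T}Lv\right)$. Next I would transport this to Lagrangian coordinates via right-translation, writing the spray as $S(\ph,X_\ph)=(R_\ph)_*\,\Gamma(X_\ph\circ\ph^{-1})$ for an appropriate $\Gamma$, and also expressing the covariant derivative and Christoffel symbols of the $H^1$-metric on $T\Diff^q(M)$.

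The decisive step is to show that $S:T\Diff^q(M)\to T^2\Diff^q(M)$ is smooth. This is the Ebin--Marsden ``no loss of derivatives'' phenomenon. Naively, $\ad^T_u u$ involves two derivatives of $u$ inside $Lv$, so short-time existence by Picard--Lindel\"of would fail. The rescue is that the operator $L^{-1}$ has order $-2$, and upon conjugating with $\ph$ the principal symbols of $L$ and of $R_\ph\circ L\circ R_{\ph^{-1}}$ agree, so the commutator $[R_\ph,L]$ is of lower order. A careful analysis, using that composition $(\ph,u)\mapsto u\circ\ph$ is smooth as a map $\Diff^q(M)\times\mf X^q(M)\to\mf X^q(M)$ only when $u$ is as smooth as $\ph$, together with standard pseudo-differential estimates on $M$, then shows that all the apparently dangerous terms in $\Gamma$, after this conjugation and the application of $L^{-1}$, define smooth maps into $\mf X^q(M)$. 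Writing $L$ in intrinsic form and carefully tracking Riemannian Christoffel symbols on $(M,g)$ is the one genuinely lengthy piece of bookkeeping; this step is the main obstacle.

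Once smoothness of the spray is established, Picard--Lindel\"of on the Hilbert manifold $T\Diff^q(M)$ produces, for each $(\Id,u_0)\in T\Diff^q(M)$, a unique $C^\infty$ integral curve $\ph(t)$ on a maximal interval $(-\ep,\ep)$ depending continuously on $u_0$, and hence a solution $u(t)=\dot\ph(t)\circ\ph(t)^{-1}$ of the Camassa--Holm equation. The regularity statement $u\in C^0((-\ep,\ep),\mf X^q)\cap C^1((-\ep,\ep),\mf X^{q-1})$ follows from the fact that $\dot\ph$ is $C^1$ into $T\Diff^q(M)$ and composition with $\ph^{-1}\in\Diff^q(M)$ loses at most one derivative. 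Finally, a bootstrap argument shows that the maximal existence time is independent of $q$ for $q$ large enough, so smooth initial data yield smooth solutions and $t\mapsto\ph(t)$ lies in $\Diff(M)$, closing the argument.
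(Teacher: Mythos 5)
Your proposal follows essentially the same route the paper attributes to the cited source: the Ebin--Marsden method of passing to the Sobolev completions $\on{Diff}^q(M)$, showing that the geodesic spray becomes smooth in Lagrangian coordinates despite the apparent loss of derivatives (via the order $-2$ smoothing of $L^{-1}$ and the lower-order commutators arising from conjugation by $\ph$), and then invoking Picard--Lindel\"of together with a bootstrap in $q$. The only blemish is terminological --- you first say right-multiplication is merely continuous and then correctly use that right-translations are smooth (it is left-translation, inversion, and the joint composition map that fail to be smooth) --- but this does not affect the substance of the argument.
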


This result holds more generally also for manifolds with boundary with either Dirichlet, Navier or mixed boundary conditions. See \cite{GayBalmaz2009} for more details. For the one-dimensional case the smoothness of the geodesic spray was noted already in \cite{Kouranbaeva1999}. 

For the circle $M=S^1$ we have the stronger result that the geodesic sprays for Sobolev metrics $H^s$ are smooth for $s \geq \tfrac 12$.

\begin{theorem}[Cor.~4.2, \cite{Escher2012_preprint}]
The geodesic spray of the $H^s$-metric
\[
\langle u, v \rangle = \sum_{n\in \mathbb Z} (1+n^2)^s \wh u(n) \ol {\wh v(n)}
\]
on the diffeomorphism group $\on{Diff}^q(S^1)$ of the circle is smooth for $s \geq \tfrac 12$ and $q > 2s + \tfrac 32 $. 
Here $\wh u$ denotes the Fourier series of $u$.
Thus the geodesic equation is, similarly to Thm. \ref{GayBalmaz}, locally well-posed.
\end{theorem}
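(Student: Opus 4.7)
\medskip

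\noindent\textbf{Proof plan.} The strategy is the Ebin--Marsden approach, adapted to fractional order pseudo-differential metrics, as carried out by Escher and Kolev. First I would enlarge the smooth category to the Hilbert manifold $\on{Diff}^q(S^1)$ of $H^q$-diffeomorphisms. For $q>3/2$ this is a topological group in which right multiplication $R_\ph:\ps\mapsto \ps\circ\ph$ is smooth (it is the restriction of a continuous linear map once $\ph$ is fixed), while left multiplication and inversion are only continuous. The right-invariant metric extends to $T\on{Diff}^q(S^1)$ in the obvious way, and the goal is to exhibit a smooth second-order vector field (the spray) on $T\on{Diff}^q(S^1)$ whose integral curves are the geodesics; Picard--Lindel\"of on the Hilbert manifold then gives local well-posedness together with smooth dependence on initial data.

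Next I would right-trivialize the geodesic equation. Writing $u=\p_t\ph\circ\ph^{-1}$, Euler's equation $\p_t u = -\on{ad}_u^T u$ with $\on{ad}_u^T = L^{-1}\bigl((u\cdot\nabla)(L\cdot)+\on{div}(u)\,L\cdot + (Du)^T L\cdot\bigr)$ becomes, after conjugation by $R_\ph$, a second-order ODE
\[
\p_t\ph = v,\qquad \p_t v = F_q(\ph,v),\qquad F_q(\ph,v) = -\wt L_\ph^{-1}\bigl(B_\ph(v,v)\bigr),
\]
where $\wt L_\ph := R_\ph\circ L\circ R_{\ph^{-1}}$ is the conjugated operator and $B_\ph$ is the right-trivialized bilinear form associated to $\on{ad}^T$. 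The whole proof reduces to showing that $F_q:T\on{Diff}^q(S^1)\to H^q(S^1)$ is smooth when $q>2s+3/2$.

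The crucial step, and the main obstacle, is the no-loss-of-derivatives property: a priori, $\wt L_\ph$ is of order $2s$ and $B_\ph$ involves one derivative of $v$ together with $\wt L_\ph v$, so the naive count gives $F_q$ landing in $H^{q-2s-1}$. The gain of $2s+1$ derivatives must come from $\wt L_\ph^{-1}$. To establish this I would use the Fourier description $L=(1-\p_x^2)^s$ and the pseudo-differential calculus on $S^1$: one shows that for $\ph\in\on{Diff}^q(S^1)$ with $q>2s+3/2$, the conjugated operator $\wt L_\ph$ is a $\ph$-dependent elliptic pseudo-differential operator of order $2s$ whose symbol depends smoothly on $\ph$ in the appropriate symbol topology, with a smooth inverse $\wt L_\ph^{-1}$ of order $-2s$. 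This is where the hypothesis $s\ge 1/2$ enters: for $s<1/2$ the bilinear form $B_\ph$ is genuinely unbounded and the right-invariant metric ceases to give a smooth spray; for $s\ge 1/2$, careful commutator estimates (of Kato--Ponce type on the circle) show that the composition $\wt L_\ph^{-1}\circ B_\ph(v,\cdot)$ is a smooth map $H^q\to H^q$ depending smoothly on $\ph$. In particular the dangerous top-order terms in $B_\ph$ cancel against the leading symbol of $\wt L_\ph^{-1}$.

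Granted this, $F_q:T\on{Diff}^q(S^1)\to H^q(S^1)$ is smooth, so the spray $S_q(\ph,v)=(v,F_q(\ph,v))$ is a smooth vector field on the Hilbert manifold $T\on{Diff}^q(S^1)$. Picard--Lindel\"of yields local existence, uniqueness and smooth dependence of the geodesic flow for any $q>2s+3/2$. A standard no-loss-of-regularity bootstrap, comparing solutions at different values of $q$, then transfers the result to the smooth category: the maximal time of existence is independent of $q$, so smooth data yield smooth geodesics. This is precisely the content of the stated corollary.
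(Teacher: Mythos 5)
Your plan is the same Ebin--Marsden strategy that the paper invokes for this result and that Escher and Kolev carry out in the cited reference: pass to the Hilbert manifold $\on{Diff}^q(S^1)$, right-trivialize the geodesic equation, show that the spray is a smooth vector field by exhibiting the gain of $2s+1$ derivatives through $\wt L_\ph^{-1}$ together with the commutator structure of the quadratic term (which is also where $s\ge\tfrac12$ and $q>2s+\tfrac32$ enter), and conclude by Picard--Lindel\"of plus a no-loss-of-regularity bootstrap. The one caution is that the smoothness of $\ph\mapsto R_\ph\circ L\circ R_{\ph^{-1}}$ cannot be obtained from the classical change-of-variables theorem for pseudo-differential operators, since $\ph$ has only finite Sobolev regularity; it is precisely the Fourier-multiplier commutator estimates you mention in passing, not symbol calculus, that carry this step.
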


The case of Sobolev metrics of integer order, which includes the periodic Camassa-Holm equation, was proven in \cite{Constantin2003}. For the homogeneous $\dot H^{1/2}$-metric this result was proven in \cite{Escher2010} and the estimates were then extended to cover general metrics given via Fourier multipliers in \cite{Escher2012_preprint}.

As a consequence of the well-posedness result for Sobolev metrics on $\Imm(M,\R^d)$ it has been shown that the Lagrangian form of the geodesic equation is locally well posed for higher order Sobolev metrics on $\Diff(M)$.
\begin{theorem}[Thm. 10, \cite{Bauer2011b}]
Let $(M,g)$ be a compact Riemannian manifold without boundary and let $k\geq 1$ with $k \in \mathbb N$. For $q > \tfrac{\on{dim}(M)}2 +2k+1$ the geodesic spray of the $H^k$-metric 
is smooth as a map  on $\on{Diff}^q(M)$ and the geodesic equation has unique local solutions on $\Diff^q(M)$.
\end{theorem}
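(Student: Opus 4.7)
The plan is to reduce the statement to the already-established Thm.~\ref{thm:sob_wp} by isometrically embedding $M$ into Euclidean space and realising $\Diff(M)$ as a submanifold of a space of immersions. By Nash's embedding theorem, choose a smooth isometric embedding $i:(M,g)\hookrightarrow(\R^N,\bar g)$ for $N$ large enough. The map
\[
\iota:\Diff(M)\to\Imm(M,\R^N),\quad \ph\mapsto i\on{\circ}\ph,
\]
is $\Diff(M)$-equivariant with respect to the right action by composition, and extends to a smooth embedding of Hilbert manifolds $\iota:\Diff^q(M)\hookrightarrow\Imm^q(M,\R^N)$. Since $i$ is isometric we have $(i\on{\circ}\ph)^*\bar g=\ph^*g$, so the intrinsic geometry of $M$ is recovered at every footpoint.

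Next, equip $\Imm(M,\R^N)$ with a Sobolev-type metric $G^L$ whose operator field is $L_q=\on{Id}+(\De^{q^*\bar g})^k$, acting componentwise on $T_q\Imm(M,\R^N)$. One verifies that $L$ is an elliptic, $L^2$-symmetric and positive pseudo-differential operator field of order $2k=2l$ which is invariant under reparametrizations, and that the covariant derivative $\nabla L$ and the normal part of $\on{Adj}(\nabla L)$ satisfy the order conditions listed above Thm.~\ref{thm:sob_wp}. That theorem then yields smoothness of the geodesic spray of $G^L$ on $\Imm^q(M,\R^N)$ and local well-posedness of the geodesic initial value problem for every $q>\dim(M)/2+2k+1$.

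The final step is to transfer well-posedness from $\Imm^q(M,\R^N)$ back to $\Diff^q(M)$ via $\iota$. Using right-invariance and equivariance, one checks that the pullback $\iota^*G^L$ and the right-invariant $H^k$-metric on $\Diff(M)$ have matching Eulerian forms, so that the Lagrangian form $\p_t\ph=u\on{\circ}\ph$ of the geodesic equation on $\Diff^q(M)$ coincides (after the identification $\iota$) with the geodesic equation of $G^L$ restricted to tangent vectors of $\iota(\Diff^q(M))$. Combined with the smoothness of the spray of $G^L$, the Picard--Lindel\"of theorem in the Sobolev chart then delivers unique short-time solutions in $\Diff^q(M)$, depending smoothly on the initial data in $T\Diff^q(M)$.

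The main obstacle is the matching in this last step: one must check carefully that the operator $L_q=\on{Id}+(\De^{q^*\bar g})^k$ on $\Imm(M,\R^N)$, which is built from the varying pullback metric $\ph^*g$, is in fact compatible with the operator $\on{Id}+(\De^g)^k$ that appears in the right-invariant $H^k$-metric on $\Diff(M)$ after right-translation. This compatibility reduces, via the isometry of $i$ and the naturality of the Laplace-Beltrami operator under diffeomorphisms, to the identity $\De^{\ph^*g}(v\on{\circ}\ph)=(\De^g v)\on{\circ}\ph$ for vector fields on $M$; once this is in place, the remaining verification that $\iota(\Diff^q(M))$ is invariant under the $G^L$-geodesic flow follows by an ODE uniqueness argument in the chart.
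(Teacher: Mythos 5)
Your reduction to Thm.~\ref{thm:sob_wp} is in the spirit of the cited source (``as a consequence of the well-posedness result on $\Imm(M,\R^d)$''), but the route through a Nash embedding breaks down at the transfer step. The image $\iota(\Diff^q(M))=\{i\on{\circ}\ph\}$ is the $\Diff(M)$-orbit of $i$, i.e.\ a single fiber of the Riemannian submersion $\pi:\Imm_f(M,\R^N)\to B_{i,f}(M,\R^N)$, and fibers of this submersion are not totally geodesic for $G^L$. An ODE uniqueness argument cannot show that the $G^L$-geodesic flow preserves a submanifold; for that the spray would have to be tangent to $T\iota(\Diff^q(M))$, and it is not: in the geodesic equation \eqref{sob_ge} the normal term $-H_q\langle L_qq_t,q_t\rangle n_q$ (and the normal part of $\on{Adj}(\nabla L)$) is generically nonzero for purely tangential $q_t$, so a geodesic with reparametrization-type initial velocity immediately acquires a normal component and leaves the orbit. (For the $L^2$-metric on plane curves this is visible directly in \eqref{ge_l2_imm}: a tangential $c_t$ produces the normal acceleration $-\tfrac12|c_t|^2\ka_c|c_\th|\,n_c$.) A second, independent defect is the one you flag yourself but dismiss too quickly: the operator $\on{Id}+(\De^{q^*\bar g})^k$ acts componentwise on $\R^N$-valued maps, and on fields of the form $Ti.u$ the componentwise Laplacian differs from $Ti.(\De^g u)$ by second-fundamental-form terms of $i(M)\subset\R^N$ (Gauss formula), so $\iota^*G^L$ is an equivalent inner product but \emph{not} the right-invariant $H^k$-metric; its geodesics are different curves.

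Both problems disappear if you drop the Nash embedding and instead use that $\Diff(M)$ is an \emph{open} subset of $C^\infty(M,M)$: by naturality of the Bochner Laplacian under pullback --- your identity $\De^{\ph^*g}(v\on{\circ}\ph)=(\De^gv)\on{\circ}\ph$, now applied to sections of $\ph^*TM$ with the pullback connection rather than to $\R^N$-valued components --- the right-invariant $H^k$-metric at $\ph$ equals $\int_M g\bigl((\on{Id}+(\De^{\ph^*g})^k)h,\,h\bigr)\on{vol}(\ph^*g)$, i.e.\ the restriction to an open set of a Sobolev-type metric on the space of maps into the curved target $N=M$. Openness removes any flow-invariance or totally-geodesic issue, and the general-target version of the well-posedness theorem (which \cite{Bauer2011b} provides; the survey's Thm.~\ref{thm:sob_wp} is stated only for targets $\R^d$) then gives smoothness of the spray on $\Diff^q(M)$ and local well-posedness directly. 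This is the reduction the cited proof actually uses.
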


If the metric is strong enough, it is possible to show the long-time existence of solutions. 
\begin{theorem}\label{diff_globalwell}
If the Green's function $K$ of the operator $L$ inducing the metric is a $C^1$-function, then for 
any vector-valued distribution $m_0$, whose components are finite signed measures,  equation 
\eqref{epdiff_lagrange} with $\ph(0,x)=x$ can be solved for all time and the solution is a map   
\[
(t \mapsto \ph(t,\cdot)) \in C^1(\R, C^1(\R^d,\R^d))\,.
\]
\end{theorem}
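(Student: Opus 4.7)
The plan is to cast the integral equation \eqref{epdiff_lagrange} as an ODE in a Banach space and apply Picard--Lindel\"of, then extend to a global solution via a priori bounds. Write $\ph(t,\cdot)=\Id+v(t,\cdot)$ with $v(t,\cdot)$ in the Banach space $X=C_b^1(\R^d,\R^d)$ of bounded $C^1$ maps with bounded first derivatives (understanding $K$ to have bounded first derivatives, which is automatic in the examples considered and implicit in the phrase ``$C^1$-function'' here). Equation \eqref{epdiff_lagrange} becomes
\[
\p_t v(t,x) = F(v(t))(x), \quad F(v)(x):=\int_{\R^d} K(x+v(x),\,y+v(y))\,dm_0(y).
\]

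The first step is to check that $F\colon X\to X$ is well-defined and locally Lipschitz. For fixed $v\in X$, the integrand is bounded by $\|K\|_\infty$ uniformly in $x$, so the integral converges since $m_0$ is a finite measure; differentiation in $x$ produces $\nabla_1 K\cdot(I+\nabla v(x))$, again integrable. For two configurations $v,w\in X$, the mean value inequality applied to $K$ gives
\[
|F(v)(x)-F(w)(x)|\le \|\nabla K\|_\infty\bigl(|v(x)-w(x)|+\|v-w\|_\infty\bigr)|m_0|(\R^d),
\]
with an analogous estimate for $\nabla F(v)-\nabla F(w)$ involving $\|\nabla^2 K\|_\infty$ (or, at the $C^1$ level, a continuity modulus of $\nabla K$ plus the Lipschitz bound above). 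This yields local Lipschitz continuity of $F$ on bounded subsets of $X$, and the Picard--Lindel\"of theorem in Banach spaces produces a unique maximal $C^1$-solution on some interval $(T_-,T_+)\ni 0$.

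The second step is to rule out finite-time blow-up, which is the main technical point. The sup bound $|\p_t v(t,x)|\le \|K\|_\infty |m_0|(\R^d)$ is immediate, and shows $\|v(t)\|_\infty$ grows at most linearly in $t$. For the derivative, differentiate \eqref{epdiff_lagrange} in $x$ to obtain the linear variational identity
\[
\p_t \nabla_x \ph(t,x) = A(t,x)\,\nabla_x \ph(t,x),\quad A(t,x):=\!\int_{\R^d}\!\nabla_1 K(\ph(t,x),\ph(t,y))\,dm_0(y),
\]
where $\|A(t,\cdot)\|_\infty \le \|\nabla K\|_\infty |m_0|(\R^d)$ uniformly in $t$. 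A Gr\"onwall argument therefore bounds $\|\nabla\ph(t,\cdot)\|_\infty$ by an exponential in $t$. Combined with the sup bound on $v$, the $C^1$-norm of $v(t)$ stays finite on any bounded time interval, so $T_\pm=\pm\infty$ by the standard continuation criterion for ODEs in Banach spaces. The resulting $\ph$ lies in $C^1(\R,C^1(\R^d,\R^d))$ as claimed.

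The hardest part of actually carrying this out is the verification that $F$ is well-defined and Lipschitz \emph{on $X$ rather than just pointwise}: one has to control the $C^1$-seminorm of $F(v)$, which involves differentiating under the integral with respect to $x$ at a point where the ``footpoint'' of $K$ depends on $x$ through $\ph(x)$. The key input for this is precisely the $C^1$-hypothesis on $K$ together with the fact that $m_0$ is a finite measure, so that all quantities of interest are bounded by $\|K\|_{C^1}\cdot|m_0|(\R^d)$ times an exponential in $t$; no integration by parts or regularity of $v$ beyond $C^1$ is invoked.
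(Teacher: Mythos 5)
Your overall strategy (recast \eqref{epdiff_lagrange} as an ODE in a Banach space, get local existence by Picard--Lindel\"of, then exclude blow-up by a priori bounds) is the right one, and it is essentially the shape of the arguments in the references the paper defers to for this theorem (\cite{Trouve2005,Michor2012d_preprint,Younes2010}; the paper itself gives no proof). But your proof rests on a misreading of the right-hand side of \eqref{epdiff_lagrange}. The momentum $m_0$ is a vector-valued distribution dual to vector fields, and $\ph(t)_\ast m_0$ is its pushforward in that duality, not the plain image measure: $\langle \ph_\ast m_0, X\rangle = \langle m_0, (D\ph)^{-1}(X\on{\circ}\ph)\rangle$, so the equation reads
\[
\p_t\ph(t,x)=\int_{\R^d} K\bigl(\ph(t,x),\ph(t,y)\bigr)\,\bigl(D\ph(t,y)\bigr)^{-T}\,dm_0(y)\,,
\]
with the cotangent-lift factor $(D\ph(t,y))^{-T}$ acting on the vector part of $m_0$. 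That this factor must be present is visible in the paper's own landmark equations \eqref{lmk_geod}, where the momenta $p_i(t)$ evolve in time, whereas your reading of $\ph_\ast m_0$ would force them to be constant. Your operator $F$ is therefore not the right-hand side of the equation being solved.

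This is not a cosmetic omission; it is precisely what makes global existence nontrivial, and your argument for it collapses once the factor is restored. Writing $P(t,y)=(D\ph(t,y))^{-T}$, the bounds become $|\p_t\ph|\le\|K\|_\infty\|P(t)\|_\infty\,|m_0|(\R^d)$ and $\|A(t,\cdot)\|_\infty\le\|\nabla_1K\|_\infty\|P(t)\|_\infty\,|m_0|(\R^d)$, neither of which is uniform in $t$, while $\p_tP=-\bigl(Du(t)\on{\circ}\ph\bigr)^TP$ yields only the Riccati-type inequality $\p_t\|P(t)\|_\infty\le C\|P(t)\|_\infty^2$, which is compatible with finite-time blow-up; Gr\"onwall alone proves nothing. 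The missing idea is the conservation along geodesics of the energy
\[
\|u(t)\|^2_{\mc H}=\iint P(t,x)^T K\bigl(\ph(t,x),\ph(t,y)\bigr)P(t,y)\,dm_0(x)\,dm_0(y)\,,
\]
which, combined with the continuous embedding $\mc H\hookrightarrow C^1_b$ that the $C^1$-hypothesis on the kernel provides, gives a $t$-uniform bound on $\|u(t)\|_{C^1_b}$; only then does Gr\"onwall give $\|D\ph(t)^{\pm1}\|_\infty\le e^{Ct}$ and hence global existence. A secondary issue: with the correct right-hand side the local step is an ODE for the coupled pair $(\ph,P)$, and its local Lipschitz continuity requires $\nabla K$ to be Lipschitz; the mere modulus of continuity of $\nabla K$ that you invoke gives continuity of the vector field but not the Lipschitz estimate that Picard--Lindel\"of needs in a Banach space.
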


This result is implicit in the work \cite{Trouve2005}, an explicit proof can be found in \cite{Michor2012d_preprint}. See also \cite{Younes2010}.

\begin{remark}
This method of proving well-posedness is not universally applicable as not all geodesic sprays are 
smooth. For example the spray induced by the right-invariant $L^2$-metric on $\on{Diff}(S^1)$ is 
not smooth. More precisely in \cite{Constantin2002} it is shown that the exponential map is not a 
$C^1$-map from a neighborhood of $T_{\on{Id}}\on{Diff}^q(S^1)$ to $\on{Diff}^q(S^1)$ for any 
$q\geq 2$. Nevertheless the geodesic equation, which is Burgers' equation in this case, has 
solutions $t \mapsto u(t)$ for small time with    
\[
u \in C^0((-\ep,\ep), H^q(S^1)) \cap C^1((-\ep,\ep), H^{q-1}(S^1)),
\]
when $u_0 \in H^q(S^1)$; see \cite{Kato1975}. A similar statement holds for the KdV-equation, which 
is the geodesic equation on the Virasoro-Bott group with respect to the right-invariant $L^2$-metric; see \cite{Constantin2007}.
\end{remark}

\subsection{Geodesic distance}\label{diff_dist}

It was shown in \cite{Michor2005} that the geodesic distance on the group $\on{Diff}_c(M)$ vanishes for the $L^2$-metric and is positive for the $H^1$-metric. This naturally raises the question, what happens for the $H^s$-metric with $0<s<1$. For $M=S^1$ a complete answer is provided in \cite{Bauer2013b}, whereas for more general manifolds $N$ a partial answer was given in the articles \cite{Bauer2013b,Bauer2012c_preprint}.

\begin{theorem}[Thm.~3.1, \cite{Bauer2013b,Bauer2012c_preprint}]
The geodesic distance on $\on{Diff}_c(M)$ induced by the Sobolev-type metric of order $s$ vanishes
\begin{itemize}
\item
for $s<\tfrac 12$,
\item
for $s=\tfrac 12$, when $M=S^1\x C$ with $C$ compact.
\end{itemize}
The geodesic distance is positive
\begin{itemize}
\item
for $s\geq 1$,
\item
for $s>\tfrac 12$, when $\on{dim}(M)=1$.
\end{itemize}
\end{theorem}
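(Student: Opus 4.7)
The plan is to prove the four assertions separately: the positivity statements reduce to comparison inequalities between the $H^s$-norm and either a weaker norm on which positivity is already known, or to the $C^0$-norm via Sobolev embedding. The vanishing statements are handled by explicit short-path constructions. The critical role of $s = 1/2$ reflects the Sobolev scaling exponent in one dimension.

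For positivity when $s \geq 1$, I would simply observe that $\|X\|_{H^s} \geq C \|X\|_{H^1}$ for every compactly supported vector field $X$. Hence the $H^s$-length of any path in $\on{Diff}_c(M)$ dominates (up to a constant) its $H^1$-length, so $\on{dist}^{H^s} \geq C \on{dist}^{H^1}$, and the positivity of $\on{dist}^{H^1}$ already noted in the excerpt yields the claim. For positivity when $s > 1/2$ and $\on{dim} M = 1$, the Sobolev embedding $H^s(M) \hookrightarrow C^0(M)$ is the essential input. Given any path $\ph_t$ in $\on{Diff}_c(M)$ with right-logarithmic derivative $u(t)$, one has $\p_t \ph_t(x) = u(t, \ph_t(x))$, hence
\[
|\ph_1(x) - \ph_0(x)| \leq \int_0^1 \|u(t)\|_{C^0}\,\ud t \leq C \int_0^1 \|u(t)\|_{H^s}\,\ud t \leq C \cdot \on{Len}^{H^s}(\ph_t).
\]
Taking the infimum over paths gives $\on{dist}^{H^s}(\ph_0, \ph_1) \geq C^{-1} \|\ph_1 - \ph_0\|_{C^0}$, which is point-separating.

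For vanishing when $s < 1/2$, I would first treat the one-dimensional case $M = S^1$ (or $\R$) with a compression argument. Given a prescribed displacement $\de$ to be carried out by a time-independent vector field supported on an interval of length $\ep$, the rescaling $X_\ep(x) = X(x/\ep)$ satisfies
\[
\|X_\ep\|_{H^s} \asymp \ep^{1/2 - s}\|X\|_{H^s},
\]
while the time needed to achieve the displacement is $\sim \de/\|X_\ep\|_\infty$. Consequently the $H^s$-length of the piece scales like $\ep^{1/2 - s}$ times a $q$-independent factor and can be driven to zero as $\ep \to 0$ provided $s < 1/2$. Composing many such local moves connects the identity to an arbitrary compactly supported diffeomorphism with arbitrarily short total length. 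For general $M$, I would reduce to the 1D case by covering the support of the target diffeomorphism by tubes $U \cong \R \times B^{d-1}$ and executing the motion tube-by-tube using 1D compressions in the first factor, gluing with a partition of unity.

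For the borderline case $s = 1/2$ with $M = S^1 \times C$, the above scaling argument breaks down because $\ep^{1/2 - s} = 1$, so single-scale compression gives no improvement. Instead, I would use a multi-scale construction exploiting the product structure: superimpose compressions at dyadic scales $\ep_k = 2^{-k}$ on the $S^1$-factor, arranging the bumps with sign or phase cancellations so that the $\dot H^{1/2}$-norm of the sum benefits from the almost-orthogonality of different scales, and the sum $\sum_k \ep_k^{1/2 - s} = \sum_k 1$, which diverges naively, is replaced by a logarithmic quantity that can be made small by distributing the required displacement across many scales. The main obstacle is precisely this critical case: at $s = 1/2$ one must exploit a delicate multi-scale cancellation rather than a single-scale compression, and it is for this reason that the theorem handles only the product form $S^1 \times C$, where the $S^1$-factor provides the setting for the scale-by-scale construction.
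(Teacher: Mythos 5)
Your positivity arguments are correct and are essentially the standard ones, which is also how the cited papers proceed: for $s\geq 1$ one has $\|X\|_{H^s}\geq\|X\|_{H^1}$, so the claim reduces to the known non-degeneracy of the $H^1$-distance; and for $\on{dim}(M)=1$, $s>\tfrac12$, the embedding $H^s\hookrightarrow C^0_b$ together with $\p_t\ph(t,x)=u(t,\ph(t,x))$ gives $\|\ph_1-\ph_0\|_{\infty}\leq C\,\on{Len}^{H^s}(\ph)$, a displacement bound that is point-separating. This is the same estimate used in Sect.~\ref{sec_outer} for the outer metrics, and there is nothing to object to here.

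The vanishing half, however, has a genuine gap in its basic mechanism. A \emph{time-independent} vector field $X_\ep$ supported in an interval of length $\ep$ generates a flow that maps that interval to itself and is the identity outside it, so no point can be displaced by more than $\ep$; your claim that a prescribed displacement $\de$ is achieved in time $\sim\de/\|X_\ep\|_{\infty}$ is impossible once $\de>\ep$, and with it the asserted cost $O(\ep^{1/2-s})$ per ``local move'' collapses. To profit from small spatial support one must let the support travel along with the points being transported (a travelling-wave field $u(t,x)=c\,\ps((x-ct)/\ep)$, say); this does yield paths of $H^s$-length $O(\ep^{1/2-s})$, but the endpoint diffeomorphism reached in this way is a \emph{particular} one depending on $\ep$, not an arbitrary prescribed target, so ``composing many such local moves'' gives no control of the total length needed to reach a given $\ph$. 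What is missing is precisely the reduction that this survey records immediately after the theorem as ``a main ingredient for the vanishing result'': the set $A=\{\ph:\dist^{H^s}(\on{Id},\ph)=0\}$ is a normal subgroup of $\Diff_c(N)$ (normality because $\on{Ad}_\ps$ is bounded on $H^s$ for fixed $\ps$), and since $\Diff_c(N)$ is simple it suffices to exhibit a \emph{single} nontrivial element of $A$ --- which is exactly what the travelling-wave construction provides for $s<\tfrac12$. Your sketch of the critical case $s=\tfrac12$, $M=S^1\x C$ is only a heuristic: the dyadic almost-orthogonality is not carried out, and the cited proof again works by producing one short path to one nontrivial diffeomorphism (using the $S^1$-factor) and then invoking simplicity, rather than by multi-scale cancellation toward an arbitrary target.
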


\begin{remark}
By taking $C=\{ \text{point} \}$ to be the zero dimensional manifold, the above theorem provides a complete answer for $M=S^1$: the geodesic distance vanishes if and only if $s\leq\tfrac 12$.
\end{remark}

\begin{remark}
The $H^{1/2}$-metric on $\on{Diff}(S^1)$ is the only known example, where the  geodesic spray is smooth on the Sobolev-completions $\Diff^q(S^1)$ for all $q\geq \tfrac 52$ 
and the geodesic distance vanishes at the same time.

It is shown in \cite{Escher2012_preprint} that for $q > \tfrac 52$ the exponential map is a local diffeomorphism $\on{exp} : U \subseteq H^q(\R) \to \on{Diff}^q(S^1)$.
In particular we have the inequality 
\[
\on{Len}^{H^{1/2}}(\ph) \geq \| \on{exp}^{-1}(\ph(1)) \|_{H^{1/2}}
\]
for all paths $\ph : [0,1] \to \on{exp}(B_\ep^q(0))$ with $\ph(0) = \on{Id}$. In other words we have a lower bound on the length for all paths, that remain $H^q$-close to $\on{Id}$. This does not however imply anything about the geodesic distance, since a path can have small $H^{1/2}$-length or equivalently remain $H^{1/2}$-close to $\on{Id}$, but leave the $H^q$-neighborhood.
\end{remark}

\begin{openquestion}
For a Sobolev metric of order $s$ the behavior of  the geodesic distance on $\Diff_c(M)$ remains open for
\begin{itemize}
 \item $\tfrac 12<s<1$ and  $\on{dim}(N)\geq2$.
 \item $s = \tfrac 12$ and $N\neq S^1\times M$, with $M$ compact.
\end{itemize}
Extrapolating from the known cases, we conjecture the following result:
\emph{The geodesic distance induced by the Sobolev-type metric of order $s$ on $\Diff_c(N)$ vanishes for $s\leq \tfrac 12$ and is non-degenerate for $s > \tfrac 12$.}
\end{openquestion}
A main ingredient for the vanishing result is the following property of  the geodesic distance on $\on{Diff}_c(N)$: 
\begin{lemma}
Let $s\geq 0$.
 If the geodesic distance  on $\Diff_c(N)$ for a right-invariant Sobolev $H^s$-metric vanishes for one pair $\ph, \ps \in \on{Diff}_c(N)$ with $\ph \neq \ps$, 
 then the geodesic distance already vanishes identically on all of $\Diff_c(N)$.
 \end{lemma}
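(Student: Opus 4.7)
Let $A := \{\eta \in \Diff_c(N) : \on{dist}^{H^s}(\eta, \on{Id}) = 0\}$. By right-invariance of the metric one has $\on{dist}^{H^s}(\ph, \ps) = \on{dist}^{H^s}(\ph \on{\circ} \ps^{-1}, \on{Id})$, so the conclusion of the lemma is equivalent to $A$ being the full identity component $\Diff_c(N)_0$, and the hypothesis supplies a non-identity element $\eta_0 := \ph \on{\circ} \ps^{-1} \in A$. The strategy is therefore to show $A$ is a \emph{normal} subgroup of $\Diff_c(N)$ and then invoke the classical theorem of Mather-Thurston (with Epstein) that $\Diff_c(N)_0$ is a simple group.

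First I would check that $A$ is a subgroup. Closure under products is the triangle inequality combined with right-invariance,
\[
\on{dist}(\al \on{\circ} \be, \on{Id}) \leq \on{dist}(\al \on{\circ} \be, \al) + \on{dist}(\al, \on{Id}) = \on{dist}(\be, \on{Id}) + \on{dist}(\al, \on{Id})\,,
\]
and closure under inverses follows from $\on{dist}(\al^{-1}, \on{Id}) = \on{dist}(\on{Id}, \al)$ (apply right-invariance with $\al$). Note also that $\on{dist}(\ph, \ps) = 0$ already forces $\eta_0$ to lie in $\Diff_c(N)_0$, since otherwise no connecting path exists and the infimum defining the distance is vacuous.

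For normality, fix $\eta \in A$ and $\chi \in \Diff_c(N)$. Given $\ep > 0$, choose a path $\al : [0,1] \to \Diff_c(N)$ with $\al(0)=\on{Id}$, $\al(1)=\eta$ and $\on{Len}^{H^s}(\al) < \ep$. The conjugated path $\tilde\al(t) := \chi \on{\circ} \al(t) \on{\circ} \chi^{-1}$ joins $\on{Id}$ to $\chi \eta \chi^{-1}$, and a direct computation shows that its right-logarithmic derivative is the pushforward $\chi_\ast u(t) = T\chi \on{\circ} u(t) \on{\circ} \chi^{-1}$, where $u(t)$ is the right-logarithmic derivative of $\al$. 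Since for a fixed $\chi \in \Diff_c(N)$ the map $v \mapsto \chi_\ast v$ is a bounded linear automorphism of $H^s$-vector fields with some operator norm $C_\chi < \infty$, we get
\[
\on{Len}^{H^s}(\tilde\al) = \int_0^1 \|\chi_\ast u(t)\|_{H^s}\, \ud t \leq C_\chi \on{Len}^{H^s}(\al) < C_\chi \ep\,.
\]
Letting $\ep \to 0$ yields $\chi \eta \chi^{-1} \in A$, so $A \cap \Diff_c(N)_0$ is a non-trivial normal subgroup of $\Diff_c(N)_0$. Invoking the Mather-Thurston-Epstein simplicity theorem forces $A \cap \Diff_c(N)_0 = \Diff_c(N)_0$, and right-invariance then propagates vanishing to every pair of diffeomorphisms in a common component.

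The main obstacle is the boundedness constant $C_\chi$ for $v \mapsto \chi_\ast v$ on $H^s$-vector fields when $s$ is not an integer: for $s=0$ it is an immediate change-of-variables estimate, for integer $s \geq 1$ it follows from the chain rule and iterated product-rule bounds in terms of derivatives of $\chi$ and $\chi^{-1}$ (which are bounded since $\chi - \on{Id}$ has compact support), but for fractional $s$ it requires a more careful argument via the Gagliardo-Slobodeckij seminorm or a Littlewood-Paley decomposition combined with the diffeomorphism invariance of Sobolev spaces on $\R^d$. The rest of the proof is essentially formal once this continuity estimate is in hand.
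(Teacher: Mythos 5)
Your proposal follows essentially the same route as the paper: the paper's proof is exactly the observation that the set $A$ of diffeomorphisms at zero distance from the identity is a normal subgroup and that $\Diff_c(N)$ (more precisely its identity component, by Mather--Thurston--Epstein) is simple. You merely supply the details the paper leaves implicit --- the triangle-inequality subgroup check, the conjugation estimate for normality, and the identity-component caveat --- all of which are correct.
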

This lemma follows from the fact that the set
\[
A = \left\{ \ph \,:\, \on{dist}^{H^s}(\on{Id}, \ph)=0 \right\}
\]
is a normal subgroup of $\on{Diff}_c(N)$ for all $s\geq 0$ and because $\on{Diff}_c(N)$ is a simple group. 
Thus, if $A$ contains any element apart from $\on{Id}$ it has to be the whole group. 

\begin{remark}
We can also consider the geodesic distance on the Virasoro-Bott group, which is the one-dimensional 
central extension of $\on{Diff}(S^1)$. There the geodesic distance vanishes for $s=0$, i.e., for 
the $L^2$-metric. For $s>\tfrac 12$ the geodesic distance cannot vanish identically. Whether it is 
point-separating is not known.   
\end{remark}
\begin{openquestion}
For a Sobolev metric of order $s$ the behavior of  the geodesic distance on the Virasoro-Bott group  remains open for $0<s<1$.
\end{openquestion}

One way to define geodesics is to fix two diffeomorphisms $\ph_0, \ph_1$ and to consider the set 
\[
B=\{\ph(t) \,:\, \ph(0)=\ph_0,\, \ph(1)=\ph_1\}
\]
of all paths joining them. Geodesics then correspond to critical points of the energy or equivalently the length functional restricted to the set $B$. Vanishing of the geodesic distance implies that these functionals have no global minima. The following theorem shows that for the $L^2$-metric there are no local minima either.

\begin{theorem}[Thm 3.1, \cite{Bruveris2013}]
Let $\varphi(t,x)$ with $t \in [0, T]$ be a path in $\operatorname{Diff}_{c}(\mathbb R)$. Let $U$ be a neighborhood of $\varphi$ in the space $C^{\infty}_c([0,T] \times \mathbb R)$. Then there exists a path $\psi \in U$ with the same endpoints as $\varphi$ and
\[ E(\psi) < E(\varphi), \]
where $E(.)$ is the energy w.r.t. the right-invariant $L^2$-metric.
\end{theorem}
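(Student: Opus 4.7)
The plan is to decrease the energy by adding a highly oscillatory but amplitude-small perturbation, inspired by the Michor--Mumford ``zig-zag'' construction that underlies vanishing of the geodesic distance for the $L^2$-metric. Writing the energy in Lagrangian coordinates as $E(\varphi)=\int_0^T\!\int_\R \varphi_t^2\varphi_x\,\ud x\,\ud t$, for $h\in C^\infty_c([0,T]\times\R)$ with $h(0,\cdot)=h(T,\cdot)=0$ a direct expansion gives
\[E(\varphi+\epsilon h)-E(\varphi)=\epsilon A(h)+\epsilon^2 B(h,h)+\epsilon^3 C(h),\]
where $A(h)=\int\!\int(2\varphi_t\varphi_x h_t+\varphi_t^2 h_x)\,\ud x\,\ud t$, $B(h,h)=\int\!\int(\varphi_x h_t^2+2\varphi_t h_t h_x)\,\ud x\,\ud t$, and $C(h)=\int\!\int h_t^2 h_x\,\ud x\,\ud t$. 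Since $E(\varphi)=0$ forces $\varphi_t\equiv 0$ and leaves no room for strict decrease, I may assume there is a point $(t_0,x_0)\in(0,T)\times\R$ with $\varphi_t(t_0,x_0)\neq 0$.

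Set the ``convection speed'' $v:=\varphi_t(t_0,x_0)/\varphi_x(t_0,x_0)$ and choose nonnegative bumps $\chi\in C^\infty_c((0,T))$ and $\phi\in C^\infty_c(\R)$ with $\chi(t_0)\phi(x_0)>0$ and supports concentrated near $(t_0,x_0)$. Define
\[h_\omega(t,x):=\chi(t)\phi(x)\sin\bigl(\omega(x-vt)\bigr).\]
Then $\psi_\omega:=\varphi+\epsilon h_\omega$ shares the endpoints of $\varphi$ and for small $\epsilon$ is a smooth compactly supported path of diffeomorphisms (the constraint $\psi_{\omega,x}>0$ is maintained since $|\epsilon h_{\omega,x}|\lesssim\epsilon\omega$ can be driven to $0$).

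The heart of the argument is the large-$\omega$ asymptotics. Because the phase $\omega(x-vt)$ has constant nonzero gradient $(-v\omega,\omega)$, the non-stationary phase lemma yields $\int\!\int f\,e^{\pm i\omega(x-vt)}\,\ud x\,\ud t=O(\omega^{-N})$ for every $N$ whenever $f$ is smooth with compact support. Decomposing $A(h_\omega)$ and $C(h_\omega)$ into pure harmonics $\omega,2\omega,3\omega$ of the phase -- and checking that no zero-frequency term survives in either -- gives $A(h_\omega)=O(\omega^{-N})$ and $C(h_\omega)=O(\omega^{-N})$ for every $N$. For $B(h_\omega,h_\omega)$ only the zero-frequency parts of $h_{\omega,t}^2$ and $h_{\omega,t}h_{\omega,x}$ survive at leading order, giving
\[B(h_\omega,h_\omega)=\frac{\omega^2}{2}\int_0^T\!\int_\R \chi^2\phi^2\bigl(v^2\varphi_x-2v\varphi_t\bigr)\ud x\,\ud t+O(1).\]
With the chosen $v$, the bracket equals $-\varphi_t(t_0,x_0)^2/\varphi_x(t_0,x_0)<0$ at $(t_0,x_0)$, so shrinking the supports of $\chi,\phi$ forces the $\omega^2$-coefficient to be strictly negative: $B(h_\omega,h_\omega)=-c_0\omega^2+O(1)$ with $c_0>0$.

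Finally I balance the parameters. The neighborhood $U$ contains a $C^k$-ball for some finite $k$, and $\|h_\omega\|_{C^k}\lesssim\omega^k$, so the choice $\epsilon_\omega:=\omega^{-k-1}$ places $\psi_\omega$ inside $U$ once $\omega$ is large. Substituting yields
\[E(\psi_\omega)-E(\varphi)=-c_0\,\omega^{-2k}+O(\omega^{-2k-1}),\]
which is strictly negative for large $\omega$. The principal obstacle is the quantitative bookkeeping of the oscillatory-integral estimates -- in particular, showing that the remainders in $B(h_\omega,h_\omega)$ are truly $O(1)$ and not $O(\omega)$, and that $\epsilon^3 C(h_\omega)$ and $\epsilon A(h_\omega)$ remain subdominant under the scaling $\epsilon_\omega=\omega^{-k-1}$. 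Both amount to careful applications of non-stationary phase, made uniform in the geometric data of $\varphi,\chi,\phi$.
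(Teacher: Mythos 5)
Your argument is correct, and it is essentially the proof of the cited reference: one expands the cubic functional $E(\varphi+\epsilon h)=E(\varphi)+\epsilon A(h)+\epsilon^2B(h,h)+\epsilon^3C(h)$, observes that the principal symbol $\varphi_x\tau^2+2\varphi_t\tau\xi$ of the second variation is indefinite wherever $\varphi_t\neq 0$, and exploits this with a high-frequency traveling wave at the characteristic speed $v=\varphi_t/\varphi_x$; your harmonic decompositions do check out (every term of $A(h_\omega)$ and $C(h_\omega)$ is a nonzero harmonic of the phase, so non-stationary phase kills them, while only $B$ retains a zero-frequency part, of size $-c_0\omega^2+O(1)$), and the scaling $\epsilon_\omega=\omega^{-k-1}$ then closes the argument. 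The only point worth recording is the one you already noticed: for a constant path $E(\varphi)=0$ and no strict decrease is possible, so the theorem as restated here tacitly assumes $\varphi_t\not\equiv 0$ (the hypothesis present in the original); your reduction to that case is the right reading.
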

In the article \cite{Bruveris2013} the result is proven for $\operatorname{Diff}_{\mathcal S}(\mathbb R)$, but essentially the same proof works also for $\operatorname{Diff}_{c}(\mathbb R)$.

\subsection{Completeness}\label{diff_complete}
As a corollary of Thm.~\ref{diff_globalwell} we obtain the result that the diffeomorphism group equipped with a metric of high enough order is geodesically complete:
\begin{theorem}
Let $(M,g)$ be a compact Riemannian manifold and let $G^s$ be the Sobolev metric of order $s$. 
For $s\geq\frac{\dim(M)+3}{2}$ the space $\big(\Diff(M),G^s\big)$ is geodesically complete.
\end{theorem}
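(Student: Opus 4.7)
The strategy is to deduce this from Theorem \ref{diff_globalwell} by verifying that its hypothesis -- $C^1$-regularity of the Green's function of the operator $L$ generating the metric -- holds once $s\geq (n+3)/2$, where $n=\dim M$. The right-invariant $H^s$-metric on $\Diff(M)$ is, at the identity, represented by the pseudo-differential operator $L=(\on{Id}+\De^g)^s$ on $\X(M)$, which is elliptic of order $2s$, self-adjoint and positive in $L^2(M,g)$, and hence invertible with a well-defined Schwartz kernel $K:M\x M\to\R$ (the Green's function).

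To control $K$ I would use the pseudo-differential calculus. Since $L^{-1}$ is classical of order $-2s$, its kernel is smooth off the diagonal, while a parametrix construction for powers of elliptic operators shows that the singular part of $K$ near the diagonal is asymptotically of the form $\dist_g(x,y)^{2s-n}$, with logarithmic corrections when $2s-n$ is an even non-negative integer. Consequently $K\in C^k(M\x M)$ as soon as $2s-n>k$, so the bound $s\geq (n+3)/2$ comfortably guarantees $K\in C^1(M\x M)$. Equivalently, one can argue by Sobolev embedding: $\de_x\in H^{-n/2-\ep}(M)$ for every $\ep>0$, elliptic regularity gives $K(x,\cdot)=L^{-1}\de_x\in H^{2s-n/2-\ep}(M)$, and the embedding $H^{n/2+1+\ep}\hookrightarrow C^1$ applies uniformly in $x$.

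With $K\in C^1(M\x M)$ in hand, the Lagrangian form of the geodesic equation,
\begin{equation*}
\p_t\ph(t,x)=\int_M K(\ph(t,x),y)\,(\ph(t))_\ast m_0(y),
\end{equation*}
can be treated exactly as in Theorem \ref{diff_globalwell}: for smooth initial data $u_0\in\X(M)$ the momentum $m_0=Lu_0\otimes\vol(g)$ is a smooth, hence finite-mass, vector-valued density, and a Gronwall estimate in $C^1(M,M)$ with constant proportional to $\|K\|_{C^1(M\x M)}\cdot\|m_0\|_{\mathrm{TV}}$ precludes finite-time blow-up of $\ph$. This produces a global flow $\ph\in C^1(\R,C^1(M,M))$.

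To upgrade this into a smooth geodesic in $\Diff(M)$, I would invoke persistence of Sobolev regularity: the local well-posedness result in $\Diff^q(M)$ quoted earlier in the text implies that the maximal $H^q$-existence interval is independent of $q$, so the already-global $C^1$-flow stays smooth for all time. The main obstacle is the sharp Green's function estimate on a general compact Riemannian manifold -- in particular showing that the gradient of the singular part extends continuously to the diagonal -- which requires either a careful parametrix argument or a localization-and-Sobolev-embedding argument performed in Riemannian normal coordinates; everything else then reduces to a standard ODE-in-Banach-space Gronwall estimate.
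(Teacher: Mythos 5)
Your proposal follows exactly the paper's (very terse) argument: the result is obtained as a corollary of Theorem~\ref{diff_globalwell}, the only point to check being that for $s\geq\frac{\dim(M)+3}{2}$ the Green's function of the operator inducing the metric is of class $C^1$, which your parametrix/Sobolev-embedding estimate establishes with room to spare (indeed $2s-\dim(M)\geq 3$). The paper states no more than this one-line reduction, so your write-up is a correct and more detailed version of the same proof.
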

This result is based on the observation, that for $s\geq\frac{\dim(M)+3}{2}$ the kernel of the operator inducing the metric $G^s$ is a $C^1$-function.

\subsection{Curvature}

Denote by $\ga(\cdot,\cdot)$ the inner product on the Lie algebra $\mf g$ of any Lie group $\mc G$ and let $u,v \in \mf g$ be orthonormal vectors. Then the sectional curvature of the plane $P(u,v)$ in $\mc G$ with respect to the right-invariant metric induced by $\ga$ is given by
\begin{align*}
 k(P(u,v))&=\frac14 \|\on{ad}^T_v u+ \on{ad}^T_u v\|^2_\ga-\ga(\on{ad}^T_v u,\on{ad}^T_u v)\\&\quad-\frac34\| \on{ad}_u v\|^2_\ga
+\frac12 \ga(\on{ad}_u v,\on{ad}^T_v u- \on{ad}^T_u v),
\end{align*}
where $\on{ad}^T$ is the transpose of $\on{ad}$ with respect to the given inner product $\ga$ inducing the right invariant metric.

For general Sobolev metrics there are no results on curvature available, but for the family of  $a$-$b$-$c$-metrics \eqref{abc_metric} on the $d$-dimensional torus $\mathbb T^d$, it was shown in \cite{Khesin2011_preprint} that the curvature assumes both signs.
\begin{theorem}[Thm. 7.1, \cite{Khesin2011_preprint}]
\label{thm_curv_pm}
If $d\geq 2$ and at least two of the parameters $a$, $b$, $c$ are non-zero, then the sectional curvature of the $a$-$b$-$c$-metric on $\on{Diff}(\mathbb T^d)$ assumes both signs.
\end{theorem}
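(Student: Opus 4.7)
The plan is to apply the Arnold curvature formula for right-invariant metrics on Lie groups (recalled just before the statement of the theorem) to carefully chosen pairs of vector fields on $\mathbb T^d$, and exhibit two pairs that produce sectional curvatures of opposite signs.

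First, I would identify the symmetric positive operator $L \colon \mf X(\mathbb T^d) \to \mf X(\mathbb T^d)$ representing the $a$-$b$-$c$ inner product, namely (using integration by parts together with $\int \on{div} X \on{div} Y \ud x = -\int \langle X, \nabla \on{div} Y\rangle\ud x$ and $\int \langle dX^\flat, dY^\flat\rangle \ud x = \int \langle -\De X + \nabla\on{div} X, Y\rangle \ud x$)
\[
L X = a X - c\,\De X + (c-b)\,\nabla \on{div} X.
\]
Because $L$ has constant coefficients on the flat torus, it is diagonalized by the Fourier basis: on a pure mode $X(x) = e^{i\langle k,x\rangle} w$ with $w \in \R^d$, $LX$ is another such mode with $w$ replaced by an explicit $2 \times 2$ (or $d\times d$) block depending linearly on $k$, so $L^{-1}$ is likewise explicit on each finite-dimensional block.

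Next, I would compute $\on{ad}_u^T v$ from its defining relation $\langle \on{ad}_u^T v, w\rangle_{a,b,c} = \langle v, \on{ad}_u w\rangle_{a,b,c}$, together with $\on{ad}_u w = -[u,w]$ in the paper's sign convention. Since $[u,w] = u\cdot\nabla w - w\cdot\nabla u$ is first order and couples only finitely many Fourier frequencies when $u$ and $v$ are themselves single-frequency fields, this gives closed-form expressions for $\on{ad}_u^T v$ and hence for every term in Arnold's formula.

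I would then test the formula on two families: (a) a pair of commuting vector fields $u, v$, for which $\on{ad}_u v = 0$ and only the ``positive'' part $\tfrac14\|\on{ad}_u^T v + \on{ad}_v^T u\|^2 - \ga(\on{ad}_u^T v, \on{ad}_v^T u)$ survives, yielding positive sectional curvature as soon as this expression is nonzero; and (b) a non-commuting pair of high-frequency modes where the term $-\tfrac34\|\on{ad}_u v\|^2$ dominates the other contributions because the bracket carries one more derivative than the transposes can compensate after applying $L^{-1}$. The hypothesis $d \geq 2$ is used in case (a) to have at least two independent coordinate directions so the commuting pair is nontrivial, and the hypothesis that at least two of $a,b,c$ are nonzero is used to guarantee that neither example is killed by a coincidental cancellation in $L$.

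The main obstacle is not any single computation but the bookkeeping needed to cover all parameter cases uniformly: I would split according to which of $a, b, c$ vanishes (three subcases, plus the generic case) and in each subcase pick the Fourier modes whose frequencies make the surviving terms in $L$ nondegenerate, so that the positive example is genuinely strictly positive and the negative example is genuinely strictly negative. Once the block-diagonalization of $L$ is in hand, each subcase reduces to checking the sign of an explicit rational function of the wavenumbers and of $a, b, c$.
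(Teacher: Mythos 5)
This survey does not actually prove the statement; it quotes it verbatim from \cite{Khesin2011_preprint}, so your proposal can only be measured against the method of that reference. Your overall architecture matches it in outline: the operator is indeed $LX=aX-c\De X+(c-b)\nabla\on{div}X$, it is block-diagonal on Fourier modes (eigenvalue $a+c|k|^2$ on $w\perp k$ and $a+b|k|^2$ on $w\parallel k$), and the proof does consist of feeding explicit trigonometric vector fields into Arnold's formula and exhibiting planes of both signs. Your positive half is sound: for commuting $u,v$ the formula collapses to $k(P(u,v))=\tfrac14\|\on{ad}^T_v u-\on{ad}^T_u v\|^2_\ga\ge 0$, and pairs such as $u=f(x_1)e_2$, $v=g(x_1)e_2$ make this strictly positive.

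The gap is in the negative half, which is the actual content of the theorem. Your proposed mechanism --- that $-\tfrac34\|\on{ad}_u v\|^2_\ga$ dominates for high-frequency non-commuting modes because ``the bracket carries one more derivative than the transposes can compensate after applying $L^{-1}$'' --- is not a valid principle. A scaling count on unit-normalized modes of comparable frequency $N$ shows that the bracket term and the transpose terms are both $O(1)$, so nothing dominates and the sign is decided by exact cancellations; worse, if one sends $|j|,|k|\to\infty$ with $j+k$ fixed, it is the transpose terms $\on{ad}^T_u v$ that blow up (since $L^{-1}$ at the low frequency $j+k$ no longer gains two powers of $N$) while the bracket stays bounded --- the opposite of your claim, and precisely the source of unbounded \emph{positive} curvature in these metrics. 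The heuristic is also falsified by a case quoted in this very survey: for $d=1$, $b=0$, the curvature is $\int_{S^1}(uv'-vu')^2\,dx\ge0$ identically \cite{Michor2005}, even though the bracket there also carries one more derivative. Relatedly, you assign the hypothesis $d\ge2$ to the positive example, but commuting pairs with positive curvature exist already for $d=1$; the hypotheses $d\ge2$ and ``at least two nonzero parameters'' are what make negative curvature attainable at all (for $d=1$ one needs both $a,b\neq0$). So the hard step --- producing, for every admissible $(a,b,c)$, a concrete plane with strictly negative curvature --- is exactly the step your argument delegates to an unexamined dominance claim; it must instead be done by choosing explicit modes and verifying the sign of the resulting rational function in $a$, $b$, $c$ and the wavenumbers, as in \cite{Khesin2011_preprint}.
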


In dimension one we have the same behavior for the family of $a$-$b$ metrics.

\begin{theorem}[Sect.\ 6, \cite{Khesin2011_preprint}]
If $d=1$ and both parameters $a$, $b$ are non-zero, then the sectional curvature of the $a$-$b$-metric on  $\on{Diff}(S^1)$ assumes both signs.
\end{theorem}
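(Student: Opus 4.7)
The proof will follow the same strategy as Theorem~\ref{thm_curv_pm} above: derive an explicit formula for $\on{ad}^T$ with respect to the $a$-$b$ inner product, substitute into Arnold's sectional curvature formula, and then exhibit two two-planes spanned by Fourier modes on which the curvature has opposite signs.

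First I would compute $\on{ad}^T$. Writing $L = a - b\p_x^2$ so that $\langle X,Y\rangle_{a,b} = \int_{S^1}(LX)\,Y\,dx$, and using $\on{ad}_u v = u'v - uv'$ (with the convention that the Lie bracket on $\Diff(S^1)$ is the negative of the vector-field bracket), a short integration by parts in $\int_{S^1}(Lv)(u'w - uw')\,dx$ yields
\[
L\bigl(\on{ad}^T_u v\bigr) \;=\; 2(Lv)u' + (Lv)'u.
\]
For $a,b>0$ the operator $L$ is a positive elliptic isomorphism of $C^\infty(S^1)$, and it is diagonal on Fourier modes, $L e^{ikx} = (a+bk^2)e^{ikx}$, so on any finite Fourier-mode subspace the inversion $\on{ad}^T_u v = L^{-1}\bigl(2(Lv)u' + (Lv)'u\bigr)$ is completely explicit.

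Next, I would restrict the right-invariant curvature formula quoted earlier in the section to a two-plane spanned by a pair of trigonometric vector fields built from $\{e^{\pm inx},e^{\pm imx}\}$. For such $u_n,v_m$ every occurrence of $\on{ad}$ and $\on{ad}^T$ lands in the finite-dimensional subspace generated by modes of frequency $|n|$, $|m|$, $|n\pm m|$, on which $L^{-1}$ is scalar multiplication by $(a+b(\cdot)^2)^{-1}$. After normalizing $u_n,v_m$ to unit length in $\langle\cdot,\cdot\rangle_{a,b}$, each of the four terms in the sectional curvature expression collapses to an elementary product of these scalars, and one obtains a closed-form rational expression
\[
k(P(u_n,v_m)) \;=\; \frac{N(a,b,n,m)}{(a+bn^2)(a+bm^2)(a+b(n+m)^2)(a+b(n-m)^2)}
\]
with $N$ an explicit polynomial in its arguments.

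Finally, to produce both signs, I would evaluate $N$ at two different frequency regimes. In the low-frequency regime such as $(n,m)=(1,2)$, the $a$-term dominates the denominator and one can exhibit a negative value of $N$; in the high-frequency regime $(n,m)=(N,N{+}1)$ with $N$ large, the $b$-term dominates and $N$ approaches an asymptotically positive quantity corresponding to the pure $\dot H^1$ behaviour. Direct evaluation at the two chosen frequency pairs then produces the two opposite signs. The main obstacle is purely algebraic: expanding the four pieces of Arnold's formula on a general Fourier-mode pair gives a lengthy rational expression, and one must simplify $N$ enough to read off definite signs at the two chosen $(n,m)$. There is no conceptual difficulty beyond the bookkeeping — the whole argument is the one-dimensional specialization of the computation behind Theorem~\ref{thm_curv_pm}, made substantially simpler because $L^{-1}$ is just multiplication on Fourier modes.
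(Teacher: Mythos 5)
Your framework --- the formula $\on{ad}^T_u v = L^{-1}\bigl(2(Lv)u' + (Lv)'u\bigr)$ with $L=a-b\p_\th^2$, the reduction to finite Fourier blocks, and a closed rational expression with denominator $(a+bn^2)(a+bm^2)(a+b(n+m)^2)(a+b(n-m)^2)$ --- is correct and is the standard route; the survey gives no proof of its own, so the proposal must be judged on its own terms. The gap is in the last step: the sign claims at your chosen planes are false, so the argument never produces a negatively curved plane. Carrying out your own program for $u=\cos\th$, $v=\cos 2\th$ gives the unnormalized curvature
\[
\ga(R(u,v)v,u)\;=\;\frac{\pi}{16}\cdot\frac{40a^2-32ab+72b^2}{a+9b},
\]
whose numerator is a positive definite quadratic form in $(a,b)$; this plane is positively curved for \emph{every} $a,b>0$. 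The same happens for $(\cos n\th,\sin n\th)$ (one gets a positive multiple of $(bn^2-2a)^2$), for $(1,\cos n\th)$, for $(\cos n\th,\cos m\th)$ in both limits $b\to 0$ and $a\to 0$, and in every mixed case I have checked, including your high-frequency pairs. Your low-frequency heuristic is moreover inconsistent with the survey itself: the regime where ``the $a$-term dominates'' is the $L^2$-metric, whose sectional curvature is non-negative by the theorem quoted immediately before this one, so low-frequency pure-mode planes are the last place to look for negativity. Pairs of pure trigonometric modes appear never to span a negatively curved plane for these metrics.

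There is a compounding issue you should be aware of: the curvature formula printed earlier in this section appears to carry a typo. The term $-\ga(\on{ad}^T_v u,\on{ad}^T_u v)$ should read $-\ga(\on{ad}^T_u u,\on{ad}^T_v v)$ (Arnold's $-4\langle B_u,B_v\rangle$); as printed, the formula returns $\tfrac54\|\on{ad}_u v\|^2$ instead of $\tfrac14\|\on{ad}_u v\|^2$ for a bi-invariant metric, and for the $L^2$-metric it yields $-\int_{S^1}(uv'-vu')^2\ud x$, the negative of the result quoted two theorems later. If you run your computation with the printed formula you will in fact obtain exactly the signs you predict (negative at $(1,2)$, positive at high frequency), but you will be computing the wrong quantity. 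With the corrected formula the term $-\ga(\on{ad}^T_u u,\on{ad}^T_v v)$ --- the only structurally negative contribution available --- vanishes identically on pure-mode pairs of distinct frequencies, because $\on{ad}^T_u u$ and $\on{ad}^T_v v$ live in the mutually orthogonal frequencies $2n$ and $2m$; this is precisely why such pairs cannot detect the negative curvature. The planes exhibiting both signs must mix modes (or be built from non-trigonometric fields), so that this term and the off-diagonal components $\ga(R(u,v)w,z)$ enter; what is needed is the full curvature tensor on a Fourier block minimized over planes, not the diagonal sectional curvatures at pure-mode pairs.
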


There are two special cases, where the sign of the curvature is constant. The first is the $L^2$-metric ($b=0$) in one dimension.

\begin{theorem}[Sect.\ 5.4, \cite{Michor2005}]
If $d=1$ and $b=0$ then the sectional curvature of the plane spanned by two orthonormal vector fields $u,v \in \mf X(S^1)$ for the $a$-$b$ metric on $\Diff(S^1)$ is given by
\begin{align*}
 k(P(u,v))&=\int_{S^1}(uv'-vu')^2\ud x\,.
\end{align*}
In particular the sectional curvature is non-negative.
\end{theorem}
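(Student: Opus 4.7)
The plan is to apply the general sectional curvature formula for right-invariant metrics on a Lie group, stated immediately above the theorem, to $G=\Diff(S^1)$ with Lie algebra $\mf g = \X(S^1)$ equipped with the $L^2$-inner product $\ga(u,v)=\int_{S^1}uv\,dx$. The argument is purely algebraic once the adjoint and its transpose are identified.

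Since the Lie bracket on $\mf g$ is the negative of the usual vector field bracket, one has $\on{ad}_u v = vu_x - uv_x$. The $L^2$-transpose is computed by a single integration by parts:
\[
\ga(v,\on{ad}_u w) = \int_{S^1} v(wu_x - uw_x)\,dx = \int_{S^1}(2vu_x + uv_x)\,w\,dx,
\]
so $\on{ad}^T_u v = 2vu_x + uv_x$. As a consistency check, $-\on{ad}^T_u u = -3uu_x$ recovers Burgers' equation, in accordance with Sect.~\ref{EPDiff}.

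Next I would exploit the two identities
\[
\on{ad}^T_u v + \on{ad}^T_v u = 3(uv)_x, \qquad \on{ad}^T_v u - \on{ad}^T_u v = -\on{ad}_u v,
\]
together with the pointwise identity $((uv)_x)^2 = (vu_x - uv_x)^2 + 4uv\,u_x v_x$. Substituting into the curvature formula, the $\tfrac14\|\on{ad}^T_v u + \on{ad}^T_u v\|^2$ term collapses to $\tfrac{9}{4}\int((uv)_x)^2\,dx$, which splits via the pointwise identity into a multiple of $\int(vu_x-uv_x)^2\,dx$ plus a cross-term proportional to $\int uv\,u_x v_x\,dx$; the inner-product term is expanded by brute force into a linear combination of $\int u^2 v_x^2\,dx$, $\int v^2 u_x^2\,dx$, and $\int uv\,u_x v_x\,dx$; and the last term simplifies to $-\tfrac12\|\on{ad}_u v\|^2$ by the second identity above. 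Assembling everything, the mixed $uv\,u_x v_x$ contributions cancel and the coefficients of $\|\on{ad}_u v\|^2$ combine to leave exactly $\int_{S^1}(vu_x - uv_x)^2\,dx$, which is manifestly non-negative.

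The main obstacle is bookkeeping rather than anything conceptual: one must consistently track the opposite-sign Lie bracket convention for $\mf g$, the definition of $\on{ad}^T$ with respect to a weak $L^2$-inner product, and the coefficients appearing in the four terms of the curvature formula, and carry out the expansion of the quadratic form $\ga(\on{ad}^T_v u,\on{ad}^T_u v)$ without sign errors. No analytic input beyond integration by parts on the compact manifold $S^1$ is required.
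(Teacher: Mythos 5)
Your preliminary computations are all correct: with the bracket on $\mf X(S^1)$ taken as the negative of the usual vector-field bracket one has $\on{ad}_u v = vu'-uv'$ and $\on{ad}^T_u v = 2vu'+uv'$ (the Burgers consistency check confirms the sign), and the identities $\on{ad}^T_v u+\on{ad}^T_u v=3(uv)'$ and $\on{ad}^T_v u-\on{ad}^T_u v=-\on{ad}_u v$ hold. The gap is in the final assembly, which you assert rather than carry out: substituting into the curvature formula \emph{exactly as displayed above the theorem} does not give $\int(uv'-vu')^2\ud x$ but its negative. Writing $A=u^2(v')^2$, $B=v^2(u')^2$, $C=uvu'v'$, the four terms contribute the integrands $\tfrac94(A+B+2C)$, $-(2A+2B+5C)$, $-\tfrac34(A+B-2C)$ and $-\tfrac12(A+B-2C)$, whose coefficients sum to $-1$, $-1$, $+2$, so that
\[
k(P(u,v))=\int_{S^1}\bigl(-A-B+2C\bigr)\ud x=-\int_{S^1}(uv'-vu')^2\ud x\,,
\]
the opposite of the claimed result. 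So the cancellation you describe does not happen the way you say it does.

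The source of the discrepancy is not your bookkeeping plan but the quoted general formula itself: its second term reads $-\ga(\on{ad}^T_v u,\on{ad}^T_u v)$, whereas Arnold's sectional-curvature formula for a right-invariant metric has the diagonal term $-\ga(\on{ad}^T_u u,\on{ad}^T_v v)$ in that position. A quick sanity check shows the printed version cannot be right: for a bi-invariant metric, where $\on{ad}^T_u=-\on{ad}_u$, it yields $\tfrac54\|\on{ad}_u v\|^2_\ga$ instead of the classical $\tfrac14\|\on{ad}_u v\|^2_\ga$. With the diagonal term, which here equals $-\ga(3uu',3vv')=-9\int_{S^1}C\ud x$, the same expansion gives coefficients $+1$, $+1$, $-2$ and hence $k(P(u,v))=\int_{S^1}(uv'-vu')^2\ud x\ge 0$, as stated in the theorem and in Sect.~5.4 of the cited reference. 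To repair your argument you must first correct the curvature formula (or work directly from Arnold's formula); as written, your derivation proves the statement with the wrong sign.
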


This does not generalize to higher dimensions. Denote by $\mathbb T^d$ the flat $d$-dimensional torus. 

\begin{theorem}[Prop. 7.2, \cite{Khesin2011_preprint}]
\label{curv_d2_l2}
If $d\geq 2$ and $b=c=0$ then then the sectional curvature of the $a$-$b$-$c$ metric on $\on{Diff}(\mathbb T^d)$ assumes both signs.
\end{theorem}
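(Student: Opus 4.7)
My approach is to exhibit two explicit pairs of $L^2$-orthogonal vector fields on $\mathbb T^d$ whose sectional curvatures have opposite signs, by plugging them into the Arnold-type formula for $k(P(u,v))$ recalled at the start of this section. It suffices to treat $d=2$: for $d>2$ every test field is extended to be independent of $x_3,\ldots,x_d$, which multiplies every $L^2$-pairing by the common factor $(2\pi)^{d-2}$ and therefore leaves the sign of $k(P(u,v))$ unchanged. A direct integration by parts on the flat torus, with $\ad_u v = Du\cdot v - Dv\cdot u$, gives the $L^2$-adjoint
$$\ad^T_u v = (Du)^T v + v\on{div} u + (Dv)\cdot u.$$

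For positive curvature I take $u_1 = \cos(x_1)\,e_1$ and $v_1 = \cos(x_1)\,e_2$ on $\mathbb T^2$. These are orthogonal in $L^2$, and a short computation yields $\ad_{u_1}v_1 = \tfrac12\sin(2x_1)\,e_2$, $\ad^T_{u_1}v_1 = -\sin(2x_1)\,e_2$, and, crucially, $\ad^T_{v_1}u_1 = 0$. Substitution into the Arnold formula leaves the three positive contributions $\tfrac14\|\ad^T_{u_1}v_1\|^2$, $0$, and $\tfrac12\langle \ad_{u_1}v_1, -\ad^T_{u_1}v_1\rangle$, which dominate the single negative term $-\tfrac34\|\ad_{u_1}v_1\|^2$, giving a strictly positive numerator. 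For negative curvature I take $u_2 = \cos(x_1)\,e_1$ and $v_2 = \cos(x_2)\,e_1$, both in the $e_1$-direction but oscillating in different coordinates, so that $\on{div}\,u_2 = -\sin(x_1)\ne 0$. One finds $\ad_{u_2}v_2 = -\sin(x_1)\cos(x_2)\,e_1$, $\ad^T_{u_2}v_2 = -2\sin(x_1)\cos(x_2)\,e_1$, and $\ad^T_{v_2}u_2 = -\sin(x_1)\cos(x_2)\,e_1 - \cos(x_1)\sin(x_2)\,e_2$. Substitution into the Arnold formula then shows that the three terms $-\langle \ad^T_{v_2}u_2,\ad^T_{u_2}v_2\rangle$, $-\tfrac34\|\ad_{u_2}v_2\|^2$ and $\tfrac12\langle \ad_{u_2}v_2,\ad^T_{v_2}u_2-\ad^T_{u_2}v_2\rangle$ jointly outweigh the positive symmetrised piece $\tfrac14\|\ad^T_{v_2}u_2+\ad^T_{u_2}v_2\|^2$, producing a strictly negative numerator.

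The main obstacle is locating the negative example. Any ``collinear'' Fourier-mode choice in which $u$ and $v$ depend on the same base coordinate effectively reduces the problem to the one-dimensional situation of Sect.~5, where the $L^2$ sectional curvature is manifestly non-negative by the explicit formula $k = \int (uv'-vu')^2\,\ud x$. The nontrivial move is to let $u$ and $v$ share an ambient direction in $\R^d$ while depending on distinct base variables; this forces $\on{div}\,u$ to be nonzero and activates the term $v\on{div} u$ in $\ad^T$, which is precisely what is needed to tip the Arnold formula into the negative regime. Once such a pair is exhibited for $d=2$, the triviality of the extension to $d>2$ completes the proof.
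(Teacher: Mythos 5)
Your proposal is correct, and I have checked both computations: with $\on{ad}_u v = Du\cdot v - Dv\cdot u$ and the $L^2$-adjoint $\on{ad}^T_u v = (Du)^Tv + Dv\cdot u + v\on{div}u$ (which is consistent with the form of the EPDiff equation \eqref{epdiff} for $L=\on{Id}$), the pair $u_1=\cos(x_1)e_1$, $v_1=\cos(x_1)e_2$ yields the value $\tfrac58\pi^2>0$ and the pair $u_2=\cos(x_1)e_1$, $v_2=\cos(x_2)e_1$ yields $-\tfrac34\pi^2<0$ for the unnormalized curvature $\langle R(u,v)v,u\rangle$; since the Arnold expression is biquadratic in $(u,v)$ and your test fields are $L^2$-orthogonal, the sign of this "numerator" is indeed the sign of the sectional curvature, so the lack of normalization is harmless. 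The extension to $d>2$ by constant extension in $x_3,\dots,x_d$ is also fine, since all the pointwise quantities are unchanged and every integral picks up the same positive factor. Note that this survey does not actually reproduce a proof of the statement --- it only records the Arnold-type formula for $k(P(u,v))$ and cites Prop.~7.2 of Khesin--Lenells--Misio{\l}ek--Preston --- so your argument cannot be compared line by line with a proof in the text; it is, however, a valid self-contained verification in the same spirit as the cited source, which likewise works with explicit trigonometric vector fields (organized there as a Fourier-mode computation for the general $a$-$b$-$c$ family rather than ad hoc test pairs). Your closing heuristic about why the negative example must mix base variables while sharing an ambient direction is not needed for the proof and is only loosely analogous to the one-dimensional formula $\int(uv'-vu')^2\ud x$, but it does not affect correctness.
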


The sectional curvature of the $L^2$-metric has been calculated for an arbitrary Riemannian 
manifold $N$. The expression for sectional curvature is the sum of a non-negative term and a term 
whose sign is indefinite. Although we conjecture that the statement of Thm. \ref{curv_d2_l2} 
extends to arbitrary manifolds $N$, this has not been proven yet.   

The second special case is the homogeneous $\dot H^1$-metric with $a=c=0$ for $d\geq 2$ and $a=0$ for $d=1$. The metric is degenerate on $\on{Diff}(M)$, but it induces the Fisher-Rao metric on the space $\on{Diff}(M)/\on{Diff}_\mu(M)$ of densities. Remarkably the induced metric has constant sectional curvature.
\begin{theorem}[Cor. 3.2, \cite{Khesin2011_preprint}]
 Let $(M,g)$ be a compact Riemannian manifold. Then the homogeneous $\dot H^1$-metric
 \[\langle u,v\rangle_{\dot H^1}=\int_M\on{div}(u)\on{div}(v)\on{vol}(g)  \]
 on $\on{Diff}(M)/\on{Diff}_{\mu}(M)$ has constant positive sectional curvature
\[
k(P(u,v)) =  \frac{1}{\Vol(M)}\,.
\]
\end{theorem}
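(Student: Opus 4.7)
The plan is to realize the quotient $\on{Diff}(M)/\on{Diff}_\mu(M)$ as (an open subset of) a round Hilbert sphere via the square-root transform, reducing the computation of sectional curvature to the classical formula for a sphere in a Hilbert space.

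First I would identify the quotient with a space of densities: by Moser's theorem the map $\varphi \mapsto \varphi^\ast\on{vol}(g)$ descends to a diffeomorphism between $\on{Diff}(M)/\on{Diff}_\mu(M)$ and the manifold of positive smooth densities on $M$ of total mass $\Vol(M)$. Then I would verify that the $\dot H^1$ semi-inner-product descends to a genuine Riemannian metric on the quotient. Its kernel on $\X(M)$ is exactly $\X_\mu(M) = \on{Lie}(\on{Diff}_\mu(M))$, and it is $\on{Ad}(\on{Diff}_\mu(M))$-invariant, a consequence of the change-of-variables formula and $\psi^\ast\on{vol}(g) = \on{vol}(g)$ for $\psi \in \on{Diff}_\mu(M)$; together with the matching of the vertical distribution of the fibration and the radical of the semi-metric in the appropriate trivialization, this yields a well-defined descended metric.

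The key computation is to identify the descended metric with the Fisher--Rao information metric
\[
G^B_\rho(h\on{vol}(g),h\on{vol}(g)) = \int_M \frac{h^2}{f}\,\on{vol}(g), \qquad \rho = f\on{vol}(g).
\]
Writing a tangent vector at $\varphi$ in the right trivialization as $u \circ \varphi$ with $u \in \X(M)$, the derivative of $\pi: \varphi \mapsto \varphi^\ast\on{vol}(g)$ gives $\delta\rho = (\on{div}(u)\circ\varphi)\,f\on{vol}(g)$, so $h = f \cdot (\on{div}(u)\circ\varphi)$. Substituting $y = \varphi(x)$ in $\int_M \on{div}(u)^2\,\on{vol}(g)$ then produces the Fisher--Rao expression displayed above.

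Finally, I would invoke the classical square-root transform $\Phi: f\on{vol}(g) \mapsto 2\sqrt{f}$, which embeds the space of densities of total mass $\Vol(M)$ isometrically onto an open subset of the $L^2$-sphere $\{F \in L^2(M,\on{vol}(g)) : \|F\|_{L^2}^2 = 4\Vol(M)\}$ of radius $2\sqrt{\Vol(M)}$; a one-line calculation of $d\Phi$ verifies that $\Phi^\ast\langle\cdot,\cdot\rangle_{L^2}$ equals the Fisher--Rao metric. Since a round Hilbert sphere of radius $r$ has constant sectional curvature $1/r^2$, the quotient inherits constant positive sectional curvature, and matching the normalization constants implicit in the definitions of $\dot H^1$ and $\Phi$ yields the stated value $1/\Vol(M)$. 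The main obstacle is the central calculation: the descent of the right-invariant semi-metric requires care in choosing the side of the quotient so that the vertical distribution coincides with the radical of $\dot H^1$, and the identification of the descended metric with Fisher--Rao at a general density (rather than only at the base point $\rho = \on{vol}(g)$, where it is immediate) requires tracking Jacobians through the change of variables.
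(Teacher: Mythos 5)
Your strategy is precisely the one the paper indicates for this result: identify $\on{Diff}(M)/\on{Diff}_\mu(M)$ with the space of densities of total mass $\Vol(M)$, check that the $\dot H^1$ semi-metric descends to the Fisher--Rao metric there, and exhibit an explicit isometry onto (an open subset of) a round sphere in $L^2(M,\on{vol}(g))$ via the square-root map. The paper's own justification is exactly the remark that ``the $\dot H^1$ metric on $\on{Diff}(M)/\on{Diff}_{\mu}(M)$ is isometric to a sphere in the Hilbert space $L^2(M,\on{vol}(g))$'', so your proposal is a correctly fleshed-out version of the intended argument, including the side-of-the-quotient and Jacobian bookkeeping you flag.

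The one step you defer --- ``matching the normalization constants'' --- does not in fact come out to the stated value, so do not treat it as a formality. With your conventions, $\Phi(f\on{vol}(g))=2\sqrt f$ pulls the flat $L^2$ inner product back exactly to $\int_M h^2/f\,\on{vol}(g)$, which in turn is exactly the descended quantity $\int_M(\on{div}u)^2\on{vol}(g)$ with no extra factor, and the image lies on the sphere of radius $2\sqrt{\Vol(M)}$; hence your argument yields constant curvature $1/(4\Vol(M))$. Since sectional curvature is an isometry invariant, no rescaling of $\Phi$ can change this: the value $1/\Vol(M)$ belongs to the normalization $\langle u,v\rangle_{\dot H^1}=\tfrac14\int_M\on{div}(u)\on{div}(v)\on{vol}(g)$, equivalently to declaring $f\on{vol}(g)\mapsto\sqrt f$ to be the isometry. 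So either the metric in the statement should carry a factor $\tfrac14$ or the curvature should read $1/(4\Vol(M))$; your computation, carried through honestly, detects this mismatch, and you should record explicitly which convention you adopt rather than asserting that the constants agree.
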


This result is based on the observation, that the $\dot H^1$ metric on $\on{Diff}(M)/\on{Diff}_{\mu}(M)$ is  
isometric to a sphere in the Hilbert space $L^2(M,\on{vol}(g))$.  
For $M=S^1$ this result has been proven already in  \cite{Lenells2007}. Recently it has been shown 
that the $\dot H^1$-metric on a certain extension of $\Diff_c(\R)$ is a flat space in the sense of Riemannian geometry; see \cite{Bauer2012d_preprint}.

\section{Metrics on shape space induced by $\on{Diff}(\R^d)$}
\label{sec_outer}

In this section we will consider Riemannian metrics on $B_e(M,\R^d)$, 
the space of embedded type $M$ submanifolds that are induced by the left action of $\on{Diff}(\R^d)$. 
Let $\on{Diff}(\R^d)$ stand for one of the groups $\on{Diff}_c(\R^d)$, 
$\on{Diff}_{\mc S}(\R^d)$ or $\on{Diff}_{H^\infty}(\R^d)$ described in Sect.~\ref{diff_groups}. 
We also relax the assumption on the dimension of $M$ and only require $\on{dim}(M) < d$. The action is given by
\[
\on{Diff}(\R^d) \x B_e(M,\R^d) \ni (\ph, Q) \mapsto \ph(Q) \in B_e(M,\R^d)\,.
\]
This action is in general not transitive -- consider for example a knotted and an unknotted circle in $\R^3$ -- but its orbits are open subsets of $B_e(M,\R^d)$. Since the groups $\on{Diff}_c(\R^d)$, $\on{Diff}_{\mc S}(\R^d)$ and $\on{Diff}_{H^\infty}(\R^d)$ connected and $M$ is compact, the orbits are the connected components of $B_e(M,\R^d)$. For $Q \in B_e(M,\R^d)$ the isotropy group
\[
\on{Diff}(\R^d)_Q = \left\{ \ph\,:\, \ph(Q) = Q \right\}\,,
\]
consists of all diffeomorphisms that map $Q$ to itself. Thus each orbit $\on{Orb}(Q) = \on{Diff}(\R^d).Q$ can be identified with the quotient
\[
B_e(M,\R^d) \supseteq \on{Orb}(Q) \cong {\on{Diff}(\R^d)}/{ \on{Diff}(\R^d)_Q}\,.
\]
Let us take a step backwards and remember that another way to represent $B_e(M,\R^d)$  was as the quotient
\[
B_e(M,\R^d) \cong \on{Emb}(M,\R^d) / \on{Diff}(M)\,.
\]
The diffeomorphism group $\on{Diff}(\R^d)$ also acts on the space $\on{Emb}(M,\R^d)$ of embeddings, that is parametrized submanifolds with the action
\[
\on{Diff}(\R^d) \x \on{Emb}(M,\R^d)\!\ni\! (\ph, q) \mapsto \ph \on{\circ} q \in \on{Emb}(M,\R^d).
\]
This action is generally not transitive either, but has open orbits as before. For fixed $q \in \on{Emb}(M,\R^d)$,  the isotropy group
\[
\on{Diff}(\R^d)_q = \left\{ \ph\,:\, \ph|_{q(M)} \equiv \on{Id} \right\}\,,
\]
consists of all diffeomorphisms that fix the image $q(M)$ pointwise. Note the subtle difference between the two groups $\on{Diff}(\R^d)_q$ and $\on{Diff}(\R^d)_Q$, when $Q = q(M)$. The former consists of diffeomorphisms that fix $q(M)$ pointwise, while elements of the latter only fix $q(M)$ as a set. As before we can identify each orbit $\on{Orb}(q) = \on{Diff}(\R^d).q$ with the set
\[
\on{Emb}(M,\R^d) \supseteq \on{Orb}(q) \cong {\on{Diff}(\R^d)}/{ \on{Diff}(\R^d)_q}\,.
\]
The isotropy groups are subgroups of each other
\[
\on{Diff}(\R^d)_q \unlhd \on{Diff}(\R^d)_Q \leq \on{Diff}(\R^d)\,,
\]
with $\on{Diff}(\R^d)_q$ being a normal subgroup of $\on{Diff}(\R^d)_Q$. Their quotient can be identified with
\[
\on{Diff}(\R^d)_Q / \on{Diff}(\R^d)_q \cong \on{Diff}(M)\,.
\]
Now we have the two-step process,
\begin{multline*}
\on{Diff}(\R^d) \to \\
\to \on{Diff}(\R^d) / \on{Diff}(\R^d)_q \cong \on{Orb}(q) \subseteq \on{Emb}(M,\R^d) \to \\
 \to \on{Emb}(M,\R^d) / \on{Diff}(M) \cong B_e(M,\R^d)\,.
\end{multline*}
In particular the open subset $\on{Orb}(Q)$ of $B_e(M,\R^d)$ can be represented as any of the quotients
\begin{multline*}
\on{Orb}(Q) \cong \on{Orb}(q) / \on{Diff}(M) \cong \\
\cong \left.{}^{\displaystyle {\on{Diff}(\R^d)}/{ \on{Diff}(\R^d)_q} }
\middle/_{\displaystyle \on{Diff}(\R^d)_Q/\on{Diff}(\R^d)_q}\right. \cong \\
\cong \on{Diff}(\R^d)/{ \on{Diff}(\R^d)_Q}\,.
\end{multline*}

Let a right-invariant Riemannian metric $G^{\on{Diff}}$ be given on $\on{Diff}(\R^d)$. Then we can attempt to define a metric on $\on{Emb}(M,\R^d)$ in the following way: fix $q_0 \in \on{Emb}(M,\R^d)$ and let $q = \ph \on{\circ} q_0$ be an element in the orbit of $q_0$. Define the (semi-)norm of a tangent vector $h \in T_q \on{Emb}(M,\R^d)$ by
\[
G^{\on{Emb}}_q(h,h) = \inf_{X_\ph \on{\circ} q_0 = h} G^{\on{Diff}}_\ph(X_\ph, X_\ph)\,,
\]
with $X_\ph \in T_\ph\on{Diff}(\R^d)$. If we define $\pi_{q_0}$ to be the projection
\[
\pi_{q_0} : \on{Diff}(\R^d) \to \on{Emb}(M,\R^d),\quad \pi_{q_0}(\ph) = \ph \on{\circ} q_0\,,
\]
then
\[
h = X_\ph \on{\circ} q_0 = T_\ph \pi_{q_0}.X_\ph\,,
\]
and the equation defining $G^{\on{Emb}}$ is the relation between two metrics that are connected by a Riemannian submersion. Because $G^{\on{Diff}}$ is right-invariant and the group action is associative we can rewrite the defining equation as
\[
G_q^{\on{Emb}}(h, h) = \inf_{X \on{\circ} q = h} G^{\on{Diff}}_{\on{Id}}(X, X)\,,
\]
with $X \in T_{\on{Id}}\on{Diff}(\R^d)$. Thus we see that $G^{\on{Emb}}$ does not depend on the choice of $q_0$. 

One has to prove in each example, that $G^{\on{Emb}}$ is smooth and a metric, i.e., that it is non-degenerate. We will see for landmark matching in Sect. \ref{sec_lm}, that even though the metric $G^{\on{Diff}}$ on $\on{Diff}(\R^d)$ is smooth, the induced metric on the landmark space $\mathcal{L}^n(\R^d)$ has only finitely many derivatives. 

While $\pi_{q_0}$ is a Riemannian submersion this is an example, where the horizontal bundle exists only in a suitable Sobolev-completion; see Sect.~\ref{Submersion}. 
In Sect.~\ref{metric_outer} we will take care of this by defining the metric via a reproducing 
kernel Hilbert space $\mc H$. 

Assuming that this construction yields a Riemannian metric on the space $\on{Emb}(M,\R^d)$, then this metric is invariant 
under reparametrizations, because
the left-action by $\on{Diff}(\R^d)$ commutes with the right-action by $\on{Diff}(M)$:
\begin{multline*}
G^{\on{Emb}}_{q\on{\circ}\ph}(h\on{\circ}\ph,h\on{\circ}\ph) = \inf_{X\on{\circ} q\on{\circ}\ph = h\on{\circ}\ph} G^{\on{Diff}}_{\on{Id}}(X, X) =\\
= \inf_{X\on{\circ} q = h} G^{\on{Diff}}_{\on{Id}}(X, X) = G^{\on{Emb}}_{q}(h,h)\,.
\end{multline*}
The metric $G^{\on{Emb}}$ then projects to a Riemannian metric on $B_e(M,\R^d)$ as explained in Sect.~\ref{Submersion}.

\subsection{Pattern theory}

This section is closely related to ideas in Grenander's pattern theory \cite{Grenander1993, Grenander2007, Mumford2010}. 
The principle underlying pattern theory is to explain changes of shape by a deformation group 
acting on the shape. 
In our case shapes are elements of either $\on{Emb}(M,\R^d)$ or $B_e(M,\R^d)$ 
and the deformation group is the group $\on{Diff}(\R^d)$.

There is a lot of flexibility in the choice of the group and the space it acts upon. 
If $M$ is a finite set of $n$ points, then $\Emb(M,\R^d)\subseteq (\R^d)^n$ is the set of landmarks. 
We have inclusion instead of equality because landmarks have to be distinct points. 
We will return to this space in Sect. \ref{sec_lm}.

An important example is when the shape space is the space of volumetric grey-scale images modeled as functions in $C^\infty(\R^d,\R)$ and the deformation group is $\on{Diff}(\R^d)$. The action is given by
\[
\on{Diff}(\R^d) \x C^\infty(\R^d,\R)\!\ni\! (\ph, I) \mapsto I \on{\circ} \ph^{-1} \in C^\infty(\R^d,\R).
\]
This action is far from being transitive. 
Thus it is not possible to rigorously define a Riemannian metric on $C^\infty(\R^d,\R)$ 
that is induced by $\on{Diff}(\R^d)$. 
Nevertheless the idea of images being deformed by diffeomorphisms led to the image registration method 
known as LDDMM \cite{Beg2005, Trouve2002, Miller2001, Trouve1998}. 
It is being applied in computational anatomy with images being MRI and CT scans to study the connections between anatomical shape and physiological function. See \cite{Bruveris2013_preprint} for an overview of image registration within the LDDMM framework.

\subsection{Defining metrics on $\on{Diff}(\R^d)$}\label{metric_outer}
Following the presentation in \cite{Micheli2013} we assume that the inner product on 
$\mf X_c(\R^d)$ is given in the following way: let $(\mc H, \langle\cdot,\cdot\rangle_\mc H)$ be a 
Hilbert space of vector fields, such that the canonical inclusions in the following diagram  
\[
\mf X_c(\R^d) \hookrightarrow \mc H \hookrightarrow C^k_b(\R^d, \R^d)
\]
are bounded linear mappings for some $k\geq 0$. We shall also assume that the Lie algebra $\mf X_c(\R^d)$ 
of $\Diff(\mathbb R^d)$
is dense in $\mc H$. 
Here $C^k_b(\R^d, \R^d)$ is the space of all globally bounded $C^k$-vector fields 
with the norm $\| X\|_{k,\infty} = \sum_{0\leq j \leq k} \| D^j X\|_{\infty}$.

Given these assumptions, the space $\mc H$ is a {\it reproducing kernel Hilbert space}, i.e., for all 
$x,a\in \R^d$ the directional point-evaluation $\on{ev}_x^a : \mc H \to \R$ defined as 
$\on{ev}_x^a(u) = \langle u(x), a \rangle$ is a continuous linear functional on $\mc H$. See 
\cite{Aronszajn1950} or \cite{Saitoh1988} for a detailed treatment.
The relation
\[
\langle u, K(.,x)a\rangle_{\mc H} = \langle u(x), a \rangle
\]
defines a matrix-valued function $K:\R^d \x \R^d \to \R^{d\x d}$, called the {\it kernel} of 
$\mc H$. It satisfies the two properties
\begin{itemize}
\item
$K \in C^k_b(\R^d \x \R^d,\R^{d\x d})$ and
\item
$K(y,x) = K(x,y)^T$.
\end{itemize}

Associated to $\mc H$ we have the canonical isomorphism $L : \mc H \to \mc H^\ast$. Note that the 
kernel satisfies $K(y,x)a = L^{-1}(\on{ev}^a_x)(y)$; this relation is even more general: 
the space $\mf M^k(\R^d,\R^d)$ of vector-valued distributions, 
whose components are $k$-th derivatives of finite signed measures is a subspace of the dual space $C^k_b(\R^d,\R^d)^\ast$ and the operator
\[
K : \mf M^k(\R^d,\R^d) \to C^k_b(\R^d, \R^d),\; m \mapsto \int_{\R^d} K(.,x) m(x)
\]
coincides with $L^{-1}$. This is represented in the diagram
\[
\xymatrix @-2.6pt{
\mf X_c(\R^d)^\ast & \mc H^\ast \ar[l] & \mf M^k(\R^d) \ar[l] \ar[d]^K \ar[r]^-\subseteq & C^k_b(\R^d,\R^d)^\ast \\
\mf X_c(\R^d) \ar[r] & \mc H \ar[r] \ar[u]^L & C^k_b(\R^d,\R^d)
}
\]
Here $\mf X_c(\R^d)^\ast$ denotes the space of vector-valued distributions dual to $\X(\mathbb R^d)$, 
depending on the decay conditions chosen. 
The inner product on $\mf X_c(\R^d)$ is the restriction of the inner product on $\mc H$,
\[
\langle X, Y \rangle_{\mc H} = \int_{\R^d} \langle LX, Y \rangle \ud x\,,
\]
where the expression on the right hand side is a suggestive way to denote the pairing $\langle LX, Y \rangle_{\mf X_c^\ast \x \mf X_c}$ between a distribution and a vector field.

\begin{example} Let $\mc H = H^k(\R^d,\R^d)$ be the Sobolev space of order $k>\tfrac d2$ with the inner product
\[
\langle X, Y \rangle_{H^k} = \int_{\R^d} \langle (\on{Id} - \De)^k X, Y \rangle \ud x\,.
\]
Then by the Sobolev embedding theorem we have
\[
H^k(\R^d,\R^d) \hookrightarrow C^{l}_b(\R^d,\R^d)\quad \text{  for }l<k-d/2.
\]
In this example $L : H^k(\R^d,\R^d) \to H^{-k}(\R^d,\R^d)$ is the operator $L = (\on{Id} - \De)^k$ and the kernel $K$ is the Green's function of $L$,
\[
K(x,y) = (2\pi)^{-d/2} \frac{2^{1-k}}{(k-1)!} |x-y|^{k-\tfrac d2} J_{k-\tfrac d2}(|x-y|) \on{Id}\,.
\]
$J_\al(x)$ is the modified Bessel function of order $\al$. Around $x=0$ the Bessel function behaves like $J_\al(|x|) \sim |x|^\al$ and so 
\[
|x-y|^{k-\tfrac d2} J_{k-\tfrac d2}(|x-y|) \sim |x-y|^{2k-d}\,\, \textrm{around } x-y=0\,.
\]
Thus $K \in C_b^{2k-d-1}(\R^d\x\R^d,\R^{d\x d})$; 
this will be relevant in the case of landmarks.
\end{example}

In the above example $L$ was a scalar differential operator;  
it acted on each component of the vector field equally and was a multiple of the 
identity matrix. 
This is not always the case. 
For example the operator associated to the family of $a$-$b$-$c$-metrics is in general not scalar 
and the corresponding kernel is a dense (not sparse) matrix.

In Sect.\ \ref{diff_groups} the metric on $\on{Diff}(\R^d)$ was introduced by choosing a 
differential operator. Given an operator with appropriate properties, it is possible to reconstruct 
the space $\mc H$. The reason for emphasizing the space $\mc H$ and the reproducing kernel is 
twofold: Firstly, the induced metrics on $\on{Emb}(M,\R^d)$ and the space of landmarks have a simpler 
representation in terms of the kernel. Secondly, in the literature on LDDMM (e.g., in 
\cite{Younes2010}) the starting point is the space $\mc H$ of vector fields and by presenting both 
approaches we show their similarities.

\subsection{The metric on $\on{Emb}(M,\R^d)$}

Let $G^{\mc H}$ be a right-invariant metric on $\on{Diff}(\R^d)$. The induced metric on $\on{Emb}(M,\R^d)$ is defined via
\[
G_q^{\mc H}(h, h) = \inf_{X \on{\circ} q = h} \langle X, X \rangle_{\mc H}\,.
\]
To compute a more explicit expression for $G_q^{\mc H}$, we decompose $\mc H$ into
\begin{align*}
\mc H^{\on{vert}}_q &= \{ X\,:\, X|_q \equiv 0 \}\,,&
 \mc H^{\on{hor}}_q &= (\mc H^{\on{vert}}_q)^\perp\,.
\end{align*}
Then the induced metric is
\[
G_q^{\mc H}(h, h) = \langle X^{\on{hor}}, X^{\on{hor}} \rangle_{\mc H}\,,
\]
with $X \in \mf X_c(\R^d)$ any vector field such that $X \on{\circ} q = h$. 
The horizontal projection does not depend on the choice of the lift, i.e., 
if $X, Y \in \mf X_c(\R^d)$ coincide along $q$, then $X^{\on{hor}} = Y^{\on{hor}}$. 
We identify $\mc H^{\on{hor}}_q$ 
with the $G^{\mc H}$-completion of the tangent space $T_q \on{Emb}(M,\R^d)$. 
There are maps
\[
\begin{array}{ccc}
T_q \on{Emb}(M,\R^d) & \to & \mc H^{\on{hor}}_q \\
h & \mapsto & X^{\on{hor}}
\end{array}\,,
\qquad
\begin{array}{ccc}
\mc H^{\on{hor}}_q & \to & C^k_b(M,\R^d) \\
X & \mapsto & X\on{\circ} q
\end{array}\,.
\]
The composition of these two maps is the canonical embedding $T_q \on{Emb}(M,\R^d) \hookrightarrow C_b^k(M,\R^d)$. 
The space $\mc H^{\on{hor}}_q$ is again a reproducing kernel Hilbert space with the kernel given by
\[
K_q : M\x M \to \R^{d\x d}\,,\quad
K_q(x,y) = K(q(x), q(y))\,.
\]
Thus we have identified the induced Riemannian metric $G^{\mc H}$ on $\on{Emb}(M,\R^d)$ as
\[
G_q^{\mc H}(h, h) = \langle h, h \rangle_{\mc H^{\on{hor}}_q}\,.
\]
In this formula we identified $\mc H^{\on{hor}}_q$ with vector fields on $M$ with values in $\R^d$.

\subsection{Geodesic distance}

If the metric on $\on{Diff}(\R^d)$ is strong enough, then the induced Riemannian metric on $\on{Emb}(M,\R^d)$ has a point-separating geodesic distance function and we conjecture that the same is true for $B_e(M,\R^d)$.
\begin{theorem}
If the norm on $\mc H$ is at least as strong as the $C^0_b$-norm. i.e., $\mc H \hookrightarrow C^0_b$, then there exists $C>0$ such that for $q_0, q_1 \in \on{Emb}(M,\R^d)$ we have
\[
\| q_0 - q_1 \|_{\infty} \leq C \on{dist}^{\mc H}_{\on{Emb}}(q_0,q_1)\,.
\]
\end{theorem}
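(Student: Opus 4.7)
The plan is to bound the supremum distance between endpoints of an arbitrary smooth path in $\on{Emb}(M,\R^d)$ by its $G^{\mc H}$-length, then take the infimum over paths. The key leverage is the continuous embedding $\mc H \hookrightarrow C^0_b(\R^d,\R^d)$, which turns an $\mc H$-norm bound on a vector field into a uniform pointwise bound.

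First I would fix a smooth path $q \colon [0,1] \to \on{Emb}(M,\R^d)$ joining $q_0$ and $q_1$. For each $t$ the tangent vector $\p_t q(t) \in T_{q(t)}\on{Emb}(M,\R^d)$ admits a horizontal lift $X(t) \in \mc H^{\on{hor}}_{q(t)} \subset \mc H$ satisfying
\[
X(t) \on{\circ} q(t) = \p_t q(t), \qquad \|X(t)\|_{\mc H}^2 = G^{\mc H}_{q(t)}(\p_t q, \p_t q),
\]
by the Riemannian submersion description of $G^{\mc H}$ via the reproducing kernel Hilbert space $\mc H$.

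Next, for every $x \in M$ the fundamental theorem of calculus gives
\[
q(1,x) - q(0,x) = \int_0^1 \p_t q(t,x) \ud t = \int_0^1 X(t)\bigl(q(t,x)\bigr) \ud t.
\]
The hypothesis $\mc H \hookrightarrow C^0_b$ provides a constant $C > 0$ with $\|Y\|_\infty \le C \|Y\|_{\mc H}$ for all $Y \in \mc H$. Applying this pointwise and taking the supremum over $x \in M$ yields
\[
\|q(1,\cdot) - q(0,\cdot)\|_\infty \le \int_0^1 \|X(t)\|_\infty \ud t \le C \int_0^1 \|X(t)\|_{\mc H} \ud t = C \on{Len}^{G^{\mc H}}_{\on{Emb}}(q).
\]
Taking the infimum over all such paths $q$ gives $\|q_0 - q_1\|_\infty \le C \on{dist}^{\mc H}_{\on{Emb}}(q_0,q_1)$.

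The only nontrivial point I expect is the measurability and integrability of $t \mapsto X(t)$, so that the last chain of inequalities is justified. This is mild: since $q$ is smooth and the orthogonal projection onto $\mc H^{\on{hor}}_{q(t)}$ depends smoothly on the footpoint through the kernel $K_{q(t)}(x,y) = K(q(t,x), q(t,y))$, the curve $t \mapsto X(t) \in \mc H$ is continuous, so all integrals make sense. If one prefers to avoid horizontal lifts, one can pick any $\varepsilon$-almost-minimal lift $X_\varepsilon(t) \in \mf X_c(\R^d)$ for each $t$, running the same estimate and then letting $\varepsilon \to 0$ after taking the infimum over paths.
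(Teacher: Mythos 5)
Your proposal is correct and follows essentially the same route as the paper: the paper first derives the pointwise bound $\| h\|_\infty^2 \leq C^2 G_q^{\mc H}(h,h)$ by taking the infimum over all lifts $X$ with $X\on{\circ} q = h$ (rather than invoking the horizontal lift explicitly), and then integrates $\p_t q(t,x)$ along the path exactly as you do. Your closing remark about $\varepsilon$-almost-minimal lifts is in fact the cleaner variant, since it sidesteps the (here unnecessary) question of attainment and regularity of the horizontal lift, which is precisely how the paper's argument avoids that issue.
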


\begin{proof}
Since $\mc H \hookrightarrow C^0_b$, there exists a constant $C>0$, such that $\| X\|_{\infty} \leq C \| X\|_{\mc H}$ holds for all $X \in \mc H$.
Given $h \in \on{Emb}(M,\R^d)$ and $x \in M$ let $X \in \mf X(\R^d)$ be any vector field with $X \on{\circ} q = h$.  From
\[
|h(x)| = |X(q(x))| \leq \| X\|_{\infty} \leq C \| X\|_{\mc H}\,,
\]
we see that $\| h\|_\infty \leq C \| X \|_\infty$ and by taking the infimum over all $X$ we obtain
\[
\| h\|^2_\infty \leq C^2\, G_q^{\mc H}(h,h)\,.
\]
Now for any path $q(t)$ between $q_0$ and $q_1$ we have
\[
q_1(x) - q_0(x) = \int_0^1 \p_t q(t,x) \ud t\,,
\]
and thus
\begin{multline*}
|q_1(x) - q_0(x)| \leq \int_0^1 |\p_t q(t,x) | \ud t \leq \\
\leq C \int_0^1 \sqrt{G_{q(t)}^{\mc H}(\p_t q(t), \p_t q(t))} \ud t = C \on{Len}^{\mc H}_{\on{Emb}}(q)\,.
\end{multline*}
By taking the supremum over $x \in M$ and the infimum over all paths we obtain
\[
\| q_0 - q_1 \|_{\infty} \leq C \on{dist}^L_{\on{Emb}}(q_0,q_1)
\]
as required.\qed
\end{proof}

For the geodesic distance on shape space we have a positive result for the space $B_e(S^1,\R^2)$ of plane curves and the family $\mc H = H^k(\R^d)$ of Sobolev spaces. A lower bound on $\on{dist}_{B_e}^L$ is given by the Fr\'echet distance \eqref{frechet_dist}.

\begin{theorem}
The geodesic distance on $B_e(S^1,\R^2)$ of the outer metric induced by $\mc H = H^k(\R^d)$ with the operator $L=(1-A\De)^k$ for $A>0$ and $k\geq 1$ is bounded from below by the Fr\'echet distance, i.e., for $Q_0, Q_1 \in B_e(S^1,\R^2)$ we have
\[
\on{dist}_{B_e}^{L^\infty}(Q_0,Q_1) \leq \on{dist}^{H^k}_{B_e}(Q_0,Q_1)\,.
\]
\end{theorem}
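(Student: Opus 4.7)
The plan is to dominate the Fr\'echet distance by the $H^k$-length of an arbitrary path in $B_e(S^1,\R^2)$ joining $Q_0$ to $Q_1$, then take infima. Fix $\ep > 0$ and pick a smooth path $Q\colon[0,1] \to B_e(S^1,\R^2)$ with $Q(0)=Q_0$, $Q(1)=Q_1$, and
$$\on{Len}^{H^k}_{B_e}(Q) < \on{dist}^{H^k}_{B_e}(Q_0,Q_1) + \ep.$$
Picking any $q_0 \in \pi^{-1}(Q_0)$, I would lift $Q$ horizontally to a path $q\colon[0,1]\to\on{Emb}(S^1,\R^2)$ starting at $q_0$; by Theorem \ref{thm:submersion} such a horizontal lift exists and has the same length as $Q$. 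Set $q_1 := q(1)$, so $\pi(q_1) = Q_1$.

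Next, from the definition of the Fr\'echet distance in \eqref{frechet_dist} and the fundamental theorem of calculus,
$$\on{dist}^{L^\infty}_{B_e}(Q_0,Q_1) \leq \|q_1 - q_0\|_{L^\infty(S^1)} \leq \int_0^1 \|\partial_t q(t,\cdot)\|_{L^\infty(S^1)} \ud t.$$
Since $q$ is horizontal for the outer metric, there is a unique $u(t) \in \mc H^{\on{hor}}_{q(t)} \subset H^k(\R^2,\R^2)$ with $u(t)\circ q(t) = \partial_t q(t)$ and $\|u(t)\|_{\mc H}^2 = G^{H^k}_{q(t)}(\partial_t q, \partial_t q)$, so that $\|\partial_t q(t,\cdot)\|_{L^\infty(S^1)} \leq \|u(t)\|_{L^\infty(\R^2)}$. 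The reproducing-kernel estimate
$$|u(t)(x)| \leq \sqrt{\|K(0,0)\|_{\on{op}}}\;\|u(t)\|_{H^k}$$
then bounds the $L^\infty$ norm by the $H^k$ norm, the prefactor being finite precisely because $k > d/2 = 1$ makes the Bessel kernel of $(\on{Id}-A\De)^k$ continuous at the origin, and being equal to $1$ under the implicit normalization of $A$.

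Combining the displays,
$$\on{dist}^{L^\infty}_{B_e}(Q_0,Q_1) \leq \int_0^1 \|u(t)\|_{H^k} \ud t = \on{Len}^{H^k}_{\on{Emb}}(q) = \on{Len}^{H^k}_{B_e}(Q) < \on{dist}^{H^k}_{B_e}(Q_0,Q_1) + \ep,$$
and letting $\ep\to 0$ finishes the argument.

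The main obstacle is the pointwise-to-Sobolev estimate for the horizontal velocity. Existence of a smooth horizontal lift has to be justified, since the horizontal bundle for an outer metric lives a priori in the fiberwise completion $\overline{T\on{Emb}}$ discussed before Theorem \ref{thm:submersion}; and one needs the kernel normalization to absorb the Sobolev-embedding constant (the borderline case $k=1$, $d=2$ requires genuine care). The remainder is a quotient-space analogue of the endpoint-length estimate from the preceding theorem on $\on{Emb}(M,\R^d)$, transposed via the Riemannian submersion.
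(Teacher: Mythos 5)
Your overall strategy coincides with the paper's: choose a (near-optimal) path in $B_e(S^1,\R^2)$, lift it horizontally to $\on{Emb}(S^1,\R^2)$ --- the paper invokes \cite[Prop.~5.7]{Michor2006a} for the existence of such a lift, since Thm.~\ref{thm:submersion}(3) only asserts the correspondence conditional on liftability --- then bound the sup-distance of the endpoints by the length of the lift and pass to the infimum. Up to the pointwise estimate on the horizontal velocity field, the proposal tracks the paper's argument.

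The genuine gap is the step $\|u(t)\|_{L^\infty(\R^2)} \le \|u(t)\|_{H^k}$ with constant exactly $1$, on which the constant-free inequality of the theorem hinges. The reproducing-kernel bound gives $|u(x)| \le \sqrt{\|K(0,0)\|_{\on{op}}}\,\|u\|_{\mc H}$, and this prefactor is not $1$: for $L=(1-A\De)^k$ on $\R^2$ the kernel is $G(x-y)\on{Id}$ with $G(0)=(2\pi)^{-2}\int_{\R^2}(1+A|\xi|^2)^{-k}\ud \xi = \tfrac{1}{4\pi A(k-1)}$ for $k>1$, which exceeds $1$ whenever $A(k-1)<\tfrac{1}{4\pi}$. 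There is no ``implicit normalization of $A$'' available --- the statement allows arbitrary $A>0$. Worse, in the borderline case $k=1$, $d=2$ the Green's function of $1-A\De$ diverges logarithmically on the diagonal: $H^1(\R^2)$ is not a reproducing kernel Hilbert space and does not embed into $C^0_b$, so the point-evaluation mechanism fails outright there; you flag this case as requiring ``genuine care'' but do not resolve it. As written, your argument yields at most $\on{dist}^{L^\infty}_{B_e} \le C(A,k)\,\on{dist}^{H^k}_{B_e}$ for $k>1$ --- essentially the preceding theorem on $\on{Emb}(M,\R^d)$ transported through the submersion --- rather than the constant-free inequality asserted. Closing this requires the sharper estimate established in \cite{Michor2006a} (which the survey's own proof also cites rather than reproduces); the RKHS bound alone cannot deliver it uniformly in $A$ and for all $k\ge 1$.
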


\begin{proof}
Take $Q_0, Q_1 \in B_e(M,\R^d)$ and let $Q(t)$ be a path between them. Then by \cite[Prop. 5.7]{Michor2006a} we can lift this path to a horizontal path $q(t)$ on $\on{Emb}(S^1,\R^2)$. Then
\begin{multline*}
\on{dist}_{B_e}^{L^\infty}(Q_0,Q_1) \leq \| q(0) - q(1) \|_\infty = \\
=\on{Len}_{\on{Emb}}(q) = \on{Len}_{B_e}(Q)\,,
\end{multline*}
and by taking the infimum over all paths we obtain the result. \qed
\end{proof}

In order to generalize this result to $B_e(M,\R^d)$ one would need to be able to lift horizontal paths from $B_e(M,\R^d)$ to $\on{Emb}(M,\R^d)$. A careful analysis of the induced metric $G^\mc H$ in the spirit of \cite{Michor2006a} should provide such a result for a fairly general Sobolev-type metric.

\subsection{Geodesic equation}\label{outer_geod}
The geodesic equation on $\on{Emb}(M,\R^d)$ is most conveniently written in Hamiltonian form 
in terms of the position $q(t)$ and the momentum $\wt p(t) = p(t) \otimes \on{vol}^g$,
where $\vol^g=\vol(g)=\vol(q^*\bar g)$. 
The momentum defines a vector-valued distribution with support in the image of $q(t)$. 
The momentum $\wt p$ 
acts on  $X \in \mf X(\R^d)$ by
\[
\int_M \langle X \on{\circ} q(t), \wt p(t) \rangle = \int_{M} \langle X(q(t,x)), p(t, x) \rangle \on{vol}^g(x).
\]
Let us introduce the notation
\[
K_{q(t)}'(x, y) = D_1 K(q(t,x), q(t,y))
\]
for the derivative of the kernel with respect to the first variable. The geodesic equation is given by
\begin{align*}
  \p_t q(t,x) ={}& X(t,q(t,x)) \\
\p_t \left(p(t) \otimes \on{vol}^g\right)(t,x) ={}& \\
=-\bigg( \int_M p(t,x)^T K_{q(t)}'&(x,y) p(t,y) \on{vol}^g(y) \bigg) \otimes \on{vol}^g(x)\\
X(t,u) ={}& \int_M K(u, q(t,y)) p(t,y) \on{vol}^g(y)\,.
\end{align*}
See \cite{Michor2006a} for a derivation of the geodesic equation for plane curves and 
\cite{Micheli2013} for the related geodesic equation on $B_e(M,\R^d)$.

The vector field $X$ is not smooth but only $X \in \mc H$. Therefore it is not possible to 
horizontally lift geodesics from $\on{Emb}(M,\R^d)$ to $\on{Diff}(\R^d)$. One can however work in a suitable Sobolev completion of $\on{Diff}(\R^d)$. Then right-invariance of the Riemannian metric on $\on{Diff}(\R^d)$ implies the conservation of the momentum:
\[
\ph(t)^\ast \wt p(t,.) \textrm{ is independent of $t$}\,.
\]
From here we obtain via $\wt p(t,.) = \ph(t)_\ast \wt p(0,.)$ that $\ph(t)$ satisfies the following form of the {\it Euler-Poincar\'e equation on the diffeomorphism group} (EPDiff),
\begin{equation}
\label{outer_lagrange}
\p_t \ph(t,x) = \int_M K(\ph(t,x), .)\, \ph(t)_\ast \wt p(0,.)\,.
\end{equation}
See \cite{Holm2005} for details on singular solutions of the EPDiff equations. Theorem \ref{diff_globalwell} can be applied to show long-time existence of solutions of \eqref{outer_lagrange}.

\subsection{Curvature}

The representation of $B_e(M,\R^d)$ as the quotient 
\[
B_e(M,\R^d) = {\on{Diff}(\R^d)}/\on{Diff}(\R^d)_Q
\] 
was used in \cite{Micheli2013} together with an infinite dimensional version of O'Neil's formula to compute an expression for the sectional curvature on $B_e(M,\R^d)$. For details consult \cite[Sect.~5]{Micheli2013}.

\section{The space of landmarks}
\label{sec_lm}
By choosing $M$ to be the finite set $M=\{1,\dots,n\}$ we obtain as $\on{Emb}(M,\R^d)$ the set of landmarks, i.e., the set of $n$ distinct, labeled points in $\R^d$. Let us denote this space by
\[\L^n(\R^d):=\left\{(q^1,\ldots,q^n)|q^k\in\R^d, q^k\neq q^j, k\neq j\right\}.\]
Note that $\L^n(\R^d)$ is an open subset of $\R^{nd}$ and thus it is the first example of a finite dimensional shape space in this paper. As a consequence some of the questions discussed for other shape spaces have a simple answer for the space of landmarks. The geodesic distance is guaranteed to be point-separating, the geodesic equation is an ODE and therefore locally well-posed and due to Hopf-Rinow geodesic completeness implies metric completeness.

\begin{remark}
We regard landmark space as the set of all \emph{labeled} collections of $n$ points in $\R^d$, i.e., the landmarks
$q = (q^1,q^2,\ldots q^n)$, $\tilde q = (q^2,q^1,\ldots q^n)$
are regarded as different elements of $\L^n(\R^d)$. One could also consider the space of unlabeled landmarks $\L_u^n(\R^d)$, which would correspond to $B_e(M,\R^d)$. It is sometimes called also \emph{configuration space}. Since $\on{Diff}(M) = S_n$ is the symmetric group of $n$ elements, we have $\L^n_u(\R^d) = \L^n(\R^d)/S_n$. The group $S_n$ is a finite group, therefore the projection $\L^n(\R^d) \to \L^n_u(\R^d)$ is a covering map and so for local properties of Riemannian geometry it is enough to study the space $\L^n(\R^d)$.
\end{remark}

Before we proceed we need to fix an ordering for the coordinates on $\R^{nd}$. There are two canonical choices and we will follow the convention of \cite{Joshi2000}. A landmark $q$ is a vector $q=(q^1,\ldots,q^n)^T\in\L^n(\R^d)$ and each $q^i$ has $d$ components
$q^i=(q^{i1},\ldots,q^{id})^T$. We concatenate these vectors as follows
\begin{equation}\label{lmk:coordinates}
 q=(q^{11},\ldots,q^{1d},q^{21},\ldots,q^{2d},\ldots,q^{nd})^T\;.
\end{equation}

Riemannian metrics on $\L^n(\R^d)$, that are induced by the action of the diffeomorphism group, have been studied in \cite{Joshi2000,Micheli2012,Marsland2007} and on the landmark space on the sphere in \cite{Glaunes2004}. Other metrics on landmark space  include Bookstein's thin-plate spline distance \cite{Bookstein1997,Bookstein1976} and  Kendall's similitude invariant distance \cite{Kendall1984}. See \cite{Trouve2002} for an overview comparing the different approaches.

\subsection{A metric on $\L^n(\R^d)$ induced by $\on{Diff}(\R^d)$}

As in Sect.\ \ref{metric_outer} let the metric $G^{\mc H}$ on $\on{Diff}(\R^d)$ be defined via a Hilbert space $\mc H$ of vector fields satisfying the conditions given in Sect. \ref{metric_outer} and let $K$ be the reproducing kernel of $\mc H$. As before we will write $\on{Diff}(\R^d)$ for any of the groups $\on{Diff}_c(\R^d)$, 
$\on{Diff}_{\mc S}(\R^d)$ or $\on{Diff}_{H^\infty}(\R^d)$. The metric $G^{\mc H}$ induces a Riemannian metric $g_{\mc H}$ on $\L^n(\R^d)$ and we can calculate it explicitly; see Thm \ref{lmk_metric1}.

 For the convenience of the reader we will repeat the definition of the distance function on $\L^n(\R^d)$ induced by
 the metric $G^{\mathcal H}$;
  see  Sect. \ref{sec_outer} for the  the more general situation of embeddings  of an arbitrary manifold $M$ in $\R^d$.  Let $E$ be the energy functional of the metric $G^{\mathcal H}$ on the diffeomorphism group, i.e., 
\begin{equation}\label{lmk_energy}
 E(v)=\int_0^1 \|v(t,\cdot)\|^2_{\mathcal H}\ud t \,.
\end{equation}
 The induced distance function of the action of the diffeomorphism group on the landmark space is given by
 \begin{equation}\label{lmk_dist}
 \on{dist}^{\mc H}(q,\wt q)= \underset{v}{\on{inf}}\left\{\sqrt{E(v)}:\ph^v(q^i)=\wt q^i\right\} \,,
 \end{equation}
 where $\ph^v$ is the flow of the time dependent vector field $v$ and  where the infimum is taken over all sufficiently smooth vector fields
 $v: [0,1]\to \X(\R^d)$. Given a solution $v$ of the above minimization problem, the landmark trajectories $q^i(t)$ are then given as the solutions of the ODE
 \[\dot q^i(t)=v(t,q^i(t)),\qquad i=1\ldots,n\,.\]

 We will now define a Riemannian metric on the finite dimensional space $\L^n(\R^d)$ directly and we will see that it is in fact induced by the metric
 $G^\mathcal H$ on the diffeomorphism group. For a landmark $q$ we define the matrix 
 \begin{equation}\label{lmk_metric}
 g_{\mc H}^{-1}(q)=
\left( \begin{array}{ccc}
K(q^1,q^1)  & \cdots & K(q^1,q^n)\\
\vdots & \ddots  & \vdots\\
K(q^n,q^1) & \cdots & K(q^n,q^n)\end{array} \right)\in \R^{nd \x nd} \,,
\end{equation}
where $K : \R^d\x\R^d \to \R^{d\x d}$ is the kernel of $\mc H$. 
That $g_{\mc H}$ defines a Riemannian metric on $\L^n(\R^d)$ can be easily shown using the properties of the kernel $K$.

The metric $g_{\mathcal H}$ defines, in the usual way, an energy functional directly on the space of landmark trajectories,
 \begin{equation}\label{lmk_energy2}
 \wt E(q(t))=\int_0^1 \dot q(t)^T g_{\mc H}(q(t))\dot q(t)\ud t\,,
\end{equation}
and one can  also define the induced distance function of $g$ as
\begin{equation}\label{lmk_energy2a}
\wt{\on{dist}}{}^{\mc H}(q,\wt q) = \underset{q(t)}{\on{inf}}\left\{\sqrt{\wt E(q(t))}:q(0)=q, q(1)=\wt q\right\}\,,
\end{equation}
where the infimum is taken over all sufficiently smooth paths in landmark space $q: [0,1]\to \L^n(\R^d)$.
 
 It is shown in \cite[Prop. 2]{Micheli2012} that the minimization problems 
 \eqref{lmk_dist} and  \eqref{lmk_energy2a} are equivalent and that the induced distance functions are equal:
 \begin{theorem}[Prop. 2, \cite{Micheli2012}]
 Let $v$ be a minimizer of the energy functional \eqref{lmk_energy}. Then the trajectory $q(t)$, which is obtained as the solution of the system of ODE's
  \[\dot q^i(t)=v(t,q^i(t)),\qquad i=1\ldots,n\,,\]
  minimizes the energy functional \eqref{lmk_energy2} and 
$E(v) = \wt E(q)$.
On the other hand, if $q(t)$ is a minimizer of the energy functional \eqref{lmk_energy2} define the vector field 
\begin{equation}\label{lmk_vectorfield}
v(t,x)=\sum_{i=1}^n p_i(t) K(x,q^i(t))
\end{equation}
with the \emph{momenta} $p_i: [0,1]\to \R^d$ given implicitely by
\begin{equation}\label{lmk_momenta}
\dot q^i(t) = \sum_{j=1}^n p_j(t) K(q^i(t),q^j(t))\,.
\end{equation}
Then the vector field $v$ is a minimizer of the energy  \eqref{lmk_energy} and we have  $\wt E(q)=E(v) $.
 \end{theorem}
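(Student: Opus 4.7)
The plan is to recognize this as an instance of the Riemannian submersion picture from Sect.~\ref{Submersion}: the action $\on{Diff}(\R^d)\times\L^n(\R^d)\to\L^n(\R^d)$ equipped with the right-invariant metric $G^{\mc H}$ makes $\ph\mapsto(\ph(q^1),\ldots,\ph(q^n))$ a (pre-)Riemannian submersion onto $(\L^n(\R^d),g_{\mc H})$, and the vector fields of the form \eqref{lmk_vectorfield} are exactly the horizontal lifts. Once this is established, both statements follow by applying Thm.~\ref{thm:submersion}.

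The technical core is a pointwise-in-$t$ optimization. Fix a landmark configuration $q\in\L^n(\R^d)$ and a velocity $\dot q\in\R^{nd}$. I claim
\[
\inf\bigl\{\|v\|^2_{\mc H}:v\in\mc H,\;v(q^i)=\dot q^i\text{ for all }i\bigr\}=\dot q^T g_{\mc H}(q)\dot q,
\]
with the infimum attained uniquely by $v^{*}(x)=\sum_{j=1}^n K(x,q^j)p_j$, where the momenta $p=(p_1,\ldots,p_n)\in\R^{nd}$ are determined by $\dot q=g_{\mc H}^{-1}(q)\,p$. To see this, let $V_q\subset\mc H$ be the finite-dimensional subspace spanned by $\{K(\cdot,q^i)e_\alpha\}_{i,\alpha}$. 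Because $\mc H$ is a reproducing kernel Hilbert space, $\langle u,K(\cdot,q^i)a\rangle_{\mc H}=\langle u(q^i),a\rangle$, so the constraints $v(q^i)=\dot q^i$ are exactly the equations $\langle v,K(\cdot,q^i)e_\alpha\rangle_{\mc H}=\dot q^{i\alpha}$. The unique element of minimal $\mc H$-norm satisfying a finite family of linear functional constraints is the $\mc H$-orthogonal projection onto $V_q$, which therefore has the stated form. Evaluating the constraint on the ansatz gives $\dot q^i=\sum_j K(q^i,q^j)p_j$, i.e.\ $\dot q=g_{\mc H}^{-1}(q)p$ (which also shows $g_{\mc H}^{-1}(q)$ is positive definite, hence invertible, so $g_{\mc H}$ is a genuine Riemannian metric). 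Finally,
\[
\|v^{*}\|^2_{\mc H}=\sum_{i,j}p_i^T K(q^i,q^j)p_j=p^T g_{\mc H}^{-1}(q)p=\dot q^T g_{\mc H}(q)\dot q.
\]

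With this pointwise identity in hand, the two directions of the theorem are immediate. For any time-dependent $v(t,\cdot)\in\mc H$ whose flow $\ph^v$ carries the landmarks along the curve $q(t)$, the velocity constraint $\dot q^i(t)=v(t,q^i(t))$ gives $\int_0^1\|v(t,\cdot)\|^2_{\mc H}\,dt\ge\int_0^1\dot q(t)^T g_{\mc H}(q(t))\dot q(t)\,dt$, i.e.\ $E(v)\ge\tilde E(q)$, with equality if and only if $v(t,\cdot)=v^{*}(t,\cdot)$ for a.e.\ $t$, that is, $v$ is of the form \eqref{lmk_vectorfield} with momenta satisfying \eqref{lmk_momenta}. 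Conversely, every landmark path $q(t)$ admits a (horizontal) lift to such a $v$, because the momenta $p(t)=g_{\mc H}(q(t))\dot q(t)$ depend smoothly on $t$ and the associated vector field \eqref{lmk_vectorfield} lies in $\mc H$ with the required boundary condition. Thus the infimum in \eqref{lmk_dist} over all admissible $v$ equals the infimum in \eqref{lmk_energy2a} over all admissible $q(t)$, and the minimizers correspond via the horizontal lift/projection described above.

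The main obstacle is the optimization lemma at fixed $t$: one must justify the reduction to the finite-dimensional subspace $V_q$. This uses in an essential way the reproducing property of $K$ and the fact that the inclusion $\mc H\hookrightarrow C^k_b(\R^d,\R^d)$ with $k\ge 0$ makes each evaluation functional $v\mapsto v(q^i)$ continuous on $\mc H$. A secondary subtlety is measurability and integrability of $v^{*}(t,\cdot)$ as a function of $t$, which follows from the smoothness of $q\mapsto g_{\mc H}(q)$ and $q\mapsto K(\cdot,q^j)$; in particular one needs $g_{\mc H}^{-1}(q)$ to stay uniformly positive definite along any compact landmark path, which is guaranteed because the $q^i(t)$ remain pairwise distinct on $[0,1]$ so that $q(t)$ stays in $\L^n(\R^d)$.
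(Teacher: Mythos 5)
Your argument is correct and is essentially the standard one: the paper itself gives no proof, deferring to \cite[Prop.~2]{Micheli2012}, and the proof there rests on exactly the pointwise minimal-norm interpolation lemma you isolate — the reproducing property turns the interpolation constraints $v(q^i)=\dot q^i$ into inner-product constraints, so the optimal $v$ at each time is the projection onto the span of the kernel sections, yielding $\|v^{*}\|^2_{\mc H}=\dot q^T g_{\mc H}(q)\dot q$, after which both directions follow by integrating in $t$. The only point you pass over lightly is that positive definiteness of the Gram matrix $g_{\mc H}^{-1}(q)$ requires linear independence of the sections $K(\cdot,q^i)e_\al$ at distinct points (guaranteed here since $\mf X_c(\R^d)$ is dense in $\mc H$), which is the same fact the paper invokes when asserting that $g_{\mc H}$ is a genuine Riemannian metric.
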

 Thus we have:
\begin{theorem}\label{lmk_metric1}
If $\mc H \hookrightarrow C^k_b$, then the induced metric $g_{\mc H}$ on $\L^n(\R^d)$ is given by
\begin{equation}
 g_{\mc H}(q)=
\left( \begin{array}{ccc}
K(q^1,q^1)  & \cdots & K(q^1,q^n)\\
\vdots & \ddots  & \vdots\\
K(q^n,q^1) & \cdots & K(q^n,q^n)\end{array} \right)^{-1} \in \R^{nd \x nd} \,,
\end{equation}
where $K \in C^k(\R^d\x\R^d,\R^{d\x d})$ is the kernel of $\mc H$. We have $g_{\mc H} \in C^k(\R^{nd},\R^{nd \x nd})$.
\end{theorem}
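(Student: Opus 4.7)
The plan is to recognize that this theorem is essentially a bookkeeping consequence of two ingredients already in play: the submersion-type definition of $G^{\mc H}_q$ from Section sec\_outer specialized to $M = \{1,\ldots,n\}$, and the reproducing kernel structure of $\mc H$. In this special case the defining formula becomes
\[
G^{\mc H}_q(h,h) = \inf_{X \in \mc H,\, X(q^i) = h^i} \|X\|^2_{\mc H},
\]
and I will show this infimum equals $h^T g_{\mc H}(q)\,h$ with $g_{\mc H}(q)$ as stated.

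First I would carry out the horizontal decomposition $\mc H = \mc H^{\on{vert}}_q \oplus \mc H^{\on{hor}}_q$, where $\mc H^{\on{vert}}_q = \{X \in \mc H : X(q^i) = 0,\, i = 1,\ldots,n\}$ is closed (the constraints are continuous point evaluations), and $\mc H^{\on{hor}}_q$ is its orthogonal complement. The reproducing property $\langle X, K(\cdot,q^i)a\rangle_{\mc H} = \langle X(q^i),a\rangle$ identifies $\mc H^{\on{hor}}_q$ as the span of $\{K(\cdot,q^i)a : i = 1,\ldots,n,\, a \in \R^d\}$, a subspace of dimension at most $nd$. The unique horizontal minimizer therefore has the form $X^{\on{hor}} = \sum_{j=1}^n K(\cdot, q^j) p_j$; writing out the constraint $X^{\on{hor}}(q^i) = h^i$ gives the block linear system $h = g_{\mc H}^{-1}(q)\,p$, hence $p = g_{\mc H}(q)\,h$. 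One more application of the reproducing property yields
\[
\|X^{\on{hor}}\|^2_{\mc H} = \sum_{i,j} p_i^T K(q^i,q^j) p_j = p^T g_{\mc H}^{-1}(q) p = h^T g_{\mc H}(q)\,h,
\]
which is exactly the formula in the statement.

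The main technical point to justify is that the matrix $g_{\mc H}^{-1}(q)$ is in fact invertible (equivalently, positive definite) whenever the landmarks $q^i$ are distinct, so that the $nd$ generators $K(\cdot,q^i)e_k$ of $\mc H^{\on{hor}}_q$ are linearly independent. This reduces to showing that the directional point-evaluation functionals $\on{ev}^a_{q^i}$ are linearly independent on $\mc H$ for distinct $q^i$, which follows from the density of $\mf X_c(\R^d)$ in $\mc H$ (vector fields of compact support can be prescribed independently near each $q^i$). This is the only nontrivial step; everything else is a direct computation.

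Smoothness is then immediate. By the assumption $\mc H \hookrightarrow C^k_b(\R^d,\R^d)$, the kernel satisfies $K \in C^k_b(\R^d \x \R^d, \R^{d \x d})$ as recorded in Section metric\_outer, so $q \mapsto g_{\mc H}^{-1}(q)$ is $C^k$ from $\L^n(\R^d)$ into the space of symmetric $nd\x nd$ matrices. Matrix inversion is smooth on the open set of invertible matrices, and positive definiteness shown above guarantees that $g_{\mc H}^{-1}(q)$ lies in this open set for every $q \in \L^n(\R^d)$. Hence $g_{\mc H} \in C^k(\L^n(\R^d), \R^{nd \x nd})$, completing the proof.
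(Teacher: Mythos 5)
Your proof is correct, and it is the standard reproducing-kernel argument: decompose $\mc H$ into the closed vertical subspace $\mc H^{\on{vert}}_q$ and its finite-dimensional orthogonal complement spanned by the $K(\cdot,q^i)a$, solve the interpolation constraints for the coefficients, and read off the quadratic form; the only genuinely substantive step, the positive-definiteness of the Gram matrix for distinct landmarks, you handle correctly via linear independence of the directional evaluations $\on{ev}^a_{q^i}$ (for which the inclusion $\mf X_c(\R^d)\subset\mc H$ already suffices — density is not really needed, since a functional vanishing on all of $\mc H$ in particular vanishes on compactly supported fields concentrated near a single $q^i$). The paper, however, reaches the theorem by a different route: it first states the equivalence, at the level of \emph{path energies}, between minimizing $E(v)=\int_0^1\|v(t,\cdot)\|^2_{\mc H}\,dt$ over time-dependent vector fields transporting the landmarks and minimizing $\wt E(q)=\int_0^1 \dot q^T g_{\mc H}(q)\dot q\,dt$ over landmark trajectories (Prop.~2 of \cite{Micheli2012}, including the representation $v(t,x)=\sum_j p_j(t)K(x,q^j(t))$ of the optimal field), and then deduces the pointwise formula for $g_{\mc H}$ as a corollary. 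Your argument is the infinitesimal version of that statement and is more self-contained for the theorem as stated, while the paper's route simultaneously delivers the dynamic information (equality of the two distance functions and the form of the optimal vector field along a geodesic) that is used later for the geodesic equation and the soliton interpretation. One cosmetic remark: you are right to state the conclusion as $g_{\mc H}\in C^k(\L^n(\R^d),\R^{nd\times nd})$ rather than on all of $\R^{nd}$, since invertibility of the Gram matrix is only guaranteed off the collision set.
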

We will discuss the solutions of the minimization problem  \eqref{lmk_energy2} in Sect.\ \ref{lmk_completeness}.

\begin{remark}
 Note that in the articles \cite{Micheli2012,MicheliPhD} the coordinates were ordered in a different way. Given $q=(q^1,\dots,q^n)$ they flatten it as
\[ q=(q^{11},\ldots,q^{n1},q^{12},\ldots,q^{n2},\ldots ,q^{nd})^T\,.\]
If the kernel $K(x,y)$ of $\mc H$ is a multiple of the identity matrix, i.e., $K(x,y) = \boldsymbol k(x,y) \on{Id}_{d\x d}$ for a scalar function $\boldsymbol k$, then the matrix $g_{\mc H}(q)$ is sparse and these coordinates allow us see the sparsity in an elegant way,
\begin{equation*}
g_{\mc H}^{-1}(q)=
\left( \begin{array}{cccc}
  \boldsymbol k(q)  & 0 & \cdots& 0 \\
0 &\boldsymbol k(q)  & \cdots&0\\
\vdots & \vdots & \ddots&\vdots\\
0&\cdots&0& \boldsymbol k(q) \end{array} \right)\;,
\end{equation*}
Here $\boldsymbol k(q)$ denotes the $n\x n$-matrix $(\boldsymbol k(q^i,q^j))_{1\leq i,j \leq n}$.
\end{remark}

\subsection{The geodesic equation}
The geodesic equation can be deduced from the equation in the general case $\Emb(M,\R^d)$; see Sect.\ \ref{outer_geod}.
\begin{theorem}
If $\mathcal H\hookrightarrow C^1_b$, then the Hamiltonian form of the geodesic equation of the metric 
$g_{\mc H}$ on $\L^n(\R^d)$  is given by
\begin{equation}\begin{aligned}\label{lmk_geod}
\dot q^i& = \sum_{j=1}^N  K(q^i,q^j)p_j, \\
\dot p_i &= -\sum_{j=1}^N  p_i^\top  (\partial_1K)(q^i,q^j)  p_j 
\end{aligned}
\end{equation}
with $p_i(t)=K(q(t))^{-1} q^i(t)$ the vector valued momentum.
\end{theorem}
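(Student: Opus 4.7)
The plan is to read off the geodesic equations as Hamilton's equations on the finite-dimensional cotangent bundle $T^{\ast}\mathcal{L}^{n}(\mathbb{R}^{d})$. Since $\mathcal{H}\hookrightarrow C^{1}_{b}$, Theorem~\ref{lmk_metric1} gives that $g_{\mathcal{H}}\in C^{1}$ with inverse cometric equal to the block kernel matrix appearing in \eqref{lmk_metric}. Because $\mathcal{L}^{n}(\mathbb{R}^{d})$ is an open subset of $\mathbb{R}^{nd}$, the classical Hamiltonian formulation of the geodesic flow applies, with kinetic Hamiltonian
\[
H(q,p) \;=\; \tfrac{1}{2}\,p^{\top} g_{\mathcal{H}}(q)^{-1} p \;=\; \tfrac{1}{2}\sum_{i,j=1}^{n} p_{i}^{\top} K(q^{i},q^{j})\, p_{j},
\]
where $p=(p_{1},\dots,p_{n})$ with $p_{i}\in\mathbb{R}^{d}$.

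The first Hamilton equation $\dot q^{i}=\partial H/\partial p_{i}$ is immediate and gives $\dot q^{i}=\sum_{j}K(q^{i},q^{j})p_{j}$. This is also the Legendre relation $p=g_{\mathcal{H}}(q)\dot q$ between velocity and momentum, corresponding to the identification of $p$ stated in the theorem.

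For the second equation, $\dot p_{i}=-\partial H/\partial q^{i}$, I would exploit that $q^{i}$ enters each summand of $H$ in two positions: as the first argument of $K(q^{i},q^{j})$ and as the second argument of $K(q^{k},q^{i})$. Differentiating yields
\[
2\,\partial_{q^{i}} H \;=\; \sum_{j} p_{i}^{\top}(\partial_{1}K)(q^{i},q^{j})\,p_{j} \;+\; \sum_{k} p_{k}^{\top}(\partial_{2}K)(q^{k},q^{i})\,p_{i}.
\]
The key step is to rewrite the second sum in the form of the first. Differentiating the symmetry $K(x,y)=K(y,x)^{\top}$ gives $(\partial_{2}K)(x,y)[v]=\bigl((\partial_{1}K)(y,x)[v]\bigr)^{\top}$ for every $v\in\mathbb{R}^{d}$; applying this and transposing the resulting scalar turns the second sum into $\sum_{k}p_{i}^{\top}(\partial_{1}K)(q^{i},q^{k})\,p_{k}$, which agrees with the first sum after relabelling. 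The factor $\tfrac{1}{2}$ therefore cancels and one obtains the announced formula $\dot p_{i}=-\sum_{j}p_{i}^{\top}(\partial_{1}K)(q^{i},q^{j})\,p_{j}$.

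The only real obstacle in this argument is the index-bookkeeping around the kernel symmetry; everything else is just the definition of Hamilton's equations in $\mathbb{R}^{nd}$. As an alternative route one can specialize the general EPDiff-type equations of Section~\ref{outer_geod} to the singular vector-valued measure $\tilde p(t)=\sum_{i}p_{i}(t)\,\delta_{q^{i}(t)}$, from which the landmark equations drop out directly; this accounts for the remark preceding the theorem that the result is a deduction from the general embedding case.
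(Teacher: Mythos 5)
Your proposal is correct, and it is worth noting how it sits relative to the paper. The paper's proof proper is the \emph{other} route you mention only in passing at the end: it deduces \eqref{lmk_geod} by specializing the geodesic equation on $\on{Emb}(M,\R^d)$ from Sect.~\ref{outer_geod} to the case where $M$ is a finite set, so that the momentum is the singular measure $\sum_i p_i\,\de_{q^i}$. Your primary argument --- writing the kinetic Hamiltonian $\on{Ham}(q,p)=\tfrac12 p^{\top}g_{\mc H}(q)^{-1}p=\tfrac12\sum_{i,j}p_i^{\top}K(q^i,q^j)p_j$ on the open subset $\L^n(\R^d)\subset\R^{nd}$ and reading off Hamilton's equations --- is exactly what the paper offers in the remark immediately following the theorem as ``a different possibility,'' except that the paper leaves the differentiation implicit while you actually carry out the index bookkeeping, using $(\partial_2K)(x,y)=((\partial_1K)(y,x))^{\top}$ (from $K(x,y)=K(y,x)^{\top}$) to fold the two occurrences of $q^i$ into one sum and cancel the factor $\tfrac12$; that computation is right, as is your use of Thm.~\ref{lmk_metric1} to guarantee $g_{\mc H}\in C^1$ under the hypothesis $\mc H\hookrightarrow C^1_b$. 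The trade-off is the expected one: your route is elementary and self-contained, needing only finite-dimensional Riemannian geometry and the kernel symmetry, while the paper's route reuses the machinery of Sect.~\ref{outer_geod} and makes transparent why landmark geodesics are soliton-like solutions of EPDiff. One small point: you correctly read the momentum identification as the Legendre relation $p=g_{\mc H}(q)\dot q$, which is what the (slightly garbled) formula $p_i=K(q)^{-1}q^i$ in the statement is meant to express.
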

For scalar kernels this system has been studied  in the articles \cite{Marsland2007,Micheli2012}; see also the PhD-thesis of Micheli \cite{MicheliPhD}.
Two examples of a two-particle interaction can be seen in Fig.\ref{lmk_twoparticles}.

\begin{remark}
A different possibility to derive the above geodesic equation is to consider directly the Hamiltonian function of the finite dimensional Riemannian manifold $(\L^n(\R^d), g_{\mc H})$.
Following \cite[Eqn. 1.6.6]{Jost2008} it is given by
\[\on{Ham}(p,q)=\frac12p^{T}g(q)^{-1} p=\sum_{i,j=1}^N p_i^T K(q_i,q_j) p_j\,,\]
Then the  geodesic equations \eqref{lmk_geod} are just Hamilton's equation for $\on{Ham}$:
\begin{align*}
\dot q^i = \frac{\partial \on{Ham}}{\partial p_i}, \qquad 
\dot p_i = -\frac{\partial \on{Ham}}{\partial q^i} \,.
\end{align*}
\end{remark}

\begin{figure}[ht]
\begin{minipage}{0.49\linewidth}
     \includegraphics[width=\textwidth]{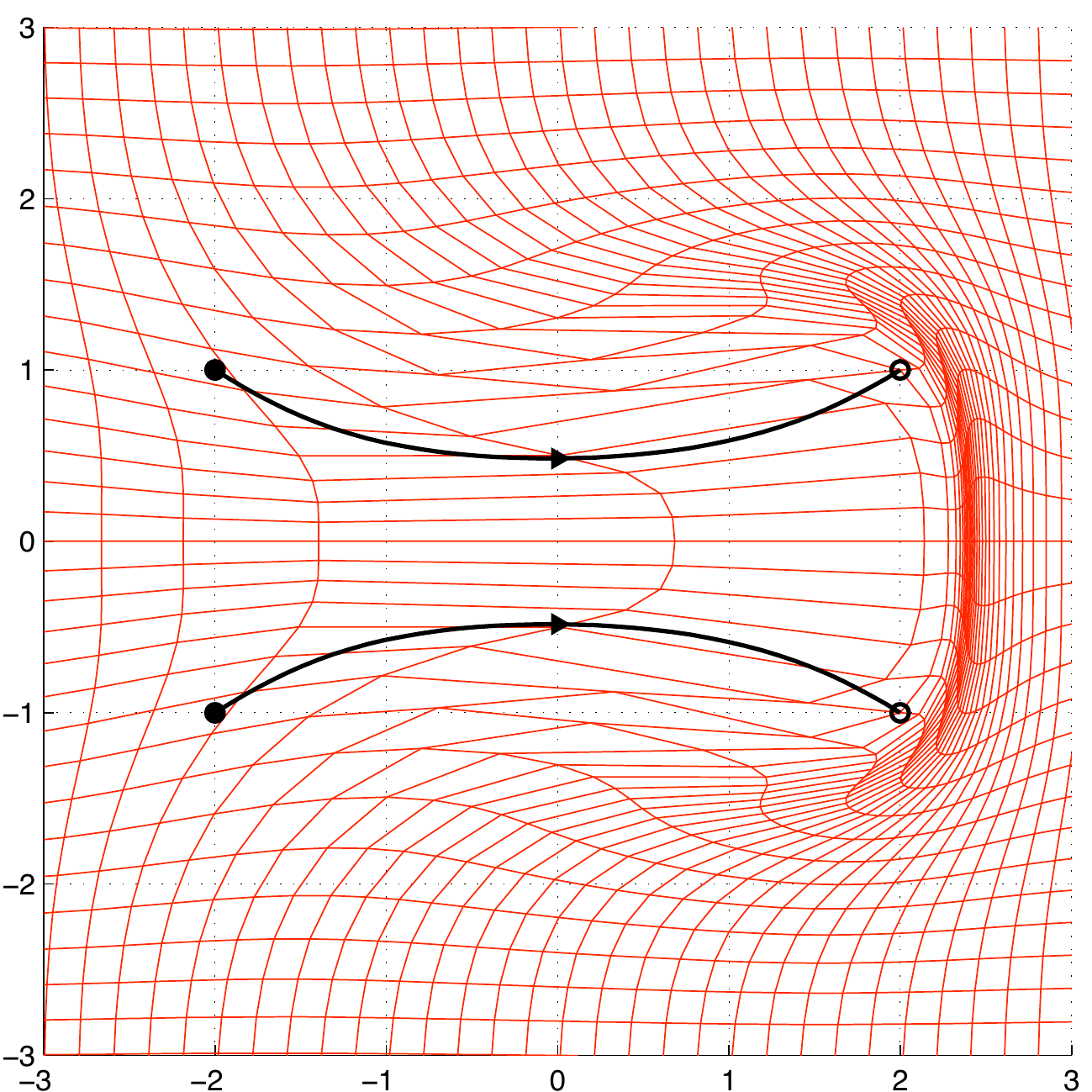}
   \end{minipage}
   \hfill
   \begin{minipage}{0.49\linewidth}
    \includegraphics[width=\textwidth]{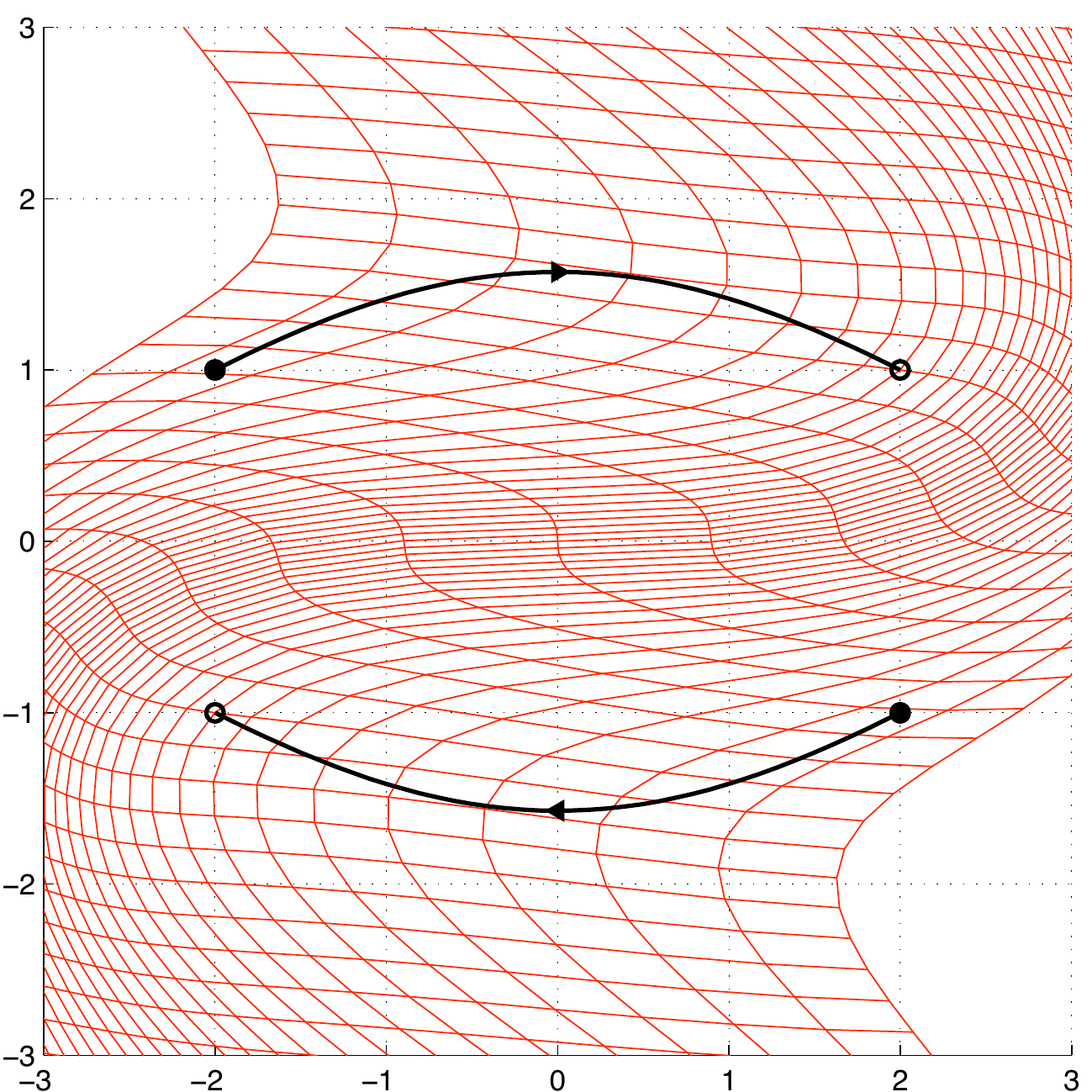}
   \end{minipage}
\caption{Two geodesics in $\L^2(\R^2)$. The grid represents the corresponding diffeomorphisms. On the left-hand side both landmarks travel in the same direction, and the two paths tend to attract each other. On the right hand side the landmarks travel in opposite directions and the paths try to avoid each other. Original image published in \cite{Micheli2012}.}
\label{lmk_twoparticles}
\end{figure}

\begin{remark}\label{lmk_soliton}
We can regard a geodesic curve of landmarks as a soliton-like solution of the geodesic equation on 
$\Diff(\mathbb R^d)$ where the corresponding momentum is a linear combination of vector valued 
delta distributions and travels as such. 
\end{remark}

\subsection{Completeness}\label{lmk_completeness}
As a  consequence of the global well-posedness theorem on the full diffeomorphism group -- Thm. \ref{diff_globalwell} --
we can deduce the long-time existence of geodesics on Landmark space. To do so we solve the geodesic equation \eqref{outer_lagrange} on the diffeomorphism group
for a singular initial momentum $p(0,x)=\sum_{j=1}^n p_j \delta(x-q^j)$. Then the landmark trajectories are given by $q^i(t)=\ph(t,q^i(0))$, where $\ph\in\Diff(\R^d)$ is the
solution of \eqref{outer_lagrange}.

\begin{theorem}
If $\mathcal H \hookrightarrow C^1_b$, then  the Riemannian manifold  $\left( \L^n(\R^d),g_{\mc H}\right)$ is geodesically complete.
\end{theorem}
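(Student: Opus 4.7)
The strategy is to reduce the claim to the global well-posedness result on the diffeomorphism group (Theorem \ref{diff_globalwell}) and to extract landmark trajectories from its flow, showing they stay in $\L^n(\R^d)$ for all time.

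First, I would fix initial data $q(0)=(q^1(0),\dots,q^n(0))\in\L^n(\R^d)$ together with an initial velocity $\dot q(0)\in T_{q(0)}\L^n(\R^d)$. Since the block matrix $g_{\mc H}^{-1}(q(0))$ is symmetric and positive definite, I can solve the linear system
\[
\dot q^i(0)=\sum_{j=1}^n K(q^i(0),q^j(0))\,p_j(0)
\]
uniquely for initial momenta $p_j(0)\in\R^d$. Assembling these into the vector-valued distribution $m_0=\sum_{j=1}^n p_j(0)\,\delta_{q^j(0)}$, whose components are finite signed measures, puts me in exactly the setting of Theorem \ref{diff_globalwell}.

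Next, the assumption $\mc H\hookrightarrow C^1_b$ implies that the reproducing kernel $K$ lies in $C^1_b(\R^d\x\R^d,\R^{d\x d})$. Theorem \ref{diff_globalwell} therefore yields a global solution $\ph\in C^1(\R,C^1(\R^d,\R^d))$ of \eqref{outer_lagrange} with initial momentum $m_0$. I then define landmark trajectories by $q^i(t):=\ph(t,q^i(0))$. Using momentum conservation, the pushforward $\ph(t)_* m_0$ remains a sum of $n$ Dirac masses, $\sum_j p_j(t)\,\delta_{q^j(t)}$, so that the associated vector field is exactly the one in \eqref{lmk_vectorfield}; differentiating $q^i(t)$ and the resulting expression for $p_i(t)$ reproduces the Hamiltonian system \eqref{lmk_geod}.

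The main obstacle, and the only place where genuine care is needed, is to verify that $q(t)\in\L^n(\R^d)$ for all $t\in\R$, i.e. that no two landmarks ever collide. This reduces to showing that $\ph(t,\cdot)$ is a bijection of $\R^d$ for each $t$. The time-dependent vector field $u(t,x)=\int_{\R^d}K(x,\cdot)\,\ph(t)_* m_0$ lies in $C^1_b(\R^d,\R^d)$ uniformly on compact time intervals, since $K\in C^1_b$ and $\ph(t)_* m_0$ has uniformly bounded total variation (the $p_j(t)$ are continuous in $t$). Classical ODE theory then guarantees that its flow is a family of $C^1$-diffeomorphisms of $\R^d$, so that $\ph(t,q^i(0))\neq\ph(t,q^j(0))$ whenever $i\neq j$. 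Since any geodesic in $\L^n(\R^d)$ arises in this way from an initial condition in $T\L^n(\R^d)$, and every such geodesic has been extended to all of $\R$, the Riemannian manifold $(\L^n(\R^d),g_{\mc H})$ is geodesically complete. \qed
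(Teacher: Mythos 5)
Your proposal is correct and follows essentially the same route as the paper: reduce to the global well-posedness theorem on the diffeomorphism group (Thm.~\ref{diff_globalwell}) applied to the singular initial momentum $\sum_j p_j(0)\,\delta_{q^j(0)}$, and read off the landmark trajectories as $q^i(t)=\ph(t,q^i(0))$. The only difference is that you spell out the non-collision step (that $\ph(t,\cdot)$ is a bijection, so the trajectory stays in $\L^n(\R^d)$), which the paper leaves implicit in the assertion that the solution $\ph(t)$ lies in $\Diff(\R^d)$.
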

A consequence of this theorem is that two landmarks will never collide along a geodesic path. For finite dimensional Riemannian manifolds with a metric that is at least $C^2$ the theorem of Hopf-Rinow asserts that the notions of geodesic completeness and metric completeness are equivalent. 
\begin{corollary}
If $\mathcal H\hookrightarrow C^2_b$, then $(\L^n(\R^d), \on{dist}^{\mc H})$ is a complete metric space.
\end{corollary}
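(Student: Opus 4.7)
The plan is to deduce metric completeness from the geodesic completeness established in the preceding theorem by invoking the Hopf--Rinow theorem, which applies here because $\L^n(\R^d)$ is a \emph{finite-dimensional} Riemannian manifold (an open subset of $\R^{nd}$).

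First I would record the regularity of the metric tensor. By Thm.~\ref{lmk_metric1}, the assumption $\mc H \hookrightarrow C^k_b$ implies that the reproducing kernel satisfies $K \in C^k(\R^d \x \R^d, \R^{d\x d})$ and hence $g_{\mc H} \in C^k(\R^{nd}, \R^{nd\x nd})$. Taking $k=2$, the Riemannian metric $g_{\mc H}$ is at least $C^2$, so the Levi--Civita connection and the geodesic spray are well-defined $C^1$ objects and the classical Riemannian geometry on $\L^n(\R^d)$ is available without issue.

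Next, since $\mc H \hookrightarrow C^2_b \hookrightarrow C^1_b$, the hypothesis of the previous theorem is satisfied, giving geodesic completeness of $(\L^n(\R^d), g_{\mc H})$: for every initial condition $(q(0), \dot q(0)) \in T\L^n(\R^d)$ the solution of the geodesic equation \eqref{lmk_geod} exists for all time. Equivalently, the Riemannian exponential map $\on{exp}_q$ is defined on all of $T_q \L^n(\R^d)$ for every $q$.

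Finally I would invoke the Hopf--Rinow theorem for finite-dimensional Riemannian manifolds with a metric of class at least $C^2$: geodesic completeness implies metric completeness of the induced distance. Since $g_{\mc H}$ is $C^2$ and $(\L^n(\R^d), g_{\mc H})$ is geodesically complete, it follows that $(\L^n(\R^d), \on{dist}^{\mc H})$ is a complete metric space. There is no real obstacle here — the statement is essentially a packaging of the previous theorem with Hopf--Rinow, and the $C^2$ hypothesis on $\mc H$ is precisely what guarantees the regularity needed to apply the classical theorem. The only point worth flagging is that one should verify that the geodesic distance defined via infima of lengths of admissible paths coincides with the Riemannian distance of $(\L^n(\R^d), g_{\mc H})$, which is immediate from the identification of energies \eqref{lmk_energy} and \eqref{lmk_energy2} established earlier.
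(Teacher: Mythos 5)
Your proposal is correct and follows essentially the same route as the paper: the text surrounding the corollary explicitly deduces it by combining the geodesic completeness theorem (which needs only $\mc H \hookrightarrow C^1_b$) with the Hopf--Rinow theorem, using the fact that $\mc H \hookrightarrow C^2_b$ makes $g_{\mc H}$ a $C^2$ metric on the finite-dimensional manifold $\L^n(\R^d)$. Your additional remark that the path-length distance coincides with the Riemannian distance of $g_{\mc H}$ is a reasonable point of care and is covered by the equivalence of the two energy functionals established earlier in the section.
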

For a $C^2$-metric $g_{\mc H}$ one can use once more the theorem of Hopf-Rinow to show the well-posedness of the geodesic boundary value problem. 
\begin{corollary}[Prop. 1, \cite{Micheli2012}]
If $\mathcal H \hookrightarrow C^2_b$ then for each pair of landmarks $q,\tilde q\in \L^n(\R^d)$ there exists a minimizer $q(t)\in C^{1}([0,1],\L^n(\R^d))$ of the energy functional  \eqref{lmk_energy2}.
\end{corollary}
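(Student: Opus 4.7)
The plan is to deduce the existence of an energy minimizer from the Hopf--Rinow theorem applied to the finite-dimensional Riemannian manifold $(\L^n(\R^d), g_{\mc H})$. The assumption $\mc H \hookrightarrow C^2_b$ is used twice: first to ensure that the kernel $K \in C^2(\R^d \x \R^d, \R^{d\x d})$, so that the matrix-valued map $q \mapsto g_{\mc H}(q)$ is of class $C^2$ by Thm.~\ref{lmk_metric1}; and second, to invoke the preceding corollary which gives that $(\L^n(\R^d), \on{dist}^{\mc H})$ is a complete metric space.

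Since $g_{\mc H}$ is a $C^2$ Riemannian metric on the finite-dimensional manifold $\L^n(\R^d)$, its Christoffel symbols are $C^1$ and the geodesic equation is a second-order ODE with $C^1$ coefficients. By Picard--Lindel\"of the initial value problem has unique $C^2$ solutions that depend continuously on initial data, so the standard proof of the Hopf--Rinow theorem applies without modification. In particular, metric completeness implies that the exponential map $\on{exp}_q : T_q\L^n(\R^d) \to \L^n(\R^d)$ is defined on all of $T_q\L^n(\R^d)$ and is surjective, and any two points can be joined by a length-minimizing geodesic.

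Given $q,\tilde q \in \L^n(\R^d)$, let $\gamma:[0,L] \to \L^n(\R^d)$ be such a minimizing geodesic parametrized by arclength, with $\gamma(0)=q$ and $\gamma(L)=\tilde q$, where $L=\on{dist}^{\mc H}(q,\tilde q)$. Rescaling to the unit interval by $q(t) := \gamma(Lt)$ yields a $C^2$-curve (hence $C^1$) with $q(0)=q$, $q(1)=\tilde q$, parametrized with constant speed $L$. For any competitor $\tilde q(t) \in C^1([0,1],\L^n(\R^d))$ with the same endpoints, the Cauchy--Schwarz inequality gives
\[
\on{Len}(\tilde q)^2 = \left(\int_0^1 \sqrt{\dot{\tilde q}^{\,T} g_{\mc H}(\tilde q) \dot{\tilde q}} \,\ud t\right)^2 \leq \wt E(\tilde q),
\]
with equality exactly when $\tilde q$ has constant speed. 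Combining this with $L = \on{Len}(q) \leq \on{Len}(\tilde q)$ and the fact that $q$ has constant speed so that $\wt E(q) = L^2$, we obtain $\wt E(q) \leq \wt E(\tilde q)$, proving that $q$ minimizes the energy \eqref{lmk_energy2}.

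The only delicate point in executing this plan is justifying Hopf--Rinow at the $C^2$-regularity level rather than $C^\infty$; but as noted above this is standard since one only needs local existence, uniqueness and continuous dependence for the geodesic ODE, all of which hold under $C^1$ Christoffel symbols. Everything else is a direct invocation of the preceding completeness corollary and the elementary length-versus-energy comparison.
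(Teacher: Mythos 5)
Your proposal is correct and follows essentially the same route as the paper, which derives this corollary from the preceding metric-completeness statement together with the Hopf--Rinow theorem for the $C^2$ metric $g_{\mc H}$. The only material you add beyond the paper's one-line justification is the standard Cauchy--Schwarz comparison converting the length-minimizing constant-speed geodesic into an energy minimizer, which is exactly the intended (and implicit) final step.
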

In fact the existence of minimizers to the boundary value problem on landmark space can be proven under even weaker smoothness conditions on the metric $g_{\mc H}$,
see \cite[Sect.\ C]{Joshi2000}.

\subsection{Curvature}
We see from \eqref{lmk_metric} that the expression for the co-metric $g_{\mc H}^{-1}$ is much simpler 
than that for $g_{\mc H}$. 
In the article \cite{Micheli2012} the authors took this observation as a motivation to derive a formula for the sectional curvature in terms of the co-metric, now called MarioÂŽs formula; see \cite[Thm. 3.2]{Micheli2012}. Using this formula they were able to calculate the the sectional curvature of the landmark space $(\L^n(\R^d),g_{\mc H})$; see \cite[Thm. 9]{Micheli2012}. We will not present these formulas in the general case but only for the special case of two landmarks in $\R$:
\begin{theorem}[Prop. 23, \cite{Micheli2012}]
The sectional 
curvature on $\L^2(\R)$ depends only on the distance $\rho=|q-\tilde q|$ between the two landmarks $q,\tilde q$. For a metric $g_{\mathcal H}$, with reproducing kernel $K \in C^2(\R,\R)$, it is given by
\[k(\rho)=\frac{K(0)-K(\rho)}{K(0)+K(\rho)}K''(\rho)-\frac{2K(0)-K(\rho)}{(K(0)+K(\rho))^2}K'(0)^2 \;.\]
\end{theorem}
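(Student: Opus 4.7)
Since $\L^2(\R)$ is a two-dimensional Riemannian manifold, the sectional curvature coincides with the Gaussian curvature, so the task reduces to computing a single scalar function of $\rho$. My plan is to diagonalize the metric by exploiting translation invariance of the kernel, and then apply the standard $2$-dimensional Gaussian curvature formula for an orthogonal metric. An equivalent and perhaps more conceptual route would be to invoke \emph{Mario's formula} (Thm.~3.2 of \cite{Micheli2012}) directly on the simple $2\times 2$ co-metric
\[
g_{\mc H}^{-1}(q,\tilde q) \;=\; \begin{pmatrix} K(0) & K(\rho) \\ K(\rho) & K(0) \end{pmatrix},
\]
whose entries are already explicit in $\rho$, so that the curvature expression is obtained by a purely algebraic manipulation of kernel values and their derivatives.

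First I would change variables to $u=(q+\tilde q)/2$ and $v=q-\tilde q$, so that $\rho=|v|$. Because the kernel is translation invariant, $g_{\mc H}$ is independent of $u$, and because $g_{11}=g_{22}$ by the symmetry interchanging the two landmarks, the cross term $g(\p_u,\p_v)$ vanishes. Inverting $g_{\mc H}^{-1}$ and re-expressing in the $(u,v)$-frame yields a diagonal metric $ds^2=E(\rho)\,du^2+G(\rho)\,dv^2$ with
\[
E(\rho)=\frac{2}{K(0)+K(\rho)},\qquad G(\rho)=\frac{1}{2(K(0)-K(\rho))}.
\]

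With the metric in diagonal form and both coefficients depending only on $\rho$, the Gaussian curvature reduces to the one-variable expression
\[
k(\rho)\;=\;-\frac{1}{2\sqrt{EG}}\,\frac{d}{d\rho}\!\left(\frac{E'(\rho)}{\sqrt{E(\rho)\,G(\rho)}}\right).
\]
Using the identities $\sqrt{EG}=1/\sqrt{K(0)^2-K(\rho)^2}$ and $E'(\rho)=-2K'(\rho)/(K(0)+K(\rho))^2$, I would expand the outer derivative via the product rule to obtain a single term proportional to $K''(\rho)$ together with two terms quadratic in the first derivative of $K$. Collecting the latter two contributions over the common denominator $(K(0)+K(\rho))^2$ and using $A+3B=2(2K(0)-K(\rho))$ with $A=K(0)+K(\rho)$, $B=K(0)-K(\rho)$ produces the stated coefficient $(2K(0)-K(\rho))/(K(0)+K(\rho))^2$ in front of the quadratic term and the prefactor $B/A$ in front of $K''(\rho)$.

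The main (and really only) obstacle is bookkeeping: one must keep track of the factor $\operatorname{sgn}(v)$ arising from $\rho=|v|$, which cancels after two differentiations since Gaussian curvature is intrinsic and independent of the orientation of $v$, and one must carry out the algebraic regrouping cleanly so as to recognize the precise coefficients appearing in the stated formula. A sanity check at $\rho\to 0$, where $K(\rho)\to K(0)$ so $B\to 0$, confirms that the first term vanishes while the second term controls the small-distance behavior, consistent with finiteness of curvature when the two landmarks approach one another.
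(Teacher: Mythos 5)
Your approach is sound and genuinely different from the route behind the paper's statement: the formula is quoted from \cite{Micheli2012}, where it is obtained by specializing Mario's formula --- the general expression for sectional curvature in terms of the co-metric --- to the $2\times 2$ co-metric you wrote down. Your alternative, passing to $u=(q+\tilde q)/2$, $v=q-\tilde q$, using translation invariance to kill the $u$-dependence and the landmark-exchange symmetry to kill the cross term, and then applying the classical Gauss curvature formula for an orthogonal metric $E\,du^2+G\,dv^2$, is more elementary and self-contained; your coefficients $E=2/(K(0)+K(\rho))$ and $G=1/(2(K(0)-K(\rho)))$ are correct, as is the one-variable curvature formula you quote, and the $\on{sgn}(v)$ bookkeeping is harmless away from the collision locus. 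What Mario's formula buys is that the computation generalizes to $n$ landmarks in $\R^d$, where no such diagonalization exists; what your computation buys is a short, checkable verification in this special case.

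One point you should not gloss over: carrying your computation to the end gives
\[
k(\rho)=\frac{K(0)-K(\rho)}{K(0)+K(\rho)}\,K''(\rho)-\frac{2K(0)-K(\rho)}{(K(0)+K(\rho))^2}\,K'(\rho)^2,
\]
i.e., the quadratic term involves $K'(\rho)^2$, not $K'(0)^2$ as printed. Since a translation-invariant kernel is even, $K'(0)=0$ whenever it exists, so the printed second term would vanish identically; and with $K'(0)^2$ the curvature would not tend to $0$ as $\rho\to\infty$, contradicting the expected decoupling of distant landmarks. Both observations show the printed $K'(0)^2$ is a typo for $K'(\rho)^2$. Your sanity check at $\rho\to 0$ cannot detect this, whereas a check at $\rho\to\infty$ does; you assert that the regrouping ``produces the stated coefficient'', which is true of the coefficient but not of the argument of $K'$, and a complete write-up should flag that discrepancy rather than paper over it.
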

For a Gaussian kernel $K$ a plot of the curvature depending on the distance between the landmarks can be seen in Fig.\ref{fig:curvature}.
\begin{figure}[ht]
\includegraphics[width=.48\textwidth]{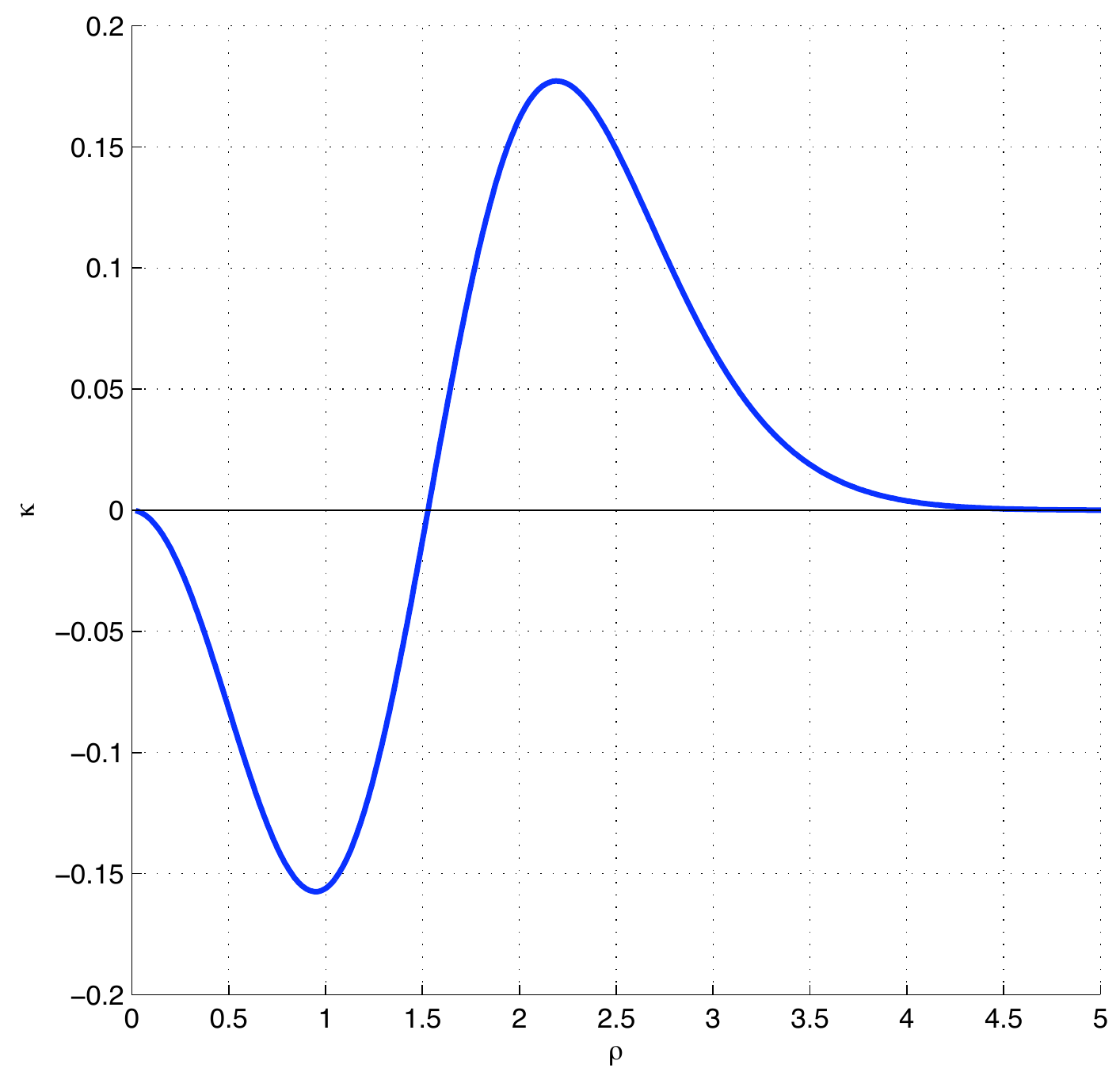}
\caption{Sectional curvature of $\L^2(\R),g_K$ as a function of the distance between the landmarks $q_0,q_1$.
Here $K$ was the Gaussian kernel $K(x)=\on{exp}(-\tfrac{x}{2})$. Original image published in \cite{Micheli2012}.}
\label{fig:curvature}
\end{figure}

\section{Universal Teichm\"uller space as shape space}
Here we sketch how $\Diff(S^1)/PSL(2,\mathbb R)$ parametrizes the shape space of simple closed 
smooth plane curves modulo translations and scalings and discuss the associated Riemannian metric, called 
the Weil-Peterson metric. This metric has 
nonpositive curvature, is geodesically complete, and any two shapes can be connected by a unique 
minimal geodesic. There exist soliton-like solutions  which are called \emph{teichons} which are given 
by a finite dimensional Hamiltonian system. They relate to geodesics of shapes like landmarks do to 
geodesics in $\Diff(\mathbb R^d)$; see Sect.\ \ref{lmk_soliton}.
This theory and the corresponding numerical analysis has been developed in \cite{Mumford2004,Mumford2006}. 
The use of teichons has been developed in \cite{Kushnarev2009}.  

Given a 1-dimensional closed smooth and connected submanifold $\Ga$ in $\mathbb R^2=\mathbb C$ inside 
the Riemann sphere $\ol{\mathbb C}=\mathbb C\cup\{\infty\}$,
we 
consider its interior $\Ga_{\text{int}}$ and its exterior $\Ga_{\text{ext}}$ which contains 
$\infty$; these are smooth 2-manifolds with boundary. 
Let $\mathbb D_{\text{int}}$ and $\mathbb D_{\text{ext}}$ denote the unit disk and the exterior of the unit
disk respectively. By the smooth Riemann mapping theorem (\cite[page 323]{Taylor2011I} or \cite{WikiSmoothRiemannMapping}) 
there exists a biholomorphic mapping 
$\Ph_{\text{int}}:\mathbb D_{\text{int}}\to \Ga_{\text{int}}$ extending smoothly to the 
boundaries, unique up to replacing it by 
$\Ph_{\text{int}}\on{\circ} A$ for a M\"obius transformation 
\[
A(z)= \frac{az+b}{\ol bz+\ol a} \text{   with }
\begin{pmatrix} a & b \\ \ol b & \ol a\end{pmatrix}\in PSU(1,1)\cong PSL(2,\mathbb R).
\]
Likewise we have a biholomorphic map between the exteriors
$\Ph_{\text{ext}}:\mathbb D_{\text{ext}}\to \Ga_{\text{ext}}$ which is unique 
by the requirement that $\Ph_{\text{ext}}(\infty)=\infty$ and $\Ph_{\text{ext}}'(\infty)>0$.
The resulting diffeomorphism 
\[
\Ps := \Ph_{\text{ext}}^{-1}\on{\circ}\Ph_{\text{int}}:S^1\to S^1,
\]
projects to a unique element of $\Diff^+(S^1)/PSL(2,\mathbb R)$ (here $S^1$ is viewed as 
$P^1(\mathbb R)$).
It is called the \emph{fingerprint} of $\Ga$. Any coset $\Ps.PSL(2,\mathbb R)$ comes from a shape 
$\Ga$, and two shapes give the same coset if they differ by a M\"obius transformation in 
$\on{Aut}(\ol{\mathbb C})$ which fixes $\infty$ and has positive derivative at $\infty$; i.e., by 
translations and scalings.

One can reconstruct the shape $\Ga$ from the fingerprint $\Ps.PSL(2,\mathbb R)$ by \emph{welding}: 
Construct a Riemann surface by welding the boundaries of $\mathbb D_{\text{int}}$ and 
$\mathbb D_{\text{ext}}$ via the mapping $\Ps$. The result is conformally equivalent to the Riemann 
sphere and we use a conformal mapping $\Ph$ from the welded surface to the sphere which takes $\infty$ to 
$\infty$ and has positive derivative at $\infty$. Then $\Ga$ equals $\Ph^{-1}(S^1)$ up to a 
translation and a scaling of $\mathbb C$. An efficient numerical procedure for welding is described 
in \cite{Mumford2004, Mumford2006}.

The quotient $\mathcal T:=\Diff(S^1)/PSL(2,\mathbb R)$, also known as universal Teichm\"uller 
space, is naturally a coadjoint orbit of the Virasoro group (see Sect.\ \ref{EPDiff})
and as such it carries a natural invariant K\"ahler structure; 
see \cite{KirillovYuriev88}. The corresponding Riemann metric can be described as follows. 
For $u\in \X(S^1)\cong C^\infty(S^1)$ we consider the Fourier series 
$u(\th)= \sum_{n\in \mathbb Z} a_n e^{in\th}$ with $\ol a_n= a_{-n}$ and the seminorm
\[
\|u\|_{\on{WP}}^2 = \sum_{n\in \mathbb Z}|n^3-n||a_n|^2. 
\]
The kernel of this seminorm consists of vector fields of the form $\ol a_1 e^{-i\th} + a_0 + a_1 e^{i\th}$; i.e., 
$\on{ker}(\|\cdot\|_{\on{WP}})=\mathfrak s\mathfrak l(2,\mathbb R)$. 
So this gives an inner product on the tangent space at the 
base point of  $\mathcal T$. 
This norm can also be defined by the elliptic pseudodifferential operator 
$L=\mathcal H(\partial_\th^3+\partial_\th)$ via 
$\|u\| = \int_{S^1} L(u).u \ud\th$, where the \emph{periodic Hilbert transform} $\mathcal H$ is 
given by convolution with $\tfrac 1{2\pi}\on{cotan}(\tfrac \th 2)$. The inverse of $L$ is convolution with the 
Green's function 
\begin{multline*}
K(\th) =\sum_{|n|>1}\frac{e^{in\th}}{n^3-n} 
=\\
= (1-\cos\th)\log\left(2(1-\cos\th)\right) + \frac32\cos\th-1.
\end{multline*}
According to Sect.\ \ref{EPDiff}, $\ph(t)\in \Diff(S^1)$ projects to a geodesic in $\mathcal T$ 
if and only if the right logarithmic derivative $u(t)= \partial_t \ph(t) \on{\circ} \ph(t)^{-1}$ satisfies  
\begin{gather*}
L(u_t) = - \ad_u^*(Lu) \quad\text{  or } \\
(Lu)_t + u.(Lu)_\th + 2u_\th.(Lu) =0
\end{gather*}
and $u(0)$ has vanishing Fourier coefficients of order $-1$, $0$, $1$. 
We call $m=Lu\in (\X(S^1)/\mathfrak s\mathfrak l(2,\mathbb R))'$ the \emph{momentum}, with 
$u=G*m$.
The Weil-Petersson metric described by $L$ is a Sobolev metric of order 3/2. The extension to the 
corresponding Sobolev completions has been worked out by 
\cite{Gay-Balmaz2013_preprint}.

If we look for the geodesic evolution of a momentum of the form 
\begin{align*}
m=\sum_{j=1}^N p_j\de(\th-q_j),\quad\text{so that}\quad
v=\sum_{j=1}^N p_j G(\th-q_j)
\end{align*}
a finite combination of delta distributions,  
which lies outside of the image of 
$L:\X(S^1)/\mathfrak s\mathfrak l\to (\X(S^1)/\mathfrak s\mathfrak l)'$, we see that the evolution of the parameters $q_j, p_j$ is given by 
the Hamiltonian system
\[
\begin{cases} &\dot p_k = -p_k\sum_{j=1}^N p_jG'(q_k-q_j) 
              \\
							&\dot q_k = \sum_{j=1}^N p_j G(q_k-q_j)
							\end{cases} 
\]
These solutions are called \emph{Teichons}, and they can be used to approximate smooth geodesics of 
shapes in a very efficient way which mimics the evolution of landmarks.
The disadvantage is, that near concave parts of a shape the
teichons crowd up exponentially.  An example of such a geodesic can be seen in Fig. \ref{fig:teichons};
see \cite{Kushnarev2009} and \cite{Kushnarev2012_preprint} for more 
details.

\begin{figure}[ht]
\includegraphics[width=.48\textwidth]{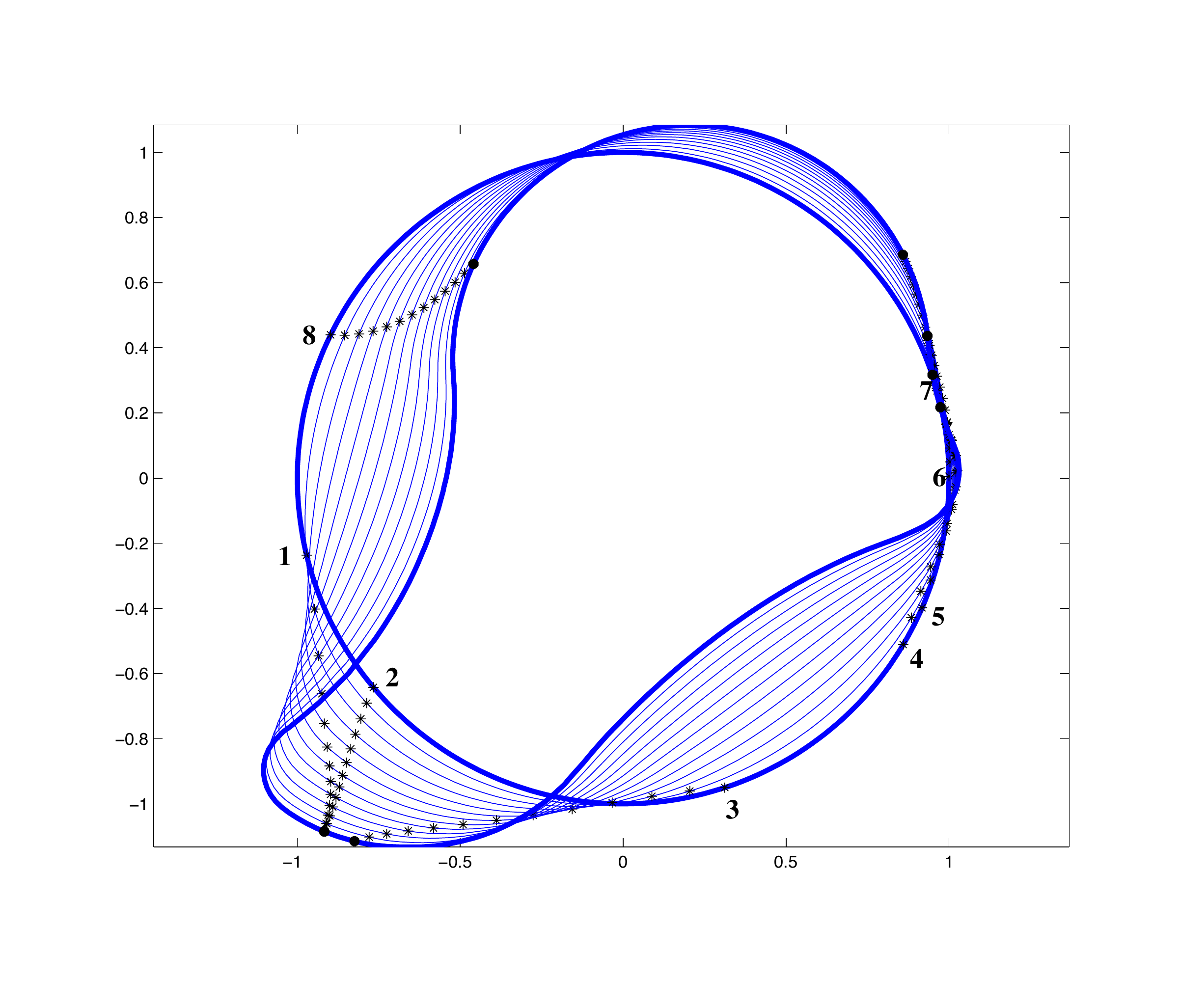}
\caption{Evolution of an $8$-Teichon from the circle to a Donald-Duck-like shape. Positions of individual $1$-Teichons are marked by asterisks. Original image published in \cite{Kushnarev2009}.}
\label{fig:teichons}
\end{figure}

\section{The space of Riemannian metrics}
\label{sec_mets}

Let $M$ be a compact manifold without boundary and $\on{dim}(M) = m$. In this part we describe the Riemannian geometry on $\Met(M)$, the manifold of all Riemannian metrics on $M$. The $L^2$-metric on $\Met(M)$ is given by
\[G^{E}_g(h,k)=\int_M \on{Tr}\big(g^{-1} hg^{-1} k\big)\vol(g)\,, \]
with $g \in \Met(M)$ and $h,k\in T_g\Met(M)$. Each tangent vector $h$ is a bilinear form $h: TM \x_M TM \to \R$, that is interpreted as a map $TM \to T^\ast M$. This metric has been introduced in \cite{Ebin1970a} and is also known as the Ebin-metric. Its geodesic equation and curvature 
have been calculated in \cite{Freed1989,Gil-Medrano1991},
and the induced distance function and metric completion have been studied 
by Clarke \cite{Clarke2013c,Clarke2009,Clarke2010,Clarke2011}.

Similar to Riemannian metrics on immersions, Sobolev metrics of higher order and almost local metrics can be defined using a (pseudo differential) operator field 
$L$ acting on the tangent space of $\Met(M)$. To be more precise, let 
\[L: T\Met(M) \to T\Met(M)\]
be a smooth base-point preserving bundle isomorphism, such that for every 
$g\in\Met(M)$ the map 
\[L_g: T_g\Met(M) \to T_g\Met(M)\]
is a pseudo differential operator, that is symmetric and positive with respect to the metric $G^E$.
Then we can define the metric $G^L$ by
\[G^L(h,k)=\int_M \on{Tr}\big(g^{-1} L_g(h)g^{-1} k\big)\vol(g)\,.\]
Let us also assume, that the operator field $L$ is invariant under the action of $\Diff(M)$, i.e.,
\[\ph^*(L_gh)=L_{\ph^*g}(\ph^*h)\;.\]
Then the metric $G^L$ induces a Riemannian metric on $\Met(M)/\on{Diff}_0(M)$ 
where $\Diff_0(M)$ denotes the group of all diffeomorphisms that are homotopic to the identity. 
In relativity theory the Lorentzian analog of the space $\Met(M)/\on{Diff}_0(M)$ is called \emph{super space}, 
since it is the true phase space of Einstein's equation. 

An example for an operator field $L$ is
\[L_gh=h+(\Delta^g)^lh\,,\qquad l\geq 0\,.\]
The resulting metric $G^L$, which is a the Sobolev metric of order $l$, has been introduced in \cite{Bauer2013a}.
Other metrics, that have been studied include conformal transformations of the $L^2$-metric \cite{Clarke2013b,Bauer2013a},
\[
G^\Ph_g(h,k) = \Ph(\on{Vol}_g) \int_M \on{Tr}\big(g^{-1} hg^{-1} k\big)\vol(g)\
\]
with $\Ph \in C^\infty(\R_{>0}, \R_{>0})$ and scalar curvature weighted metrics  \cite{Bauer2013a},
\[
G^\Ph_g(h,k) = \int_M \Ph(\on{Scal}^g) \on{Tr}\big(g^{-1} hg^{-1} k\big)\vol(g)\,,
\]
with $\Ph \in C^\infty(\R, \R_{>0})$.

The main focus of the section will be on the $L^2$-metric.

\subsection{Connections to Teichm\"uller theory and information geometry}
Our main motivation to consider the space of all Riemannian metrics in this article lies in its possible application to shape analysis of surfaces
as explained in Sect.\ \ref{intro_spaces}; see also \cite{Jermyn2012}.
 
Another  motivation for the study of the $L^2$-metric on the  manifold of metrics can be found in its  connections to Teichm\"uller theory. 
Let $M$ be a Riemann surface of genus greater than one. Then the $L^2$-metric, restricted to the space $\Met_1$ of hyperbolic metrics, induces the Weil-Peterson metric on Teichm\"uller space $\Met_1(M)/\on{Diff}_0(M)$.
This is described for example by Fischer and Tromba \cite{Fischer1984} or Yamada \cite{Yamada2004,Yamada2011}.

A surprising connection can be also found with the field of information geometry, since the $L^2$-metric  descends to the Fisher-Rao metric on the space of volume densities.
To understand this connection we will consider the Riemannian metric on $\Diff(M)$ induced by $G^E$. For a fixed metric $g_0\in \Met(M)$ we introduce the map:
\begin{equation*}
\label{diff_met}
\on{Pull}: \on{Diff}(M) \to \Met(M),\quad \ph \mapsto \ph^\ast g_0\,.
\end{equation*}
Now we can define a metric $G^{\on{Pull}}$ on $\Diff(M)$ as the pullback of the $L^2$-metric under the map $\on{Pull}$, i.e., 
\[G^{\on{Pull}}_\ph(h,k)=G^E(T_{\ph}\on{Pull}h,T_{\ph}\on{Pull}k)\,.\]
This mapping and the induced metric on $\Diff(M)$ for a variety of metrics on $\Met(M)$ is studied in \cite{Bauer2013_preprint}. The metric $G^{\on{Pull}}$ is invariant under the left action by the group $\on{Diff}_\mu(M)$ of volume-preserving diffeomorphisms and the metric induced on the quotient space 
\[
\on{Dens}(M) \cong \Diff_{\mu}(M)\backslash\Diff(M)
\]
of densities is the Fisher-Rao metric; see \cite[Thm. 4.9]{Modin2012_preprint}.

Another possibility, to see the connection to information geometry was implicitly presented in \cite{Clarke2013}. There the authors consider the subspace of K\"ahler metrics in a fixed K\"ahler class -- assuming that $M$ admits a 
K\"ahler structure. Then the Ebin metric induces the so-called Calabi geometry on the space of K\"ahler metrics.  It was then observed in \cite{Khesin2013} that this space is, via the Calabi-Yau map, isometric to 
the space of volume densities with the Fisher-Rao metric. 

\subsection{Geodesic distance}

In contrast to to the spaces of immersions, submanifolds and the diffeomorphism group, the $L^2$-metric on $\Met(M)$ induces a point-separating geodesic distance function.

\begin{theorem}[Thm. 18, \cite{Clarke2010}]
The $L^2$-metric induces a point-separating geodesic distance function on $\Met(M)$.
\end{theorem}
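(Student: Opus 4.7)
The plan is to bound the geodesic length of any smooth path from $g_0$ to $g_1$ from below by a point-separating quantity built from pointwise data. The essential feature of the Ebin metric that makes this possible is that its integrand $\on{Tr}(g^{-1}hg^{-1}h)\vol(g)$ is purely algebraic in $(g(x),h(x))$, so the geometry decouples fiberwise: on each fiber $P_x$ of positive-definite symmetric bilinear forms on $T_xM$ one has a finite-dimensional symmetric-space Ebin metric whose geodesic distance $d_x$ is explicit and point-separating (Freed–Groisser).

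First I would establish the orthogonal trace/traceless split. For $h\in T_g\Met(M)$, write $h=h_0+\tfrac{1}{m}(\on{Tr}_g h)\,g$ with $h_0$ being $g$-traceless. A direct computation yields
\begin{equation*}
G^E_g(h,h)=\int_M \on{Tr}(g^{-1}h_0g^{-1}h_0)\vol(g)+\tfrac{1}{m}\int_M(\on{Tr}_g h)^2\vol(g).
\end{equation*}
The trace part controls the evolution of the volume form: if $\rho(t,x)=\sqrt{\det g(t,x)}$ in a coordinate chart, then $\partial_t\rho=\tfrac12\on{Tr}_g(\partial_t g)\rho$, hence $\partial_t\rho^{1/2}=\tfrac14\on{Tr}_g(\partial_t g)\rho^{1/2}$. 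Integrating in $t$ and applying Cauchy–Schwarz first in $t$ and then in $x$ gives
\begin{equation*}
\int_M\bigl(\rho(1,x)^{1/2}-\rho(0,x)^{1/2}\bigr)^{2}dx\le\tfrac{m}{16}\int_0^1 G^E_{g(t)}(\partial_t g,\partial_t g)\,dt.
\end{equation*}
Because we may always reparametrize to constant speed, the right-hand side is bounded by $\tfrac{m}{16}\on{dist}^{G^E}_{\Met}(g_0,g_1)^2$. Consequently, whenever $\vol(g_0)\neq\vol(g_1)$ the geodesic distance is positive, and in fact the map $g\mapsto\vol(g)^{1/2}\in L^2(M,dx)$ is $\tfrac{\sqrt m}{4}$-Lipschitz.

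The remaining case is $\vol(g_0)=\vol(g_1)$ but $g_0\neq g_1$. Choose $x_0\in M$ and a small coordinate ball $U\ni x_0$ on which $g_0$ and $g_1$ remain pointwise distinct, so that $d_x(g_0(x),g_1(x))\ge\delta>0$ on a positive-measure subset of $U$. On each fiber $P_x$, the restriction of the integrand of $G^E$ is a finite-dimensional Riemannian metric of noncompact symmetric-space type, so $d_x$ is a well-defined point-separating distance. The goal is to show
\begin{equation*}
\on{Len}^{G^E}(g(\cdot))\;\ge\; C\Bigl(\int_M d_x\bigl(g_0(x),g_1(x)\bigr)^{2}\vol(g_*)(x)\Bigr)^{\!1/2}
\end{equation*}
for a suitable reference density, after which positivity of the right-hand side closes the argument.

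The chief obstacle is this last inequality: the Ebin length carries a square root \emph{outside} the spatial integral, so naive pointwise lower bounds do not assemble additively into an $L^2$-type bound. I would handle this by a localization/slicing argument — restricting to variations supported in shrinking neighborhoods and using that a horizontal curve in $\Met(M)$ projects fiberwise to a curve in $P_x$ whose length for the fiber metric is at least $d_x(g_0(x),g_1(x))$ — together with the explicit Freed–Gil-Medrano–Michor formula for fiberwise geodesics to convert pointwise bounds into the required integral bound. This is the delicate analytic core of Clarke's theorem; Steps 1–2 above are essentially algebraic and already dispense with the conformal direction, so the entire difficulty is concentrated in controlling paths that travel transversally to the volume foliation.
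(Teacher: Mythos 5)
The paper itself gives no proof of this statement; it cites Clarke and then records the two facts your plan relies on, namely the Lipschitz bound for $\sqrt{\Vol}$ and the identity $\on{dist}^E=\Omega_2$. Your Steps 1--2 are correct (the trace/traceless split, the computation $\p_t\rho^{1/2}=\tfrac14\on{Tr}_g(\p_t g)\rho^{1/2}$, and the constant $\tfrac{\sqrt m}{4}$ all match the paper's Lemma 12 of Clarke), and the overall strategy of reducing to fiberwise distances is exactly the right one. The problem is Step 3: you declare the inequality $\on{Len}^{G^E}(g)\ge\Omega_2(g_0,g_1)$ to be the ``delicate analytic core'' and offer only a vague localization/slicing plan. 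As written this is a genuine gap --- nothing in your sketch explains how shrinking the support of the variation produces a lower bound for a path that is \emph{given}, not chosen, and whose time-slices mix all the fibers.

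The gap is closable, and more cheaply than you fear: you are conflating the hard direction of Clarke's theorem $\on{dist}^E=\Omega_2$ (the upper bound) with the direction you actually need (the lower bound), and the latter is a one-line consequence of Minkowski's integral inequality --- the same reversal of the order of integration you already used for the volume estimate. Writing $\vol(g)=\sqrt{\det(\wt g^{-1}g)}\,\vol(\wt g)$ for a fixed background metric $\wt g$, one has $G^E_g(h,h)=\int_M \ga_{x,g(x)}(h(x),h(x))\,\vol(\wt g)(x)$ with $\ga_{x,a}$ the fiber metric from the paper, i.e.\ the Ebin norm of $h$ is the $L^2(\vol(\wt g))$-norm of the function $x\mapsto\|h(x)\|_{\ga_{x,g(x)}}$. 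Minkowski's integral inequality (with $p=2$, applied to $F(x,t)=\|\p_t g(t,x)\|_{\ga_{x,g(t,x)}}$) then gives
\begin{equation*}
\on{Len}^{G^E}(g)=\int_0^1\Big\|F(\cdot,t)\Big\|_{L^2(\vol(\wt g))}\ud t\;\ge\;\Big\|\int_0^1 F(\cdot,t)\ud t\Big\|_{L^2(\vol(\wt g))}\;\ge\;\Big(\int_M d_x\big(g_0(x),g_1(x)\big)^2\vol(\wt g)\Big)^{1/2},
\end{equation*}
since $\int_0^1 F(x,t)\ud t$ is the $\ga_x$-length of the fiber curve $t\mapsto g(t,x)$. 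Each fiber is a finite-dimensional Riemannian manifold, so $d_x$ is point-separating, and continuity of $(x,a,b)\mapsto d_x(a,b)$ gives a positive lower bound for $d_x(g_0(x),g_1(x))$ on an open set whenever $g_0\neq g_1$; hence $\Omega_2(g_0,g_1)>0$ and the theorem follows. With this substitution your argument is complete; note also that once you have this fiberwise bound, the separate volume argument in Step 2 becomes redundant, since it handles no case that the $\Omega_2$ bound misses.
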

\begin{remark}
Note that this result also holds for all metrics, that are stronger than the $L^2$-metric, i.e., 
\[
G^E_g(h,h) \leq C G_g(h,h)\,,
\]
with a constant $C>0$, independent of $g$. This applies in particular to almost local metrics, if the function $\Ph$ is bounded from below by $\Ph \geq C > 0$, as well as to most Sobolev-type metrics.
\end{remark}

Fix a Riemannian metric $\wt g$ on $M$. For each $x \in M$ denote by $\Met(M)_x = S^2_+ T^\ast_x M$ the space of symmetric, positive definite $0 \choose 2$-tensors at $x$. Then for $b, c \in T_a \Met(M)_x$ the expression
\[
\ga_{x,a}(b, c) = \on{Tr}(a^{-1} b a^{-1} c) \sqrt{\on{det} \wt g(x)^{-1} a}
\]
defines a Riemannian metric on the finite-dimensional manifold $\Met(M)_x$. Denote by $d_x$ the induced geodesic distance function and define the following distance on $\Met(M)$,
\[\Omega_2(g_0,g_1)=\sqrt{\int_M d_x(g_0(x),g_1(x))^2\on{vol}(\wt g)}\,.\]
The following theorem states that computing the geodesic distance on $\Met(M)$ with respect to the $L^2$-distance, is equivalent to summing point-wise geodesic distances on $\Met(M)_x$.

\begin{theorem}[Thm. 3.8,\cite{Clarke2013b}]
Geodesic distance induced by the $L^2$ metric and the distance $\Om_2$ coincide, i.e., for all $g_0, g_1 \in \Met(M)$,
\[\dist^{E}(g_0,g_1)=\Omega_2(g_0,g_1)\,.\]
\end{theorem}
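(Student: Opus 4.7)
My plan is to exhibit the Ebin metric as an ``$L^2$-metric valued in the fiberwise Riemannian manifolds'' $(\Met(M)_x,\gamma_{x,\cdot})$ and then compare lengths to pointwise lengths. First I would rewrite the Ebin metric using the pointwise relation $\vol(g)=\sqrt{\det(\wt g^{-1}g)}\,\vol(\wt g)$, giving
\[
G^E_g(h,h)=\int_M \on{Tr}(g^{-1}hg^{-1}h)\sqrt{\on{det}(\wt g^{-1}g)}\,\vol(\wt g)=\int_M \ga_{x,g(x)}(h(x),h(x))\,\vol(\wt g)(x).
\]
This identification is the key point: the Ebin geometry splits as an $L^2$ superposition of the finite-dimensional cone geometries $(\Met(M)_x,\ga_{x,\cdot})$, so one should expect the geodesic distance to ``decouple'' in the same way.

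Next I would prove the lower bound $\Om_2(g_0,g_1)\le\on{dist}^E(g_0,g_1)$. Given any smooth path $g:[0,1]\to\Met(M)$ from $g_0$ to $g_1$, set $f(t,x):=\sqrt{\ga_{x,g(t,x)}(g_t(t,x),g_t(t,x))}$, which is the pointwise speed in the fiber. For each $x$, the curve $t\mapsto g(t,x)$ lies in $\Met(M)_x$ and joins $g_0(x)$ to $g_1(x)$, so by definition of $d_x$ we have $\int_0^1 f(t,x)\,dt\ge d_x(g_0(x),g_1(x))$. Taking $L^2(\vol(\wt g))$-norms and applying Minkowski's integral inequality yields
\[
\Om_2(g_0,g_1)\le\Bigl\|\int_0^1 f(t,\cdot)\,dt\Bigr\|_{L^2(\wt g)}\le\int_0^1 \|f(t,\cdot)\|_{L^2(\wt g)}\,dt=\on{Len}^E(g),
\]
and taking the infimum over paths gives the claim.

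For the reverse inequality, I would build near-optimal paths by performing the fiberwise geodesic construction. For each $x\in M$ let $\eta_x:[0,1]\to\Met(M)_x$ be the constant-speed geodesic from $g_0(x)$ to $g_1(x)$ in $(\Met(M)_x,\ga_{x,\cdot})$ (these have explicit Freed--Groisser formulas and depend smoothly on their endpoints and on $x$). Setting $g(t,x):=\eta_x(t)$ makes the pointwise speed $f(t,x)=d_x(g_0(x),g_1(x))$ independent of $t$, which is precisely the equality case in Minkowski's inequality, so
\[
\on{Len}^E(g)=\int_0^1 \bigl\|d_x(g_0(x),g_1(x))\bigr\|_{L^2(\wt g)}\,dt=\Om_2(g_0,g_1).
\]

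\textbf{The main obstacle} is exactly that this pointwise construction need not stay inside $\Met(M)$: the finite-dimensional manifolds $(\Met(M)_x,\ga_{x,\cdot})$ are geodesically \emph{incomplete}, because a fiberwise geodesic can run into the boundary of the positive-definite cone (where $\det g=0$) in finite time. Thus $\eta_x(t)$ may fail to be positive definite on a set of $x$'s of positive measure, in which case $g(t,\cdot)\notin\Met(M)$ for some $t$. I would handle this via an approximation argument in the spirit of Clarke's completion results: approximate $g_0,g_1$ and the fiberwise geodesics by smooth detours that avoid the incomplete directions at an arbitrarily small cost in $L^2(\wt g)$-length, using that the ``bad'' set of $x$ for which the straight fiberwise geodesic degenerates has a controllable measure and that the fiberwise distance $d_x$ is continuous and computable from the Freed--Groisser formula. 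Combined with a standard smoothing of the resulting $x$-dependent path, this produces a sequence of admissible paths in $\Met(M)$ from $g_0$ to $g_1$ whose $G^E$-lengths converge to $\Om_2(g_0,g_1)$, yielding $\on{dist}^E\le\Om_2$ and, together with Step~2, the equality.
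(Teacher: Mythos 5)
The survey itself gives no proof of this statement --- it is quoted from Clarke --- so your plan can only be judged on its own terms. Your identification of the Ebin metric as a fiberwise $L^2$-superposition of the finite-dimensional metrics $\ga_{x,\cdot}$ is correct, and your lower bound $\Om_2(g_0,g_1)\le \dist^{E}(g_0,g_1)$ --- restrict a path to each fiber, bound the fiberwise length below by $d_x$, and apply Minkowski's integral inequality --- is complete and is the standard argument for that direction.

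The genuine gap is in the upper bound. Your construction starts from ``the constant-speed geodesic $\eta_x$ from $g_0(x)$ to $g_1(x)$ in $(\Met(M)_x,\ga_{x,\cdot})$,'' but such a geodesic need not exist: in the explicit solution formula quoted in Sect.~\ref{metrics_geod} the traceless coordinate $b(t)$ is bounded by $4\pi/\sqrt{m\on{Tr}(H_0^2)}$ for all time, so the fiberwise exponential map never reaches points that differ too much in the traceless direction. For such pairs the infimum defining $d_x$ is approached only by paths that first shrink the determinant toward zero, traverse the traceless directions at small cost (the cost is weighted by $\sqrt{\det(\wt g^{-1}a)}$), and grow the determinant back --- i.e.\ by paths passing arbitrarily close to the degenerate boundary --- and no minimizer exists in the open cone. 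Hence the ``equality case of Minkowski'' is never literally attained and the entire upper bound must be an approximation argument. You do flag this obstacle, but the remedy you propose (``smooth detours at arbitrarily small cost,'' ``the bad set has controllable measure'') is precisely the technical content of Clarke's proof and is not supplied: one needs (i) the quantitative estimate that the fiberwise distance from $a$ to the degenerate cone is $O\big((\det(\wt g^{-1}a))^{1/4}\big)$ uniformly over the compact manifold, so that detours near the boundary are cheap; (ii) a choice of near-optimal fiber paths that is measurable (indeed continuous) in $x$ with uniform control of the error; and (iii) a smoothing step producing an admissible path of smooth Riemannian metrics. As written, the hard half of the theorem is asserted rather than proved.
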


Similar as in the case of the $G^A$-metric and the Sobolev metrics on the space of immersions the square root of the volume is again a Lipschitz continuous function.
\begin{theorem}[Lem.~12,\cite{Clarke2010}]
Geodesic distance induced by the $L^2$-metric  satisfies the inequality 
\[
\left|\sqrt{\Vol(F, g_0)}-\sqrt{\Vol(F, g_1)}\right| \leq
 \frac{\sqrt{m}}4 \on{dist}^{E}_{\Met(M)}(g_0,g_1)
\]
for any measurable set $F\subset M$. Here $\Vol(F, g)$ denotes the volume of $F\subset M$ with respect to the metric $g$.  

This implies the Lipschitz continuity of the map
\[\sqrt{\Vol}:\left(\Met(M),\on{dist}^{F}_{\Met(M)}\right) \to \mathbb R_{\geq 0}.\]
\end{theorem}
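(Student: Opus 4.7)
The plan is to estimate the time derivative of $t \mapsto \sqrt{V(t)}$, where $V(t):=\Vol(F,g(t))$ and $g(t)$ is an arbitrary smooth path from $g_0$ to $g_1$ in $\Met(M)$, and then integrate and take the infimum over all such paths. The key fact I would use is the standard formula
\[
\partial_t \vol(g(t)) = \tfrac{1}{2}\on{Tr}(g(t)^{-1}\dot g(t))\,\vol(g(t)),
\]
which follows from differentiating $\sqrt{\det g}$ in local coordinates.

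First I would differentiate: $V'(t) = \tfrac12\int_F \on{Tr}(g^{-1}\dot g)\,\vol(g)$. Next I would apply Cauchy--Schwarz with respect to the measure $\vol(g)$ on $F$:
\[
|V'(t)|^2 \le \tfrac{1}{4}\Big(\int_F \on{Tr}(g^{-1}\dot g)^2\,\vol(g)\Big)\cdot V(t).
\]
Then I would use the pointwise linear-algebra inequality $\on{Tr}(A)^2 \le m\cdot \on{Tr}(A^2)$, valid for the $g$-self-adjoint endomorphism $A = g^{-1}\dot g$ of $T_xM$ (it is just Cauchy--Schwarz applied to the eigenvalues of $A$, and $\dim M = m$). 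This yields
\[
\int_F \on{Tr}(g^{-1}\dot g)^2\,\vol(g) \le m\int_F \on{Tr}(g^{-1}\dot g\,g^{-1}\dot g)\,\vol(g) \le m\,G^E_{g}(\dot g,\dot g),
\]
where the last step uses that the integrand is non-negative and $F\subseteq M$.

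Combining these estimates gives $|V'(t)| \le \tfrac{\sqrt{m}}{2}\sqrt{G^E_{g(t)}(\dot g,\dot g)}\cdot \sqrt{V(t)}$, and hence
\[
\Big|\tfrac{d}{dt}\sqrt{V(t)}\Big| = \frac{|V'(t)|}{2\sqrt{V(t)}} \le \frac{\sqrt{m}}{4}\sqrt{G^E_{g(t)}(\dot g(t),\dot g(t))}.
\]
Integrating from $0$ to $1$ yields
\[
\big|\sqrt{\Vol(F,g_1)}-\sqrt{\Vol(F,g_0)}\big| \le \frac{\sqrt{m}}{4}\on{Len}^{G^E}(g),
\]
and taking the infimum over all paths $g$ connecting $g_0$ to $g_1$ gives the stated bound. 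Lipschitz continuity of $\sqrt{\Vol}$ on $\Met(M)$ follows by taking $F=M$.

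There is no real obstacle: the proof is a direct two-step Cauchy--Schwarz argument. The only mildly subtle point is the care needed to handle $V(t)=0$ (if $F$ has $g(t)$-measure zero at some time), but this is harmless since the bound on $|d\sqrt{V}/dt|$ is independent of $V(t)$ and one can argue by approximation (e.g.\ replace $V$ by $V+\varepsilon$ and let $\varepsilon\to 0$, or simply note that $\sqrt{V}$ is absolutely continuous with derivative a.e.\ controlled by the right-hand side).
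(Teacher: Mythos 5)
Your proof is correct, and it is essentially the argument of the cited source (the survey itself gives no proof, only the reference to Clarke's Lemma~12): differentiate $\sqrt{\Vol(F,g(t))}$ along a path, apply Cauchy--Schwarz on $F$ and the pointwise inequality $\on{Tr}(A)^2\le m\on{Tr}(A^2)$ for the $g$-self-adjoint endomorphism $A=g^{-1}\dot g$, and integrate; this is the same scheme used for the analogous $\sqrt{\Vol_q}$ estimates on immersion spaces elsewhere in the paper. The worry about $V(t)=0$ is vacuous: since $\vol(g)$ is a smooth positive density, $\Vol(F,g)=0$ for some $g$ forces $F$ to be Lebesgue-null, hence $\Vol(F,g)=0$ for every $g$ and the inequality is trivial.
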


On the other hand we also have the following upper bound for the geodesic distance.
\begin{theorem}[Prop. 4.1, \cite{Clarke2013c}]
For the $L^2$-metric the geodesic distance is bounded from above by
\[\on{dist}^E(g_0, g_1) \leq C(m)\left(\sqrt{\Vol(F, g_0)}+\sqrt{\Vol(F, g_1)}\right)\;,\]
where $F$ denotes the support of $g_1-g_0$
\[F = \ol{\left\{x \in M | g_0(x) \neq g_1(x)\right\}}\,,\]
and $C(m)$ is a constant depending only on the dimension of $M$.
\end{theorem}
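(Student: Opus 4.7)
The plan is to reduce the global estimate to a pointwise estimate on the finite-dimensional fibers $\Met(M)_x = S^2_+T_x^\ast M$, using the identification $\on{dist}^E(g_0,g_1) = \Om_2(g_0,g_1)$ from Thm.~3.8. Since $g_0(x) = g_1(x)$ off $F$, the integrand $d_x(g_0(x),g_1(x))^2$ in the definition of $\Om_2$ vanishes there, so it is enough to produce a pointwise bound on $d_x(g_0(x),g_1(x))$ for $x \in F$ by quantities whose integrals over $F$ equal $\Vol(F,g_0)$ and $\Vol(F,g_1)$.

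The central step is a sharp bound on the distance from an arbitrary $a \in \Met(M)_x$ to the ``zero'' form in the metric completion of $(\Met(M)_x,\ga_{x,\cdot})$. For this I would use the radial path $a(t) = (1-t)\,a$, $t \in [0,1)$. Substituting $a(t)^{-1}\dot a(t) = -(1-t)^{-1}\on{Id}$ and $\sqrt{\det \wt g(x)^{-1}a(t)} = (1-t)^{m/2}\sqrt{\det \wt g(x)^{-1}a}$ into $\ga_{x,a(t)}$ produces the length integral
\[
\sqrt m\,(\det \wt g(x)^{-1}a)^{1/4}\int_0^1(1-t)^{m/4-1}\,\ud t = \tfrac{4}{\sqrt m}\,(\det \wt g(x)^{-1}a)^{1/4}.
\]
To connect $g_0(x)$ to $g_1(x)$, I would concatenate two such paths (truncated at $t = 1-\ep$) by the straight-line segment $s \mapsto \ep((1-s)g_0(x) + sg_1(x))$, whose $\ga$-length is $O(\ep^{m/4})$ by a direct scaling computation. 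Sending $\ep \to 0$ yields
\[
d_x(g_0(x),g_1(x)) \leq \tfrac{4}{\sqrt m}\Bigl[(\det \wt g(x)^{-1}g_0(x))^{1/4} + (\det \wt g(x)^{-1}g_1(x))^{1/4}\Bigr].
\]

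Squaring via $(a+b)^2 \leq 2(a^2+b^2)$, integrating over $F$, and using the identity $\sqrt{\det \wt g(x)^{-1}g_i(x)}\,\on{vol}(\wt g) = \on{vol}(g_i)$, I would arrive at
\[
\Om_2(g_0,g_1)^2 \leq \tfrac{32}{m}\bigl(\Vol(F,g_0) + \Vol(F,g_1)\bigr).
\]
Taking square roots and applying $\sqrt{a+b}\leq\sqrt a+\sqrt b$ produces the claimed inequality with $C(m) = 4\sqrt 2/\sqrt m$. The main obstacle is ensuring that the $O(\ep^{m/4})$ junction length is uniform in $x \in F$, so that the limit $\ep \to 0$ may be taken before integrating over $F$; this is straightforward since the junction estimate is homogeneous in $a$ and the eigenvalues of $g_1(x)^{-1}g_0(x)$ vary continuously on the compact set $F$, giving a dominated convergence argument for the fiber integral defining $\Om_2$.
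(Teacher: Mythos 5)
Your fiberwise computation is correct: for the radial path $a(t)=(1-t)a$ one indeed gets $\ga_{x,a(t)}(\dot a,\dot a)=m(1-t)^{m/2-2}\sqrt{\det\wt g(x)^{-1}a}$, hence length $\tfrac{4}{\sqrt m}(\det\wt g(x)^{-1}a)^{1/4}$, the junction segment costs $O(\ep^{m/4})$, and the chain of inequalities through $(a+b)^2\le 2(a^2+b^2)$ and $\sqrt{\det\wt g^{-1}g_i}\,\vol(\wt g)=\vol(g_i)$ delivers the bound with $C(m)=4\sqrt2/\sqrt m$. Note that this survey does not prove the statement itself (it cites Clarke), so the relevant comparison is with Clarke's argument, and there your route is genuinely different: Clarke constructs an explicit \emph{global} path in $\Met(M)$ --- conformally shrinking $g_0$ towards zero on $F$, crossing over, and growing back to $g_1$ --- and the entire technical content of his proof is the gluing near $\p F$ with cutoff functions, needed because the naive assembly of your fiberwise paths (deform on $F$, freeze off $F$) is discontinuous in $x$ at $\p F$. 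You outsource exactly this difficulty to the identity $\on{dist}^E=\Om_2$ (Thm.~3.8, from a later paper of Clarke), whose hard inequality $\on{dist}^E\le\Om_2$ encapsulates the same approximation-by-genuine-paths argument; so your proof is clean and valid \emph{given} that theorem as a black box, but it would be circular as a self-contained proof, since the $\Om_2$ characterization is built on the completion theory to which the present proposition belongs. Finally, the ``main obstacle'' you identify is not one: the bound $d_x(g_0(x),g_1(x))\le\tfrac{4}{\sqrt m}\bigl[(\det\wt g^{-1}g_0)^{1/4}+(\det\wt g^{-1}g_1)^{1/4}\bigr]$ is obtained for each fixed $x$ by taking $\ep\to0$ in the infimum defining $d_x$ \emph{before} any integration, so no uniformity in $x$ or dominated convergence is required.
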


The above corollary implies that the set $\Met_\mu(M)$ of all Riemannian metrics having a total volume less or equal than $\mu$ has a finite diameter with respect to the $L^2$-metric.

\subsection{The geodesic equation}\label{metrics_geod}
The Christoffel symbols for the $L^2$-metric were first calculated in \cite[Sect.\ 4]{Ebin1970a}. Subsequently Freed and Groisser \cite{Freed1989} and Michor and Gil-Medrano \cite{Gil-Medrano1991} computed the geodesic equation and found explicit solution formulas. The geodesic equation for higher order Sobolev type metrics and Scalar curvature metrics can be found in \cite{Bauer2013a} and for volume weighted metrics in \cite{Bauer2013a,Clarke2013}. 

The geodesic equation for the $L^2$-metric decouples the time and spatial variables, i.e., instead of being a PDE in $(t,x)$, it is only an ODE in $t$.
\begin{lemma}[Sect.\ 4, \cite{Ebin1970a}]
The geodesic equation for the $L^2$-metric is given by the ordinary differential equation:
\[g_{tt}=\frac14\on{Tr}(g^{-1} g_t g^{-1} g_t)g+g_t g^{-1} g_t-\frac12\Tr(g^{-1} g_t) g_t\,.\]
\end{lemma}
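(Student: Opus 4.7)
The plan is to derive the geodesic equation as the Euler--Lagrange equation of the energy of $G^E$, equivalently by computing the Christoffel symbols of the Levi-Civita connection via the Koszul formula applied to constant vector fields on the affine manifold $\Met(M)$. The essential structural point, which also explains why the resulting equation is an ODE in $t$ (pointwise in $x$) rather than a PDE, is that $G^E$ is \emph{ultralocal}: its integrand depends only on the pointwise values of $g,h,k$ at each point of $M$, with no spatial derivatives. Hence the variational problem decouples in $x$ and the Levi-Civita connection is determined by a purely algebraic inversion at each point.

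First I would record the two basic variational identities: for a variation $\de g = h$ one has
\[\de(g^{-1}) = -g^{-1} h g^{-1}, \qquad \de\vol(g) = \tfrac12 \on{Tr}(g^{-1} h)\vol(g),\]
the second coming from Jacobi's formula for the determinant. Treating $h$ and $k$ as tangent vectors extended by translation (so that the Lie brackets in the Koszul formula vanish), these identities immediately yield expressions for $(D_h G^E)_g(k,k)$ and $(D_k G^E)_g(k,h)$; by cyclicity of the trace the two contributions produced by differentiating each of the two factors $g^{-1}$ in $\on{Tr}(g^{-1} k g^{-1} k)$ collapse into a single term, leaving in each case one $\on{Tr}(g^{-1} h g^{-1} k g^{-1} k)$-type term from the variation of the inverse and one product-of-traces term from the variation of the volume form.

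Next I would substitute these expressions into the Koszul formula
\[2G^E_g(\nabla_k k, h) = 2(D_k G^E)_g(k,h) - (D_h G^E)_g(k,k),\]
and solve for $\nabla_k k = \Gamma_g(k,k)$ by pointwise non-degeneracy of the $G^E$-pairing. The only non-obvious manipulation is that, in order to interpret the scalar term $\on{Tr}(g^{-1} h)$ as a $G^E$-pairing against $h$, one rewrites it as $\on{Tr}(g^{-1}\cdot g \cdot g^{-1} h)$, so that the corresponding contribution to $\Gamma_g(k,k)$ is a multiple of $g$ itself. Collecting the three terms I expect to obtain
\[\Gamma_g(k,k) = -k g^{-1} k + \tfrac12 \on{Tr}(g^{-1} k)\,k - \tfrac14\on{Tr}(g^{-1} k g^{-1} k)\,g,\]
and substituting $k = g_t$ in $g_{tt} = -\Gamma_g(g_t, g_t)$ produces exactly the claimed formula.

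The main obstacle is not analytic but combinatorial: ultralocality avoids any integration by parts in the spatial variable, so existence and uniqueness of the geodesic equation reduce to a pointwise ODE in $t$ parametrized by $(g(0,x),g_t(0,x))$. The real work is careful bookkeeping of the three sources of contributions --- the two factors of $g^{-1}$ in the integrand producing the $g_t g^{-1} g_t$ term, the variation of $\vol(g)$ producing the $-\tfrac12\on{Tr}(g^{-1} g_t)\,g_t$ term, and the rewriting trick above producing the $\tfrac14\on{Tr}(g^{-1} g_t g^{-1} g_t)\,g$ term --- together with the overall sign reversal from $\Gamma_g$ to $g_{tt}$.
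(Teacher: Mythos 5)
Your derivation is correct: the variational identities, the collapse of the two $\de(g^{-1})$ contributions by cyclicity of the trace, the Koszul formula for constant vector fields, and the rewriting $\on{Tr}(g^{-1}h)=\on{Tr}(g^{-1}g\,g^{-1}h)$ all check out, and the resulting $\Gamma_g(k,k)=-kg^{-1}k+\tfrac12\on{Tr}(g^{-1}k)k-\tfrac14\on{Tr}(g^{-1}kg^{-1}k)g$ yields exactly the stated ODE. The paper itself offers no proof (it only cites Ebin, Sect.~4, and the later computations of Freed--Groisser and Gil-Medrano--Michor), and your argument is essentially the standard one from those references, including the correct observation that ultralocality is what makes the candidate for $\nabla_k k$ an explicit smooth tensor field, so that the weak metric's Levi-Civita connection exists and the equation decouples into a pointwise ODE.
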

There exists an explicit solution formula for this ODE.
\begin{theorem}
The geodesic starting at $g_0\in\Met(M)$ in the direction of
$h\in T_{g_0}\Met(M)$ is  given by the curve
\[g(t)=g_0 e^{a(t)\on{Id}+b(t)H_0},\]
where $H_0$ is the traceless part of $H:= g_0^{-1} h$, i.e.,
$H_0=H-\frac{\on{Tr}(H)}m \on{Id}$, and where $a(t)$ and $b(t)\in
C^\infty(M)$ are defined by
\begin{align*}
a(t) &= \tfrac2m \log\left((1+\tfrac t4\on{Tr}(H))^2
        +\tfrac m{16}\on{Tr}(H_0^2)t^2\right)\\
b(t) &= 
\begin{cases}
        \frac4{\sqrt{m\on{Tr}(H_0^2)}}
                 \arctan\left(\frac{\sqrt{m\on{Tr}(H_0^2)}\,t}{4+t\on{Tr}(H)}\right),
                &\hspace{-.3cm}\on{Tr}(H_0^2)\neq0\\
        \frac t{1+\frac t4\on{Tr}(H)}, &\hspace{-.3cm}\on{Tr}(H_0^2)=0.
\end{cases}
\end{align*}
Here $ \arctan$ is taken to have values in
$(-\frac\pi2,\frac\pi2)$ for the points of the manifold where
$\on{Tr}(H)\geq0$, and on a point where $\on{Tr}(H)<0$ we define
\begin{align*}
&\arctan\left(\frac{\sqrt{m\on{Tr}(H_0^2)}\,t}{4+t\on{Tr}(H)}\right) \\&\qquad= 
\begin{cases}
         \arctan\text{ in }[0,\frac\pi2) &\text{ for }
                t\in [0,-\frac4{\on{Tr}(H)})\\
        \frac\pi2  &\text{ for }t=-\frac4{\on{Tr}(H)}\\
         \arctan\text{ in }(\frac\pi2,\pi) &\text{ for }
                t\in (-\frac4{\on{Tr}(H)},\infty). 
\end{cases}
\end{align*}
Let $N^h:= \{x\in M: H_0(x)=0\}$, and if $N^h\neq\emptyset$ let
$t^h:=\inf\{\on{Tr}(H)(x): x\in N^h\}$. Then the geodesic $g(t)$ is
defined for $t\in [0,\infty)$ if $N^h=\emptyset$ or if $t^h\geq0$,
and it is only defined for $t\in [0,-\frac4{t^h})$ if $t^h<0$.
\end{theorem}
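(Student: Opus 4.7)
The plan is to reduce the problem to a pointwise calculation and then exploit a finite-dimensional symmetric-space structure. By the preceding lemma, the geodesic equation
\[
g_{tt} = \tfrac{1}{4}\Tr(g^{-1}g_t g^{-1}g_t)\, g + g_t g^{-1} g_t - \tfrac{1}{2}\Tr(g^{-1}g_t)\, g_t
\]
contains no spatial derivatives: at each $x\in M$ it is a second order ODE on the open convex cone $\Met(M)_x = S^2_+ T_x^*M$. Thus it suffices to solve this ODE with data $(g_0(x),h(x))$ for each $x$ separately; smoothness of the resulting family in $x$ then follows from smooth dependence on parameters. I therefore fix $x$ and treat $g, H = g_0^{-1} h, H_0 = H - \tfrac{\Tr(H)}{m}\Id$ as constant linear maps on $T_xM$.

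Next I would write down the ansatz suggested by the symmetric space structure of $\Met(M)_x \cong GL(m,\R)/O(m)$: the Ebin metric splits orthogonally along the decomposition into a volume factor and a unimodular factor, both of which are totally geodesic. This makes
\[
g(t) = g_0\, e^{a(t)\Id + b(t) H_0}
\]
a natural guess, since $\Id$ and $H_0$ commute so the matrix exponential factors. A direct computation using commutativity gives $g^{-1}g_t = a'(t)\Id + b'(t)H_0$, hence
\[
\Tr(g^{-1}g_t) = m a',\quad \Tr(g^{-1}g_t g^{-1} g_t) = m(a')^2 + (b')^2 \Tr(H_0^2),
\]
and the RHS of the geodesic equation, once multiplied by $g^{-1}$ on the left, decomposes into a span of $\Id$, $H_0$, $H_0^2$. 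Matching coefficients against the analogous expansion of $g^{-1} g_{tt}$ yields two decoupled scalar ODEs, namely $b''/b' = -\tfrac{m}{2} a'$ and $4a'' + m(a')^2 = (b')^2\Tr(H_0^2)$, with initial conditions $a(0)=b(0)=0$, $a'(0)=\Tr(H)/m$, $b'(0)=1$. The first integrates immediately to $b' = e^{-ma/2}$, which reduces the system to a single equation whose explicit solution is most transparent after the substitution $(1+\tfrac{t}{4}\Tr(H))^2 + \tfrac{m}{16}\Tr(H_0^2) t^2 = e^{m a/2}$; one then reads off the stated formulas for $a(t)$ and $b(t)$ and verifies them by differentiation. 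The degenerate subcase $\Tr(H_0^2)=0$ corresponds to $H$ being a multiple of the identity, in which case $b' = e^{-ma/2}$ and the single remaining ODE $4a'' + m(a')^2 = 0$ integrates to the simpler rational expression.

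The main difficulty will be the second statement concerning the maximal interval of existence, which must be reconciled with the requirement that $g(t)$ remains positive-definite for all $x \in M$ simultaneously. Pointwise, $e^{a(t)\Id + b(t)H_0}$ is always positive-definite whenever $a(t), b(t)$ are finite, so blow-up of the geodesic corresponds exactly to blow-up of the scalars. Inspecting the formulas, a singularity occurs only at points $x$ where the argument of $\arctan$ forces $a(t)\to -\infty$, which happens precisely at $t = -4/\Tr(H)(x)$ and only when $\Tr(H)(x)<0$ and $\Tr(H_0^2)(x)=0$, i.e.\ $x\in N^h$ with $\Tr(H)(x) < 0$. The delicate point is the branch choice of $\arctan$: on the subset $\{x : \Tr(H)(x) < 0\}$ one must let $\arctan$ pass through $\pi/2$ as $t$ crosses $-4/\Tr(H)(x)$ to obtain a solution that remains smooth in $(t,x)$ for $t$ in the stated interval. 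The infimum $t^h$ then governs the first time at which the pointwise blow-up actually occurs for some $x$; for $t<-4/t^h$ the construction yields a smooth family of positive-definite tensors, giving the asserted geodesic on $\Met(M)$.
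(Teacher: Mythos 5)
Your proposal is correct and follows essentially the same route as the derivations the paper cites (Freed--Groisser and Gil-Medrano--Michor): the geodesic equation is a pointwise ODE on each cone $\Met(M)_x$, the ansatz $g(t)=g_0e^{a(t)\on{Id}+b(t)H_0}$ reduces it to the scalar system $b''=-\tfrac m2 a'b'$, $4a''+m(a')^2=(b')^2\on{Tr}(H_0^2)$ with $a'(0)=\on{Tr}(H)/m$, $b'(0)=1$, and the stated formulas are verified via $e^{ma/2}=(1+\tfrac t4\on{Tr}(H))^2+\tfrac m{16}\on{Tr}(H_0^2)t^2$ and $b'=e^{-ma/2}$. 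Your analysis of the blow-up locus (degeneration only where $H_0(x)=0$ and $\on{Tr}(H)(x)<0$, at $t=-4/\on{Tr}(H)(x)$, whence the role of $t^h$) and of the $\arctan$ branch across $t=-4/\on{Tr}(H)$ also matches the statement.
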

These formulas have been independently derived by Freed and Groisser \cite{Freed1989} and Michor and Gil-Medrano \cite{Gil-Medrano1991}.
A similar result is also available for the metric $G^{\Ph}$ with $\Ph(\on{Vol})=\frac{1}{\Vol}$; see \cite{Clarke2013}.
\begin{remark}
The geodesic equation for higher order metrics will generally not be  an ODE anymore and explicit solution formulas do not exist. 
Nevertheless, it has been shown that the geodesic equations are  (locally) well-posed, assuming certain conditions 
on the operator field $L$ defining the metric; see \cite{Bauer2013a}. These conditions are satisfied by the class of Sobolev type metrics and  conformal metrics but not by the scalar curvature weighted metrics.
\end{remark}

\subsection{Conserved quantities}
Noether's theorem associates to any metric on $\Met(M)$, that is invariant under pull-backs by the diffeomorphism group  $\on{Diff}(M)$, for each $X \in \mf X(M)$ the quantity
\[G_g(g_t,\zeta_X(g))=\on{const}\,,\]
which is conserved along each geodesic $g(t)$. Here  $\zeta_X$ is the fundamental vector field of the right action of $\Diff(M)$,
\[\zeta_X(g)=\L_X g=2\on{Sym}\nabla^g(g(X))\,,
\]
and
$\on{Sym}\nabla^g(g(X))$ is the symmetrization of the bilinear form $(Y,Z) \mapsto \nabla_Y^g g(X,Z)$, i.e., 
\[\on{Sym} \nabla^g(g(X))(Y,Z) = \tfrac 12 \left(\nabla^g_Y g(X,Z)+ \nabla^g_Z g(X,Y)\right). \]
If $G_g(g_t,\zeta_X(g))$ vanishes for all vector fields $X \in \mf X(M)$ along a geodesic $g(t)$, then $g(t)$ intersects each $\on{Diff}(M)$-orbit orthogonally.

\subsection{Completeness}\label{Met:completion}
The $L^2$-metric on $\Met(M)$ is incomplete, both metrically and geodesically. The metric completion 
of it has been studied by Clarke in \cite{Clarke2009,Clarke2013b,Clarke2013c}.
To describe the completion let $\Met_f$ denote the set of measurable sections of the bundle $S^2_{\geq 0} T^\ast M$ of symmetric, positive semi-definite $0\choose 2$-tensors, which have finite total volume. Define an equivalence relation on $\Met_f$ by identifying $g_0 \sim g_1$, if the following statement holds almost surely:
\[
g_0(x) \neq g_1(x) \Rightarrow \text{both } g_i(x) \text{ are not positive definite.}
\]
In other words, let $D=\{ x : g_0(x) \neq g_1(x)\}$ and $A_i = \{ x : g_i(x) \text{ not pos. def.} \}$. Then
\[
g_0 \sim g_1 \Leftrightarrow D \setminus (A_0 \cap A_1) \text{ has measure } 0\,.
\]
Note that the map $\Met(M) \hookrightarrow \Met_f/\!\sim$ is injective.

\begin{theorem}[Thm. 5.17, \cite{Clarke2009}]
The metric completion of the space $(\Met(M), \on{dist}^E)$ can be naturally identified with $\Met_f/\!\sim$.
\end{theorem}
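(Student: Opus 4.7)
My proof plan rests on the pointwise identity $\dist^E(g_0, g_1) = \Omega_2(g_0, g_1)$ from Thm.~3.8 of \cite{Clarke2013b}, which reduces the infinite-dimensional completion problem to an $L^2$-integrated version of the finite-dimensional fiberwise completions. First I would analyze the fibers: for each $x\in M$, $(\Met(M)_x, \ga_{x,a})$ is a finite-dimensional Riemannian manifold, and the explicit Freed--Groisser--Michor geodesic formulas recalled in Sect.~\ref{metrics_geod} show that geodesics can reach the degenerate boundary of $S^2_{\geq 0}T^\ast_x M$ in finite time. A direct inspection of those formulas shows that $d_x$ extends continuously to the closure, and crucially that $d_x(a,b)=0$ whenever both $a$ and $b$ fail to be positive definite. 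Thus the metric completion of each fiber is naturally $S^2_{\geq 0}T^\ast_x M$ modulo the identification of all degenerate tensors — exactly the pointwise version of the relation $\sim$.

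Second, I would extend $\Omega_2$ to $\Met_f$ by the same formula $\Omega_2(g_0,g_1)=\sqrt{\int_M d_x(g_0(x),g_1(x))^2\vol(\wt g)}$, using the extended $d_x$. Measurability of the integrand follows from joint continuity of $d_x$ on the closed fiber. The finite total volume assumption in the definition of $\Met_f$ together with the Lipschitz estimate for $\sqrt{\Vol}$ ensures the integral is finite on $\Met_f$. It is then a routine check that $\Omega_2$ vanishes precisely under $\sim$, and descends to a genuine metric on $\Met_f/\!\sim$ extending $\dist^E$ on the injected subset $\Met(M)$.

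Third, I would prove completeness of $(\Met_f/\!\sim, \Omega_2)$ by the standard $L^2$-style argument. Given a Cauchy sequence $[g_n]$, pass to a subsequence with $\Omega_2(g_n,g_{n+1})<2^{-n}$; by Fubini this forces $\sum_n d_x(g_n(x),g_{n+1}(x))<\infty$ for a.e. $x$, so the fiberwise completeness established in Step~1 yields a measurable pointwise limit $g_\infty$. An application of Fatou's lemma, together with the upper bound $\dist^E(g_0,g_1)\le C(m)\bigl(\sqrt{\Vol(F,g_0)}+\sqrt{\Vol(F,g_1)}\bigr)$ to control the volume of $g_\infty$, shows $g_\infty\in\Met_f$ and that $[g_n]\to[g_\infty]$ in $\Omega_2$.

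Finally, I would verify that $\Met(M)$ is $\Omega_2$-dense in $\Met_f/\!\sim$. Given a representative $g\in\Met_f$, stratify $M$ into the open set $U_\ep=\{x: g(x)\ge \ep\,\wt g(x)\}$ (positive definiteness with margin) and its complement. On $U_\ep$ one can mollify $g$ against a compactly supported kernel to obtain a smooth positive-definite approximation whose fiberwise $d_x$-distance to $g$ tends to zero in $L^2$ as the mollification scale shrinks. On the complement one replaces $g$ by any smooth positive-definite metric; the cost of this modification is controlled by the upper bound above applied to the complement $F$ of $U_\ep$, whose $g$-volume tends to zero with $\ep$. Combining these two approximations gives smooth metrics $\Omega_2$-converging to $[g]$. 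Since the embedding $\Met(M)\hookrightarrow \Met_f/\!\sim$ is an isometry, completeness plus density identifies the target as the metric completion.

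The main obstacle is Step~1: establishing rigorously that the fiberwise distance $d_x$ collapses the entire degenerate stratum to a single point in the completion. This requires a careful asymptotic analysis of the explicit geodesics as the parameter $t$ approaches the maximal existence time $-4/t^h$, showing that two approach directions with different degenerate limits are joined by a short curve passing infinitesimally close to the degenerate boundary. Everything else is relatively routine measure theory once this fiberwise picture is in hand.
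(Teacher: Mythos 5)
Your overall architecture --- reduce to the fiberwise picture via $\on{dist}^E=\Omega_2$, identify the completion of each fiber $(\Met(M)_x,\ga_{x,a})$ as the closed cone $S^2_{\geq 0}T^\ast_x M$ with the entire degenerate stratum collapsed to a single point, then run an $L^2$-style completeness-plus-density argument --- is sound, and your Step~1 is correct: in logarithmic coordinates $\la_i=e^{\mu_i}$ for the eigenvalues the fiber metric acquires the conformal factor $e^{(\mu_1+\cdots+\mu_m)/4}$, so the whole degenerate boundary is reachable, and any two degenerate tensors are connectable, at arbitrarily small cost. The genuine gap is in the density step. There you replace $g$ by ``any smooth positive-definite metric'' on the complement $F$ of $U_\ep$ and claim the cost is small because the $g$-volume of $F$ tends to zero; but the estimate you invoke is $C(m)\bigl(\sqrt{\Vol(F,g)}+\sqrt{\Vol(F,h)}\bigr)$, and the second term is the volume of the \emph{replacement} metric on $F$. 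The degenerate set of an element of $\Met_f$ can have positive Lebesgue measure (take $g\equiv 0$, for which $U_\ep=\emptyset$ for every $\ep>0$), and then $\Vol(F,h)$ is bounded away from zero for any fixed positive-definite $h$; indeed $\Omega_2(h,0)>0$, so your construction does not converge to $[g]$. The fix is to let the replacement degenerate too, e.g.\ $\de\,\wt g$ with $\de\to 0$, so that $\Vol(F,\de\wt g)=\de^{m/2}\Vol(F,\wt g)\to 0$; concretely, mollify $g$ globally and add $\de\wt g$, then send $\de$ and the mollification scale to zero together. Note also that $U_\ep$ is only a measurable set, not open, since $g$ is merely a measurable section, so the ``mollify on $U_\ep$, glue on the complement'' construction has to be rephrased along these lines anyway.

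A second, logical caveat: the survey gives no proof and cites Clarke, and your reduction takes the identity $\on{dist}^E=\Omega_2$ from \cite{Clarke2013b} as a black box. In Clarke's development that identity is established \emph{after}, and using, the completion theorem; the inequality $\Omega_2\le\on{dist}^E$ is the easy direction (Minkowski's integral inequality applied to path lengths), whereas $\on{dist}^E\le\Omega_2$ requires assembling pointwise near-optimal paths that graze the degenerate boundary into genuine smooth paths in $\Met(M)$ --- essentially the same difficulty as your density step. The original proof in \cite{Clarke2009,Clarke2013c} avoids $\Omega_2$ altogether: it works directly with $\on{dist}^E$-Cauchy sequences, shows they subconverge almost everywhere in the completed fibers ($\om$-convergence), and proves that the limit class in $\Met_f/\!\sim$ is well defined, using the Lipschitz estimate for $\sqrt{\Vol}$ together with a lower bound for the distance between two metrics that differ on a set where one of them stays uniformly positive definite. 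Your route is cleaner if the $\Omega_2$ identity may be cited, but as a self-contained proof of the completion theorem it would be circular; you should either prove $\on{dist}^E\le\Omega_2$ independently or fall back on the $\om$-convergence argument.
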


In the subsequent article \cite{Clarke2013b} it is shown that the metric completion  is a non-positively curved space in the sense of Alexandrov.
\begin{theorem}[Thm~5.6, \cite{Clarke2013b}]
\label{lem_cat0}
The metric completion $\overline{\Met(M)}$ of $\Met(M)$ with respect to the $\dist^E$-me\-tric
is a $\on{CAT}(0)$ space, i.e., 
\begin{enumerate}
  \item there exists a length-minimizing path (geodesic) between any two points in $\overline{\Met(M)}$ and
 \item $\big(\overline{\Met}(M),\on{dist}^E \big)$ is a non-positively curved space in the sense of Alexandrov.
\end{enumerate}
\end{theorem}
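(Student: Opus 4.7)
The plan is to exploit the pointwise reformulation $\dist^E = \Omega_2$ provided by Theorem~3.8 above. This identity reduces the Alexandrov-type curvature bound on the infinite-dimensional space $\ol{\Met(M)}$ to two independent ingredients: a fiberwise $\on{CAT}(0)$ property for each completed fiber $(\ol{\Met(M)_x}, d_x)$, and the general fact that the $L^2$-integral of a measurable family of $\on{CAT}(0)$ spaces is itself $\on{CAT}(0)$.

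First I would analyze a single fiber. For fixed $x \in M$, the space $\Met(M)_x$ is a finite-dimensional manifold diffeomorphic to the symmetric cone $\on{GL}(m,\R)/O(m)$, equipped with the Riemannian metric $\ga_{x,a}$. Because the geodesic ODE for the $L^2$-metric decouples in $x$ (Section~\ref{metrics_geod}), the explicit solution formula of Freed--Groisser and Michor--Gil-Medrano restricts to parametrize geodesics in the fiber by $a(t) = a_0 \exp(\al(t)\Id + \be(t) H_0)$, where $H_0$ is the traceless part of $a_0^{-1}\dot a(0)$. A direct sectional-curvature computation, separating the tangent space into its trace and traceless parts, yields non-positive curvature, so $\Met(M)_x$ is a finite-dimensional Hadamard manifold and therefore $\on{CAT}(0)$. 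The metric completion $\ol{\Met(M)_x}$ adjoins the boundary of positive semi-definite tensors, and the $\on{CAT}(0)$ inequality passes to this completion as a limit of triangles in the interior.

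Next I would globalize using the Bruhat--Tits CN-inequality: a complete geodesic metric space is $\on{CAT}(0)$ iff for every pair $p, q$ with midpoint $m$ and every point $r$,
\[
d(r,m)^2 \;\leq\; \tfrac12 d(r,p)^2 + \tfrac12 d(r,q)^2 - \tfrac14 d(p,q)^2\,.
\]
Given $g_0, g_1 \in \ol{\Met(M)}$, I define $g_{1/2}(x)$ pointwise as the unique midpoint of $g_0(x)$ and $g_1(x)$ in the $\on{CAT}(0)$ fiber $\ol{\Met(M)_x}$; continuity of the midpoint map together with standard measurable selection places $g_{1/2}$ in $\ol{\Met(M)}$. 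Integrating the pointwise CN-inequality against $\vol(\wt g)$ yields the global CN-inequality for $(\ol{\Met(M)}, \dist^E)$. Iterating the midpoint construction and using completeness produces a length-minimizing path between any two points (part~(1)), while the CN-inequality itself encodes non-positive curvature in the sense of Alexandrov (part~(2)).

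The main obstacle is verifying $\on{CAT}(0)$ on the completed fiber $\ol{\Met(M)_x}$: geodesics of $\ga_{x,a}$ can hit the degenerate boundary $\{\det a = 0\}$ in finite time --- the explicit formula for $b(t)$ in Section~\ref{metrics_geod} makes this visible --- and the Riemannian metric $\ga_{x,a}$ itself degenerates there. Extending the $\on{CAT}(0)$ inequality across these boundary strata requires approximating triangles with vertices on the boundary by interior triangles and a careful limit argument. A secondary subtlety is measurability of $g_{1/2}$ when $g_0, g_1$ are only measurable sections of semi-definite forms, which is handled by the uniqueness and continuity of the midpoint map in $\on{CAT}(0)$ spaces.
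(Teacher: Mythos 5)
The survey itself does not prove this theorem --- it is quoted from Clarke \cite{Clarke2013b} --- but your overall architecture (reduce to the fibers via the identity $\dist^E=\Omega_2$, prove that each completed fiber is $\on{CAT}(0)$, then conclude because $L^2$-maps into a complete $\on{CAT}(0)$ space again form a $\on{CAT}(0)$ space, which is exactly the integrated CN-inequality with pointwise midpoints) is the architecture of Clarke's proof. The globalization half of your argument is sound, modulo the measurable-selection point you already flag.

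The gap is in the fiber step. Non-positive sectional curvature does not make $\Met(M)_x$ a Hadamard manifold: the space is incomplete, Cartan--Hadamard does not apply, and a simply connected, non-positively curved but incomplete manifold need not satisfy the $\on{CAT}(0)$ comparison inequality globally --- you would at least need geodesic convexity, i.e., that any two positive definite forms are joined by a minimizing $\ga_x$-geodesic staying inside the cone, which is itself something to prove given that the explicit solution formula shows geodesics reaching $\det a=0$ in finite time. Moreover the completion does not ``adjoin the boundary of positive semi-definite tensors'' as a stratified boundary: because of the weight $\sqrt{\smash[b]{\det(\wt g^{-1}a)}}$, which vanishes on the degenerate locus, that entire locus collapses to a \emph{single} point of $\ol{\Met(M)_x}$ --- this is precisely what the equivalence relation defining $\Met_f/\!\sim$ in Sect.~\ref{Met:completion} encodes fiberwise --- so ``taking limits of interior triangles'' cannot by itself control triangles with a vertex at, or a side passing near, that singular point; the local $\on{CAT}(0)$ property there is a nontrivial condition on the link. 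The repair (and, in essence, Clarke's fiber argument) is to use the cone structure already visible in the Freed--Groisser/Gil-Medrano--Michor formulas: a change of variables $r\sim(\det(\wt g^{-1}a))^{1/4}$ exhibits $(\ol{\Met(M)_x},d_x)$ as a Euclidean metric cone over a rescaled copy of the \emph{complete} symmetric space $SL(m,\R)/SO(m)$, to which Cartan--Hadamard does apply; Berestovskii's criterion (the Euclidean cone over a $\on{CAT}(1)$ space is $\on{CAT}(0)$) then gives the fiber statement. With your fiber paragraph replaced by this cone argument, the rest of your outline goes through.
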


\subsection{Curvature}

For the $L^2$-metric, there exists a comparably simple expression for the curvature tensor.

\begin{theorem}[Prop. 2.6, \cite{Gil-Medrano1991}]
The Riemannian curvature for the
$L^2$--metric on the manifold $\Met(M)$ of all
Riemannian metrics is given by
\begin{align*} g^{-1} &R_g(h,k)l = \tfrac14 [[H,K],L]\\&\qquad+ \frac{m}{16}(\on{Tr}(KL)H-\on{Tr}(HL)K)\\
&\qquad+ \frac1{16}(\on{Tr}(H)\on{Tr}(L)K-\on{Tr}(K)\on{Tr}(L)H)\\&\qquad
+ \frac1{16}(\on{Tr}(K)\on{Tr}(HL)-\on{Tr}(H)\on{Tr}(KL))\on{Id}\,,
\end{align*}
where $H= g^{-1} h$, $K= g^{-1} k$ and $L=g^{-1} l$. 
\end{theorem}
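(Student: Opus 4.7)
The plan is to exploit that $\Met(M)$ is an open subset of the Fr\'echet space $\Gamma(S^2T^*M)$, so that the identity map provides a global affine chart. In this chart the Levi-Civita connection of any weak Riemannian metric is encoded in a symmetric bilinear Christoffel operator $\Gamma_g\colon T_g\Met(M)\times T_g\Met(M)\to T_g\Met(M)$, and the Riemann tensor is given by
\begin{equation*}
R_g(h,k)l = (D_h\Gamma)_g(k,l) - (D_k\Gamma)_g(h,l) + \Gamma_g(h,\Gamma_g(k,l)) - \Gamma_g(k,\Gamma_g(h,l)),
\end{equation*}
where $D$ is the directional derivative in the chart (no bracket term appears because constant extensions of tangent vectors commute). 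The whole computation thus reduces to (i) determining $\Gamma_g$, and (ii) inserting it into the formula and simplifying.

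For (i) I would apply the bracket-free Koszul identity
\begin{equation*}
2G^E_g(\Gamma_g(h,k),l) = (D_hG^E)_g(k,l) + (D_kG^E)_g(h,l) - (D_lG^E)_g(h,k),
\end{equation*}
using only the pointwise rules $D_m(g^{-1}) = -g^{-1}mg^{-1}$ and $D_m\vol(g) = \tfrac12\Tr(g^{-1}m)\vol(g)$. Writing $H=g^{-1}h$, $K=g^{-1}k$, $L=g^{-1}l$ and exploiting that each of these endomorphisms is $g$-self-adjoint, so that $\Tr$ of any triple product is permutation invariant, a short calculation yields
\begin{equation*}
\Gamma_g(h,k) = -\tfrac12\bigl(hg^{-1}k + kg^{-1}h\bigr) + \tfrac14\bigl(\Tr(H)k + \Tr(K)h - \Tr(HK)\,g\bigr),
\end{equation*}
in agreement with \cite{Freed1989,Gil-Medrano1991}. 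Crucially, $\Gamma_g$ is purely algebraic in the pointwise values of $h$ and $k$, which reflects the pointwise nature of the Ebin metric.

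For (ii) I would write $\Gamma_g(\cdot,\cdot) = g\cdot S(\cdot,\cdot)$, with $S$ the matrix-valued symmetric bilinear form read off above, so that $g^{-1}\Gamma_g(h,\Gamma_g(k,l)) = S(H,S(K,L))$ and composition is reduced to algebra in the associative algebra of $g$-symmetric endomorphisms; the derivative $(D_h\Gamma)_g(k,l)$ is obtained by differentiating each of the explicit $g$-dependences in $\Gamma_g$. The first term $\tfrac14[[H,K],L]$ of the stated answer arises entirely from the leading pieces $-\tfrac12(HK+KH)$ of the two nested Christoffel symbols, via the identity $\tfrac14\bigl(H(KL+LK)+(KL+LK)H\bigr) - \{H\leftrightarrow K\}=\tfrac14[[H,K],L]$. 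The dimension factor $m$ in the second term is produced by $\Tr(\Id)=m$ whenever the $-\Tr(HK)\,\Id$ piece of $S$ contributes through a further trace. The remaining trace-trace contributions from $D\Gamma$ and from the cross terms in $\Gamma\circ\Gamma$ package themselves into the two expressions weighted by $\tfrac1{16}$.

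The main obstacle is purely combinatorial: $S(H,K)$ has four summands, so $S(H,S(K,L))$ expands into roughly sixteen monomials, and the analogous expansion for $(D_h\Gamma)_g(k,l)$ adds another cloud of terms. What keeps this tractable is to work pointwise at $x\in M$ and to use throughout that $H(x),K(x),L(x)$ are self-adjoint with respect to $g(x)$; this forces all traces of triple products to agree and causes most of the non-canonical monomials to cancel in the antisymmetrization over $(h,k)$, leaving exactly the four canonical building blocks $[[H,K],L]$, $\Tr(KL)H-\Tr(HL)K$, $\Tr(H)\Tr(L)K-\Tr(K)\Tr(L)H$, and $\bigl(\Tr(K)\Tr(HL)-\Tr(H)\Tr(KL)\bigr)\Id$ of the stated formula.
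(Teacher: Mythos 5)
Your overall strategy --- global affine chart, Christoffel operator from the bracket-free Koszul identity, curvature from the chart formula --- is exactly the route of the cited source \cite{Gil-Medrano1991} (the present survey gives no proof, only the citation), and your Christoffel operator is correct: it reproduces the geodesic equation $g_{tt}=\tfrac14\on{Tr}(H_t^2)g+g_tg^{-1}g_t-\tfrac12\on{Tr}(H_t)g_t$ quoted in Sect.~\ref{metrics_geod}. However, there is a concrete error in your bookkeeping of where the terms come from. You assert that $\tfrac14[[H,K],L]$ arises \emph{entirely} from the leading pieces of the two nested Christoffel symbols and that $(D_h\Gamma)_g(k,l)$ only feeds the trace terms. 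That is false: the leading piece $-\tfrac12(kg^{-1}l+lg^{-1}k)$ of $\Gamma_g(k,l)$ depends on $g$ through $g^{-1}$, and $D_h(g^{-1})=-g^{-1}hg^{-1}$ turns it into $\tfrac12(kg^{-1}hg^{-1}l+lg^{-1}hg^{-1}k)$, i.e., the pure cubic monomials $\tfrac12(KHL+LHK)$. After antisymmetrizing in $h\leftrightarrow k$ this contributes $-\tfrac12[[H,K],L]$, which dominates the $+\tfrac14[[H,K],L]$ coming from $\Gamma(h,\Gamma(k,l))-\Gamma(k,\Gamma(h,l))$; the net coefficient with your conventions is $-\tfrac14$, not $+\tfrac14$.

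Carrying your computation through carefully, one finds that \emph{all four} blocks come out as the exact negatives of the stated ones: with $R(h,k)l=(D_h\Gamma)(k,l)-(D_k\Gamma)(h,l)+\Gamma(h,\Gamma(k,l))-\Gamma(k,\Gamma(h,l))$ one gets
\begin{equation*}
g^{-1}R_g(h,k)l=-\tfrac14[[H,K],L]-\tfrac{m}{16}\bigl(\on{Tr}(KL)H-\on{Tr}(HL)K\bigr)-\tfrac1{16}\bigl(\on{Tr}(H)\on{Tr}(L)K-\on{Tr}(K)\on{Tr}(L)H\bigr)-\tfrac1{16}\bigl(\on{Tr}(K)\on{Tr}(HL)-\on{Tr}(H)\on{Tr}(KL)\bigr)\Id.
\end{equation*}
So the theorem as stated corresponds to the opposite curvature sign convention (interchange the roles of $h$ and $k$ in the chart formula, i.e., $R(X,Y)Z=\nabla_{[X,Y]}Z-[\nabla_X,\nabla_Y]Z$), which is the one used in \cite{Gil-Medrano1991}. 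Neither issue invalidates the method, but as written your outline would not reproduce the stated formula: you must include the cubic-monomial contribution of $D\Gamma$ and fix the convention, after which the antisymmetrization does collapse everything onto the four canonical blocks exactly as you predict.
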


In the article \cite{Freed1989} the authors have determined the sign of the sectional curvature:
\begin{theorem}[Cor. 1.17, \cite{Freed1989}]
The sectional curvature for the
$L^2$--metric on the manifold $\Met(M)$ of all
Riemannian metrics is non-positive.
For the plane $P(h,k)$ spanned by orthonormal $h,k$ it is
\begin{align*}
k_g^{\Met}(P(h,k)) &= \int_M \frac{m}{16}\Big(\on{Tr}(HK)^2-\on{Tr}(H)^2\on{Tr}(K)^2\Big)\\&\qquad\qquad
+\frac14 \on{Tr}\Big(([H,K])^2\Big)\on{vol}(g)\,,
\end{align*}
where $H= g^{-1} h$ and $K= g^{-1} k$. 
\end{theorem}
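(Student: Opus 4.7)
My plan is to derive the sectional curvature formula by specializing the explicit Riemann curvature tensor from the previous theorem to $l = k$, taking the Ebin inner product with $h$, and then checking non-positivity term by term.

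First I would substitute $L = K$ into the expression for $g^{-1} R_g(h,k)l$ to obtain $g^{-1} R_g(h,k)k$. This yields a quantity involving the nested commutator $[[H,K],K]$ together with scalar coefficients built from $\on{Tr}(H)$, $\on{Tr}(K)$, $\on{Tr}(HK)$ and $\on{Tr}(K^2)$. For orthonormal $h, k$ the sectional curvature is $G^E_g(R_g(h,k)k, h)$, which after unwinding the Ebin inner product becomes
\[
\int_M \on{Tr}\big((g^{-1} R_g(h,k)k)\, H\big)\, \vol(g).
\]
The commutator contribution simplifies by cyclic invariance of the trace,
\[
\on{Tr}\big([[H,K],K]\, H\big) = -\on{Tr}\big([H,K]^2\big),
\]
reproducing the bracketed term in the target expression up to sign. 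It then remains to perform careful bookkeeping on the remaining scalar trace terms and collapse them into the $\frac{m}{16}\big(\on{Tr}(HK)^2 - \on{Tr}(H)^2 \on{Tr}(K)^2\big)$ contribution.

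For the sign I would argue pointwise at each $x \in M$. Since $h$ and $k$ are symmetric $(0,2)$-tensors, the endomorphisms $H = g^{-1}h$ and $K = g^{-1}k$ are self-adjoint with respect to $g_x$, so in any $g_x$-orthonormal frame they are represented by symmetric real matrices. Consequently $[H,K]$ is antisymmetric, and for any antisymmetric real matrix $A$ one has $\on{Tr}(A^2) = -\|A\|_F^2 \le 0$. Thus the commutator term is pointwise non-positive. The remaining scalar contribution would then be handled by decomposing $H$ and $K$ into their trace and trace-free parts (as in the explicit geodesic formulas above) and applying a pointwise Cauchy--Schwarz-type inequality to the trace-free summands.

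The main obstacle will be the algebraic bookkeeping needed to assemble the many scalar contributions arising from the second, third and fourth pieces of the curvature tensor into precisely the stated shape, and then establishing the pointwise non-positivity of the resulting scalar part, which does not follow from a single naive Cauchy--Schwarz. A cleaner conceptual route, closer in spirit to Freed--Groisser, is to observe that at each $x \in M$ the fiber $\Met(M)_x = S^2_+ T^*_x M$ is isomorphic as a Riemannian manifold to the symmetric space $GL(m,\R)/O(m)$ (the $SL/SO$ factor is of noncompact type, the $\R_+$ factor is flat), and therefore carries non-positive sectional curvature. The Ebin metric is the fiber-integral of these pointwise symmetric-space metrics weighted by $\vol(g)$, so non-positivity of the global sectional curvature on $\Met(M)$ follows from its pointwise analogue modulo the $g$-dependence of the volume weight, whose infinitesimal variation contributes exactly the pointwise scalar correction in $\on{Tr}(H)$ and $\on{Tr}(K)$.
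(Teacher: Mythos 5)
The paper itself offers no proof of this statement --- it is quoted verbatim from Freed--Groisser --- so your derivation has to stand on its own, and in its essential steps it does. Specializing the curvature tensor of the preceding theorem to $l=k$, contracting with $H$ under the Ebin pairing, and using cyclicity to get $\on{Tr}([[H,K],K]H)=-\on{Tr}([H,K]^2)$ is correct (the overall sign is then fixed by whichever convention makes $k(P)=\pm\, G^E_g(R_g(h,k)k,h)$, as you note), and your instinct that the scalar terms become sign-definite only after splitting $H=H_0+\tfrac{\on{Tr}(H)}{m}\on{Id}$ and $K=K_0+\tfrac{\on{Tr}(K)}{m}\on{Id}$ is exactly right: carrying out that substitution, the three scalar contributions collapse to $\tfrac{m}{16}\bigl(\on{Tr}(H_0K_0)^2-\on{Tr}(H_0^2)\on{Tr}(K_0^2)\bigr)$, which is pointwise $\le 0$ by Cauchy--Schwarz for the trace pairing on $g$-symmetric endomorphisms, while $\on{Tr}([H,K]^2)=\on{Tr}([H_0,K_0]^2)\le 0$ because $[H,K]$ is $g$-skew.

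Two warnings. First, your bookkeeping will \emph{not} land on the formula as printed: $\on{Tr}(HK)^2-\on{Tr}(H)^2\on{Tr}(K)^2$ is not what the scalar terms collapse to, and it cannot be correct, since the conformal slice $\{fg_0\}$ is totally geodesic (the explicit geodesic formula shows $H_0=0$ is preserved) and flat (substitute $u=f^{m/4}$), so planes with $H_0=K_0=0$ must have vanishing sectional curvature, whereas the printed term equals $m^2(1-m^2)f^2e^2<0$ there. The first summand should be read as $\on{Tr}(H_0K_0)^2-\on{Tr}(H_0^2)\on{Tr}(K_0^2)$ with $H_0,K_0$ the trace-free parts; expect that outcome rather than trying to force the printed shape. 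Second, the ``cleaner conceptual route'' is less clean than it looks: the fiber metric $\ga_{x,a}$ carries the weight $\sqrt{\det(\wt g^{-1}a)}$, so the fiber is \emph{not} the product of the noncompact-type space $SL(m,\R)/SO(m)$ with a flat $\R_{>0}$-factor --- the weight is precisely what flattens the pure-trace direction and produces the $\tfrac{m}{16}(\cdots)$ term --- so that route either reduces to the same computation or requires an explicit identification of the weighted fiber. Keep the direct computation.
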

In \cite{Clarke2013} it is proven that this negative curvature carries over to the metric-completion of $(\Met(M),G^E)$, as it is a $\on{CAT}(0)$ space; see Lem. \ref{lem_cat0}.

\begin{acknowledgements}
We would like to thank the referees for their careful reading of the article as well as the thoughtful comments, that helped us improve the exposition.
\end{acknowledgements}

\def\cprime{$'$}

\end{document}